\def\th@plain{%
  \upshape 
}
\renewenvironment{proof}[1][\proofname]{\par
  \pushQED{\qed}%
  \normalfont \topsep6\p@\@plus6\p@\relax
  \trivlist
  \item[\hskip\labelsep
        \bfseries
    #1\@addpunct{.}]\ignorespaces
}{%
  \popQED\endtrivlist\@endpefalse
}
\newtheorem{theorem}{Theorem}[section]
\newtheorem{lemma}[theorem]{Lemma}
\newtheorem{corollary}[theorem]{Corollary}
\newtheorem{case}{Case}
\newtheorem{subcase}{Subcase}[case]
\newtheorem{subsubcase}{Subcase}[subcase]
\newtheorem{claim}{Claim}
\theoremstyle{definition}
\newtheorem{definition}{Definition}
\newtheorem{remark}{Remark}
\newtheorem{proposition}{Proposition}
\newcommand{\etal}{et~al.\ }
\newcommand{\ie}{i.e.,\ }
\def\int(#1){\mathrm{int}(#1)}
\def\ext(#1){\mathrm{ext}(#1)}
\def\Int(#1){\mathrm{Int}(#1)}
\def\Ext(#1){\mathrm{Ext}(#1)}
\tikzset{
  on each segment/.style={
    decorate,
    decoration={
      show path construction,
      moveto code={},
      lineto code={
        \path [#1]
        (\tikzinputsegmentfirst) -- (\tikzinputsegmentlast);
      },
      curveto code={
        \path [#1] (\tikzinputsegmentfirst)
        .. controls
        (\tikzinputsegmentsupporta) and (\tikzinputsegmentsupportb)
        ..
        (\tikzinputsegmentlast);
      },
      closepath code={
        \path [#1]
        (\tikzinputsegmentfirst) -- (\tikzinputsegmentlast);
      },
    },
  },
  mid arrow/.style={postaction={decorate,decoration={
        markings,
        mark=at position .55 with {\arrow[#1]{stealth}}
      }}},
}
\newcommand{\ee}{\mathrm{EE}}
\renewcommand{\oe}{\mathrm{OE}}
\title{Variable degeneracy of graphs with restricted structures}
\author{Qianqian Wang \qquad Tao Wang\footnote{\tt Corresponding
author: wangtao@henu.edu.cn; https://orcid.org/0000-0001-9732-1617} \qquad Xiaojing Yang\\
{\small Center for Applied Mathematics}\\
{\small Henan University, Kaifeng, 475004, P. R. China}}
\begin{document}
\date{}
\maketitle
\begin{abstract}
Bernshteyn and Lee defined a new notion, weak degeneracy, which is slightly weaker than the ordinary degeneracy. It is proved that strictly $f$-degenerate transversal is a common generalization of list coloring, $L$-forested-coloring and DP-coloring. In this paper, we consider three classes of graphs, including planar graphs without any configuration in \autoref{FIGPAIRWISE3456}, toroidal graphs without any configuration in \autoref{A345}, and  planar graphs without intersecting $5$-cycles. We give structural results for each class of graphs, and prove each structure is reducible for weakly $3$-degenerate and the existence of strictly $f$-degenerate transversals. As consequences, these three classes of graphs are weakly $3$-degenerate, and have a strictly $f$-degenerate transversal. Then these three classes of graph have DP-paint number at most four, and have list vertex arboricity at most two. This greatly improve all the results in  \cite{MR4362322,MR4212281,MR4078909,MR4089638,MR3969022,MR3996735,MR3802151,MR4112063,MR3979933,MR3761240,MR3699856,MR3508765,MR3320048}. Furthermore, the first and the third classes of graphs have Alon-Tarsi number at most four. 
\end{abstract}

\textbf{Keywords}: Weak degeneracy; DP-paint number; Alon-Tarsi number; List vertex arboricity; Variable degeneracy

\textbf{MSC2020}: 05C15

\section{Introduction}
All graphs in this paper are finite, undirected and simple. Let $\mathbb{N}$ stand for the set of nonnegative integers, $[n]$ stand for the set $\{1, 2, \dots, n\}$. Let $G$ be a graph, and $f$ be a function from $V(G)$ to $\mathbb{N}$, the graph $G$ is \emph{strictly $f$-degenerate} if every subgraph $\Gamma$ has a vertex $v$ with $d_{\Gamma}(v) < f(v)$. When $f$ is an integer $k$, the strictly $f$-degenerate is equivalent to the ordinary $(k-1)$-degenerate. A sequence of vertices $v_{1}, v_{2}, \dots, v_{n}$ is a \emph{strictly $f$-degenerate order (resp. $f$-removing order)} if every vertex $v_{i}$ has less than $f(v_{i})$ neighbors with smaller (resp. larger) subscripts. Then a graph is strictly $f$-degenerate if and only if it has a strictly $f$-degenerate order or $f$-removing order.

Let $G$ be a graph and $L$ be a list assignment for $G$. For each vertex $v \in V(G)$, let $L_{v} = \{v\} \times [s]$; for each edge $uv \in E(G)$, let $\mathscr{M}_{uv}$ be a matching between the sets $L_{u}$ and $L_{v}$. Note that $\mathscr{M}_{uv}$ is not required to be a perfect matching between $L_{u}$ and $L_{v}$, and possibly it is empty. Let $\mathscr{M} := \bigcup_{uv \in E(G)}\mathscr{M}_{uv}$, and call $\mathscr{M}$ a \emph{matching assignment}. A \emph{cover} of $G$ is a graph $H_{L, \mathscr{M}}$ (simply write $H$) satisfying the following two conditions: 
\begin{enumerate}[label =(C\arabic*)]
\item the vertex set of $H$ is the disjoint union of $L_{v}$ for all $v \in V(G)$; 
\item the edge set of $H$ is the matching assignment $\mathscr{M}$.
\end{enumerate}

Note that the induced subgraph $H[L_{v}]$ is an independent set for each vertex $v \in V(G)$. Let $S$ be a vertex subset or a subgraph of $G$, we use $H_{S}$ to denote the subgraph induced by $\bigcup_{v \in S}L_{v}$ in $H$. Let $H$ be a cover of $G$ and $f$ be a function from $V(H)$ to $\mathbb{N}$, we call the pair $(H, f)$ a \emph{valued cover} of $G$. A vertex subset $R \subseteq V(H)$ is a \emph{transversal} of $H$ if $|R \cap L_{v}| = 1$ for each $v \in V(G)$. A transversal $R$ is a \emph{strictly $f$-degenerate transversal} if $H[R]$ is strictly $f$-degenerate. We say that a vertex $v$ in $G$ \emph{is colored with $p$} if $(v, p)$ is chosen in a strictly $f$-degenerate transversal of $H$. 

Let $H$ be a cover of $G$ and $f$ be a function from $V(H)$ to $\{0, 1\}$. A \emph{DP-coloring} of $H$ is a strictly $f$-degenerate transversal of $H$. Note that a DP-coloring is a special independent set in $H$.

The \emph{DP-chromatic number $\chi_{\mathrm{DP}}(G)$} of $G$ is the smallest integer $k$ such that $(H, f)$ has a DP-coloring whenever $H$ is a cover of $G$ and $f$ is a function from $V(H)$ to $\{0, 1\}$ with $f(v, 1) + f(v, 2) + \dots + f(v, s) \geq k$ for each $v \in V(G)$. A graph $G$ is \emph{DP-$k$-colorable} if its DP-chromatic number is at most $k$.

The \emph{vertex arboricity} of a graph $G$ is the minimum number of subsets (color classes) in which $V(G)$ can be partitioned so that each subset induces a forest. An \emph{$L$-forested-coloring} of $G$ for a list assignment $L$ is a coloring $\phi$ (not necessarily proper) such that $\phi(v) \in L(v)$ for each $v \in V(G)$ and each color class induces a forest. The \emph{list vertex arboricity} of a graph $G$ is the least integer $k$ such that $G$ has an $L$-forested-coloring for any list $k$-assignment. The list vertex arboricity of a graph is the list version of vertex arboricity.

It is proved \cite {MR4357325} that strictly $f$-degenerate transversal is a common generalization of list coloring, $L$-forested-coloring and DP-coloring.

Motivated by the study of greedy algorithms for graph coloring problems, Bernshteyn and Lee \cite{Bernshteyn2021a} defined a notion of \emph{weak degeneracy}.

\begin{definition}[\textsf{Delete} Operation]
Let $G$ be a graph and $f : V(G) \longrightarrow \mathbb{N}$ be a function. For a vertex $u \in V(G)$, the operation \textsf{Delete}$(G, f, u)$ outputs the graph $G' = G - u$ and the function $f': V(G') \longrightarrow \mathbb{Z}$ given by 
\begin{align*}
f'(v) \coloneqq
&\begin{cases}
f(v) - 1, & \text{if $uv \in E(G)$};\\[0.3cm]
f(v), & \text{otherwise}.
\end{cases}
\end{align*}
An application of the operation \textsf{Delete} is \emph{legal} if the resulting function $f'$ is nonnegative. 
\end{definition}

\begin{definition}[\textsf{DeleteSave} Operation]
Let $G$ be a graph and $f : V(G) \longrightarrow \mathbb{N}$ be a function. For a pair of adjacent vertices $u, w \in V(G)$, the operation \textsf{DeleteSave}$(G, f, u, w)$ outputs the graph $G' = G - u$ and the function $f' : V(G') \longrightarrow \mathbb{Z}$ given by
\begin{align*}
f'(v) \coloneqq
&\begin{cases}
f(v) - 1, & \text{if $uv \in E(G)$ and $v \neq w$};\\[0.3cm]
f(v), & \text{otherwise}.
\end{cases}
\end{align*}
An application of the operation \textsf{DeleteSave} is \emph{legal} if $f(u) > f(w)$ and the resulting function $f'$ is nonnegative. 
\end{definition}

A graph $G$ is \emph{weakly $f$-degenerate} if it is possible to remove all vertices from $G$ by a sequence of legal applications of the operations \textsf{Delete} and \textsf{DeleteSave}. A graph is \emph{$f$-degenerate} if it is weakly $f$-degenerate with no the \textsf{DeleteSave} operation. Given $d \in \mathbb{N}$, we say that $G$ is \emph{weakly $d$-degenerate} if it is weakly $f$-degenerate with respect to the constant function of value $d$. We say that $G$ is \emph{$d$-degenerate} if it is $f$-degenerate with respect to the constant function of value $d$. The \emph{weak degeneracy} of $G$, denote by $\textsf{wd}(G)$, is the minimum integer $d$ such that $G$ is weakly $d$-degenerate. The \emph{degeneracy} of $G$, denote by $\textsf{d}(G)$, is the minimum integer $d$ such that $G$ is $d$-degenerate.

Bernshteyn and Lee \cite{Bernshteyn2021a} gave the following inequalities.

\begin{proposition}\label{prop}
For any graph $G$, we always have 
\begin{equation*}
\chi(G) \leq \chi_{\ell}(G) \leq \chi_{\textsf{DP}}(G) \leq \chi_{\textsf{DPP}}(G) \leq \textsf{wd}(G) + 1 \leq \textsf{d}(G) + 1, 
\end{equation*}
where $\chi_{\textsf{DP}}(G)$ is the DP-chromatic number of $G$, and $\chi_{\textsf{DPP}}(G)$ is the DP-paint number of $G$. 
\end{proposition}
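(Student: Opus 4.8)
The plan is to establish the five inequalities one at a time, since each is of a different flavour. The rightmost, $\textsf{wd}(G)+1 \le \textsf{d}(G)+1$, is immediate from the definitions: an $f$-degenerate removal sequence is, by definition, a weakly $f$-degenerate removal sequence that uses no \textsf{DeleteSave} operation, so $\textsf{wd}(G) \le \textsf{d}(G)$. The three leftmost inequalities $\chi(G) \le \chi_{\ell}(G) \le \chi_{\textsf{DP}}(G) \le \chi_{\textsf{DPP}}(G)$ each express that one colouring framework is a special case of the next. For $\chi(G) \le \chi_{\ell}(G)$, put $k=\chi_{\ell}(G)$ and give $G$ the constant list assignment $L(v)=[k]$; an $L$-colouring is a proper $k$-colouring. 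For $\chi_{\ell}(G) \le \chi_{\textsf{DP}}(G)$, take any list assignment $L$ with $|L(v)|=k:=\chi_{\textsf{DP}}(G)$, embed all lists into a common ground set, and build the valued cover in which $f(v,c)=1$ exactly when $c\in L(v)$ and $\mathscr{M}_{uv}$ joins $(u,c)$ to $(v,c)$ for every $c\in L(u)\cap L(v)$; since $\sum_{c}f(v,c)=|L(v)|=k$, this cover admits a DP-colouring, and such a strictly $f$-degenerate (hence, as $f\le 1$, independent) transversal is precisely a proper $L$-colouring. For $\chi_{\textsf{DP}}(G)\le\chi_{\textsf{DPP}}(G)$, note that in the DP-painting game the lister may reveal every available colour of every vertex in the first round; a Painter strategy that wins the game must in particular beat this lister, and the independent set it then produces is a DP-colouring, so DP-$k$-paintable implies DP-$k$-colourable.

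The one inequality that carries real content is $\chi_{\textsf{DPP}}(G)\le\textsf{wd}(G)+1$, and I expect the work to concentrate there. Write $d=\textsf{wd}(G)$ and fix a sequence $\mathcal{S}$ of legal \textsf{Delete} and \textsf{DeleteSave} operations, taken with respect to the constant function $d$, that removes every vertex of $G$. I would prove, by induction on $|V(G)|$ (equivalently on the length of $\mathcal{S}$), the statement: \emph{if $G$ is weakly $d$-degenerate then $G$ is DP-$(d+1)$-paintable}. Playing the DP-painting game on an arbitrary valued cover with budget $d+1$ at every vertex, I would use the next applicable operation of $\mathcal{S}$ to decide the Painter's response to each move of the lister. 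A \textsf{Delete}$(G,f,u)$ step corresponds to the Painter committing to colour $u$ — safe since $u$ retains positive budget — at a cost of one budget unit to each still-uncoloured neighbour of $u$, while a \textsf{DeleteSave}$(G,f,u,w)$ step does the same but spares the designated neighbour $w$ that cost, which is affordable precisely because of the legality requirement $f(u)>f(w)$. The invariant I would carry along is that, after each round, the subgraph of still-uncoloured vertices, equipped with the function ``remaining budget minus one'', is again weakly degenerate with respect to that function, witnessed by a suitably edited tail of $\mathcal{S}$.

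The crux — and the step I expect to be hardest — is to show that the Painter can always maintain this invariant: for an arbitrary set of new colours revealed by the lister, the Painter must be able to colour an independent subset of them and absorb exactly the budget decrements that the lister's move forces, in such a way that a weak-degeneracy witness for the reduced configuration still exists. Concretely this amounts to matching the vertices the lister ``charges'' against \textsf{Delete}/\textsf{DeleteSave} steps available in the remaining sequence, and it requires a short case analysis on which of the two operations comes next and on whether the vertices it names were among those offered a colour in that round. Everything else in the chain — the three inclusion-type inequalities and $\textsf{wd}(G)\le\textsf{d}(G)$ — is routine once the definitions are unwound, so I would dispose of each in a few lines and devote the bulk of the argument to this weak-degeneracy-to-paintability implication, which is the part originally due to Bernshteyn and Lee.
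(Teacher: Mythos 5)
The paper does not actually prove this proposition: it is quoted from Bernshteyn and Lee \cite{Bernshteyn2021a}, so there is no internal proof to compare against, and a correct submission could simply cite that paper. Your treatment of the routine links is essentially fine: $\textsf{wd}(G)\leq \textsf{d}(G)$ is definitional, $\chi(G)\leq\chi_{\ell}(G)\leq\chi_{\textsf{DP}}(G)$ are the standard specializations (your cover construction for the second one is correct), and $\chi_{\textsf{DP}}(G)\leq\chi_{\textsf{DPP}}(G)$ works, with the small caveat that in the usual formulation of the DP-painting game the Lister reveals at most one cover vertex per $L_{v}$ per round, so ``reveal every available colour of every vertex in the first round'' should be replaced by revealing the colours level by level (round $j$ reveals the $j$-th colour of every vertex); the union of the Painter's responses is still an independent transversal, so the inequality follows.

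The genuine gap is in the only inequality with real content, $\chi_{\textsf{DPP}}(G)\leq \textsf{wd}(G)+1$. Your sketch treats the operations of the removal sequence as if the Painter could execute them at will: ``a \textsf{Delete}$(G,f,u)$ step corresponds to the Painter committing to colour $u$,'' and a \textsf{DeleteSave}$(G,f,u,w)$ step ``spares the designated neighbour $w$ that cost.'' Neither is something the Painter can simply do. The Painter may only colour $u$ in a round in which the Lister reveals a cover vertex of $u$, and when $u$ is coloured with a specific $(u,i)$, the matched vertex $(w,j)\in L_{w}$ (if it exists and is still alive) is destroyed whether or not the Painter ``intends'' to save $w$; the legality condition $f(u)>f(w)$ does not by itself make the cost to $w$ vanish. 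The whole content of Bernshteyn and Lee's argument is to show, using $f(u)>f(w)$ and the fact that $\mathscr{M}_{uw}$ is a matching, that the Painter can delay colouring $u$ until a round in which the revealed colour of $u$ is not matched to any surviving colour of $w$ (or $w$ is coloured simultaneously), and to organize the induction around the first operation of the sequence rather than around rounds of the game. Your proposed invariant --- that after each round the uncoloured part with ``remaining budget minus one'' is weakly degenerate, ``witnessed by a suitably edited tail of $\mathcal{S}$'' --- is precisely the statement that needs proof, and you acknowledge but do not supply the case analysis that would establish it; as written it is not clear the tail can always be edited, because the Lister's adversarial choice of which cover vertices to reveal, and which matched partners get destroyed, need not respect the order or the save-structure of $\mathcal{S}$. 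So either carry out that argument in full (essentially reproving Bernshteyn--Lee's Lemma on paintability of weakly degenerate graphs) or, as the paper does, cite \cite{Bernshteyn2021a} for this inequality.
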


Bernshteyn and Lee \cite{Bernshteyn2021a} showed that every planar graph is weakly $4$-degenerate, and hence its DP-paint number is at most 5. 

A digraph $D$ is a {\em circulation} if $d_{D}^{+}(v) = d_{D}^{-}(v)$ for each vertex $v \in V(D)$. In particular, a digraph with no arcs is a circulation. Let $G$ be a graph and $D$ be an orientation of $G$. Let $\ee(D)$ be the family of spanning sub-circulations of $D$ with even number of arcs, and let $\oe(D)$ be the family of spanning sub-circulations of $D$ with odd number of arcs. Let 
\[
\mathrm{diff}(D) = |\ee(D)| - |\oe(D)|. 
\]
We say that an orientation $D$ of $G$ is a \emph{$k$-Alon-Tarsi orientation} ($k$-AT-orientation for short) if $\mathrm{diff}(D) \neq 0$ and $\Delta(D)^{+} < k$. The {\em Alon-Tarsi number} $AT(G)$ of a graph $G$ is the smallest integer $k$ such that $G$ has a $k$-AT-orientation $D$. The following proposition is well-known \cite{MR2578908}.

\begin{proposition}
For any graph $G$, we always have 
\begin{equation*}
\textsf{ch}(G) \leq \chi_{\textsf{P}}(G) \leq AT(G), 
\end{equation*}
where $\chi_{\textsf{P}}(G)$ is the paint number of $G$. 
\end{proposition}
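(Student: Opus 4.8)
The plan is to prove the chain in two pieces: the first inequality, $\textsf{ch}(G)\le\chi_{\textsf{P}}(G)$, is a soft reduction between two colouring games, while the second, $\chi_{\textsf{P}}(G)\le AT(G)$, is the substantial one --- it is Schauz's online version of the Alon--Tarsi / Combinatorial Nullstellensatz method, so in the write-up it is natural to cite \cite{MR2578908}; I nonetheless sketch how each step goes.

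For $\textsf{ch}(G)\le\chi_{\textsf{P}}(G)$, put $k:=\chi_{\textsf{P}}(G)$ and recall the painting game: each vertex $v$ begins with $k$ tokens; in a round Mr.\ Paint marks a nonempty set $A$ of still-uncoloured vertices, each vertex of $A$ then spends one token, and Mrs.\ Correct permanently colours an independent set $I\subseteq A$; she wins once all vertices are coloured and loses if an uncoloured vertex runs out of tokens. By definition of $k$, Mrs.\ Correct has a winning strategy. Given an arbitrary list assignment $L$ with $|L(v)|=k$ for all $v$, enumerate $\bigcup_{v}L(v)$ as $c_1,c_2,\dots$ and run the game with Mr.\ Paint presenting in round $i$ the set $A_i=\{v:v\text{ still uncoloured and }c_i\in L(v)\}$ and Mrs.\ Correct responding by her winning strategy, colouring the returned independent set $I_i$ with the colour $c_i$. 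Each vertex $v$ is presented at most $k$ times (only in rounds $i$ with $c_i\in L(v)$), so were $v$ still uncoloured at the end it would have been presented exactly $k$ times and hence have exhausted its tokens while uncoloured, contradicting that the strategy wins; thus every vertex gets coloured. The resulting colouring lies in the lists (a vertex coloured in round $i$ belongs to $A_i$) and is proper (each class $I_i$ is independent), so $G$ is $k$-choosable, i.e.\ $\textsf{ch}(G)\le\chi_{\textsf{P}}(G)$.

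For $\chi_{\textsf{P}}(G)\le AT(G)$, set $k:=AT(G)$ and fix a $k$-AT-orientation $D$, so $\mathrm{diff}(D)\neq0$ and, since $\Delta(D)^{+}<k$, $d_{D}^{+}(v)\le k-1$ for every $v$. Form the graph polynomial $P_G=\prod_{uv\in E(G)}(x_u-x_v)$, where $D$ fixes, for each edge $uv$, that the factor is $x_u-x_v$ when the arc points from $u$ to $v$. Expanding, a term contributing to the monomial $\prod_{v}x_v^{\,d_{D}^{+}(v)}$ is obtained by ``flipping'' a set of arcs that must form a spanning sub-circulation of $D$, with sign $(-1)$ to the number of flipped arcs; hence the coefficient of $\prod_{v}x_v^{\,d_{D}^{+}(v)}$ in $P_G$ equals $|\ee(D)|-|\oe(D)|=\mathrm{diff}(D)\neq0$. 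Mrs.\ Correct's strategy is to maintain, throughout the game, a nonzero polynomial $P$ in the variables of the still-uncoloured vertices --- initially $P_G$ --- together with a monomial $\prod_{v}x_v^{\,e_v}$ whose coefficient in $P$ is nonzero and which satisfies $e_v\le(\text{current token count of }v)-1$ for every uncoloured $v$; initially $e_v=d_{D}^{+}(v)$ and every token count equals $k$, so the invariant holds. When Mr.\ Paint presents a set $A$, she must exhibit an independent $I\subseteq A$ and update $P$ and the monomial so the invariant survives the round (in which each vertex of $A$ loses a token and the vertices of $I$ leave the game).

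The heart of the argument, and the step I expect to be the main obstacle, is the algebraic lemma enabling this update: for a nonzero polynomial $P$ and a nonempty set $A$ of its variables, one processes the variables of $A$ one by one, each time splitting $P$ according to the powers of the current variable, so as to produce a subset $I\subseteq A$ whose exponents become frozen and a nonzero residual polynomial in the remaining variables in which the exponents over $A\setminus I$ have each decreased by one. Two matters require care: arranging the bookkeeping so that the monomial-versus-token inequality is preserved for $v\in A\setminus I$ (such a $v$ loses a token but its target exponent also drops by one), and checking that the set $I$ is \emph{independent} in $G$ --- this last point is where the special form of $P_G$ is used, as $P_G$ vanishes on the diagonal $x_u=x_v$ of each edge $uv$, a property that, carried through the peeling, prevents two adjacent vertices from being resolved by the same colour. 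Since the total token count strictly decreases each round the game terminates, and the maintained invariant ensures Mrs.\ Correct is never stuck; hence she wins with $k$ initial tokens, that is $\chi_{\textsf{P}}(G)\le AT(G)$. Combining the two parts gives $\textsf{ch}(G)\le\chi_{\textsf{P}}(G)\le AT(G)$.
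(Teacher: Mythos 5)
The paper does not actually prove this proposition: it is stated as well known and attributed to Schauz \cite{MR2578908}, so there is no internal argument to compare yours against. Judged on its own terms, your first half is complete and correct: simulating a list assignment by presenting, for each colour $c_i$, the set of still-uncoloured vertices whose lists contain $c_i$ is the standard reduction, and your counting (each vertex is presented at most $|L(v)|=k$ times, so an uncoloured survivor would sit at zero tokens in what must still be a winning position) is exactly right under the usual recursive definition of paintability. Your identification of the coefficient of $\prod_v x_v^{d_D^+(v)}$ in $\prod_{u\to v}(x_u-x_v)$ with $\mathrm{diff}(D)$ via flipping arc sets that must be circulations is also correct, and the invariant you set up (a nonzero polynomial on the uncoloured vertices together with a monomial of nonzero coefficient whose exponents stay below the current token counts) is precisely the right one.

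The genuine gap is the step you yourself flag: the lemma that, when Mr.\ Paint presents $A$, produces an \emph{independent} $I\subseteq A$ and a new nonzero certificate in which the exponents on $A\setminus I$ each drop by one. Everything hard in Schauz's theorem lives there, and your description (``splitting $P$ according to the powers of the current variable'', with independence ``carried through the peeling'' because $P_G$ vanishes on the diagonals $x_u=x_v$) is a gesture at the mechanism, not an argument: you do not specify which coefficient polynomial is kept at each peeling step, why the resulting residual polynomial is nonzero, why the exponent/token inequality survives for the vertices of $A\setminus I$ that are not coloured, or why two adjacent vertices can never both end up in $I$. As written, the second inequality is therefore not proved; it is either an appeal to \cite{MR2578908} (which is what the paper itself does, and is perfectly acceptable here) or it needs the update lemma stated and proved in full, for instance in the equivalent orientation form: if $\mathrm{diff}(D)\neq 0$ and $A\neq\emptyset$, then there is an independent $I\subseteq A$ and an orientation $D'$ of $G-I$ with $\mathrm{diff}(D')\neq 0$, $d^+_{D'}(v)\leq d^+_D(v)$ for $v\notin A$ and $d^+_{D'}(v)\leq d^+_D(v)-1$ for $v\in A\setminus I$. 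Until that is supplied, the chain $\chi_{\textsf{P}}(G)\leq AT(G)$ rests on the citation rather than on your sketch.
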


Zhu \cite{MR3906645} proved that every planar graph has Alon-Tarsi number at most five. Zhu \cite{MR4152773} proved that every planar graph $G$ has a matching such that $AT(G - M) \leq 4$. It is known that $\textsf{ch}(G) \leq AT(G)$ and $\textsf{ch}(G) \leq \textsf{wd}(G) + 1$, but there are no relations between $AT(G)$ and $\textsf{wd}(G) + 1$ for general graphs. 

In this paper, we consider three classes of graphs, proving that each considered graph has weak degeneracy at most $3$, and each one has list vertex arboricity at most two. Moreover, plane graphs without any configuration in \autoref{FIGPAIRWISE3456} have Alon-Tarsi number at most four.

It is proved that the following classes of graphs have DP-chromatic number at most four: planar graphs without $3$-cycles adjacent to $4$-cycles \cite{MR3969022}; planar graphs without $3$-cycles adjacent to $5$-cycles \cite{MR4078909}; planar graphs without $3$-cycles adjacent to $6$-cycles \cite{MR4078909}; planar graphs without $4$-cycles adjacent to $5$-cycles \cite{MR3996735}; planar graphs without $4$-cycles adjacent to $6$-cycles \cite{MR3996735}; planar graphs without $5$-cycles adjacent to $6$-cycles \cite{Zhang2019}; planar graphs without pairwise adjacent $3$-, $4$- and $5$-cycles \cite{MR4089638}.

For planar graphs, Li and Wang \cite{Li2019} gave the following result on strictly $f$-degenerate transversal which improves the mentioned results with only restrictions on $5^{-}$-cycles. 

\begin{figure}%
\centering
\subcaptionbox{\label{fig:subfig:a--}}
{\begin{tikzpicture}[scale = 0.8]
\coordinate (A) at (45:1);
\coordinate (B) at (135:1);
\coordinate (C) at (225:1);
\coordinate (D) at (-45:1);
\coordinate (H) at (90:1.414);
\draw (A)--(H)--(B)--(C)--(D)--cycle;
\draw (A)--(B);
\node[circle, inner sep = 1.5, fill = white, draw] () at (A) {};
\node[circle, inner sep = 1.5, fill = white, draw] () at (B) {};
\node[circle, inner sep = 1.5, fill = white, draw] () at (C) {};
\node[circle, inner sep = 1.5, fill = white, draw] () at (D) {};
\node[circle, inner sep = 1.5, fill = white, draw] () at (H) {};
\end{tikzpicture}}\hspace{1.5cm}
\subcaptionbox{\label{fig:subfig:b--}}
{\begin{tikzpicture}[scale = 0.8]
\coordinate (A) at (30:1);
\coordinate (B) at (150:1);
\coordinate (C) at (225:1);
\coordinate (D) at (-45:1);
\coordinate (H) at (90:1.414);
\coordinate (X) at (60:1.4);
\coordinate (Y) at (120:1.4);
\coordinate (T) at ($(H)!(A)!(X)$);
\coordinate (Z) at ($(A)!2!(T)$); 
\draw (A)--(X)--(Z)--(H)--(Y)--(B)--(C)--(D)--cycle;
\draw (A)--(H)--(B);
\draw (H)--(X);
\node[circle, inner sep = 1.5, fill = white, draw] () at (A) {};
\node[circle, inner sep = 1.5, fill = white, draw] () at (B) {};
\node[circle, inner sep = 1.5, fill = white, draw] () at (C) {};
\node[circle, inner sep = 1.5, fill = white, draw] () at (D) {};
\node[circle, inner sep = 1.5, fill = white, draw] () at (H) {};
\node[circle, inner sep = 1.5, fill = white, draw] () at (X) {};
\node[circle, inner sep = 1.5, fill = white, draw] () at (Y) {};
\node[circle, inner sep = 1.5, fill = white, draw] () at (Z) {};
\end{tikzpicture}}\hspace{1.5cm}
\subcaptionbox{\label{fig:subfig:c--}}
{\begin{tikzpicture}[scale = 0.8]
\coordinate (A) at (30:1);
\coordinate (B) at (150:1);
\coordinate (C) at (225:1);
\coordinate (D) at (-45:1);
\coordinate (H) at (90:1.414);
\coordinate (X) at (60:1.4);
\coordinate (Y) at (120:1.4);
\coordinate (T) at ($(A)!(H)!(X)$);
\coordinate (Z) at ($(H)!2!(T)$); 
\draw (A)--(Z)--(X)--(H)--(Y)--(B)--(C)--(D)--cycle;
\draw (X)--(A)--(H)--(B);
\node[circle, inner sep = 1.5, fill = white, draw] () at (A) {};
\node[circle, inner sep = 1.5, fill = white, draw] () at (B) {};
\node[circle, inner sep = 1.5, fill = white, draw] () at (C) {};
\node[circle, inner sep = 1.5, fill = white, draw] () at (D) {};
\node[circle, inner sep = 1.5, fill = white, draw] () at (H) {};
\node[circle, inner sep = 1.5, fill = white, draw] () at (X) {};
\node[circle, inner sep = 1.5, fill = white, draw] () at (Y) {};
\node[circle, inner sep = 1.5, fill = white, draw] () at (Z) {};
\end{tikzpicture}}
\caption{Forbidden configurations in \autoref{MRTHREE}.}
\label{E}
\end{figure}
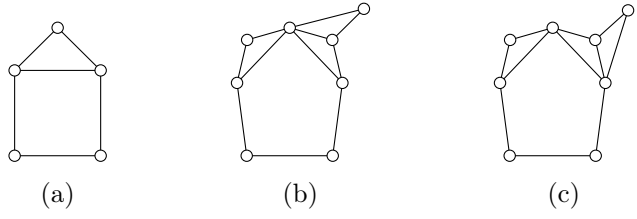

\begin{theorem}[Li and Wang \cite{Li2019}]\label{MRTHREE}
Let $G$ be a planar graph without subgraphs isomorphic to the configurations in \autoref{E}. Let $H$ be a cover of $G$ and $f$ be a function from $V(H)$ to $\{0, 1, 2\}$. If $f(v, 1) + f(v, 2) + \dots + f(v, s) \geq 4$ for each $v \in V(G)$, then $H$ has a strictly $f$-degenerate transversal. 
\end{theorem}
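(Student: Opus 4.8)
The plan is to argue by contradiction via a minimal counterexample combined with a discharging argument. Let $(G, H, f)$ be a counterexample minimizing $|V(G)|$, and among those minimizing $\sum_{v,i} f(v,i)$; fix a plane embedding of $G$. One may assume $f(v,i) \in \{1,2\}$ with $\sum_{i} f(v,i) = 4$ for every $v$: a vertex $(v,i)$ with $f(v,i)=0$ can never lie in a strictly $f$-degenerate transversal and so may be deleted from $H$, while lowering any positive $f$-value only makes a strictly $f$-degenerate transversal harder to obtain (if $f'\le f$ and $H[R]$ is strictly $f'$-degenerate then it is strictly $f$-degenerate), so minimality forces every vertex-sum to equal $4$. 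Hence each $v$ has one of the three \emph{profiles} $(2,2)$, $(2,1,1)$, $(1,1,1,1)$, listing the positive values of $f$ on $L_{v}$. By a standard reduction we may also assume $G$ is $2$-connected, so every face is bounded by a cycle.

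The first reducibility observation is that $\delta(G) \ge 4$: if $d_{G}(v) \le 3$, minimality gives a strictly $f$-degenerate transversal $R'$ of $H_{G-v}$, and since each $\mathscr{M}_{uv}$ is a matching we get $\sum_{i} |N_{H}((v,i)) \cap R'| \le d_{G}(v) \le 3 < \sum_{i} f(v,i)$; thus some colour $i$ satisfies $|N_{H}((v,i)) \cap R'| < f(v,i)$, and prepending $(v,i)$ to an $f$-removing order of $H[R']$ yields a strictly $f$-degenerate transversal of $H$. The bulk of the combinatorial work is then a list of reducible configurations, each forbidding a vertex of degree $4$ or $5$ that sits in a prescribed local pattern of $3$-faces, $4$-faces and small-degree neighbours. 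Each such lemma is proved by deleting a small vertex set $S$, taking by minimality a strictly $f$-degenerate transversal $R'$ of $H_{G-S}$, and extending: one places the vertices of $S$ at the front of the removing order and picks their colours one at a time, exploiting that a profile-$2$ colour can still be removed when it has one already-chosen neighbour. The bookkeeping is a finite case analysis over the (at most three) profiles of the vertices of $S$ and over which pairs of colours the matchings $\mathscr{M}_{uv}$ identify.

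With the reducible configurations in hand I would run the discharging. Assign each vertex $v$ charge $\mu(v) = d_{G}(v) - 4$ and each face $F$ charge $\mu(F) = \ell(F) - 4$; since $\sum_{v} d_{G}(v) = \sum_{F}\ell(F) = 2|E(G)|$ and $|V(G)| - |E(G)| + |F(G)| = 2$, the total charge equals $-8$. As $\delta(G) \ge 4$, the only negatively charged elements are faces of length at most $4$ and vertices of degree $4$ or $5$, and I would set up rules by which faces of length at least $5$ and vertices of degree at least $6$ send charge to nearby small faces and small vertices. The hypothesis that $G$ has no subgraph isomorphic to any configuration in \autoref{E} is exactly what limits how $3$-faces and $4$-faces can cluster and what the neighbourhoods of $4$- and $5$-vertices look like, and it is what is needed for the discharging to balance, so that every vertex and face ends with nonnegative charge, contradicting the total $-8$.

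The step I expect to be the main obstacle is the reducibility analysis for configurations containing two or more degree-$4$ vertices (and the analogous ones with a degree-$5$ vertex incident to several $3$-faces). Unlike for ordinary DP-colouring, the strict-degeneracy condition means a vertex at the front of the removing order with a profile-$2$ colour may still be saved when it has one chosen neighbour, which both enlarges the class of reducible configurations and complicates the extension: the order in which $S$ is removed now matters, one sometimes has to save one vertex of $S$ against another, and the crude bound $\sum_{i}|N_{H}((v,i)) \cap R'| \le d_{G}(v)$ must be refined according to how the matchings align the colours of adjacent vertices. Producing the precise minimal list of reducible configurations that meshes with a working set of discharging rules, and verifying each extension over all profile combinations and matching alignments, is the technical core of the proof.
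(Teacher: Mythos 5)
First, a point of reference: this paper does not prove \autoref{MRTHREE} at all --- it is quoted from Li and Wang \cite{Li2019} and used as a black box (in the triangle-free case of the proof of \autoref{PAIRWISE3456-SFDT}), so there is no in-paper proof to measure you against. Judged on its own terms, your write-up is a proof plan rather than a proof, and the gap is exactly where you say it is. The preliminary reductions you do carry out are fine: deleting vertices $(v,i)$ with $f(v,i)=0$, lowering $f$ so that each vertex-sum is exactly $4$ (legitimate because a strictly $f'$-degenerate transversal with $f'\le f$ is strictly $f$-degenerate), and the $\delta(G)\ge 4$ argument via prepending a vertex $(v,i)$ with $|N_{H}((v,i))\cap R'|<f(v,i)$ to an $f$-removing order. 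But everything after that is declared, not done: you never specify a single reducible configuration beyond the minimum-degree bound, never prove any multi-vertex configuration reducible in the strictly $f$-degenerate setting, and never state the discharging rules, so there is nothing to verify against the forbidden configurations of \autoref{E}. Since the entire content of the theorem lives in precisely those two items (a structural theorem for graphs avoiding \autoref{E}, and reducibility of each structure for strictly $f$-degenerate transversals), the proposal as it stands does not establish the result.

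Two further cautions on the route you sketch. The case analysis you anticipate "over profiles and matching alignments" is the wrong tool for this framework: because the matchings $\mathscr{M}_{uv}$ are arbitrary (not color-preserving), tracking how colours align across an edge is both unwieldy and unnecessary; the standard and far cleaner device --- the one this paper uses for its analogous theorems --- is to prove reducibility via Gallai-type lemmas for critical pairs (e.g.\ \autoref{CRITICAL}, which handles configurations like \autoref{Kite} and \autoref{F35} in one stroke because the induced subgraph is neither a cycle nor a complete graph and has a vertex of degree exceeding $\max_q f(\cdot,q)$) and via extension lemmas in the style of \autoref{NN} and \autoref{WW}, which only require an ordering of the configuration with degree conditions, never an inspection of the matchings. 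Relatedly, the known proofs of results of this type (including \autoref{PAIRWISE3456} in this paper) are proved in a rooted form --- a precoloured good $4^{-}$-cycle, with "internal" reducible configurations and an outer face treated specially in the discharging --- because the naive charge assignment $\mu(v)=d(v)-4$, $\mu(F)=\ell(F)-4$ with total $-8$ is hard to close without such a boundary; if you pursue your plan you should expect to need this strengthened induction statement rather than the unrooted discharging you describe.
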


\begin{figure}%
\centering
\subcaptionbox{\label{fig:subfig:a}}
{\begin{tikzpicture}[scale = 0.8]
\def\s{1.3}
\coordinate (A) at (\s, 0);
\coordinate (B) at (60:\s);
\coordinate (O) at (0, 0);
\coordinate (C) at (-150:\s);
\coordinate (D) at (0,-\s);
\coordinate (E) at (\s, -\s);
\draw (A)--(B)--(O)--(C)--(D)--(E)--cycle;
\draw (A)--(O)--(D);
\node[circle, inner sep = 1.5, fill = white, draw] () at (O) {};
\node[circle, inner sep = 1.5, fill = white, draw] () at (A) {};
\node[circle, inner sep = 1.5, fill = white, draw] () at (B) {};
\node[circle, inner sep = 1.5, fill = white, draw] () at (C) {};
\node[circle, inner sep = 1.5, fill = white, draw] () at (D) {};
\node[circle, inner sep = 1.5, fill = white, draw] () at (E) {};
\end{tikzpicture}}\hspace{1.5cm}
\subcaptionbox{\label{fig:subfig:b}}
{\begin{tikzpicture}[scale = 0.8]
\def\s{1}
\coordinate (A) at (\s, 0);
\coordinate (B) at (0.5*\s, 0.7*\s);
\coordinate (O) at (0, 0);
\coordinate (E) at (\s, -\s);
\coordinate (D) at (0,-\s);
\coordinate (C) at (0.5*\s, -1.7*\s);
\draw (A)--(B)--(O)--(D)--(C)--(E)--cycle;
\draw (A)--(O);
\draw (D)--(E);
\node[circle, inner sep = 1.5, fill = white, draw] () at (O) {};
\node[circle, inner sep = 1.5, fill = white, draw] () at (A) {};
\node[circle, inner sep = 1.5, fill = white, draw] () at (B) {};
\node[circle, inner sep = 1.5, fill = white, draw] () at (C) {};
\node[circle, inner sep = 1.5, fill = white, draw] () at (D) {};
\node[circle, inner sep = 1.5, fill = white, draw] () at (E) {};
\end{tikzpicture}}\hspace{1.5cm}
\subcaptionbox{\label{fig:subfig:433}}
{\begin{tikzpicture}[scale = 0.8]
\def\s{1.2}
\coordinate (O) at (0, 0);
\coordinate (A) at (0, -\s);
\coordinate (B) at (-\s,-\s);
\coordinate (C) at (-\s,0);
\coordinate (D) at (135:\s);
\coordinate (E) at (0,\s);
\draw (O)--(A)--(B)--(C)--(D)--(E)--cycle;
\draw (C)--(O)--(D);
\node[circle, inner sep = 1.5, fill = white, draw] () at (O) {};
\node[circle, inner sep = 1.5, fill = white, draw] () at (A) {};
\node[circle, inner sep = 1.5, fill = white, draw] () at (B) {};
\node[circle, inner sep = 1.5, fill = white, draw] () at (C) {};
\node[circle, inner sep = 1.5, fill = white, draw] () at (D) {};
\node[circle, inner sep = 1.5, fill = white, draw] () at (E) {};
\end{tikzpicture}}\hspace{1.5cm}
\subcaptionbox{\label{fig:subfig:3444}}
{\begin{tikzpicture}[scale = 0.8]
\def\s{1}
\coordinate (O) at (0, 0);
\coordinate (E) at (\s, 0);
\coordinate (W) at (-\s,0);
\coordinate (N) at (0,\s);
\coordinate (S) at (0,-\s);
\coordinate (NW) at (-\s,\s);
\coordinate (SW) at (-\s,-\s);
\coordinate (SE) at (\s,-\s);
\draw (E)--(N)--(NW)--(SW)--(SE)--cycle;
\draw (N)--(S); 
\draw (E)--(W); 
\node[circle, inner sep = 1.5, fill = white, draw] () at (O) {};
\node[circle, inner sep = 1.5, fill = white, draw] () at (E) {};
\node[circle, inner sep = 1.5, fill = white, draw] () at (W) {};
\node[circle, inner sep = 1.5, fill = white, draw] () at (N) {};
\node[circle, inner sep = 1.5, fill = white, draw] () at (S) {};
\node[circle, inner sep = 1.5, fill = white, draw] () at (SE) {};
\node[circle, inner sep = 1.5, fill = white, draw] () at (SW) {};
\node[circle, inner sep = 1.5, fill = white, draw] () at (NW) {};
\end{tikzpicture}}\\
\subcaptionbox{\label{fig:subfig:533}}
{\begin{tikzpicture}[scale = 0.8]
\def\s{1}
\foreach \ang in {1, 2, 3, 4, 5}
{
\def\pointname{v\ang}
\coordinate (\pointname) at ($(\ang*360/5-54:\s)$);}
\coordinate (S) at ($(v2)!1!60:(v1)$);
\coordinate (S') at ($(v2)!1!60:(S)$);
\draw (v1)--(v2)--(v3)--(v4)--(v5)--cycle;
\draw (v1)--(S)--(v2);
\draw (S)--(S')--(v2);
\node[circle, inner sep = 1.5, fill = white, draw] () at (S) {};
\node[circle, inner sep = 1.5, fill = white, draw] () at (S') {};
\foreach \ang in {1, 2, 3, 4, 5}
{
\node[circle, inner sep = 1.5, fill = white, draw] () at (v\ang) {};
}
\end{tikzpicture}}\hspace{1cm}
\subcaptionbox{\label{fig:subfig:534f}}
{\begin{tikzpicture}[scale = 0.8]
\def\s{1}
\foreach \ang in {1, 2, 3, 4, 5}
{
\def\pointname{v\ang}
\coordinate (\pointname) at ($(\ang*360/5-54:\s)$);
}
\coordinate (S) at ($(v2)!1!60:(v1)$);
\coordinate (S') at ($(v2)!1!90:(S)$);
\coordinate (S'') at ($(S)!1!-90:(v2)$);
\draw (v1)--(v2)--(v3)--(v4)--(v5)--cycle;
\draw (v1)--(S)--(v2);
\draw (S)--(S'')--(S')--(v2);
\node[circle, inner sep = 1.5, fill = white, draw] () at (S) {};
\node[circle, inner sep = 1.5, fill = white, draw] () at (S') {};
\node[circle, inner sep = 1.5, fill = white, draw] () at (S'') {};
\foreach \ang in {1, 2, 3, 4, 5}
{
\node[circle, inner sep = 1.5, fill = white, draw] () at (v\ang) {};
}
\end{tikzpicture}}\hspace{1cm}
\subcaptionbox{\label{fig:subfig:g}}
{\begin{tikzpicture}[scale = 0.8]
\def\s{1.2}
\coordinate (O) at (0, 0);
\coordinate (v1) at (0:\s);
\coordinate (v2) at (60:\s);
\coordinate (v3) at (120:\s);
\coordinate (v4) at (180:\s);
\coordinate (OO) at ($(v3)!1!60:(v2)$);
\coordinate (A) at ($(OO)!1!-60:(v3)$);
\coordinate (B) at ($(OO)!1!-60:(A)$);
\coordinate (C) at ($(OO)!1!-60:(B)$);
\coordinate (D) at ($(OO)!1!-60:(C)$);
\draw (v1)--(v2)--(v3)--(v4)--(O)--cycle;
\draw (O)--(v2);
\draw (O)--(v3);
\draw (v3)--(A)--(B)--(C)--(D)--(v2);
\node[circle, inner sep = 1.5, fill = white, draw] () at (O) {};
\node[circle, inner sep = 1.5, fill = white, draw] () at (v3) {};
\node[circle, inner sep = 1.5, fill = white, draw] () at (v1) {};
\node[circle, inner sep = 1.5, fill = white, draw] () at (v2) {};
\node[circle, inner sep = 1.5, fill = white, draw] () at (v4) {};
\node[circle, inner sep = 1.5, fill = white, draw] () at (A) {};
\node[circle, inner sep = 1.5, fill = white, draw] () at (B) {};
\node[circle, inner sep = 1.5, fill = white, draw] () at (C) {};
\node[circle, inner sep = 1.5, fill = white, draw] () at (D) {};
\end{tikzpicture}}\hspace{1cm}
\subcaptionbox{\label{fig:subfig:h}}
{\begin{tikzpicture}[scale = 0.8]
\def\s{1.2}
\coordinate (O) at (0, 0);
\coordinate (v1) at (0:\s);
\coordinate (v2) at (60:\s);
\coordinate (v3) at (120:\s);
\coordinate (v4) at (180:\s);
\coordinate (OO) at ($(v2)!1!60:(v1)$);
\coordinate (A) at ($(OO)!1!-60:(v2)$);
\coordinate (B) at ($(OO)!1!-60:(A)$);
\coordinate (C) at ($(OO)!1!-60:(B)$);
\coordinate (D) at ($(OO)!1!-60:(C)$);
\draw (v1)--(v2)--(v3)--(v4)--(O)--cycle;
\draw (O)--(v2);
\draw (O)--(v3);
\draw (v2)--(A)--(B)--(C)--(D)--(v1);
\node[circle, inner sep = 1.5, fill = white, draw] () at (O) {};
\node[circle, inner sep = 1.5, fill = white, draw] () at (v3) {};
\node[circle, inner sep = 1.5, fill = white, draw] () at (v1) {};
\node[circle, inner sep = 1.5, fill = white, draw] () at (v2) {};
\node[circle, inner sep = 1.5, fill = white, draw] () at (v4) {};
\node[circle, inner sep = 1.5, fill = white, draw] () at (A) {};
\node[circle, inner sep = 1.5, fill = white, draw] () at (B) {};
\node[circle, inner sep = 1.5, fill = white, draw] () at (C) {};
\node[circle, inner sep = 1.5, fill = white, draw] () at (D) {};
\end{tikzpicture}}
\caption{Forbidden configurations in \autoref{PAIRWISE3456}, \autoref{PAIRWISE3456-Weak3} and \autoref{PAIRWISE3456-SFDT}.}
\label{FIGPAIRWISE3456}
\end{figure}

We first consider plane graphs without any configuration in \autoref{FIGPAIRWISE3456}. A $4^{-}$-cycle is \emph{good} if there is no vertex having four neighbors on the cycle, otherwise it is \emph{a bad cycle}. Note that every $3$-cycle is good. In a plane graph, for a fixed cycle, a vertex is \emph{internal} if it does not belong to the fixed cycle, and a subgraph $\Gamma$ is \emph{internal} if $\Gamma$ has no common vertices with the fixed cycle.

For a plane graph and a cycle $K$, we use $\int(K)$ to denote the set of vertices inside of $K$, and $\ext(K)$ to denote the set of vertices outside of $K$. If neither $\int(K)$ nor $\ext(K)$ is empty, then we say that $K$ is a \emph{separating cycle}.

\begin{theorem}\label{PAIRWISE3456}
Let $G$ be a connected plane graph without any configuration in \autoref{FIGPAIRWISE3456}, and let $[x_{1}x_{2}\dots x_{l}]$ be a good $4^{-}$-cycle in $G$. Then one of the followings holds:
\begin{enumerate}[label = (\roman*)]
\item $V(G) = \{x_{1}, x_{2}, \dots, x_{l}\}$; or
\item there is an internal $3^{-}$-vertex; or
\item there is an internal induced subgraph isomorphic to \autoref{Kite}; or
\item there is an internal induced subgraph isomorphic to \autoref{F35}. 
\end{enumerate}
\end{theorem}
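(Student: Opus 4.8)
The plan is to argue by contradiction, combining a minimal counterexample with the discharging method. Suppose the statement is false and, among all pairs $(G,C)$ witnessing this --- $G$ a connected plane graph with no configuration of \autoref{FIGPAIRWISE3456}, $C=[x_1x_2\dots x_l]$ a good $4^{-}$-cycle of $G$, and none of (i)--(iv) holding --- choose one with $|V(G)|+|E(G)|$ minimum. Since (i) and (ii) fail, $G$ has a vertex off $C$ and every vertex of $V(G)\setminus V(C)$ has degree at least $4$; since (iii) and (iv) fail, $G$ has no induced copy of \autoref{Kite} and no induced copy of \autoref{F35} vertex-disjoint from $C$.

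The first block of the argument is the usual set of topological reductions. If $C$ were separating, then the subgraph induced by $V(C)\cup\mathrm{int}(C)$ is a strictly smaller connected plane graph, still free of the configurations of \autoref{FIGPAIRWISE3456}, in which $C$ is still a good $4^{-}$-cycle; by minimality one of (i)--(iv) holds for it, and since degrees and induced subgraphs of vertices in $\mathrm{int}(C)$ are inherited by $G$, outcomes (ii)--(iv) would transfer to $G$ while outcome (i) would say $\mathrm{int}(C)=\emptyset$, contradicting separation. Hence $C$ bounds a face, which we take as the outer face, so that $G$ becomes a plane graph with outer boundary $C$ and everything else in the bounded disk. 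Cutting along cycles and reapplying minimality to the side containing $C$ likewise yields that $G$ is $2$-connected (hence every inner face is a cycle), that $C$ is chordless, and that $G$ has no separating triangle; separating $4$-cycles and short separating paths joining two vertices of $C$ are excluded in the same spirit. These steps are routine but occupy some space, and the bookkeeping when a short separating cycle meets $C$ needs care.

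Next I would distil the hypotheses into a short list of forbidden local patterns. The absence of the configurations of \autoref{FIGPAIRWISE3456}, combined with the absence of internal copies of \autoref{Kite} and \autoref{F35}, severely restricts how $3$- and $4$-faces may accumulate around an interior vertex of degree $4$ or $5$: particular cyclic strings of incident $3$'s and $4$'s around such a vertex would rebuild one of the forbidden pictures or create an internal kite or an internal copy of \autoref{F35}. I would record these as a handful of claims such as ``no interior $4$-vertex lies on more than a bounded number of $3$-faces'' and ``an interior $3$-face sharing an edge with an interior $4$-face has a tightly constrained neighbourhood'', which are exactly the inputs a discharging argument consumes. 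Then: set $\mu(x)=d(x)-4$ for every vertex and every inner face $x$, and $\mu=d(C)-4=l-4\le 0$ for the outer face, so that $\sum\mu=-8$ by Euler's formula; design rules sending fixed amounts of charge from $5^{+}$-faces and $5^{+}$-vertices to incident $3$-faces and to $C$, with the rates tuned (using the local prohibitions and the fact that $d(v)\ge 4$ for interior $v$) so that every interior vertex and every inner face finishes with charge $\ge 0$; finally bound the total charge that can survive on $C$ and its at most four vertices --- here ``good'' limits how much interior structure can hang off $C$ --- and show it cannot reach $-8$, contradicting conservation of charge.

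I expect the hard part to be the discharging analysis around interior vertices of degree $4$ (and, to a lesser extent, $5$) that are incident with several triangular faces: this is precisely the regime in which the forbidden configurations of \autoref{FIGPAIRWISE3456} and internal kites or copies of \autoref{F35} are on the verge of appearing, so the local case analysis is intricate, and the discharging rates must be chosen delicately enough that the small cycle $C$ cannot absorb the full deficit of $-8$.
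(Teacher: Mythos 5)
Your overall strategy (minimal counterexample, topological reductions, then discharging) is the same as the paper's, but as written the proposal has a genuine gap: the entire substance of the proof is the construction and verification of a discharging scheme, and you never exhibit one — you only assert that rules "from $5^{+}$-faces and $5^{+}$-vertices to incident $3$-faces and to $C$" can be "tuned" so that everything ends nonnegative. That existence claim is exactly what has to be proved, and it is not routine: in the paper's argument the rules require a substantial auxiliary apparatus (clusters of $3$-faces, source/sink transfers through $3$-faces, the classification of internal $4$-vertices as poor/bad/light/special and of $5^{*}$-faces as $5_{\mathrm a}$-, $5_{\mathrm b}$-, $5_{\mathrm b'}$-, $5_{\mathrm c}$-, $5_{\mathrm d}$-faces), delicately chosen constants such as $\lambda=\tfrac{19}{14}$ and $\rho=\tfrac14$, and even face-to-face transfers ($7^{+}$-faces paying $\tfrac17$ to adjacent $3$-faces), all supported by roughly eighteen structural claims derived from the forbidden configurations of \autoref{FIGPAIRWISE3456} and the absence of internal copies of \autoref{Kite} and \autoref{F35}. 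Your sketch neither anticipates these ingredients nor shows that your simpler transfer pattern (with charges $d-4$, so that internal $4$-vertices have nothing to give) can be made to balance; "the rates must be chosen delicately" is precisely the statement of the theorem's difficulty, not an argument for it.

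Two of your preliminary reductions are also not correct as stated. Minimality only lets you kill \emph{good} separating $4^{-}$-cycles, because the inductive hypothesis requires the distinguished cycle to be good; separating \emph{bad} $4$-cycles genuinely can occur in a minimal counterexample (their interior is the hub of a wheel, giving four internal $3$-faces), and the paper must carry them along and analyse them rather than exclude them, so your blanket claim that "separating $4$-cycles \dots are excluded in the same spirit" would fail. Similarly, $2$-connectivity does not follow by "cutting along cycles and reapplying minimality": deleting a block hanging off an internal cut vertex changes that vertex's degree, so outcomes (ii)--(iv) for the smaller graph need not transfer back, and indeed the paper never establishes (or needs) $2$-connectivity — it instead proves face-by-face statements such as that every $6$-face has a cycle boundary. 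These points are secondary to the main issue, but they show the reduction step also needs repair before the discharging stage could even be set up.
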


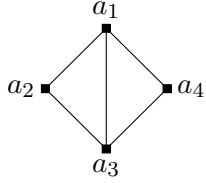
\begin{figure}%
\centering
\begin{tikzpicture}
\def\s{0.8}
\coordinate (E) at (\s, 0);
\coordinate (N) at (0, \s);
\coordinate (W) at (-\s, 0);
\coordinate (S) at (0, -\s);
\draw (N)node[above]{$a_{1}$}--(W)node[left]{$a_{2}$}--(S)node[below]{$a_{3}$}--(E)node[right]{$a_{4}$}--cycle;
\draw (N)--(S);
\node[rectangle, inner sep = 1.5, fill, draw] () at (E) {};
\node[rectangle, inner sep = 1.5, fill, draw] () at (N) {};
\node[rectangle, inner sep = 1.5, fill, draw] () at (W) {};
\node[rectangle, inner sep = 1.5, fill, draw] () at (S) {};
\end{tikzpicture}
\caption{$a_{2}a_{4} \notin E(G)$. Here and in all figures below, a solid quadrilateral represents a $4$-vertex, a solid pentagon represents a $5$-vertex, and a solid hexagon represents a $6$-vertex.}
\label{Kite}
\end{figure}

\begin{figure}%
\centering
\begin{tikzpicture}
\def\s{0.8}
\coordinate (v1) at (45: 1.414*\s);
\coordinate (v2) at (135: 1.414*\s);
\coordinate (v3) at (225: 1.414*\s);
\coordinate (v4) at (-45: 1.414*\s);
\coordinate (A) at (2*\s, 0);
\coordinate (B) at (-2*\s, 0);
\draw (v1)--(v2)--(B)--(v3)--(v4)--(A)--cycle;
\draw (v2)--(v3);
\node[rectangle, inner sep = 1.5, fill, draw] () at (v1) {};
\node[rectangle, inner sep = 1.5, fill, draw] () at (v2) {};
\node[rectangle, inner sep = 1.5, fill, draw] () at (v3) {};
\node[rectangle, inner sep = 1.5, fill, draw] () at (v4) {};
\node[rectangle, inner sep = 1.5, fill, draw] () at (A) {};
\node[rectangle, inner sep = 1.5, fill, draw] () at (B) {};
\end{tikzpicture}
\caption{Reducible configuration.}
\label{F35}
\end{figure}
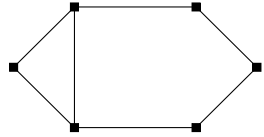

Using this structural result, we can prove the following three theorems. 

\begin{theorem}\label{PAIRWISE3456-AT}
Let $G$ be a plane graph without any configuration in \autoref{FIGPAIRWISE3456}. Then its Alon-Tarsi number is at most four. 
\end{theorem}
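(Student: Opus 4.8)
The plan is a minimum-counterexample argument that uses \autoref{PAIRWISE3456} purely as a black box. Suppose $G$ is a plane graph with no configuration in \autoref{FIGPAIRWISE3456} and with $AT(G) \geq 5$, chosen so that $|V(G)|$ is smallest possible. The property ``plane, with no configuration of \autoref{FIGPAIRWISE3456}'' is preserved by deleting vertices, and the Alon--Tarsi number of a disjoint union is the maximum of those of the components; hence every plane configuration-free graph on fewer than $|V(G)|$ vertices has Alon--Tarsi number at most $4$, and $G$ is connected. I will freely use three standard facts about $\mathrm{diff}$: an orientation $D$ of $G$ with $\Delta^{+}(D) \leq 3$ and $\mathrm{diff}(D)\neq 0$ witnesses $AT(G)\leq 4$; a source of $D$ lies in no spanning sub-circulation, so adjoining a source does not change $\mathrm{diff}$; and if every edge between a vertex set $S$ and $V(G)\setminus V(S)$ is oriented away from $S$, then no spanning sub-circulation of $D$ uses such an edge, so that restriction gives a size-additive bijection between spanning sub-circulations of $D$ and pairs consisting of one of $D[S]$ and one of $D-S$, whence $\mathrm{diff}(D)=\mathrm{diff}(D[S])\cdot\mathrm{diff}(D-S)$.

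First I would show $\delta(G)\geq 4$. If $d_{G}(v)\leq 3$, take a $4$-AT-orientation $D'$ of $G-v$ by minimality, orient all edges at $v$ away from $v$ to obtain $D$; then $d^{+}_{D}(v)\leq 3$, all other out-degrees are unchanged, and $v$ is a source, so $\mathrm{diff}(D)=\mathrm{diff}(D')\neq 0$, contradicting $AT(G)\geq 5$. Since a triangle-free plane graph on $n\geq 3$ vertices has at most $2n-4$ edges while $\delta(G)\geq 4$ forces at least $2n$ edges, $G$ contains a triangle; every $3$-cycle is a good $4^{-}$-cycle, so \autoref{PAIRWISE3456} applies to it. Alternative (i) would force $|V(G)|\leq 3$, impossible with $\delta(G)\geq 4$; alternative (ii) is impossible because $\delta(G)\geq 4$; hence (iii) or (iv) holds, producing an induced subgraph $S$ of $G$ isomorphic to \autoref{Kite} or to \autoref{F35} in which every vertex has degree exactly $4$ in $G$.

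The remaining step is to reduce $S$. Let $D'$ be a $4$-AT-orientation of $G-S$ (minimality), orient every edge between $S$ and $V(G)\setminus V(S)$ away from $S$, and orient $E(G[S])$ by a fixed orientation $D_{S}$ to be chosen. Each $v\in V(S)$ has $d_{G}(v)=4$ and therefore $4-d_{G[S]}(v)$ edges leaving $S$, so $d^{+}_{D}(v)=d^{+}_{D_{S}}(v)+4-d_{G[S]}(v)$; thus we must arrange $d^{+}_{D_{S}}(v)\leq d_{G[S]}(v)-1$ for all $v\in V(S)$ together with $\mathrm{diff}(D_{S})\neq 0$. Since $\sum_{v}(d_{G[S]}(v)-1)=2|E(G[S])|-|V(S)|$ equals $6$ for the kite and $8$ for \autoref{F35}, there is just enough room. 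No acyclic $D_{S}$ works, because its source would violate the cap; but in both cases $G[S]$ is two cycles sharing a single edge --- two triangles sharing an edge for the kite, a triangle and a $5$-cycle sharing an edge for \autoref{F35} --- so one orients both cycles consistently around the shared edge. Then the only spanning sub-circulations of $D_{S}$ are the empty one and the two cycles (they are not arc-disjoint, so their union is not a circulation), giving $\mathrm{diff}(D_{S})=1-1-1=-1\neq 0$, and a direct check confirms the out-degrees meet the caps. Finally $\mathrm{diff}(D)=\mathrm{diff}(D_{S})\cdot\mathrm{diff}(D')\neq 0$ and $\Delta^{+}(D)\leq 3$, so $AT(G)\leq 4$, the final contradiction.

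The main obstacle is the last verification: the caps $d_{G[S]}(v)-1$ are tight enough to forbid acyclic orientations of $G[S]$, so one is forced to use an orientation that does carry sub-circulations yet still has $\mathrm{diff}\neq 0$, and the ``two cycles sharing one edge'' shape of \autoref{Kite} and \autoref{F35} is exactly what makes this possible (and is presumably why these configurations occur as the reducible ones in \autoref{PAIRWISE3456}). Everything else is routine: the degeneracy reduction at low-degree vertices, the elementary Euler-formula bound forcing a triangle, and the elimination of alternatives (i) and (ii).
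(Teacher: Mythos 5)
Your proposal is correct: the degree reduction, the configuration reduction (orient all edges from the Kite/\autoref{F35} subgraph outward, choose an inner orientation with nonzero difference and out-degrees capped by $d_{G[S]}(v)-1$, then multiply differences), and the verification that your two-odd-cycles orientations have $\mathrm{diff}=-1$ while meeting the caps are all sound, and your appeal to \autoref{PAIRWISE3456} is legitimate since every $3$-cycle is good and the configurations consist entirely of $4$-vertices of $G$. Where you differ from the paper is the induction framework: the paper proves the stronger relative statement \autoref{PAIRWISE3456-AT'}, in which a good $4^{-}$-cycle $C$ is carried along, $E(G[V(C)])$ is deleted, and all edges meeting $V(C)$ are directed toward $C$; the structure theorem's ``internal'' vertices and configurations are then disjoint from $C$, so the same two reductions preserve the rooted form, and the absolute statement is finally extracted by attaching an artificial triangle $[xyz]$ at a vertex. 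You avoid the relative form entirely by working with a minimum counterexample to the absolute statement, using the source argument to get $\delta(G)\geq 4$ and Euler's formula to force a triangle to which \autoref{PAIRWISE3456} can be applied once; this costs you the small counting step and the observation about degree-$4$ vertices, but saves the boundary bookkeeping and the triangle-attachment trick, at the price of a less reusable statement (the paper's rooted version, like its weak-degeneracy and transversal analogues, is designed for extension arguments). Two minor remarks: your multiplicativity fact is exactly \autoref{L} (Lu et al.) and could simply be cited, and your explicit orientations play the role of the paper's \autoref{OR}, where for the kite the paper instead orients one triangle and the $4$-cycle as directed cycles, getting $\mathrm{diff}=1$; either choice works.
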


\begin{theorem}\label{PAIRWISE3456-Weak3}
Let $G$ be a planar graph without any configuration in \autoref{FIGPAIRWISE3456}. Then $G$ is weakly $3$-degenerate. 
\end{theorem}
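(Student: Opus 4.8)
The plan is to argue by minimal counterexample, with \autoref{PAIRWISE3456} supplying the structure. Let $G$ be a planar graph containing no configuration of \autoref{FIGPAIRWISE3456} that is \emph{not} weakly $3$-degenerate and has as few vertices as possible. Since a legal sequence of \textsf{Delete}/\textsf{DeleteSave} operations performed on one component of a graph leaves the others untouched, weak $3$-degeneracy is preserved by disjoint unions; hence $G$ is connected, and we fix a plane embedding. If $G$ has girth at least $5$, then every subgraph $H$ is a planar graph of girth $\ge 5$ or a forest, so $|E(H)| < \tfrac{5}{3}|V(H)|$ and $\delta(H)\le 3$; thus $G$ is $3$-degenerate, hence weakly $3$-degenerate, a contradiction. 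Therefore $G$ has a $3$- or $4$-cycle, and since every triangle is good and every bad $4$-cycle contains a triangle, $G$ contains a good $4^{-}$-cycle $K=[x_1\dots x_l]$. Apply \autoref{PAIRWISE3456} to $G$ and $K$.

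Case (i) gives $|V(G)|=l\le 4$, but any graph on at most four vertices is $3$-degenerate, hence weakly $3$-degenerate, a contradiction. In each of the cases (ii)--(iv) there is a nonempty \emph{internal} configuration $R$ — a $3^{-}$-vertex, a copy of \autoref{Kite}, or a copy of \autoref{F35} — with $V(R)\cap V(K)=\varnothing$; by minimality $G-V(R)$ is weakly $3$-degenerate, and I claim this forces $G$ to be weakly $3$-degenerate. The key point is that a legal sequence $\sigma$ of operations emptying $G-V(R)$ can be replayed verbatim on $G$: no operation of $\sigma$ removes a vertex of $R$, so the values on $V(G)\setminus V(R)$ evolve exactly as they do inside $G-V(R)$, and in particular every operation of $\sigma$ (including the legality test $f(u)>f(w)$ of each \textsf{DeleteSave}) remains legal on $G$; meanwhile each $a\in V(R)$ merely loses one unit of value for each of its neighbours outside $R$. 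Because $R$ is internal, a $3^{-}$-vertex of $R$ has at most three neighbours outside $R$, while each vertex of the Kite or the F35 configuration is a $4$-vertex with two or three neighbours inside the configuration, hence at most two neighbours outside; so no value ever becomes negative, and after running $\sigma$ only $R$ remains, with the value of $a\in V(R)$ equal to $3$ minus its number of neighbours outside $R$.

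It then remains to empty $R$. If $R$ is a $3^{-}$-vertex, it is now isolated with nonnegative value, so one \textsf{Delete} finishes. For the Kite $[a_1a_2a_3a_4]$ with chord $a_1a_3$ the residual value is $2$ at $a_1,a_3$ and $1$ at $a_2,a_4$, and the sequence \textsf{DeleteSave}$(a_1,a_2)$, \textsf{DeleteSave}$(a_3,a_4)$, \textsf{Delete}$(a_2)$, \textsf{Delete}$(a_4)$ empties it legally. For the F35 configuration, a $6$-cycle $v_1v_2Bv_3v_4A$ with chord $v_2v_3$, the residual value is $2$ at $v_2,v_3$ and $1$ elsewhere, and the sequence \textsf{DeleteSave}$(v_2,v_1)$, \textsf{DeleteSave}$(v_3,B)$, \textsf{Delete}$(B)$, \textsf{DeleteSave}$(A,v_4)$, \textsf{Delete}$(v_1)$, \textsf{Delete}$(v_4)$ empties it legally (each saved vertex has strictly larger value at the moment it is saved, and all values stay nonnegative). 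This makes $G$ weakly $3$-degenerate, the desired contradiction.

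The part that needs the most care is the accounting in the middle step — verifying that replaying $\sigma$ on $G$ drives no value negative, both on $V(G)\setminus V(R)$ (where values are literally unchanged) and on $V(R)$ (where the loss is bounded by the degree, at most $3$, and at most $2$ for the Kite/F35 vertices) — together with the legality of the short cleanup sequences. Note that \textsf{DeleteSave} is genuinely indispensable here: with plain \textsf{Delete} operations alone neither a $4$-cycle-with-a-chord valued $(2,1,2,1)$ nor the residual F35 configuration can be emptied. All of the work of forbidding the configurations of \autoref{FIGPAIRWISE3456} has been absorbed into \autoref{PAIRWISE3456}, so that hypothesis is not invoked again in this argument.
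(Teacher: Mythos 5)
Your proof is correct. I checked the two places where it could fail. First, the replay step: every operation of $\sigma$ stays legal in $G$ because the values of vertices outside $R$ evolve exactly as in $G-V(R)$ (no vertex of $R$ is ever the deleted or the saved vertex of an operation of $\sigma$), while each vertex of $R$ starts at $3$ and loses at most $3$ units in case (ii) and at most $2$ units for the Kite and \autoref{F35}, since those are induced configurations all of whose vertices are $4$-vertices with at least two neighbours inside. Second, the cleanup sequences: with residual values $2,1,2,1$ on the Kite and $1,2,1,2,1,1$ on $v_1,v_2,B,v_3,v_4,A$, each of your \textsf{DeleteSave} applications satisfies the required strict inequality and no value goes negative. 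One wording slip only: legality of \textsf{DeleteSave}$(G,f,u,w)$ requires the \emph{deleted} vertex $u$, not the saved vertex $w$, to carry the strictly larger value; your sequences satisfy the correct inequality, but your parenthetical states it backwards.

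Your route is organized differently from the paper's, although the skeleton (minimal counterexample plus \autoref{PAIRWISE3456}) is the same. The paper first proves the rooted strengthening \autoref{PAIRWISE3456-Weak3'}: for a good $4^{-}$-cycle $C$ the graph $G-V(C)$ is weakly $g$-degenerate with $g(u)=3-|N_{G}(u)\cap V(C)|$; in its minimal counterexample the Kite and \autoref{F35} are excluded via the Gallai-type \autoref{Gallai-Weak} of Bernshteyn--Lee (their vertex sets induce a connected graph that is neither a cycle nor complete), and \autoref{PAIRWISE3456-Weak3} is then deduced by attaching a pendant triangle block $[xyz]$ at an arbitrary vertex to manufacture a good cycle. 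You avoid both the rooted version and \autoref{Gallai-Weak}: you secure a good $4^{-}$-cycle by the girth-$5$ Euler count (planar graphs of girth at least $5$ are $3$-degenerate) together with the observation that a bad $4$-cycle forces a triangle, and you certify reducibility of the internal $3^{-}$-vertex, the Kite and \autoref{F35} by the replay argument plus explicit \textsf{Delete}/\textsf{DeleteSave} sequences, which in effect re-derives by hand what the paper packages into \autoref{Weak-Sequence} and \autoref{Weak-Reduce}. The paper's formulation buys reusability, since \autoref{Weak-Reduce} and the rooted statement also serve the toroidal and intersecting-$5$-cycle theorems, and \autoref{Gallai-Weak} spares sequence-checking for larger configurations; your version buys self-containment and economy for this single theorem, with a completely explicit certificate for each reducible configuration.
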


\begin{theorem}\label{PAIRWISE3456-SFDT}
Let $G$ be a planar graph without any configuration in \autoref{FIGPAIRWISE3456}. Let $H$ be a cover of $G$ and $f$ be a function from $V(H)$ to $\{0, 1, 2\}$. If $f(v, 1) + f(v, 2) + \dots + f(v, s) \geq 4$ for each $v \in V(G)$, then $H$ has a strictly $f$-degenerate transversal.  
\end{theorem}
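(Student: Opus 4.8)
The plan is to argue by contradiction. Suppose the statement is false and choose a counterexample $(G,H,f)$ with $|V(G)|$ as small as possible; thus $G$ is a plane graph avoiding every configuration of \autoref{FIGPAIRWISE3456}, $(H,f)$ satisfies $f(v,1)+\dots+f(v,s)\ge 4$ for every $v\in V(G)$, $H$ has no strictly $f$-degenerate transversal, and every cover of a strictly smaller such graph does have one. We may assume $G$ is connected, since a disconnected counterexample contains a smaller one on one of its components. The first routine step is the \emph{degree reduction}: if some $v\in V(G)$ has $d_G(v)\le 3$, then $H_{G-v}$ has a strictly $f$-degenerate transversal $R'$ by minimality; fix a strictly $f$-degenerate order of $H[R']$ and append $v$ to it. For $(v,p)\in L_v$ write $b_p$ for the number of neighbours $u$ of $v$ whose chosen element of $L_u$ is matched to $(v,p)$ by $\mathscr{M}_{uv}$. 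Since each $\mathscr{M}_{uv}$ is a matching, $\sum_p b_p\le d_G(v)\le 3<4\le\sum_p f(v,p)$, so $b_p<f(v,p)$ for some $p$; then $R'\cup\{(v,p)\}$ is a strictly $f$-degenerate transversal of $H$ (the only new constraint, that of $v$, is satisfied, and no earlier constraint changes). This contradiction shows $\delta(G)\ge 4$.

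A plane graph with minimum degree at least $4$ is not a forest and cannot have girth $\ge 4$ (Euler's formula would give $|E(G)|\le 2|V(G)|-4<2|V(G)|\le|E(G)|$), so $G$ contains a triangle $T=[x_1x_2x_3]$, which is a good $4^-$-cycle. Applying \autoref{PAIRWISE3456} to the connected graph $G$ with $C=T$: outcome~(i) would force $G=K_3$ with $\delta(G)=2$, and outcome~(ii) would produce a vertex of degree at most $3$; both contradict $\delta(G)\ge 4$. Therefore $G$ contains an internal induced subgraph isomorphic to \autoref{Kite} or to \autoref{F35}. It now suffices to show each of these two configurations is reducible, i.e.\ leads to a strictly smaller counterexample; this is the heart of the proof.

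The main obstacle is exactly this reducibility, and it cannot be settled by the naive scheme ``delete the configuration, colour $G$ minus it, then colour the configuration last'': in both \autoref{Kite} and \autoref{F35} every vertex is a $4$-vertex, so whichever configuration vertex is coloured last would have all four of its neighbours already coloured and would need weight at least $5$ — and one checks that no ordering of any subset of the configuration avoids this deadlock. The reduction must instead exploit the precise local structure. For the kite I would use the chord $a_1a_3$ and the non-edge $a_2a_4\notin E(G)$: the chord splits the $4$-cycle $[a_1a_2a_3a_4]$ into two triangular regions, restricting how the outside-neighbours $p_1$ of $a_1$, $p_3$ of $a_3$ and $q_1,q_2,r_1,r_2$ of $a_2,a_4$ can attach. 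Two natural routes are (a) to pass to the strictly smaller graph obtained by contracting $a_1a_3$, identifying $L_{a_1}$ and $L_{a_3}$ along $\mathscr{M}_{a_1a_3}$ and redistributing weight so that the contracted instance still satisfies the hypotheses and still avoids the forbidden configurations, or (b) to delete $\{a_1,a_2,a_3,a_4\}$ and, after colouring $G-\{a_1,a_2,a_3,a_4\}$, recolour a bounded number of the outside-neighbours while inserting $a_1,a_2,a_3,a_4$ into the order — the relevant arithmetic becomes tight rather than infeasible because deleting $a_1$ and $a_3$ drops both $a_2$ and $a_4$ to degree $2$. \autoref{F35}, which is a triangle $v_2Bv_3$ together with a $5$-cycle $v_1v_2v_3v_4A$ glued along $v_2v_3$ with all six vertices of degree $4$, is handled by an analogous but longer case analysis, additionally invoking the absence of all configurations of \autoref{FIGPAIRWISE3456} to control the outside-neighbourhoods of $v_1,\dots,v_4,A,B$.

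In each case the strictly $f$-degenerate transversal of the smaller cover supplied by minimality is extended to a strictly $f$-degenerate transversal of $H$, contradicting the choice of $(G,H,f)$; this completes the proof. I expect the weight and cover bookkeeping in the kite reduction, and the sheer length of the case analysis for \autoref{F35}, to be the two points requiring the most care.
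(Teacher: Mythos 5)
Your outer shell is sound and even a little slicker than the paper's: the minimal-counterexample/deletion argument giving $\delta(G)\ge 4$, the Euler-formula observation that a planar graph of minimum degree four must contain a triangle (the paper instead disposes of the triangle-free case by quoting \autoref{MRTHREE}), and the application of \autoref{PAIRWISE3456} to that triangle are all correct. But the proof stops exactly where the real work begins. You state yourself that the reducibility of the configurations in \autoref{Kite} and \autoref{F35} is ``the heart of the proof,'' and then you do not prove it: you only list two ``natural routes.'' Route (a), contracting the chord $a_{1}a_{3}$, is genuinely problematic in the cover setting — $\mathscr{M}_{a_1a_3}$ need not be a perfect matching, so identifying $L_{a_1}$ with $L_{a_3}$ and ``redistributing weight'' is not a well-defined operation preserving the hypotheses, and nothing guarantees the contracted plane graph still avoids the configurations of \autoref{FIGPAIRWISE3456}. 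Route (b), recolouring a bounded number of outside neighbours, is not specified at all, and for \autoref{F35} you offer only ``an analogous but longer case analysis.'' As written, the statement is therefore not proved.

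The paper closes this gap with a single tool you did not invoke: the Gallai/Brooks-type result for minimal non-strictly-$f$-degenerate pairs, \autoref{CRITICAL} (Lu--Wang--Wang). In a minimal counterexample all vertices of an internal copy of \autoref{Kite} or \autoref{F35} have degree exactly four and $f$-sum at least four, so they lie in $\mathscr{D}$; the configuration is $2$-connected, is neither a cycle nor a complete graph, and contains a vertex of degree $3$ inside the configuration, whereas $\max_{q}f(\cdot,q)\le 2$ — contradicting \autoref{CRITICAL}\ref{M2}. This is precisely how the paper's stronger extension statement (\autoref{PAIRWISE3456-SFDT'}, proved for a precoloured good $4^{-}$-cycle and then applied to a triangle) kills both configurations in two lines, overcoming the greedy ``deadlock'' you correctly identified. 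To repair your write-up, replace your routes (a)/(b) by an appeal to \autoref{CRITICAL} (or reprove that lemma), noting that your $\delta(G)\ge 4$ reduction already puts you in the minimal-pair setting where it applies.
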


\begin{corollary}
Let $G$ be a planar graph. For each vertex $v \in V(G)$, there exists an integer $k_{v} \in \{3, 4, 5, 6\}$ such that $v$ is not contained in any $k_{v}$-cycle. Then its DP-paint number is at most $4$, and it has list vertex arboricity at most two. 
\end{corollary}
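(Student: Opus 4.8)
The plan is to obtain the corollary as a direct consequence of \autoref{PAIRWISE3456-Weak3} and \autoref{PAIRWISE3456-SFDT}, the point being that the hypothesis on $G$ excludes every configuration in \autoref{FIGPAIRWISE3456}. Note first that the condition ``for each $v$ there is $k_{v} \in \{3,4,5,6\}$ with $v$ on no $k_{v}$-cycle'' is precisely the negation of ``some vertex of $G$ lies on a $3$-cycle, a $4$-cycle, a $5$-cycle and a $6$-cycle simultaneously''. So it is enough to verify that every one of the eight configurations in \autoref{FIGPAIRWISE3456} contains a vertex incident with cycles of all four lengths $3,4,5,6$ (cycles of the configuration, hence of $G$); once this is known, a graph satisfying the hypothesis contains no such configuration, and both theorems apply to it.

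The verification is a finite case check, one subfigure at a time. In each configuration I would single out a ``central'' vertex --- the $4$-valent vertex $O$ in the four configurations of the top row, and the vertex $v_{2}$ in the four configurations of the bottom row --- and exhibit a $3$-, a $4$-, a $5$- and a $6$-cycle running through it, read straight off the picture. For instance, in the first configuration the vertex $O$ lies on the triangle $OAB$, the $4$-cycle $AODE$, the $5$-cycle $ABODE$ and the $6$-cycle $ABOCDE$; the other seven cases are of the same flavour. The only place to be careful is that in a few of the pictures an edge is drawn as a straight segment through a third vertex and therefore stands for two edges (e.g.\ the central vertex of the ``$3444$'' configuration is genuinely $4$-valent, with one incident triangular face and three incident quadrilateral faces), so the cycles must be counted with that in mind.

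With the reduction in place, the two numerical statements follow quickly. Since $G$ contains no configuration in \autoref{FIGPAIRWISE3456}, \autoref{PAIRWISE3456-Weak3} says that $G$ is weakly $3$-degenerate, so $\textsf{wd}(G) \le 3$, and \autoref{prop} gives $\chi_{\textsf{DPP}}(G) \le \textsf{wd}(G) + 1 \le 4$, i.e.\ the DP-paint number is at most $4$. For the list vertex arboricity, let $L$ be any list assignment with $|L(v)| \ge 2$ for all $v$; I would use the known encoding of $L$-forested-coloring as a strictly $f$-degenerate transversal from \cite{MR4357325}. Form the cover $H$ of $G$ in which $\mathscr{M}_{uv}$ matches $(u,i)$ to $(v,i)$ for every $i \in L(u) \cap L(v)$, and put $f(v,i) = 2$ for $i \in L(v)$ and $f(v,i) = 0$ otherwise, so that $f \colon V(H) \to \{0,1,2\}$ and $f(v,1) + f(v,2) + \dots + f(v,s) = 2|L(v)| \ge 4$. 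By \autoref{PAIRWISE3456-SFDT}, $(H,f)$ has a strictly $f$-degenerate transversal $R$. A vertex of $f$-value $0$ can never lie in a strictly $f$-degenerate transversal, so the element of $R$ in $L_{v}$ has $f$-value $2$; reading its index as the color of $v$ gives a coloring $\phi$ with $\phi(v) \in L(v)$, and the condition that $H[R]$ (on which $f$ is constantly $2$) be strictly $2$-degenerate says exactly that the spanning subgraph of $G$ consisting of the edges with equally colored endpoints --- the disjoint union of the color classes --- is a forest, i.e.\ every color class induces a forest. Thus $G$ has an $L$-forested-coloring and its list vertex arboricity is at most $2$.

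I expect the structural theorems to carry essentially all the weight, so the main (and only) obstacle is the routine but genuinely exhaustive check in the second paragraph: one must confirm for each of the eight forbidden configurations that it really hosts a vertex lying on cycles of lengths $3$, $4$, $5$ and $6$, the one pitfall being the correct reading of the pictures in which a drawn segment hides a subdivision vertex.
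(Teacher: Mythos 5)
Your proposal is correct and is essentially the paper's own (implicit) derivation: the hypothesis excludes every configuration in \autoref{FIGPAIRWISE3456} because each one contains a vertex lying on $3$-, $4$-, $5$- and $6$-cycles, after which \autoref{PAIRWISE3456-Weak3} together with \autoref{prop} yields $\chi_{\textsf{DPP}}(G) \leq \textsf{wd}(G)+1 \leq 4$, and \autoref{PAIRWISE3456-SFDT} with the standard encoding of $L$-forested-coloring as a strictly $f$-degenerate transversal (with $f \equiv 2$ on the list vertices) yields list vertex arboricity at most two. One cosmetic slip: in \autoref{fig:subfig:b} the vertex you single out has degree $3$ rather than $4$, but it still lies on cycles of all four lengths, so your finite check goes through unchanged.
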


\begin{corollary}
If $G$ is a planar graph without pairwise adjacent $3$-, $4$, $5$- and $6$-cycles, then its DP-paint number is at most $4$, and it has list vertex arboricity at most two. 
\end{corollary}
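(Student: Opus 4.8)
The plan is to derive this corollary from \autoref{PAIRWISE3456-Weak3} and \autoref{PAIRWISE3456-SFDT} by checking that the hypothesis ``no pairwise adjacent $3$-, $4$-, $5$- and $6$-cycles'' already forbids every configuration in \autoref{FIGPAIRWISE3456}. So the first step is the following finite claim: \emph{each of the eight configurations in \autoref{FIGPAIRWISE3456} contains cycles $C_{3}, C_{4}, C_{5}, C_{6}$ of respective lengths $3, 4, 5, 6$ that are pairwise adjacent}, i.e., pairwise share at least one edge. I would verify this configuration by configuration: in each picture one picks a triangle or a $4$-cycle lying on a distinguished edge $e$, and then displays a $4$-, a $5$- and a $6$-cycle all through $e$ (or through a common short path), so that the four chosen cycles pairwise meet in an edge. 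For example, in \autoref{fig:subfig:a} one may take the triangle $ABO$, the $4$-cycle $AODE$, the $5$-cycle $AOCDE$ and the $6$-cycle $ABOCDE$, which are pairwise adjacent; the other seven configurations are handled the same way, each time listing four explicit cycles. Hence a planar graph with no pairwise adjacent $3$-, $4$-, $5$- and $6$-cycles contains none of the configurations in \autoref{FIGPAIRWISE3456}.

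Granting this, the DP-paint bound is immediate: \autoref{PAIRWISE3456-Weak3} gives that $G$ is weakly $3$-degenerate, so $\textsf{wd}(G) \leq 3$, and \autoref{prop} yields $\chi_{\textsf{DPP}}(G) \leq \textsf{wd}(G) + 1 \leq 4$.

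For the list vertex arboricity, I would apply \autoref{PAIRWISE3456-SFDT} with the constant function $f \equiv 2$ (so $s = 2$ and $f(v,1) + f(v,2) = 4$ for every $v$). Given an arbitrary list $2$-assignment $L$, form the cover $H$ in which, for each edge $uv \in E(G)$, the matching $\mathscr{M}_{uv}$ joins $(u, i)$ to $(v, j)$ exactly when the $i$-th color of $L(u)$ equals the $j$-th color of $L(v)$; this is a matching because the colors in a list are distinct. By \autoref{PAIRWISE3456-SFDT}, $H$ has a strictly $f$-degenerate transversal $R$; reading off the color of each vertex from $R$ produces a coloring $\phi$ with $\phi(v) \in L(v)$ whose monochromatic edges are exactly the edges of $H[R]$. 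Since every vertex of $H[R]$ has $f$-value $2$, the transversal being strictly $f$-degenerate means $H[R]$ is $1$-degenerate, i.e., a forest; therefore each color class of $\phi$ induces a forest, so $\phi$ is an $L$-forested-coloring. As $L$ was arbitrary, the list vertex arboricity of $G$ is at most $2$. (Equivalently, one may just cite \cite{MR4357325}, where strictly $f$-degenerate transversal is shown to generalize $L$-forested-coloring, so that \autoref{PAIRWISE3456-SFDT} with $f \equiv 2$ immediately gives list vertex arboricity at most two.)

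The only genuine work is the first step, the case analysis over the eight configurations; it is routine, but needs care because the required $5$- or $6$-cycle is often not a bounding face but a longer cycle running through a pendant path. Everything after the claim is a formal consequence of the two cited theorems and \autoref{prop}.
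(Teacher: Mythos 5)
Your proposal is correct and follows exactly the route the paper intends: each configuration in \autoref{FIGPAIRWISE3456} contains pairwise adjacent $3$-, $4$-, $5$- and $6$-cycles (your worked example for \autoref{fig:subfig:a} is right, and the same kind of check succeeds for the other seven, e.g.\ in \autoref{fig:subfig:534f} one must take the $5$-cycle through the pendant path rather than the outer pentagon, a subtlety you explicitly anticipate), so \autoref{PAIRWISE3456-Weak3} with \autoref{prop} gives $\chi_{\textsf{DPP}}(G)\leq \textsf{wd}(G)+1\leq 4$, and \autoref{PAIRWISE3456-SFDT} with $f\equiv 2$ (equivalently, the reduction in \cite{MR4357325}) gives list vertex arboricity at most two. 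No gaps.
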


This strengthens all the results in \cite{MR4212281,MR4078909,MR4089638,MR3969022,MR3996735,Zhang2019} for DP-chromatic number and the results in \cite{MR4135602, MR4112063, MR3979933, MR3699856} for (list) vertex arboricity.

\autoref{PAIRWISE3456-SFDT} cannot be strengthened from ``planar graph'' to ``toroidal graph''. For each integer $k \geq 6$, Choi \cite[Theorem~4.1]{MR3634481} constructed an infinite family of toroidal graphs without cycles of lengths from $6$ to $k$ with chromatic number five. Note that the infinite family of graphs are embeddable on any surface, orientable or non-orientable, except the plane and projective plane. Note that the family of toroidal graphs do not contain any configuration in \autoref{FIGPAIRWISE3456}. Then the family of graphs are not $4$-colorable, and then it has no strictly $f$-degenerate transversals as in \autoref{PAIRWISE3456-SFDT}.

The (list) vertex arboricity of toroidal graphs without some small cycles or adjacent small cycles are extensively investigated.

\begin{theorem}\label{WithoutTriangle}\mbox{}
\begin{enumerate}
\item (Kronk and Mitchem \cite{MR0360334})
Every toroidal graph without $3$-cycles has vertex arboricity at most two. 
\item (Zhang \cite{MR3570576})
Every toroidal graph without $5$-cycles has list vertex arboricity at most two. 
\item (Choi and Zhang \cite{MR3233411})
Every toroidal graph without $4$-cycles has vertex arboricity at most two. 
\item (Huang, Chen and Wang \cite{MR3320048})
Every toroidal graph without $3$-cycles adjacent to $5$-cycles has list vertex arboricity at most two. 
\item (Chen, Huang and Wang \cite{MR3508765})
Every toroidal graph without $3$-cycles adjacent to $4$-cycles has list vertex arboricity at most two. 
\end{enumerate}
\end{theorem}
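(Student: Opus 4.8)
These five statements are already in the literature, attributed above to their original sources, so the quickest route is simply to invoke those references; I will instead sketch the common template they share, since it is also the one driving the new results of this paper. The plan in each case is to argue by contradiction from a counterexample $G$ minimizing $|V(G)|+|E(G)|$. First I would record the standard reductions: $G$ is $2$-connected and has no vertex of degree at most $3$, since such a vertex $v$ could be deleted, the smaller graph list-forest-coloured by minimality, and $v$ returned to whichever of its two colour classes contains at most one of its (at most three) neighbours, which creates no monochromatic cycle through $v$. More generally, any fixed local configuration admitting such a ``delete, recolour, re-extend'' argument would be absent from $G$.

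The core of each proof would then be a structural lemma in the spirit of \autoref{PAIRWISE3456}: no toroidal graph with minimum degree at least $4$, meeting the stated cycle restriction, and containing none of the relevant reducible configurations can exist. I would prove this by Euler-formula discharging. Assign to each vertex $v$ the initial charge $d(v)-4$ and to each face $g$ the initial charge $|g|-4$; since the torus has Euler characteristic $0$ and $\sum_v d(v)=\sum_g |g|=2|E(G)|$, the total initial charge is
\[
\sum_{v}\bigl(d(v)-4\bigr)+\sum_{g}\bigl(|g|-4\bigr)=-4\bigl(|V(G)|-|E(G)|+|F(G)|\bigr)=0 .
\]
I would then choose discharging rules, tailored to the class, so that after redistribution the only possible obstructions to nonnegative charge everywhere are exactly the configurations excluded by hypothesis; as $G$ contains none of them, every vertex and face would end with nonnegative charge while at least one ends strictly positive, contradicting the total being $0$.

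For the three list statements (items 2, 4 and 5) I would alternatively route through the framework of this paper. By \cite{MR4357325}, ``list vertex arboricity at most two'' is the instance of the strictly $f$-degenerate transversal problem with $f\equiv 2$ and with the cover $H$ whose matching between $L_u$ and $L_v$ identifies equal colours: a transversal $R$ is then an $L$-forested-colouring exactly when $H[R]$ is strictly $2$-degenerate, i.e.\ a forest. Each list statement then follows from an SFDT result of the type of \autoref{PAIRWISE3456-SFDT} restricted to the appropriate toroidal class, and the reducibility of each configuration can be checked by exhibiting a legal sequence of \textsf{Delete} and \textsf{DeleteSave} operations on its vertices while tracking the residual values of $f$ at the boundary vertices.

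The hard part, as always with this method, is not the discharging arithmetic but the joint design of the two ingredients: the list of reducible configurations must be rich enough that, once all of them are excluded, the discharging rules can be balanced, yet every configuration on that list must genuinely be reducible for the colouring notion at hand. Reducibility is subtler here than for ordinary $k$-colouring, because one must track how many ``forest tokens'' $f(v)$ survive at each boundary vertex after the interior has been removed and must ensure that extending the partial colouring creates no monochromatic cycle; this is where the various cited papers differ in detail and where essentially all of the casework lives.
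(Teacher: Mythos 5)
Your proposal is correct and matches the paper's treatment: the paper states these five results purely as cited background (Theorem \ref{WithoutTriangle} carries no proof of its own, only the attributions to the original papers), and invoking those references is exactly what is intended. Your added sketch of the minimal-counterexample-plus-discharging template and the reduction of the list statements to strictly $f$-degenerate transversals with $f\equiv 2$ is a faithful outline of how those original proofs (and this paper's new toroidal result, Theorem \ref{T345-SFDT}, which supersedes them) actually proceed, though of course it is only an outline and not a substitute for the cited arguments.
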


More results on (list) vertex arboricity, we refer the reader to \cite{MR2889524, MR2926103, MR2408378, MR1871345, MR3164029}. For other DP-$4$-coloring of planar graphs, we refer the reader to \cite{MR3802151, MR3881665, MR4294211, MR4362322}.

The second class of graphs is the class of toroidal graphs without any configuration in \autoref{A345}.

\begin{theorem}\label{T345}
Let $G$ be a connected toroidal graph without any configuration in \autoref{A345}. Then one of the followings holds:
\begin{enumerate}[label = (\roman*)]
\item $\delta(G) \leq 3$;
\item there is a subgraph isomorphic to the configuration in \autoref{F35}; 
\item there is a subgraph isomorphic to a configuration in \autoref{RC}; 
\item $G$ is $4$-regular. 
\end{enumerate}
\end{theorem}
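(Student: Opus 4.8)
The plan is to argue by contradiction via a discharging argument on a torus embedding. Suppose for contradiction that $G$ is a connected toroidal graph containing no configuration in \autoref{A345} for which none of (i)--(iv) holds; thus $\delta(G) \ge 4$, $G$ has no subgraph isomorphic to the configuration in \autoref{F35}, $G$ has no subgraph isomorphic to any configuration in \autoref{RC}, and $G$ is not $4$-regular. Fix an embedding of $G$ on the torus, which we may assume to be cellular (otherwise $G$ is planar and the argument simplifies). Let $V$, $E$, $F$ denote the vertex, edge and face sets. Euler's formula on the torus gives $|V| - |E| + |F| = 0$, and the usual manipulation, using $\sum_{v}d(v) = \sum_{f}d(f) = 2|E|$, yields $\sum_{v \in V}(d(v) - 4) + \sum_{f \in F}(d(f) - 4) = 0$.

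Assign the initial charge $\mu(x) = d(x) - 4$ to every vertex and every face, so the total charge is $0$. Since $\delta(G) \ge 4$ every vertex has nonnegative charge, and since $G$ is not $4$-regular at least one vertex has charge $\ge 1$; faces of degree $\ge 5$ have positive charge, $4$-faces have charge $0$, and exactly the $3$-faces are in deficit, each carrying $-1$. I would design discharging rules that move charge to $3$-faces so that after the redistribution: (a) every $3$-face has nonnegative final charge; (b) every vertex of degree $\ge 5$ retains \emph{strictly positive} charge; and (c) no $4$-vertex, no $4$-face and no face of degree $\ge 5$ ends negative. Because charge is conserved and $G$ has a vertex of degree $\ge 5$, the total final charge would then be strictly positive, contradicting that it equals $0$; this contradiction proves the theorem.

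Before the rules I would record the local structural consequences of the hypotheses: $\delta(G)\ge 4$ means there are no $3^{-}$-vertices, and the absence of the configurations in \autoref{A345} (which forbid the various ways pairwise-adjacent $3$-, $4$- and $5$-cycles can arise), together with the absence of \autoref{F35} and of the configurations in \autoref{RC}, forces that $3$-faces do not cluster, that a $3$-face all of whose vertices have degree $4$ cannot be surrounded by small faces, and that a $4$- or $5$-vertex lies on only a bounded number of $3$-faces. With these in hand I would adopt rules of the customary shape: each vertex of degree $\ge 5$ sends a fixed fraction of its surplus across each incident $3$-face; each face of degree $\ge 5$ sends a fixed amount to each $3$-face sharing an edge with it, and possibly, through an intervening $4$-face acting as a conduit, to a $3$-face one step away; and $4$-faces merely relay what they receive. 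The amounts would be tuned so that a needy $3$-face, classified by the degrees of its three vertices and the sizes of its three neighbouring faces, collects at least $1$ in every configuration surviving the structural lemmas, while every donor of degree or size $d$ parts with at most $d-4$ and every degree-$\ge 5$ vertex parts with strictly less.

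The main obstacle is establishing (a): a complete case analysis of the neighbourhood of a $3$-face $[xyz]$. The dangerous cases are those in which $x, y, z$ all have degree $4$ (so the triangle supplies nothing itself) or in which two of its neighbouring faces are also small; the heart of the argument is to show that each such case is either ruled out directly by a configuration in \autoref{A345}, or creates a copy of \autoref{F35} or of a configuration in \autoref{RC}, or leaves enough large faces and high-degree vertices around $[xyz]$ to cover the deficit. Keeping the $4$-face conduits from ever relaying more than they receive, and handling $4$- and $5$-vertices incident with several $3$-faces, are the remaining delicate points; the rest is routine bookkeeping with the Euler identity above.
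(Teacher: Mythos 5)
The setup you describe (charge $d(x)-4$, Euler's formula on the torus, nonnegativity everywhere plus a strictly positive vertex because $G$ is not $4$-regular) is exactly the paper's frame, but your proposal stops at the frame: the discharging rules are never specified ("rules of the customary shape", "amounts would be tuned") and the case analysis that constitutes the entire content of the theorem is not carried out. In a discharging proof the verification \emph{is} the proof; as written there is no argument that any choice of constants satisfies your conditions (a)--(c) simultaneously. Moreover, the design you gesture at differs from what actually closes the cases, and in ways that matter. First, your requirement (b) that every vertex of degree at least $5$ retain strictly positive charge is stronger than what the paper's rules achieve: with the paper's scheme there are $5$-vertices (a $5$-vertex incident with two nonadjacent $3$-faces, one of them bounding an $F_{5}$-face, with a $6^{+}$-neighbor) whose final charge is exactly $0$, and positivity of the total is recovered only by exhibiting a neighboring $6^{+}$-vertex with surplus. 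You give no reason to believe a rule set meeting your stricter (b) exists, and the need for this extra argument in the paper suggests it is at least delicate. Second, your sketch has $5^{+}$-vertices paying $3$-faces directly and $4$-faces acting as conduits, whereas the paper feeds $3$-faces exclusively from adjacent $5^{+}$-faces ($\frac13$ from each $5^{+}$-face for good $3$-faces, $\frac12$ from each $6^{+}$-face for bad ones, using that every bad edge lies on a $6^{+}$-face) and uses vertices only to replenish $5$-faces and $6^{+}$-faces; $4$-faces are inert. This is not just a stylistic difference: the genuinely dangerous object is not a $(4,4,4)$-triangle but a $(4,4,4,4,4)$-$5$-face adjacent to five $3$-faces, which must export up to $\frac53$ while holding only $1$. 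That deficit is covered by its five pendent vertices, which are forced to have degree at least $5$ precisely because \autoref{F35} is excluded (\autoref{CAP}), and the bookkeeping around those pendent $5$- and $6$-vertices is controlled precisely by the exclusion of the configurations in \autoref{RC} (\autoref{Lem-RC}). Your structural preamble ("$3$-faces do not cluster", "a $(4,4,4)$-face cannot be surrounded by small faces") never identifies this configuration, so the place where the hypotheses about \autoref{F35} and \autoref{RC} actually enter the charge count is missing from your plan. Until rules are fixed and the neighborhoods of $3$-faces, $5$-faces, and $5$-, $6$-, $7^{+}$-vertices are checked against them, the theorem is not proved.
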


\begin{figure}%
\centering
\subcaptionbox{\label{fig:subfig:TA}}{\begin{tikzpicture}[scale = 0.8]
\coordinate (A) at (45:1);
\coordinate (B) at (135:1);
\coordinate (C) at (225:1);
\coordinate (D) at (-45:1);
\coordinate (H) at (90:1.414);
\draw (A)--(H)--(B)--(C)--(D)--cycle;
\draw (A)--(B);
\node[circle, inner sep = 1.5, fill = white, draw] () at (A) {};
\node[circle, inner sep = 1.5, fill = white, draw] () at (B) {};
\node[circle, inner sep = 1.5, fill = white, draw] () at (C) {};
\node[circle, inner sep = 1.5, fill = white, draw] () at (D) {};
\node[circle, inner sep = 1.5, fill = white, draw] () at (H) {};
\end{tikzpicture}}\hspace{1.5cm}
\subcaptionbox{\label{fig:subfig:TB}}{\begin{tikzpicture}[scale = 0.8]
\coordinate (A) at (45:1);
\coordinate (B) at (135:1);
\coordinate (C) at (225:1);
\coordinate (D) at (-45:1);
\coordinate (H) at (90:1.414);
\coordinate (X) at ($(A)+(0, 1)$);
\coordinate (Y) at (45:2);
\draw (A)--(Y)--(X)--(H)--(B)--(C)--(D)--cycle;
\draw (X)--(A)--(H);
\node[circle, inner sep = 1.5, fill = white, draw] () at (A) {};
\node[circle, inner sep = 1.5, fill = white, draw] () at (B) {};
\node[circle, inner sep = 1.5, fill = white, draw] () at (C) {};
\node[circle, inner sep = 1.5, fill = white, draw] () at (D) {};
\node[circle, inner sep = 1.5, fill = white, draw] () at (H) {};
\node[circle, inner sep = 1.5, fill = white, draw] () at (X) {};
\node[circle, inner sep = 1.5, fill = white, draw] () at (Y) {};
\end{tikzpicture}}
\caption{Forbidden configurations in \autoref{T345}, \autoref{T345-Weak3} and \autoref{T345-SFDT}.}
\label{A345}
\end{figure}

\begin{figure}%
\centering
\def\s{1.3}
\subcaptionbox{$t = 7$ \label{fig:subfig:-b}}[0.3\linewidth]
{
\begin{tikzpicture}
\def\n{6}
\pgfmathsetmacro \i{\n-2}
\foreach \x in {1,...,\i}
{
\def\pointnameA{A\x}
\coordinate (\pointnameA) at ($(\x*360/\n+120:0.73*\s)$);
}
\coordinate (A0) at (90:0.73);

\foreach \x in {0,...,23}
{
\def\pointnameB{B\x}
\coordinate (\pointnameB) at ($(\x*90/\n+120:1.4*\s)$);
}

\coordinate (A2B9) at ($(A2)!.5!(B9)$);
\coordinate (A2B9') at ($(A2B9)!1.73!90:(A2)$);
\coordinate (B9B11) at ($(B9)!.5!(B11)$);
\coordinate (B9B11') at ($(B9B11)!1.73!-90:(B11)$);
\coordinate (A3B11) at ($(A3)!.5!(B11)$);
\coordinate (A3B11') at ($(A3B11)!1.73!90:(B11)$);

\draw[opacity=0.4, gray, line width = 32pt, line join=round] (A1)--(A2)--(B9)--(B11)--(A3)--(A4)--(0, 0)--cycle; 
\foreach \x in {0,...,\i}
{
\draw (0, 0)node[above]{$a_{1}$}--(A\x);
}
\draw (A2)--(A2B9')--(B9);
\draw (B9)--(B9B11')--(B11);
\draw (A3)--(A3B11')--(B11);
\draw (A2)node[left]{$a_{3}$}--(B9)node[left]{$a_{4}$}--(B11)node[right]{$a_{5}$}--(A3)node[right]{$a_{6}$};
\draw (A1)node[above]{$a_{2}$}--(A2);
\draw (A3)--(A4)node[above]{$a_{7}$};

\node[regular polygon, inner sep = 1.5, fill=blue, draw=blue] () at (0, 0) {};
\node[rectangle, inner sep = 1.5, fill = white, draw] () at (A0) {};
\node[rectangle, inner sep = 1.5, fill, draw] () at (B9) {};
\node[rectangle, inner sep = 1.5, fill, draw] () at (B11) {};
\node[rectangle, inner sep = 1.5, fill, draw] () at (A1) {};
\node[rectangle, inner sep = 1.5, fill, draw] () at (A2) {};
\node[rectangle, inner sep = 1.5, fill, draw] () at (A3) {};
\node[rectangle, inner sep = 1.5, fill, draw] () at (A4) {};
\node[rectangle, inner sep = 1.5, fill = white, draw] () at (A2B9') {};
\node[rectangle, inner sep = 1.5, fill = white, draw] () at (B9B11') {};
\node[rectangle, inner sep = 1.5, fill = white, draw] () at (A3B11') {};
\end{tikzpicture}}
\subcaptionbox{$t = 9$ \label{fig:subfig:-a}}[0.3\linewidth]
{
\begin{tikzpicture}
\def\n{6}
\pgfmathsetmacro \i{\n-2}
\foreach \x in {0,...,\i}
{
\def\pointnameA{A\x}
\coordinate (\pointnameA) at ($(\x*360/\n+120:0.73*\s)$);
}

\foreach \x in {0,...,23}
{
\def\pointnameB{B\x}
\coordinate (\pointnameB) at ($(\x*90/\n+120:1.4*\s)$);
}

\coordinate (A2B9) at ($(A2)!.5!(B9)$);
\coordinate (A2B9') at ($(A2B9)!1.73!90:(A2)$);
\coordinate (B9B11) at ($(B9)!.5!(B11)$);
\coordinate (B9B11') at ($(B9B11)!1.73!-90:(B11)$);
\coordinate (A3B11) at ($(A3)!.5!(B11)$);
\coordinate (A3B11') at ($(A3B11)!1.73!90:(B11)$);

\draw[opacity=0.4, gray, line width = 32pt, line join=round] (A0)--(B1)--(B3)--(A1)--(A2)--(B9)--(B11)--(A3)--(0, 0)--cycle; 
\foreach \x in {0,...,\i}
{
\draw (0, 0)node[above]{$a_{1}$}--(A\x);
}

\draw (A2)--(A2B9')--(B9);
\draw (B9)--(B9B11')--(B11);
\draw (A3)--(A3B11')--(B11);
\draw (A0)node[above]{$a_{2}$}--(B1)node[above]{$a_{3}$}--(B3)node[left]{$a_{4}$}--(A1)node[left]{$a_{5}$}; 
\draw (A2)node[left]{$a_{6}$}--(B9)node[left]{$a_{7}$}--(B11)node[right]{$a_{8}$}--(A3)node[right]{$a_{9}$};
\draw (A1)--(A2);
\draw (A3)--(A4);

\node[regular polygon, inner sep = 1.5, fill=blue, draw=blue] () at (0, 0) {};
\node[rectangle, inner sep = 1.5, fill, draw] () at (A0) {};
\node[rectangle, inner sep = 1.5, fill, draw] () at (B1) {};
\node[rectangle, inner sep = 1.5, fill, draw] () at (B3) {};
\node[rectangle, inner sep = 1.5, fill, draw] () at (B9) {};
\node[rectangle, inner sep = 1.5, fill, draw] () at (B11) {};
\node[rectangle, inner sep = 1.5, fill, draw] () at (A1) {};
\node[rectangle, inner sep = 1.5, fill, draw] () at (A2) {};
\node[rectangle, inner sep = 1.5, fill, draw] () at (A3) {};
\node[rectangle, inner sep = 1.5, fill = white, draw] () at (A4) {};
\node[rectangle, inner sep = 1.5, fill = white, draw] () at (A2B9') {};
\node[rectangle, inner sep = 1.5, fill = white, draw] () at (B9B11') {};
\node[rectangle, inner sep = 1.5, fill = white, draw] () at (A3B11') {};
\end{tikzpicture}}
\subcaptionbox{$t = 10$ \label{fig:subfig:-c}}[0.3\linewidth]
{
\begin{tikzpicture}
\def\n{6}
\pgfmathsetmacro \i{\n-1}
\foreach \x in {0,...,\i}
{
\def\pointnameA{A\x}
\coordinate (\pointnameA) at ($(\x*360/\n+120:0.73*\s)$);
}

\foreach \x in {0,...,23}
{
\def\pointnameB{B\x}
\coordinate (\pointnameB) at ($(\x*90/\n+120:1.4*\s)$);
}

\coordinate (A2B9) at ($(A2)!.5!(B9)$);
\coordinate (A2B9') at ($(A2B9)!1.73!90:(A2)$);
\coordinate (B9B11) at ($(B9)!.5!(B11)$);
\coordinate (B9B11') at ($(B9B11)!1.73!-90:(B11)$);
\coordinate (A3B11) at ($(A3)!.5!(B11)$);
\coordinate (A3B11') at ($(A3B11)!1.73!90:(B11)$);

\draw[opacity=0.4, gray, line width = 32pt, line join=round] (A0)--(B1)--(B3)--(A1)--(A2)--(B9)--(B11)--(A3)--(0, 0)--(A5)--cycle; 
\foreach \x in {0,...,\i}
{
\draw (0, 0)node[above]{$a_{1}$}--(A\x);
}
\draw (A2)--(A2B9')--(B9);
\draw (B9)--(B9B11')--(B11);
\draw (A3)--(A3B11')--(B11);
\draw (A0)node[above]{$a_{3}$}--(B1)node[above]{$a_{4}$}--(B3)node[left]{$a_{5}$}--(A1)node[left]{$a_{6}$}; 
\draw (A2)node[left]{$a_{7}$}--(B9)node[left]{$a_{8}$}--(B11)node[right]{$a_{9}$}--(A3)node[right]{$a_{10}$};
\draw (A1)--(A2);
\draw (A3)--(A4);
\draw (A5)node[right]{$a_{2}$}--(A0);

\node[regular polygon, regular polygon sides=6, inner sep = 1.5, fill=brown, draw=brown] () at (0, 0) {};
\node[rectangle, inner sep = 1.5, fill, draw] () at (A0) {};
\node[rectangle, inner sep = 1.5, fill, draw] () at (B1) {};
\node[rectangle, inner sep = 1.5, fill, draw] () at (B3) {};
\node[rectangle, inner sep = 1.5, fill, draw] () at (B9) {};
\node[rectangle, inner sep = 1.5, fill, draw] () at (B11) {};
\node[rectangle, inner sep = 1.5, fill, draw] () at (A1) {};
\node[rectangle, inner sep = 1.5, fill, draw] () at (A2) {};
\node[rectangle, inner sep = 1.5, fill, draw] () at (A3) {};
\node[rectangle, inner sep = 1.5, fill = white, draw] () at (A4) {};
\node[rectangle, inner sep = 1.5, fill, draw] () at (A5) {};
\node[rectangle, inner sep = 1.5, fill = white, draw] () at (A2B9') {};
\node[rectangle, inner sep = 1.5, fill = white, draw] () at (B9B11') {};
\node[rectangle, inner sep = 1.5, fill = white, draw] () at (A3B11') {};
\end{tikzpicture}}
\caption{Note that $|N_{G}(a_{t}) \cap \{a_{1}, \dots, a_{t-1}\}| = 2$.}
\label{RC}
\end{figure}
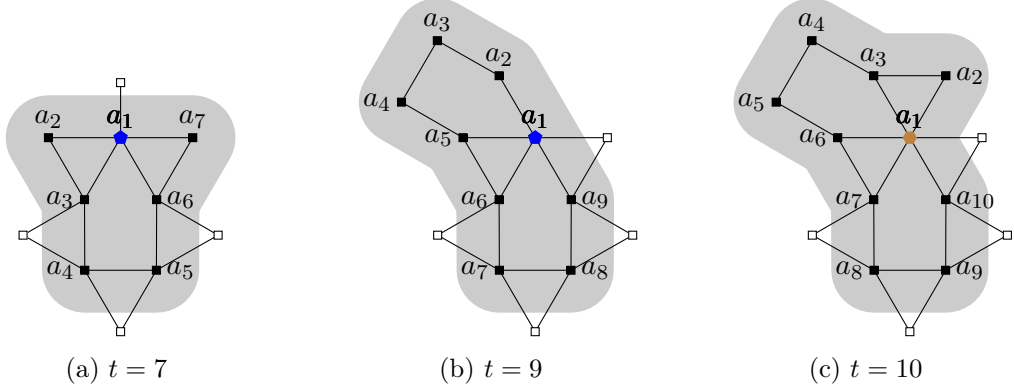

Similarly, we can prove the following two theorems by using the above structural result. 

\begin{theorem}\label{T345-Weak3}
Let $G$ be a toroidal graph without any configuration in \autoref{A345}. Then $G$ is weakly $3$-degenerate. 
\end{theorem}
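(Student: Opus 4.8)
The plan is to argue by contradiction through a vertex‑minimal counterexample, driven by the structural result \autoref{T345}. Suppose some toroidal graph with no configuration of \autoref{A345} fails to be weakly $3$-degenerate, and let $G$ be such a graph with $|V(G)|$ as small as possible. A graph is weakly $f$-degenerate if and only if each of its connected components is, so $G$ is connected; and deleting a vertex of $G$ leaves a toroidal graph still avoiding every configuration of \autoref{A345}, so by minimality every proper subgraph of $G$ is weakly $3$-degenerate. Hence \autoref{T345} applies to $G$: either $\delta(G) \le 3$, or $G$ contains a copy of \autoref{F35}, or $G$ contains one of the configurations of \autoref{RC}, or $G$ is $4$-regular.

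For each outcome I would name a nonempty $S \subseteq V(G)$ with an explicit ordering $u_1, u_2, \dots, u_{|S|}$ of its vertices, and remove these vertices from $(G, f)$, where $f$ is the constant function $3$, by legal operations: $u_i$ is removed by \textsf{Delete} whenever at that moment it has at most three not-yet-removed neighbours (legality is then automatic, since the surviving $f$-values are still positive), and by \textsf{DeleteSave}$(\cdot, u_i, w)$ otherwise, with $w$ a neighbour whose current $f$-value already exceeds $f(u_i)$ because an earlier \textsf{DeleteSave} saved it. This is exactly the mechanism by which weak $3$-degeneracy is strictly weaker than ordinary $3$-degeneracy, and it is what the bad configurations of \autoref{RC} are engineered to supply: each such configuration carries an ordering $a_1, \dots, a_t$ with $a_1$ the central $5$- or $6$-vertex, along which the $4^{-}$-marked vertices $a_2, \dots, a_t$ are stripped off and $a_1$ is cleared last --- the stated condition $|N_G(a_t) \cap \{a_1, \dots, a_{t-1}\}| = 2$ being what controls how many neighbours of each removed vertex still survive --- so that every step is a legal \textsf{Delete} or \textsf{DeleteSave}; the configuration \autoref{F35} and the case $\delta(G) \le 3$ (take $S$ a single $3^{-}$-vertex) are simpler instances of the same idea, while if $G$ is $4$-regular one first \textsf{Delete}s an arbitrary vertex, which creates a $3$-vertex, and then reduces to a previous case (or else rules this case out directly, e.g.\ by an Euler-type count on the torus forcing such a graph to contain \autoref{F35} or a configuration of \autoref{RC}). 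Having removed $S$ one is left with $(G - S, f_S)$; since $G - S$ is again toroidal and free of the forbidden configurations, and the only vertices $v$ with $f_S(v) < 3$ are vertices with few surviving neighbours, $(G - S, f_S)$ is weakly $f_S$-degenerate, so concatenating the two removal sequences empties $V(G)$ and contradicts the choice of $G$.

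The delicate point, which I expect to absorb most of the work, is the bookkeeping of $f$ along the reduction: weak $f$-degeneracy is not monotone under vertex deletion with the induced constant function, so one cannot simply invoke ``$G - S$ is weakly $3$-degenerate'' and must instead carry the decremented $f_S$ and verify, configuration by configuration in \autoref{RC} and for \autoref{F35}, that (a) there is an ordering of $S$ in which every step is a legal \textsf{Delete} or \textsf{DeleteSave}, each required save justified by a strictly larger current $f$-value, and (b) the residual instance $(G - S, f_S)$ is still weakly $f_S$-degenerate. Part (a) is a finite check on the few configurations involved; part (b) is where the structural hypotheses re-enter, since $G - S$ is toroidal, avoids the configurations of \autoref{A345}, and the vertices of deficient $f$-value are exactly the boundary of the removed configuration.
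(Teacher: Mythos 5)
Your overall skeleton (minimal counterexample, connectivity, invoke \autoref{T345}, discharge each outcome by a reducibility argument) matches the paper, but the reduction itself runs in the wrong direction, and this is a genuine gap, not bookkeeping. You delete the configuration $S$ first from $(G,3)$ and are then left needing that $(G-S,f_S)$ is weakly $f_S$-degenerate for a \emph{non-constant} function $f_S$ with deficits on the vertices adjacent to $S$. Minimality of the counterexample only gives that $G-S$ is weakly $3$-degenerate with the constant function, and weak degeneracy is very sensitive to such deficits (your claim that the deficient vertices are exactly ``vertices with few surviving neighbours'' is false in general: a vertex with two neighbours in $S$ gets $f_S=1$ while retaining arbitrarily many neighbours in $G-S$). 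So your step (b) is precisely the original difficulty restated, and nothing in \autoref{T345} or the forbidden-configuration hypothesis supplies it. The standard (and the paper's) fix is to reverse the order: by minimality, first remove all of $G-S$ by legal operations with the constant function $3$; this pushes the deficits onto the \emph{small} set $S$, inducing $g(a)=3-|N_G(a)\cap(V(G)\setminus S)|$, and one then verifies by hand that $G[S]$ is weakly $g$-degenerate — via an explicit sequence with one \textsf{DeleteSave} for the configurations of \autoref{RC} (this is \autoref{Weak-Sequence}, where the central $5^{+}$-vertex $a_{1}$ is removed \emph{first}, saving the degree-$4$ vertex $a_{t}$, which is removed last — not ``$a_{1}$ cleared last'' as you write), and via the Gallai-type \autoref{Gallai-Weak} for \autoref{F35}; this is exactly what \autoref{Weak-Reduce} packages.

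The $4$-regular case is also not handled by either of your suggestions. Deleting an arbitrary vertex creates degree-$3$ vertices whose residual value is $2$, so nothing becomes removable ``for free'' and you cannot appeal to the previous cases, which concern the constant function on a smaller graph; and the case cannot be excluded by an Euler-type count, since $4$-regular toroidal quadrangulations (e.g. the torus grid $C_{m}\,\square\,C_{n}$) are triangle-free and hence contain none of the configurations in \autoref{A345} or \autoref{RC}. The paper disposes of this case with the Brooks-type result of Bernshteyn and Lee (\autoref{Brooks-Weak}): a connected $4$-regular graph that is not weakly $3$-degenerate must be a GDP-tree, hence has a $K_{5}$ block, which already contains a forbidden configuration. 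Without importing \autoref{Brooks-Weak} (or an equivalent argument) and without reversing the reduction order as above, your proof does not go through.
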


\begin{theorem}\label{T345-SFDT}
Let $G$ be a toroidal graph without any configuration in \autoref{A345}. Let $H$ be a cover of $G$ and $f$ be a function from $V(H)$ to $\{0, 1, 2\}$. If $f(v, 1) + f(v, 2) + \dots + f(v, s) \geq 4$ for each $v \in V(G)$, then $H$ has a strictly $f$-degenerate transversal. 
\end{theorem}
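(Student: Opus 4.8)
\medskip
\noindent\textbf{Proof proposal.} The plan is a minimal-counterexample argument governed by Theorem \ref{T345}. Assume the statement fails and pick $(G,H,f)$ with $|V(G)|$ minimum. A strictly $f$-degenerate transversal of $H$ is the union of such transversals over the components of $G$, and $\sum_{p}f(v,p)\ge 4$ passes to subgraphs, so $G$ is connected and every proper subgraph of $G$ carries a strictly $f$-degenerate transversal for the inherited data. Hence Theorem \ref{T345} applies, and I will derive a contradiction in each of its four outcomes.

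The engine is one extension lemma, which I would state and prove once. Let $S\subseteq V(G)$ with $G-S\neq\varnothing$, let $R'$ be a strictly $f$-degenerate transversal of $H_{G-S}$ fixed together with an $f$-removing order $\sigma'$, and for $v\in S$, $p\in[s]$ let $r_{v,p}$ count the neighbours $u\in V(G)\setminus S$ of $v$ whose colour in $R'$ is the $\mathscr{M}_{vu}$-partner of $(v,p)$; set $f^{*}(v,p)=\max\{0,f(v,p)-r_{v,p}\}$. Since each $\mathscr{M}_{vu}$ is a matching, $\sum_{p}r_{v,p}\le d_{G-S}(v)$, so
\[
\sum_{p\in[s]}f^{*}(v,p)\;\ge\;4-d_{G-S}(v)\;=\;4-d_{G}(v)+d_{G[S]}(v)\qquad(v\in S).
\]
If $H_{G[S]}$ has a strictly $f^{*}$-degenerate transversal $R_{S}$, then an $f^{*}$-removing order of $H_{G[S]}[R_{S}]$ followed by $\sigma'$ is an $f$-removing order of $H[R'\cup R_{S}]$ (each $v\in S$ picks up $r_{v,p}$ extra later neighbours outside $S$, staying below $f(v,p)$; vertices of $G-S$ are unchanged), contradicting minimality. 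So in each case it is enough to choose $S$ for which $(H_{G[S]},f^{*})$ provably has a strictly $f^{*}$-degenerate transversal.

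For outcome (i), $\delta(G)\le 3$: take $S=\{v\}$ with $d_{G}(v)\le 3$; then $\sum_{p}r_{v,p}\le 3<4\le\sum_{p}f(v,p)$, so some colour has $f^{*}(v,p)\ge 1$ and $R_{S}=\{(v,p)\}$ works. For outcome (ii), the configuration of \autoref{F35}, let $S$ be its six vertices: all are $4$-vertices, so the display gives $\sum_{p}f^{*}(v,p)\ge d_{G[S]}(v)$; since $G[S]$ is $2$-connected, not complete, and not a cycle, a Brooks-type statement for valued covers (alternatively, a direct removal order exploiting the chord $v_{2}v_{3}$) produces $R_{S}$. For outcome (iii), a configuration of \autoref{RC}, let $S$ be its vertex set: every vertex other than the hub $a_{1}$ is a $4$-vertex with $\sum_{p}f^{*}(v,p)\ge d_{G[S]}(v)$, while the $5$- or $6$-vertex $a_{1}$ may fall one or two short, so I would remove $a_{t},a_{t-1},\dots,a_{2}$ in the order recorded in \autoref{RC} — each having at most two earlier-removed neighbours in $G[S]$, hence a usable colour by the display — and colour $a_{1}$ last. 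For outcome (iv), $G$ $4$-regular: take $S=V(G)$, so $f^{*}=f$ and $\sum_{p}f(v,p)\ge 4=d_{G}(v)$; as the first configuration of \autoref{A345} is a subgraph of $K_{5}$, the connected graph $G$ is not $K_{5}$, and the same Brooks-type input — now using the cycle structure forced by the absence of the configurations of \autoref{A345} — yields $R_{S}=R$.

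The main obstacle is making outcomes (iii) and (iv) rigorous: one needs a Brooks-type reducibility statement for strictly $f$-degenerate transversals on $2$-connected, non-complete, non-cyclic blocks (for \autoref{F35} and the $4$-regular graph), and a careful, configuration-specific removal order that absorbs the deficiency at the hub $a_{1}$ of each configuration in \autoref{RC}; checking that the bound $\sum_{p}f^{*}(v,p)\ge 4-d_{G}(v)+d_{G[S]}(v)$ suffices there, using the forbidden configurations to rule out the obstructive cases, is where the bulk of the work lies. Everything else is bookkeeping once Theorem \ref{T345} is in hand.
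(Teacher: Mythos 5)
Your skeleton (minimal counterexample, \autoref{T345}, and a residual-budget extension lemma for a chosen set $S$) is the same strategy as the paper's, and outcomes (i) and (ii) can be closed that way. But the one concrete order you propose for outcome (iii) would fail, and it is exactly at the point you concede is hardest. If you colour $a_t,a_{t-1},\dots,a_2$ greedily and leave the hub $a_1$ for last, then when $a_1$ is reached all four or five of its neighbours inside the configuration are already coloured, while your own display only guarantees $\sum_p f^*(a_1,p)\ge 4-d_G(a_1)+d_{G[S]}(a_1)=3$ with each $f^*(a_1,p)\le 2$; an adversarial cover can route the conflicts so that every colour class of $a_1$ is saturated (for instance residuals $(2,1)$ met by conflicts $(2,1)$ from three of the coloured neighbours), so there need not be a ``usable colour by the display.'' Nothing in the reversed order absorbs the deficiency of one or two that you acknowledge. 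The missing idea is the save-first mechanism: colour the hub \emph{first}, choosing its colour so that the degree-$4$ vertex $a_t$ — which is adjacent to $a_1$ and has only two neighbours inside the configuration, hence residual budget at least $2$, strictly less than the hub's at least $3$ — retains an available colour; then colour $a_2,\dots,a_{t-1}$ greedily (each has at least one later neighbour in the sequence), and finish at $a_t$. This is precisely what the paper quotes as \autoref{NN} (and strengthens in \autoref{WW}); the certificate is the sequence $a_1,a_2,\dots,a_t$ in that direction, not its reversal with the hub last.

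The other items you flag as obstacles are quotable rather than open. The Brooks/Gallai-type input you want for \autoref{F35} and for the $4$-regular case is exactly \autoref{CRITICAL}\ref{M2} (Lu--Wang--Wang): in a minimal non-strictly-$f$-degenerate pair, any cut-vertex-free $B\subseteq\mathscr{D}$ induces a cycle, a complete graph, or a graph in which every degree is at most $\max_q f(\cdot,q)\le 2$; the chorded $6$-cycle of \autoref{F35} and a non-cycle, non-complete block of a $4$-regular $G$ violate all three alternatives. (Your fallback of a ``direct removal order exploiting the chord'' is not straightforward, since every vertex of \autoref{F35} can be tight.) For the $4$-regular case, ruling out $G=K_5$ is not by itself enough: you must rule out $G$ being a GDP-tree, which follows because a $4$-regular GDP-tree has a $K_5$ end block and $K_5$ contains the configuration in \autoref{fig:subfig:TA}. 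With \autoref{CRITICAL} and \autoref{NN} in hand the whole theorem is a few lines, as in the paper; as written, your proposal defers precisely those two steps, and the explicit order you do give for the configurations in \autoref{RC} does not work.
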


\autoref{T345-SFDT} improves all the results in \autoref{WithoutTriangle}. Note that if $G$ contains no configurations in \autoref{A345}, then it contains no configurations in \autoref{FIGPAIRWISE3456}. 
\begin{corollary}
For every toroidal graph without any configuration in \autoref{A345}, the list vertex arboricity is at most two, and the DP-chromatic number is at most four. 
\end{corollary}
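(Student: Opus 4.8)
The plan is to obtain both assertions as immediate specializations of \autoref{T345-SFDT}, using the fact \cite{MR4357325} that DP-coloring and $L$-forested-coloring with lists of size two are instances of the strictly $f$-degenerate transversal problem. The two elementary observations that make this work are: a graph is strictly $2$-degenerate if and only if it is a forest (every subgraph contains a vertex of degree at most one, so no cycle survives), and a graph is strictly $1$-degenerate if and only if it is edgeless (every subgraph contains an isolated vertex).

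First I would bound the DP-chromatic number. Let $H$ be an arbitrary cover of $G$ equipped with a function $f\colon V(H)\to\{0,1\}$ such that $f(v,1)+f(v,2)+\dots+f(v,s)\geq 4$ for each $v\in V(G)$. Since $\{0,1\}\subseteq\{0,1,2\}$, \autoref{T345-SFDT} provides a strictly $f$-degenerate transversal $R$ of $H$. Because $f(v,p)\leq 1$ for every $(v,p)\in V(H)$, the graph $H[R]$ is edgeless, so $R$ is, by definition, a DP-coloring of $(H,f)$. As $H$ and $f$ were arbitrary, $\chi_{\textsf{DP}}(G)\leq 4$.

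Next I would bound the list vertex arboricity. Let $L$ be a list $2$-assignment of $G$ (we may assume $|L(v)|=2$ for every $v$); fix a bijection $\sigma_v\colon L(v)\to[2]$ for each $v$, set $s=2$, and let $H$ be the cover of $G$ whose matching $\mathscr{M}_{uv}$ joins $(u,\sigma_u(c))$ to $(v,\sigma_v(c))$ for every $c\in L(u)\cap L(v)$ and every $uv\in E(G)$. Take $f\equiv 2$ on $V(H)$, so $f(v,1)+f(v,2)=4$ for each $v$; \autoref{T345-SFDT} then yields a strictly $f$-degenerate transversal $R=\{(v,\sigma_v(\phi(v))):v\in V(G)\}$, which encodes a coloring $\phi$ with $\phi(v)\in L(v)$. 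By the construction of $\mathscr{M}$, the edge set of $H[R]$ consists exactly of the pairs $(u,\phi(u))(v,\phi(v))$ with $uv\in E(G)$ and $\phi(u)=\phi(v)$, so $H[R]$ is the disjoint union of the graphs $G[\phi^{-1}(c)]$ over all colors $c$. Since $H[R]$ is strictly $2$-degenerate, it is a forest, hence each color class $\phi^{-1}(c)$ induces a forest and $\phi$ is an $L$-forested-coloring. As $L$ was arbitrary, the list vertex arboricity of $G$ is at most two.

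There is no substantial obstacle here: the only point requiring care is the bookkeeping that reinterprets a list $2$-assignment as a valued cover with constant value $2$ and verifies that ``$H[R]$ is a forest'' translates back into ``every color class induces a forest'' — precisely the reduction carried out in \cite{MR4357325}. Once \autoref{T345-SFDT} is available, the corollary follows.
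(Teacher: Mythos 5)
Your proof is correct and follows essentially the same route as the paper: the corollary is an immediate specialization of \autoref{T345-SFDT}, taking $f$ with values in $\{0,1\}$ summing to at least $4$ for DP-coloring, and the cover built from a list $2$-assignment with $f\equiv 2$ (so that strictly $2$-degenerate means forest) for list vertex arboricity. The paper simply delegates this bookkeeping to the cited reference \cite{MR4357325}, which is exactly the reduction you spell out.
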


We say that two cycles are \emph{intersecting} if they have at least one common vertex. The third class of graphs is the class of planar graphs without intersecting 5-cycles. Cai \etal \cite{MR3761240} proved that planar graphs without intersecting 5-cycles have vertex arboricity at most two. Hu and Wu \cite{MR3648207} proved that planar graphs without intersecting 5-cycles are 4-choosable. Recently, Li \etal \cite{MR4362322} improved Hu and Wu's result by showing that planar graphs without intersecting 5-cycles are DP-4-colorable.

\begin{theorem}\label{Intersecting}
Let $G$ be a connected plane graph without intersecting $5$-cycles, and let $C_{0}$ be a triangle. Then one of the followings holds:
\begin{enumerate}[label = (\arabic*)]
\item $V(G) = V(C_{0})$; or
\item there is an internal $3^{-}$-vertex; or
\item there is an internal induced subgraph isomorphic to a configuration in \autoref{Kite}, \autoref{F35}, \autoref{fig:subfig:RC1-a}, \autoref{RC-1}, and \autoref{RC-2}.
\end{enumerate}
\end{theorem}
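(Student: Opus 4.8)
The plan is to argue by contradiction via Euler's formula and the discharging method, with the five listed configurations emerging exactly as the unavoidable obstructions. Suppose the conclusion fails and let $G$ be a connected plane graph with no two intersecting $5$-cycles, containing a triangle $C_0$, for which none of (1)--(3) holds. Thus $V(G)\neq V(C_0)$, every vertex of $G$ not lying on $C_0$ has degree at least $4$ (there is no internal $3^{-}$-vertex), and $G$ has no induced subgraph disjoint from $C_0$ isomorphic to any configuration in \autoref{Kite}, \autoref{F35}, \autoref{fig:subfig:RC1-a}, \autoref{RC-1} or \autoref{RC-2}.

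The first step is to reduce to the case that $C_0$ bounds the outer face. On the sphere $C_0$ splits the surface into two closed discs, and since $V(G)\neq V(C_0)$ at least one of them, say $\Delta$, contains a vertex off $C_0$; let $G'$ be the plane subgraph of $G$ drawn inside $\Delta$. Since no edge of $G$ can join an interior vertex of $\Delta$ to a vertex outside $\Delta$, the graph $G'$ is connected (a path in $G$ from an interior vertex to $C_0$ stays in $\Delta$ until it first meets $C_0$), still has no two intersecting $5$-cycles, still contains the triangle $C_0$ with $C_0$ now bounding a face, every vertex of $G'$ off $C_0$ keeps degree at least $4$, and any induced subgraph of $G'$ disjoint from $C_0$ is still induced in $G$; hence $G'$ is again a counterexample. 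Replacing $G$ by $G'$ and re-embedding, we may assume $C_0$ is the outer face; a routine argument further lets us assume $G$ is $2$-connected, so every face is bounded by a cycle. Note the three vertices of $C_0$ have degree at least $2$ and every other vertex has degree at least $4$.

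Now comes the discharging. Assign to each vertex $v$ the charge $\mu(v)=d_G(v)-4$ and to each face $f$ the charge $\mu(f)=d(f)-4$; by Euler's formula $\sum_v\mu(v)+\sum_f\mu(f)=-8$. The only elements carrying negative charge are the triangular faces (charge $-1$ each), among them the outer face $C_0$, and possibly some vertices of $C_0$ (charge at least $-2$). I would design discharging rules that leave every interior face and every interior vertex with final charge at least $0$ while $C_0$ together with its three vertices retains total final charge at least $-7$; then
\[
-8=\sum_v\mu^{*}(v)+\sum_f\mu^{*}(f)\ \geq\ \mu^{*}(C_0)+\sum_{v\in V(C_0)}\mu^{*}(v)\ \geq\ -7 ,
\]
a contradiction. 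The rules route charge to the deficient triangular faces from the larger faces and larger vertices around them; the hypothesis that $G$ has no two intersecting $5$-cycles is used to show that each $5$-face is vertex-disjoint from every other $5$-cycle — so $5$-faces do not cluster and each may release its surplus of $1$ — and care is needed to keep the charge-zero elements, the $4$-faces and especially the $4$-vertices, from finishing negative, while elements incident with $C_0$ get a short separate treatment.

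The crux — and the step I expect to be by far the hardest — is verifying that the rules never fail: one must show that if some interior triangular face, or some interior $4$-vertex, would end with negative charge, then a careful analysis of its neighbourhood, pruned heavily by the ``no two intersecting $5$-cycles'' restriction, always exhibits an induced subgraph disjoint from $C_0$ isomorphic to one of \autoref{Kite}, \autoref{F35}, \autoref{fig:subfig:RC1-a}, \autoref{RC-1} or \autoref{RC-2}, contradicting the choice of $G$. Carrying out this case analysis — enumerating the possible local pictures around a deficient triangle or $4$-vertex, discarding those creating two intersecting $5$-cycles, and in each surviving case locating one of the five configurations and checking it is genuinely induced and misses $C_0$ — is the bulk of the proof; the Euler-formula bookkeeping around it is routine.
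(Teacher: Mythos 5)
There is a genuine gap: your proposal sets up the standard discharging framework (minimal counterexample, $C_{0}$ as the outer face, Euler's formula, charge redistribution) but never supplies the discharging rules nor the verification that they work, and that verification is the entire mathematical content of the theorem. Saying "I would design discharging rules that leave every interior face and vertex nonnegative" and "the crux is verifying that the rules never fail" is a statement of intent, not a proof; the difficulty is precisely in choosing rules for which the unavoidability analysis closes, and the five listed configurations are not generic obstructions that pop out of any reasonable rule set — they are tailored to a specific set of rules. In the paper this requires (a) preliminary structural lemmas proved from the counterexample hypotheses (boundaries of $4$-faces and internal $5$-faces are chordless, the two possible local pictures of a \emph{special} $4$-vertex and the $6^{+}$-faces forced around it, the fact that a $3$-face incident to an internal $(4,4,4,4,4)$-face has its third vertex on $C_{0}$ or of degree at least $5$, every internal vertex on a $3$-face also lies on a $6^{+}$-face, a bound on the number of triangular faces at an internal vertex), and (b) eight rules with carefully split amounts (e.g.\ the $\frac{2}{3}/\frac{3}{4}/\frac{1}{2}$ contributions of special vertices, the $\frac14$ sent to internal $(4,4,4,4,4)$-faces through adjacent $3$-faces, including from the outer face), followed by a case analysis of internal $4$-, $5$-, $6$-, $7$- and $8^{+}$-vertices in which \autoref{fig:subfig:RC1-a}, \autoref{RC-1} and \autoref{RC-2} are located in exactly the residual cases. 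None of this is present or even sketched in your write-up, so the proposal does not establish the statement.

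A secondary, fixable issue: your reduction only cuts once along $C_{0}$ to make it facial. The structural lemmas one needs (for instance, that a chord of a $4$-face or of an internal $5$-face forces an internal $2$-vertex) also require that there be \emph{no separating $3$-cycles at all} inside the outer triangle; this is obtained in the paper by taking a counterexample minimizing $|V(G)|$ and deleting the exterior of any separating triangle, not by a single cut. Your choice of charges $d-4$ is workable in principle, but since no rules accompany it, the bookkeeping inequality you display (final charge of $C_{0}$ plus its vertices at least $-7$) is an unsupported placeholder rather than a step of a proof.
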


\begin{figure}%
\centering
\begin{tikzpicture}
\def\s{1}
\coordinate (A) at (45:\s);
\coordinate (B) at (135:\s);
\coordinate (C) at (225:\s);
\coordinate (D) at (-45:\s);
\coordinate (H) at (90:1.414*\s);
\coordinate (X) at ($(A)+(0, \s)$);
\coordinate (Y) at (45:2*\s);
\draw (A)node[right]{$a_{4}$}--(Y)node[right]{$a_{3}$}--(X)node[above]{$a_{2}$}--(H)node[left]{$a_{1}$}--(B)node[left]{$a_{7}$}--(C)node[below]{$a_{6}$}--(D)node[below]{$a_{5}$}--cycle;
\draw (X)--(A)--(H);
\node[rectangle, inner sep = 1.5, fill, draw] () at (A) {};
\node[rectangle, inner sep = 1.5, fill, draw] () at (B) {};
\node[rectangle, inner sep = 1.5, fill, draw] () at (C) {};
\node[rectangle, inner sep = 1.5, fill, draw] () at (D) {};
\node[rectangle, inner sep = 1.5, fill, draw] () at (H) {};
\node[regular polygon, inner sep = 1.5, fill=blue, draw=blue] () at (X) {};
\node[rectangle, inner sep = 1.5, fill, draw] () at (Y) {};
\end{tikzpicture}
\caption{Note that $|N_{G}(a_{7}) \cap \{a_{1}, \dots, a_{6}\}| = 2$.}
\label{fig:subfig:RC1-a}
\end{figure}

\begin{figure}%
\centering
\begin{tikzpicture}
\def\s{1}
\coordinate (O) at (0, 0);
\coordinate (v1) at (0:\s);
\coordinate (v2) at (60:\s);
\coordinate (v3) at (120:\s);
\coordinate (v4) at (180:\s);
\draw (v1)node[below]{$a_{5}$}--(v2)node[above]{$a_{1}$}--(v3)node[above]{$a_{2}$}--(v4)node[below]{$a_{3}$}--(O)node[below]{$a_{4}$}--cycle;
\draw (O)--(v2);
\draw (O)--(v3);
\node[rectangle, inner sep = 1.5, fill, draw] () at (O) {};
\node[regular polygon, inner sep = 1.5, fill=blue, draw=blue] () at (v3) {};
\node[rectangle, inner sep = 1.5, fill, draw] () at (v1) {};
\node[rectangle, inner sep = 1.5, fill, draw] () at (v2) {};
\node[rectangle, inner sep = 1.5, fill, draw] () at (v4) {};
\end{tikzpicture}
\caption{$a_{2}a_{5}, a_{3}a_{5} \notin E(G)$.}
\label{RC-1}
\end{figure}
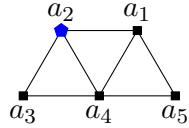

\begin{figure}%
\centering
\subcaptionbox{\label{fig:subfig:RC-2-a}$a_{3}a_{5} \notin E(G)$.}
{\begin{tikzpicture}
\def\s{1}
\coordinate (O) at (0, 0);
\coordinate (v1) at (45: \s);
\coordinate (v2) at (135: \s);
\coordinate (v3) at (225: \s);
\coordinate (v4) at (-45: \s);
\coordinate (A) at (-1.414*\s, 0);
\coordinate (B) at (1.414*\s, 0);
\draw (v1)node[above]{$a_{2}$}--(O)node[below]{$a_{3}$}--(v3)node[below]{$a_{6}$}--(A)node[left]{$a_{7}$}--(v2)node[above]{$a_{1}$}--(v4)node[below]{$a_{4}$}--(B)node[right]{$a_{5}$}--cycle;
\draw (v2)--(v3);
\draw (v1)--(v4);
\node[rectangle, inner sep = 1.5, fill, draw] () at (v1) {};
\node[rectangle, inner sep = 1.5, fill, draw] () at (v2) {};
\node[rectangle, inner sep = 1.5, fill, draw] () at (v3) {};
\node[rectangle, inner sep = 1.5, fill, draw] () at (v4) {};
\node[rectangle, inner sep = 1.5, fill, draw] () at (A) {};
\node[rectangle, inner sep = 1.5, fill, draw] () at (B) {};
\node[regular polygon, inner sep = 1.5, fill=blue, draw=blue] () at (O) {};
\end{tikzpicture}}\hspace{1cm}
\subcaptionbox{\label{fig:subfig:RC-2-b}$a_{3}a_{5} \notin E(G)$.}
{\begin{tikzpicture}
\def\s{1}
\coordinate (O) at (0, 0);
\coordinate (v1) at (45: \s);
\coordinate (v2) at (135: \s);
\coordinate (v3) at (225: \s);
\coordinate (v4) at (-45: \s);
\coordinate (A) at (-1.414*\s, 0);
\coordinate (B) at (1.414*\s, 0);
\draw (v1)node[above]{$a_{3}$}--(O)node[below]{$a_{4}$}--(v3)node[below]{$a_{6}$}--(A)node[left]{$a_{7}$}--(v2)node[above]{$a_{1}$}--(v4)node[below]{$a_{5}$}--(B)node[right]{$a_{2}$}--cycle;
\draw (v2)--(v3);
\draw (O)--(B);
\node[rectangle, inner sep = 1.5, fill, draw] () at (v1) {};
\node[rectangle, inner sep = 1.5, fill, draw] () at (v2) {};
\node[rectangle, inner sep = 1.5, fill, draw] () at (v3) {};
\node[rectangle, inner sep = 1.5, fill, draw] () at (v4) {};
\node[rectangle, inner sep = 1.5, fill, draw] () at (A) {};
\node[rectangle, inner sep = 1.5, fill, draw] () at (B) {};
\node[regular polygon, inner sep = 1.5, fill=blue, draw=blue] () at (O) {};
\end{tikzpicture}}
\caption{Note that $|N_{G}(a_{7}) \cap \{a_{1}, \dots, a_{6}\}| = 2$.}
\label{RC-2}
\end{figure}

\begin{theorem}\label{Intersecting-AT}
Let $G$ be a planar graph without intersecting $5$-cycles. Then its Alon-Tarsi number is at most four. 
\end{theorem}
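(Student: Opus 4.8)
The plan is to argue by contradiction, choosing a counterexample $G$ with $|V(G)| + |E(G)|$ minimum. Since a spanning sub-circulation of a disjoint union is the disjoint union of one spanning sub-circulation per component and the number of arcs is additive over components, $\mathrm{diff}$ is multiplicative over components; hence the Alon--Tarsi number of a graph equals the maximum of the Alon--Tarsi numbers of its components, and we may assume $G$ is connected. If $G$ is triangle-free, Euler's formula gives $|E(G)| \le 2|V(G)| - 4$, so $G$ is $3$-degenerate; repeatedly deleting a vertex of degree at most $3$ and orienting its not-yet-deleted edges away from it produces an acyclic orientation $D$ with $\Delta(D)^{+} \le 3$, and the only spanning sub-circulation of an acyclic digraph is the empty one, so $\mathrm{diff}(D) = 1 \ne 0$ and $AT(G) \le 4$, a contradiction. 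So $G$ has a triangle; I would fix a plane embedding of $G$ and a triangle $C_{0}$, and apply \autoref{Intersecting}.

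The workhorse for re-inserting a deleted vertex is the following observation, which I would state as a lemma. If $D'$ is an orientation of $G - v$ with $\mathrm{diff}(D') \ne 0$ and $\Delta(D')^{+} \le 3$, and $d_{G}(v) \le 3$, then orienting every edge at $v$ away from $v$ yields an orientation $D$ of $G$ in which $v$ has in-degree $0$, so every spanning sub-circulation of $D$ avoids $v$ and is a spanning sub-circulation of $D'$; hence $\mathrm{diff}(D) = \mathrm{diff}(D') \ne 0$ and $\Delta(D)^{+} \le 3$. This disposes of outcome (i) of \autoref{Intersecting} (then $|V(G)| = 3$, so $G \subseteq K_{3}$ and $AT(G) \le 3$) and outcome (ii) by minimality (delete the internal $3^{-}$-vertex $v$; the smaller graph $G - v$ still has no intersecting $5$-cycles, so $AT(G - v) \le 4$, and the observation upgrades this to $AT(G) \le 4$); both contradict the choice of $G$.

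It remains to rule out outcome (iii): $G$ contains, disjoint from $C_{0}$, an induced copy $F$ of one of the configurations in \autoref{Kite}, \autoref{F35}, \autoref{fig:subfig:RC1-a}, \autoref{RC-1}, \autoref{RC-2}. For each such $F$ the plan is to delete a carefully chosen subset $S \subseteq V(F)$, invoke minimality to obtain a $4$-AT-orientation $H$ of $G - S$, and reinstate the vertices of $S$ one at a time in a prescribed order, maintaining an orientation with out-degrees at most $3$ and nonzero $\mathrm{diff}$. Each reinsertion where the current degree is at most $3$ is done by the out-orientation trick above; the set $S$ and the order are to be chosen so that every reinsertion of current degree $4$ involves a vertex $v$ lying on a triangle $vw_{1}w_{2}$ at least one of whose vertices $w_{1},w_{2}$ then has out-degree at most $2$. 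Such a $v$ is reinserted by completing a directed triangle on $\{v,w_{1},w_{2}\}$ that reuses the existing arc between $w_{1}$ and $w_{2}$ — if that arc points the wrong way, first reverse a directed cycle of the current orientation through it, which changes neither the out-degree sequence nor the non-vanishing of $\mathrm{diff}$ — so that the vertex of $\{w_{1},w_{2}\}$ whose out-degree rises is the one with room, and then orienting the remaining two edges at $v$ away from $v$; the nonvanishing of $\mathrm{diff}$ of the resulting orientation (equivalently, of the relevant coefficient of $\prod_{uv\in E(G)}(x_u-x_v)$) is then verified by a direct count of spanning sub-circulations. The main obstacle is precisely this configuration-by-configuration bookkeeping: one must exhibit, for each of Kite, \autoref{F35}, \autoref{fig:subfig:RC1-a}, \autoref{RC-1}, \autoref{RC-2}, a choice of $S$ and reinsertion order, and check, using the absence of intersecting $5$-cycles together with the degrees prescribed in the figures, that at each degree-$4$ step the needed triangle and low-out-degree savior are available and the resulting $\mathrm{diff}$ is nonzero. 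Since \autoref{Intersecting} always produces one of (i)--(iii), each impossible, we conclude $AT(G) \le 4$.
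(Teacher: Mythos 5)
Your top-level skeleton (minimal counterexample, triangle-free case via $3$-degeneracy and acyclic orientations, then \autoref{Intersecting} and reducibility of each outcome) matches the paper's overall strategy, and your treatment of outcomes (i) and (ii) is correct: a vertex of degree at most $3$ can be reinserted with all its edges oriented outward, since it then has in-degree $0$, every spanning sub-circulation avoids it, and $\mathrm{diff}$ is unchanged. The gap is in outcome (iii), which is the real content of the theorem and which you leave as ``bookkeeping''. Your mechanism for reinserting a vertex $v$ of current degree $4$ --- find a triangle $vw_{1}w_{2}$ whose savior has out-degree at most $2$, if necessary reverse a directed cycle through the arc between $w_{1}$ and $w_{2}$, complete a directed triangle, and confirm $\mathrm{diff}\neq 0$ ``by a direct count'' --- does not stand as written. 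First, that arc need not lie on any directed cycle of the current orientation, so the reversal move may be unavailable. Second, and more seriously, once an arc enters $v$ the new orientation acquires spanning sub-circulations through $v$ (and through previously reinserted vertices), and their signed contributions can cancel against the old ones; nonvanishing of $\mathrm{diff}$ is not a local or generic consequence of the savior having spare out-degree. This is precisely where the Alon--Tarsi setting differs from weak degeneracy: there is no analogue of the \textsf{DeleteSave} operation for $\mathrm{diff}$ (the paper itself points out that no inequality between $AT(G)$ and $\textsf{wd}(G)+1$ holds in general), and it is consistent with this that the paper does not claim $AT\leq 4$ for the toroidal class, whose configurations are reducible only via such a ``save''. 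Since you explicitly defer the per-configuration choice of $S$, the reinsertion order, and the $\mathrm{diff}$ verification, the heart of the proof is missing.

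For comparison, the paper's route avoids the problem entirely: it proves the rooted strengthening \autoref{Intersecting-AT'} and, for each configuration produced by \autoref{Intersecting}, deletes the \emph{whole} vertex set of the configuration, obtains a $4$-AT-orientation of the remainder by minimality, and then orients all edges incident with the configuration so that every edge leaving it points outward, using the explicit orientations in \autoref{OR} and \autoref{OR-C5}. Because no arc enters the configuration, \autoref{L} factors $\mathrm{diff}$ as a product, and one only needs $\mathrm{diff}\neq 0$ and out-degree at most $3$ for a fixed small digraph --- a finite check independent of the ambient graph, made possible because the degrees prescribed in \autoref{Kite}, \autoref{F35}, \autoref{fig:subfig:RC1-a}, \autoref{RC-1} and \autoref{RC-2} control the number of outgoing edges. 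If you pursue your incremental plan you would end up redoing exactly this computation in a harder setting, so the whole-configuration deletion combined with \autoref{L} is the step to adopt.
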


\begin{theorem}\label{Intersecting-Weak3}
Let $G$ be a planar graph without intersecting $5$-cycles. Then $G$ is weakly $3$-degenerate. 
\end{theorem}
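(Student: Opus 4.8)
The plan is to derive Theorem~\ref{Intersecting-Weak3} from the structural Theorem~\ref{Intersecting} by a minimal‑counterexample argument, in which each outcome of Theorem~\ref{Intersecting} is shown to admit a short legal sequence of \textsf{Delete} and \textsf{DeleteSave} operations after which the inductive hypothesis applies. A single \textsf{Delete} next to a high‑degree vertex already drops a neighbour's value below $3$ (and one cannot \textsf{DeleteSave} it, since at that moment no vertex has value exceeding $3$), so the induction cannot be run on the bare statement ``$G$ is weakly $3$‑degenerate''. Instead I would induct on $|V(G)|$ over a valued‑graph statement: \emph{every planar graph $G$ without intersecting $5$‑cycles, together with every $f\colon V(G)\to\mathbb N$ satisfying a hypothesis $(\star)$, is weakly $f$‑degenerate}, where $(\star)$ holds for the constant function $3$ and is calibrated so as to be restored by each reduction below. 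A guide for $(\star)$ is that under \textsf{Delete} the quantity $f(v)-d_G(v)$ is unchanged at every surviving neighbour, while \textsf{DeleteSave} raises it by $1$ at the saved vertex; thus $(\star)$ should be a lower bound on $f$ relative to the current degree, relaxed on a controlled set of ``deficient'' vertices and augmented by the invariant that every \textsf{Delete} stays legal. Fixing $(\star)$ so that it is simultaneously implied by $f\equiv 3$, preserved by every reduction, and strong enough to keep all operations legal is the principal bookkeeping task.

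With $(\star)$ chosen, let $(G,f)$ be a counterexample with $|V(G)|$ minimum. A standard argument using that operations in one component do not affect $f$ elsewhere reduces to $G$ connected. If $\delta(G)\le 3$ — in particular if $G$ is triangle‑free, where $\delta(G)\le 3$ automatically since $G$ is planar of girth at least $4$ — pick $v$ with $d_G(v)\le 3$; then $\textsf{Delete}(G,f,v)$ is legal by $(\star)$, the pair $(G-v,f')$ again lies in the class and satisfies $(\star)$, hence is weakly $f'$‑degenerate by minimality, and prepending the deletion of $v$ contradicts the choice of $(G,f)$. So $\delta(G)\ge 4$; as $G$ is planar this forces a triangle $C_0$, and we apply Theorem~\ref{Intersecting} to $(G,C_0)$. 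Outcome~(1), $V(G)=V(C_0)$, is impossible since $\delta(G)\ge 4$, and outcome~(2), an internal $3^-$‑vertex, is impossible for the same reason; hence outcome~(3) holds, so $G$ contains an internal induced subgraph isomorphic to one of \autoref{Kite}, \autoref{F35}, \autoref{fig:subfig:RC1-a}, \autoref{RC-1}, \autoref{RC-2}.

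For each such configuration $R$ I would exhibit an explicit order in which to delete the vertices of $R$, choosing at each step \textsf{Delete} or \textsf{DeleteSave}$(\,\cdot\,,w)$ so that every step is legal and the resulting valued graph $(G-V(R),f')$ lies in the class and still satisfies $(\star)$; minimality then gives the contradiction. The choices are dictated by the data in the figures: the blue apex is a $5$‑vertex and every $\square$‑vertex is a $4$‑vertex, so the vertices of $R$ have almost all of their neighbours inside $R$ (for instance $a_4$ in \autoref{fig:subfig:RC1-a} has all four), one distinguished vertex has exactly two neighbours among the others ($|N_G(a_7)\cap\{a_1,\dots,a_6\}|=2$), and the forbidden edges ($a_2a_4\notin E(G)$ in \autoref{Kite}, $a_2a_5,a_3a_5\notin E(G)$ in \autoref{RC-1}, $a_3a_5\notin E(G)$ in \autoref{RC-2}) together with the absence of intersecting $5$‑cycles restrict how $R$ attaches to the rest of $G$. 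Deleting the interior vertices of $R$ in an order that postpones those with outside neighbours, and using \textsf{DeleteSave} precisely to shield the one or two outside vertices (or the apex) that would otherwise be over‑decremented, keeps the deficiency within whatever budget $(\star)$ allows.

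The main obstacle will be exactly this last step: simultaneously pinning down $(\star)$ and producing, for each of the five configurations — and especially for the larger ones in \autoref{fig:subfig:RC1-a}, \autoref{RC-1} and~\autoref{RC-2} — a deletion order together with a \textsf{DeleteSave} strategy that is legal throughout and restores $(\star)$. A secondary point requiring care is that Theorem~\ref{Intersecting} supplies these configurations only as \emph{induced} and \emph{internal} subgraphs, so one must check that the local deletions do not interact badly with the rest of $G$ or with $C_0$; this is where the planarity of $G$ and the exclusion of intersecting $5$‑cycles enter, both of which are already packaged into Theorem~\ref{Intersecting}.
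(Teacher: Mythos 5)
Your overall skeleton (minimal counterexample, reduce to $\delta\geq 4$, find a triangle, invoke \autoref{Intersecting}, kill each configuration with \textsf{Delete}/\textsf{DeleteSave}) is the paper's skeleton, but the two places you flag as ``bookkeeping'' are exactly where your plan breaks. First, the order of operations in your configuration step is backwards. You propose to delete the configuration $R$ \emph{first}, shielding over-decremented outside vertices with \textsf{DeleteSave}, and then apply minimality to $(G-V(R),f')$. But \textsf{DeleteSave}$(G,f,u,w)$ requires the strict inequality $f(u)>f(w)$, and any outside vertex not yet touched sits at the maximum value $3$, so it can never be saved; each configuration in \autoref{Kite}, \autoref{F35}, \autoref{fig:subfig:RC1-a}, \autoref{RC-1}, \autoref{RC-2} has between $6$ and $11$ edges to the outside, so deleting it first necessarily leaves that many deficiencies at uncontrolled locations. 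No hypothesis $(\star)$ of the shape you sketch can both absorb such deficiencies and still let \autoref{Intersecting} drive the next inductive step, because the reducibility of those configurations depends on their degree-$4$ vertices having the full budget $3$.

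The resolution, which is what the paper does, is to reverse the order and to make $(\star)$ concrete: the strengthened statement is rooted at a designated triangle $C_{0}=[x_{1}x_{2}x_{3}]$, asserting that $G-\{x_{1},x_{2},x_{3}\}$ is weakly $g$-degenerate for $g(u)=3-|N_{G}(u)\cap\{x_{1},x_{2},x_{3}\}|$ (\autoref{Intersecting-Weak3'}); the unrooted theorem follows by gluing a pendant triangle at an arbitrary vertex and deleting it first, as in the proof of \autoref{PAIRWISE3456-Weak3}. In the minimal counterexample one then removes \emph{everything outside} the configuration first (this is legitimate by minimality, since the smaller graph still contains $C_{0}$ and lies in the class), which leaves each configuration vertex $a$ with residual value $3-|N_{G}(a)\setminus V(R)|$; only now is the configuration removed, and the degree data recorded in the figures (a last vertex of degree four in $G$ with strictly more outside neighbours than the first, plus the listed non-edges) supply precisely the strict inequality needed for one or two nested \textsf{DeleteSave} steps inside $R$, with the rest removed greedily (\autoref{Weak-Sequence} and \autoref{WWeak-Sequence}; for \autoref{Kite} and \autoref{F35} one can instead quote the Gallai-type \autoref{Gallai-Weak}, since these induced subgraphs are not GDP-trees). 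Until your $(\star)$ is pinned down in this rooted form and the configuration step is run in this direction, the proposal has a genuine gap rather than a routine verification left to do.
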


\begin{restatable}{theorem}{IntersectingSFDT}\label{IntersectingSFDT}
Let $G$ be a planar graph without intersecting $5$-cycles. Let $H$ be a cover of $G$ and $f$ be a function from $V(H)$ to $\{0, 1, 2\}$. If $f(v, 1) + f(v, 2) + \dots + f(v, s) \geq 4$ for each $v \in V(G)$, then $H$ has a strictly $f$-degenerate transversal. 
\end{restatable}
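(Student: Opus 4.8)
The argument parallels the proofs of \autoref{PAIRWISE3456-SFDT} and \autoref{T345-SFDT}, now driven by the structural \autoref{Intersecting}. Suppose the statement fails and choose a counterexample $(G,H,f)$ with $|V(G)|+|E(G)|$ minimum; so $H$ is a cover of a planar graph $G$ without intersecting $5$-cycles, $f\colon V(H)\to\{0,1,2\}$ with $\sum_{p}f(v,p)\ge 4$ for every $v$, and $H$ has no strictly $f$-degenerate transversal. First, $G$ is connected: otherwise each component is a smaller graph in the same class, hence has a strictly $f$-degenerate transversal by minimality, and the disjoint union of these (whose $f$-removing orders concatenate) is one for $H$. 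Next, $\delta(G)\ge 4$: if $d_G(v)\le 3$, minimality gives a strictly $f$-degenerate transversal $R'$ of $H_{G-v}$ with an $f$-removing order $\sigma'$; put $v$ at the front of $\sigma'$, and note that the at most $3$ already-coloured neighbours of $v$ block at most $3$ elements of $L_v$ (one per incident matching edge), so, since $\sum_p f(v,p)\ge 4$, some colour $(v,p)$ has fewer than $f(v,p)$ blocked neighbours — if every colour $p$ had at least $f(v,p)$ conflicts, summing would give $3\ge\sum_p f(v,p)\ge 4$ — contradicting minimality. Finally, a triangle-free planar graph has a vertex of degree at most $3$, so $G$ contains a triangle $C_0$.

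Apply \autoref{Intersecting} to $G$ with the triangle $C_0$. Alternative~(1) forces $G=K_3$, which has a strictly $f$-degenerate transversal (order the vertices; each has at most two later neighbours and $\sum_p f\ge 4>2$), a contradiction; alternative~(2) contradicts $\delta(G)\ge 4$. Hence $G$ contains an internal induced subgraph $\Gamma$ isomorphic to one of the five configurations in \autoref{Kite}, \autoref{F35}, \autoref{fig:subfig:RC1-a}, \autoref{RC-1}, \autoref{RC-2}. It remains to prove that each of these is \emph{reducible}, contradicting the minimality of $(G,H,f)$.

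For such a $\Gamma$ with vertex set $S=V(\Gamma)$, put $G'=G-S$; this is a smaller graph in the class, so by minimality $H_{G'}$ has a strictly $f$-degenerate transversal $R'$. Define the residual valued cover $(H_S,f')$ by
\[
f'(v,p)\ :=\ f(v,p)-\bigl|\{\,u\in N_G(v)\setminus S:\ (u,R'(u))\,(v,p)\in E(H)\,\}\bigr|\qquad(v\in S).
\]
The same pigeonhole computation used for $v$ above, carried out along $S$, shows that if $H_S$ admits a strictly $f'$-degenerate transversal $R_S$ with an $f'$-removing order $\sigma_S$, then prepending $\sigma_S$ to $\sigma'$ certifies that $R'\cup R_S$ is a strictly $f$-degenerate transversal of $H$, the desired contradiction. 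Thus everything reduces to producing $R_S$, and for this the only input is the bound $\sum_{p}f'(v,p)\ge 4-\bigl(d_G(v)-d_S(v)\bigr)$ for $v\in S$. Reading the degrees off the figures — a solid $k$-gon is a $k$-vertex and in each configuration has all but at most two of its neighbours inside $S$, so $\sum_p f'(v,p)\ge 2$ for every solid $v$, while the distinguished apex (a $5$- or $6$-vertex joined to four or five vertices of $S$) and the $3^{-}$-free hypotheses give extra slack — one fixes a removing order $s_1,\dots,s_k$ of $S$, colours $s_k,s_{k-1},\dots,s_1$ in turn choosing at each step a colour with fewer conflicts than its $f'$-value, and uses the prescribed non-edges (e.g. $a_2a_4\notin E(G)$ in \autoref{Kite}, $a_2a_5,a_3a_5\notin E(G)$ in \autoref{RC-1}, $a_3a_5\notin E(G)$ in \autoref{RC-2}) to keep the number $t_j$ of later neighbours of each $s_j$ below its residual budget. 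In the tight steps one invokes a \textsf{DeleteSave}-type move: a later-coloured neighbour $w$ whose residual exceeds its own requirement by one can be coloured so as to additionally avoid the matching edge towards the tight vertex.

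The delicate part — and the main obstacle — is carrying out this last verification for the three larger configurations \autoref{fig:subfig:RC1-a}, \autoref{RC-1} and \autoref{RC-2} (seven to ten vertices, mixing $4$-, $5$- and $6$-vertices, with one vertex whose neighbourhood outside $S$ is only partially specified, namely $|N_G(a_7)\cap\{a_1,\dots,a_6\}|=2$ or the analogous condition). There the naive greedy bound $\sum_p f'(s_j,p)\ge t_j+1$ is off by exactly one at some vertex of $S$, so one must track the \textsf{DeleteSave} savings and show they propagate through the configuration; this is precisely where the exact degree labels and the non-adjacencies recorded in the figures are consumed. Once all five configurations are shown reducible, no minimal counterexample can exist, which proves \autoref{IntersectingSFDT}.
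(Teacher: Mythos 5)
Your skeleton matches the paper's: take a minimal counterexample, show $\delta\geq 4$ and connectivity, find a triangle, invoke \autoref{Intersecting}, and derive a contradiction by showing each of the five listed configurations is reducible. But the proof has a genuine gap exactly where you flag the ``main obstacle'': you never actually establish reducibility of the configurations, and this is the mathematical heart of the theorem. For \autoref{fig:subfig:RC1-a}, \autoref{RC-1} and \autoref{RC-2} the naive greedy extension of a transversal of $H_{G-S}$ fails by one at some vertex, and the ``\textsf{DeleteSave}-type move'' you describe does not transfer verbatim from weak degeneracy to the cover setting: in a cover one cannot simply colour a later neighbour ``so as to avoid the matching edge towards the tight vertex'', because that choice must simultaneously respect that neighbour's own residual constraints, and the conflict pattern depends on the matchings $\mathscr{M}_{uv}$. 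The paper resolves precisely this point in Section~\ref{sec:5}: the saving is placed on the \emph{first} vertex of the ordering via the hypothesis $d_{G}(v_{1})-d_{K}(v_{1})<d_{G}(v_{m})-d_{K}(v_{m})$ (Theorem~\ref{WW}), colours are renamed so that $(v_{1},j)(v_{m},j)\in E(H)$, and when the greedy completion still fails one shows every $(v_{m},j)$ has exactly $f^{*}(v_{m},j)$ neighbours among the chosen vertices and then \emph{reorders}, putting $(v_{m},1)$ first, to produce the transversal anyway. Moreover, \autoref{RC-2} has two blocks, so a single save does not suffice; the paper needs the nested version (\autoref{EXNN}, packaged as \autoref{EXNN-RC-1}). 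None of this is reconstructed or even correctly sketched in your proposal, so the claim ``once all five configurations are shown reducible'' is exactly the unproved part.

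For completeness, the paper's actual treatment of the five configurations is: \autoref{Kite} and \autoref{F35} are killed by the critical-pair theorem of Lu--Wang--Wang (\autoref{CRITICAL}\ref{M2}), since the induced subgraph on the solid vertices is neither a cycle nor complete and contains a vertex of degree $3>\max_{q}f(\cdot,q)$; \autoref{fig:subfig:RC1-a} is handled by \autoref{NN} with the order $a_{1},\dots,a_{7}$ (here the condition $|N_{G}(a_{7})\cap\{a_{1},\dots,a_{6}\}|=2$ is exactly what makes the hypotheses of \autoref{NN} hold); and \autoref{RC-1}, \autoref{RC-2} require the new extension lemmas \autoref{WW}/\autoref{EXNN} via \autoref{EXNN-RC-1}, where the recorded non-adjacencies (e.g.\ $a_{3}a_{5}\notin E(G)$) feed the degree comparisons. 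If you want your deletion-of-$S$ framework to go through, you must either prove analogues of these extension lemmas in the cover setting or verify each configuration by an explicit case analysis of the residual function $f'$; as written, the argument for the three larger configurations is an acknowledgement of the difficulty rather than a proof.
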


\begin{corollary}
Planar graphs without intersecting 5-cycles have list vertex arboricity at most two. 
\end{corollary}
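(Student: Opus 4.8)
The plan is to obtain this as an immediate consequence of \autoref{IntersectingSFDT}, using the standard encoding of $L$-forested-colorings as strictly $f$-degenerate transversals recorded in \cite{MR4357325}. So it suffices to show that every planar graph $G$ without intersecting $5$-cycles admits an $L$-forested-coloring for an arbitrary list $2$-assignment $L$.

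First I would fix such an $L$, with $|L(v)| = 2$ for every $v \in V(G)$, and after relabeling assume $\bigcup_{v} L(v) \subseteq [s]$ for some $s$. Build the cover $H = H_{L, \mathscr{M}}$ of $G$ by taking $L_{v} = \{v\} \times [s]$ and, for each edge $uv \in E(G)$, letting $\mathscr{M}_{uv}$ be the identity matching $\{(u,c)(v,c) : c \in [s]\}$. Define $f \colon V(H) \to \{0,1,2\}$ by $f(v,c) = 2$ if $c \in L(v)$ and $f(v,c) = 0$ otherwise. Then $f(v,1) + \dots + f(v,s) = 2|L(v)| = 4$ for every $v \in V(G)$, so the hypothesis of \autoref{IntersectingSFDT} is satisfied and $H$ has a strictly $f$-degenerate transversal $R$.

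Next I would read a coloring off $R$. Write $R \cap L_{v} = \{(v,\phi(v))\}$. If $\phi(v) \notin L(v)$ then $f(v,\phi(v)) = 0$, but then the one-vertex subgraph $\{(v,\phi(v))\}$ of $H[R]$ would require a vertex of degree $< 0$, which is impossible; hence $\phi(v) \in L(v)$, so $\phi$ is an $L$-coloring. For a color $c$, the subgraph of $H[R]$ induced by $\{(v,c) \in R\}$ is isomorphic to $G[\phi^{-1}(c)]$ because $\mathscr{M}_{uv}$ is the identity matching. Since $H[R]$ is strictly $f$-degenerate and $f \equiv 2$ on $R$, every subgraph of $H[R]$ — in particular every subgraph of each $G[\phi^{-1}(c)]$ — has a vertex of degree at most $1$; that is, each color class induces a forest. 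Thus $\phi$ is an $L$-forested-coloring, and since $L$ was an arbitrary list $2$-assignment, $G$ has list vertex arboricity at most two.

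There is essentially no hard step: all the content is in \autoref{IntersectingSFDT}. The only points requiring (routine) care are the two checks above — that $f$ vanishing off the lists forces the transversal to respect $L$, and that ``strictly $2$-degenerate'' coincides with ``forest'' — or, alternatively, one may simply invoke \cite{MR4357325}, where it is shown that strictly $f$-degenerate transversals generalize $L$-forested-colorings, and note that the encoding of any list $2$-assignment meets the threshold $f(v,1) + \dots + f(v,s) \geq 4$ required by \autoref{IntersectingSFDT}.
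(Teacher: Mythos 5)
Your derivation is correct and is exactly the route the paper intends: the corollary is stated as an immediate consequence of \autoref{IntersectingSFDT} via the standard encoding of $L$-forested-colorings (list $2$-assignment, identity matchings, $f\equiv 2$ on list colors, so the threshold $4$ is met and strict $2$-degeneracy of each color class means it is a forest), which is precisely the generalization recorded in \cite{MR4357325}. No gaps; the two routine checks you spell out are all that is needed.
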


We say that two cycles are \emph{normally adjacent} if their intersection is isomorphic to $K_{2}$. Two faces are \emph{normally adjacent} if the intersection of their boundaries is isomorphic to $K_{2}$.

Let $G$ be a plane graph. We use $F(G)$ to denote the set of faces in $G$. Let $\triangledown(f)$ denote the number of $3$-faces adjacent to a face $f$, and let $\triangledown(v)$ denote the number of 3-faces incident with a vertex $v$. A $d$-vertex (resp. $d^{+}$-vertex, $d^{-}$-vertex) is a vertex of degree $k$ (resp. at least $k$, at most $k$). A similar definition can be applied to faces. For a face $f \in F(G)$, we write $f = [v_{1}v_{2}\dots v_{n}]$ if $v_{1}, v_{2}, \dots, v_{n}$ are the vertices on the boundary of $f$ in a cyclic order. For convenience, we also use $[v_{1}v_{2}\dots v_{n}]$ to denote a cycle with vertices $v_{1}, v_{2}, \dots, v_{n}$ in a cyclic order. An $m$-face $[v_{1}v_{2}\dots v_{n}]$ is a $(d_{1}, d_{2}, \dots, d_{m})$-face if $d(v_{i}) = d_{i}$ for all $1 \leq i \leq m$. Throughout this paper, we use $\tau(x \rightarrow y)$ to denote the number of charges that the element $x$ sends to the element $y$.

The rest of the paper is organized as follows. In \autoref{sec:2}, we prove the structural result \autoref{PAIRWISE3456}; in \autoref{sec:3}, we prove the Alon-Tarsi result \autoref{PAIRWISE3456-AT}; in \autoref{sec:4}, we prove \autoref{PAIRWISE3456-Weak3}; in \autoref{sec:5}, we give some structural results for the critical graphs w.r.t. strictly $f$-degenerate transversal; in \autoref{sec:6}, we prove \autoref{PAIRWISE3456-SFDT}; in \autoref{sec:7}, we prove \autoref{T345}, \autoref{T345-Weak3} and \autoref{T345-SFDT}; in the final section, we prove \autoref{Intersecting}, \autoref{Intersecting-Weak3} and \autoref{IntersectingSFDT}.

\section{Proof of \autoref{PAIRWISE3456}}
\label{sec:2}
In this section, we prove the structural result, \autoref{PAIRWISE3456}. Let $G$ together with a good $4^{-}$-cycle $C_{0}=[x_{1}x_{2}\dots x_{l}]$ be a counterexample to the statement such that $|V(G)| + |E(G)|$ is minimum.

\begin{enumerate}[label = \textbf{(\arabic*)}, ref = (\arabic*)]
\item\label{CHORDLESS} $[x_{1}x_{2}\dots x_{l}]$ has no chords. 
\end{enumerate}

\begin{enumerate}[label = \textbf{(\arabic*)}, ref = (\arabic*), resume]
\item\label{SEPARATING} There are no separating good $4^{-}$-cycles in $G$. Then every induced good $4^{-}$-cycle bounds a face. So we may assume that $[x_{1}\dots x_{l}]$ is the boundary of the outer face $D$. 
\end{enumerate}
\begin{proof}
If $C_{0}$ is a separating cycle, then $G - \ext(C_{0})$ is a smaller counterexample, a contradiction. Since $C_{0}$ is an induced non-separating cycle, we may assume that $C_{0}$ bounds the outer face $D$. Assume $C$ is a separating good $4^{-}$-cycle in $G$. It is obvious that $V(C_{0}) \cap \int(C) = \emptyset$, the graph $G - \ext(C)$ is a smaller counterexample, a contradiction. Therefore, there are no separating good $4^{-}$-cycles in $G$.
\end{proof}

A face is \emph{internal} if none of its incident vertices is on the outer cycle. For simplicity, an internal $4$-face is called a \emph{$4^{*}$-face}, and an internal $5$-face is called a \emph{$5^{*}$-face}.

Let $[w_{1}w_{2}w_{3}w_{4}]$ be a separating bad $4$-cycle. Then there exists a vertex $w$ adjacent to each of $w_{1}, w_{2}, w_{3}$ and $w_{4}$. By \ref{SEPARATING}, each of $[ww_{1}w_{2}]$, $[ww_{2}w_{3}]$, $[ww_{3}w_{4}]$ and $[ww_{4}w_{1}]$ bounds a $3$-face. We say that \emph{a region bounded by $[w_{1}w_{2}w_{3}w_{4}]$ consists of four $3$-faces}. Note that a separating bad $4$-cycle has a feature that every edge on the cycle is incident with a $3$-face. 

\begin{enumerate}[label = \textbf{(\arabic*)}, ref = (\arabic*), resume]
\item\label{INTERNALVERTEX} Every internal vertex $v$ satisfies $d(v) \geq 4$. Then every $3^{-}$-vertex is incident with the outer face $D$. 
\end{enumerate}

\begin{enumerate}[label = \textbf{(\arabic*)}, ref = (\arabic*), resume]
\item\label{F35P} There is no internal subgraph (not necessarily induced) isomorphic to \autoref{F35}.
\end{enumerate}
\begin{proof}
Suppose to the contrary that there exists an internal subgraph isomorphic to \autoref{F35}. Let $w_{1}w_{2}w_{3}w_{4}w_{5}w_{1}$ be the $5$-cycle and $w_{1}w_{5}w_{6}w_{1}$ be the triangle. If $w_{1}w_{3}$ or $w_{3}w_{5}$ is an edge in $G$, then there exists a subgraph isomorphic to \autoref{fig:subfig:a}, a contradiction. If $w_{2}w_{4}$ is an edge in $G$, then there exists a subgraph isomorphic to \autoref{fig:subfig:b}. If $w_{1}w_{4}$ or $w_{2}w_{5}$ is an edge in $G$, then there exists a subgraph isomorphic to \autoref{fig:subfig:433}. Then the $5$-cycle $w_{1}w_{2}w_{3}w_{4}w_{5}w_{1}$ has no chords. If $w_{2}w_{6}$ or $w_{4}w_{6}$ is an edge in $G$, then there exists an internal induced subgraph isomorphic to \autoref{Kite}, a contradiction. It follows that $w_{2}w_{6}, w_{4}w_{6} \notin E(G)$. Suppose that $w_{3}w_{6}$ is an edge in $G$. Then both $w_{3}w_{2}w_{1}w_{6}w_{3}$ and $w_{3}w_{4}w_{5}w_{6}w_{3}$ are induced $4$-cycles. By \ref{SEPARATING}, the triangle $w_{1}w_{5}w_{6}w_{1}$ bounds a $3$-face. By \ref{INTERNALVERTEX}, the vertex $w_{6}$ has a neighbor not in $\{w_{1}, w_{2}, w_{3}, w_{4}, w_{5}\}$, and one of $w_{3}w_{2}w_{1}w_{6}w_{3}$ and $w_{3}w_{4}w_{5}w_{6}w_{3}$, say $w_{3}w_{4}w_{5}w_{6}w_{3}$, is a separating $4$-cycle. By \ref{SEPARATING}, $w_{3}w_{4}w_{5}w_{6}w_{3}$ must be a bad $4$-cycle. In other words, there exists a vertex $w \neq w_{5}$ such that $w$ is adjacent to each of $w_{3}, w_{4}, w_{5}$ and $w_{6}$. But there exists a subgraph isomorphic to \autoref{fig:subfig:a}, a contradiction.  
\end{proof}

\begin{enumerate}[label = \textbf{(\arabic*)}, ref = (\arabic*), resume]
\item\label{4FACECHORD} If $f = [w_{1}w_{2}w_{3}w_{4}]$ is a $4$-face, then $w_{1}w_{3} \notin E(G)$ and $w_{2}w_{4} \notin E(G)$. 
\end{enumerate}
\begin{proof}
Suppose that $w_{1}$ and $w_{3}$ are adjacent in $G$. By \ref{SEPARATING}, each of $[w_{1}w_{2}w_{3}]$ and $[w_{1}w_{3}w_{4}]$ bounds a $3$-face. Then $w_{2}$ and $w_{4}$ must be $2$-vertices. By \ref{INTERNALVERTEX}, $f$ is the outer face $D$, this contradicts the fact that the boundary of $D$ has no chords. 
\end{proof}

\begin{enumerate}[label = \textbf{(\arabic*)}, ref = (\arabic*), resume]
\item\label{NORMALLYADJACENT} If two $4^{-}$-faces are adjacent, then they are normally adjacent. 
\end{enumerate}
\begin{proof}
It is easy to see that if two $3$-faces are adjacent, then they are normally adjacent. Let $f = [w_{1}w_{2}w_{3}w_{4}]$ be a $4$-face, and let $f' = [w_{1}w_{4}w_{5}]$ be a $3$-face. By \ref{4FACECHORD}, $[w_{1}w_{2}w_{3}w_{4}]$ has no chords. Then $w_{1}, w_{2}, w_{3}, w_{4}$ and $w_{5}$ are five distinct vertices. Hence, $f$ and $f'$ are normally adjacent. 

Let $h = [w_{1}u_{2}u_{3}w_{4}]$ be a $4$-face. Suppose that $f$ and $h$ are not normally adjacent. By symmetry, assume that $u_{2} \in \{w_{2}, w_{3}\}$. By \ref{4FACECHORD}, $[w_{1}w_{2}w_{3}w_{4}]$ has no chords, thus $u_{2} \neq w_{3}$. It follows that $u_{2} = w_{2}$. It is easy to see that $w_{1}$ is a $2$-vertex, and $w_{1}$ is incident with the outer face $D$. Since $f$ and $h$ are symmetric, let $f$ be the outer face $D$. Then $u_{3}$ is an internal $4^{+}$-vertex, and $\int([w_{2}w_{3}w_{4}u_{3}]) \neq \emptyset$. By \ref{SEPARATING}, the cycle $[w_{2}w_{3}w_{4}u_{3}]$ is a separating bad $4$-cycle. It follows that the interior region bounded by $[w_{2}w_{3}w_{4}u_{3}]$ consists of four $3$-faces, and $u_{3}$ is an internal $3$-vertex, a contradiction. 
\end{proof}

\begin{enumerate}[label = \textbf{(\arabic*)}, ref = (\arabic*), resume]
\item\label{K4} There is no complete subgraph $K_{4}$. 
\end{enumerate}
\begin{proof}
Suppose that $\{w_{1}, w_{2}, w_{3}, w_{4}\}$ induces a complete subgraph $K_{4}$. Note that $[w_{1}w_{2}w_{3}]$ and $[w_{1}w_{3}w_{4}]$ are two normally adjacent $3$-faces. Similarly, $[w_{1}w_{2}w_{4}]$ and $[w_{2}w_{3}w_{4}]$ are two $3$-faces. Then $G = K_{4}$ and every vertex has degree three in $G$. By \ref{INTERNALVERTEX}, only the vertices on the outer face can be of degree three. Hence, there are at most three vertices of degree three, a contradiction. 
\end{proof}

\begin{enumerate}[label = \textbf{(\arabic*)}, ref = (\arabic*), resume]
\item\label{5FACECHORD} If $f = [w_{1}w_{2}w_{3}w_{4}w_{5}]$ is a $5$-face, then the boundary of $f$ has no chords. As a consequence, if a $5$-face is adjacent to a $3$-face, then they are normally adjacent. 
\end{enumerate}
\begin{proof}
Suppose that $w_{1}w_{3} \in E(G)$. By \ref{SEPARATING}, the $3$-cycle $[w_{1}w_{2}w_{3}]$ bounds a $3$-face. Thus, $w_{2}$ is a $2$-vertex, and it is incident with the outer face $D$. It follows that $[w_{1}w_{2}w_{3}]$ bounds the outer face $D$. Since $w_{4}$ is an internal $4^{+}$-vertex, we have that $\int([w_{1}w_{3}w_{4}w_{5}]) \neq \emptyset$ and $[w_{1}w_{3}w_{4}w_{5}]$ is a separating bad $4$-cycle. Hence, the interior region bounded by $[w_{1}w_{3}w_{4}w_{5}]$ consists of four $3$-faces, and $w_{4}$ is an internal $3$-vertex, this contradicts \ref{INTERNALVERTEX}. 
\end{proof}

\begin{enumerate}[label = \textbf{(\arabic*)}, ref = (\arabic*), resume]
\item\label{5FACEADJACENT34FACE} If a $5^{*}$-face is adjacent to a $4^{-}$-face, then they are normally adjacent. 
\end{enumerate}
\begin{proof}
Let $f = [w_{1}w_{2}w_{3}w_{4}w_{5}]$ be a $5^{*}$-face and $f' = [w_{1}w_{5}w_{6}\dots]$ be a $4^{-}$-face. By \ref{5FACECHORD}, $[w_{1}w_{2}w_{3}w_{4}w_{5}]$ has no chords. It follows that $w_{6} \notin \{w_{2}, w_{3}\}$. It is easy to see that $w_{6} \neq w_{4}$, for otherwise $w_{5}$ is a $2$-vertex and $f$ is not an internal face. Hence, $w_{1}, w_{2}, w_{3}, w_{4}, w_{5}$ and $w_{6}$ are distinct. If $f'$ is a $3$-face, then $f$ and $f'$ are normally adjacent. If $f'$ is a $4$-face, then the forth vertex on $f'$ is also distinct from $w_{1}, w_{2}, w_{3}, w_{4}, w_{5}$, thus $f$ and $f'$ are normally adjacent. 
\end{proof}

\begin{enumerate}[label = \textbf{(\arabic*)}, ref = (\arabic*), resume]
\item\label{4V33} If a $4$-face is adjacent to two $3$-faces, then it can only be as depicted in \autoref{4ADJACENTTWO3FACES}, and one of the two $3$-faces is the outer face $D$.
\end{enumerate}
\begin{proof}
Let $f = [w_{1}w_{2}w_{3}w_{4}]$ be a $4$-face, and let $f' = [w_{1}w_{4}w_{5}]$ be a $3$-face. By \ref{NORMALLYADJACENT}, $f$ and $f'$ are normally adjacent. 

Assume $w_{1}w_{2}$ is incident with a $3$-face $h = [w_{1}w_{2}w_{6}]$. By \ref{NORMALLYADJACENT}, $f$ and $h$ are normally adjacent. If $w_{5} \neq w_{6}$, then there is a subgraph isomorphic to \autoref{fig:subfig:a}, a contradiction. So we may assume that $w_{5} = w_{6}$. Then $w_{1}$ is a $3$-vertex, and it is incident with the outer face $D$. If $f$ is the outer face $D$, then $w_{5}$ is an internal vertex, and $[w_{2}w_{3}w_{4}w_{5}]$ is a separating bad $4$-cycle. It follows that the interior region bounded by $[w_{2}w_{3}w_{4}w_{5}]$ consists of four $3$-faces, so there is a subgraph isomorphic to the configuration in \autoref{fig:subfig:a}, a contradiction. Then one of $f'$ and $h$ must be the outer face $D$, so it is as depicted in \autoref{4ADJACENTTWO3FACES}. 

Assume $w_{2}w_{3}$ is incident with a $3$-face $[w_{2}w_{3}w_{7}]$. If $w_{5} \neq w_{7}$, then there is a subgraph isomorphic to \autoref{fig:subfig:b}, a contradiction. So we may assume that $w_{5} = w_{7}$. Note that $w_{5}$ is adjacent to each of $w_{1}, w_{2}, w_{3}$ and $w_{4}$. It follows that a region bounded by $[w_{1}w_{2}w_{3}w_{4}]$ consists of four $3$-faces, which implies that each of $w_{1}, w_{2}, w_{3}$ and $w_{4}$ is a $3$-vertex. Therefore, $f$ must be the outer face $D$, and the boundary of $D$ is a bad $4$-cycle in $G$, a contradiction. 
\end{proof}

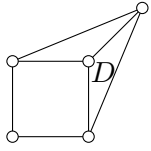
\begin{figure}%
\centering
\begin{tikzpicture}
\def\s{0.707}
\foreach \ang in {1, 2, 3, 4}
{
\def\pointname{v\ang}
\coordinate (\pointname) at ($(\ang*360/4-45:\s)$);
}
\draw (v1)--(v2)--(v3)--(v4)--cycle;
\coordinate (S) at ($(v1)!1!135:(v4)$);
\draw (v2)--(S)--(v4);
\draw (v1)--(S);
\node[circle, inner sep = 1.5, fill = white, draw] () at (S) {};
\node () at (27:1.1*\s) {$D$};
\foreach \ang in {1, 2, 3, 4}
{
\node[circle, inner sep = 1.5, fill = white, draw] () at (v\ang) {};
}
\end{tikzpicture}
\caption{A $4$-face is adjacent to two $3$-faces.}
\label{4ADJACENTTWO3FACES}
\end{figure}

\begin{enumerate}[label = \textbf{(\arabic*)}, ref = (\arabic*), resume]
\item\label{K4MINUS} If $f = [w_{1}w_{2}w_{3}]$ and $f' = [w_{1}w_{3}w_{4}]$ are two $(4, 4, 4)$-faces, then $\{w_{1}, w_{2}, w_{3}, w_{4}\} \cap V(D) \neq \emptyset$. 
\end{enumerate}
\begin{proof}
Let $\Gamma$ be the subgraph induced by $\{w_{1}, w_{2}, w_{3}, w_{4}\}$. Suppose that $\Gamma$ is an internal subgraph. By \ref{K4}, the induced subgraph $\Gamma$ is isomorphic to the configuration in \autoref{Kite}, a contradiction. 
\end{proof}

Let $f = [xy\dots]$ be an internal $(4, 4, 4, 4, 4)$-face. If $f$ is adjacent to a $3$-face $[xyz]$, then we say that $z$ is a \emph{source} of $f$ via the $3$-face $[xyz]$, and $f$ is a \emph{sink} of $z$ via the $3$-face $[xyz]$. 

\begin{enumerate}[label = \textbf{(\arabic*)}, ref = (\arabic*), resume]
\item\label{SOURCESINK} If $f = [w_{1}w_{2}w_{3}w_{4}w_{5}]$ is an internal $(4, 4, 4, 4, 4)$-face, then every source is on $D$ or has degree at least five. 
\end{enumerate}
\begin{proof}
Let $w_{1}w_{5}$ be incident with a $3$-face $f' = [w_{1}w_{5}w_{6}]$. By \ref{5FACEADJACENT34FACE}, $f$ and $f'$ are normally adjacent. If $w_{6}$ is an internal $4$-vertex, then there exists an internal subgraph isomorphic to \autoref{F35}, but this contradicts \ref{F35P}. 
\end{proof}

\begin{enumerate}[label = \textbf{(\arabic*)}, ref = (\arabic*), resume]
\item\label{BADSOURCE} Let $f = [ww_{1}xyw_{2}]$ be a $5^{*}$-face, where $d(w) \geq 5$ and $d(w_{2}) = d(y) = 4$. If $ww_{2}$ is incident with a $3$-face $f' = [ww_{2}w_{3}]$ and $w_{2}y$ is incident with a $3$-face $[uw_{2}y]$, then $w$ is not a source through $f'$. 
\end{enumerate}
\begin{proof}
By \ref{5FACEADJACENT34FACE}, we have that $f$ is normally adjacent to $[ww_{2}w_{3}]$ and $[uw_{2}y]$. Since $w_{2}$ is a $4$-vertex, it has four neighbors $w, y, u, w_{3}$ in a cyclic order. Suppose to the contrary that $w$ is a source via $f'$. Then $uw_{2}w_{3}$ is on the boundary of an internal $(4, 4, 4, 4, 4)$-face $h$. Note that $h$ is normally adjacent to an internal $(4, 4, 4)$-face $[uw_{2}y]$, this contradicts \ref{SOURCESINK}. 
\end{proof}

\begin{enumerate}[label = \textbf{(\arabic*)}, ref = (\arabic*), resume]
\item\label{4V3V4} Let $w$ be a $4^{+}$-vertex. If $w$ is incident with a $4$-face $f = [ww_{1}xw_{2}]$ and a $3$-face $f' = [ww_{2}w_{3}]$, then $ww_{3}$ is incident with a $4^{+}$-face. 
\end{enumerate}
\begin{proof}
By \ref{NORMALLYADJACENT}, $f$ and $f'$ are normally adjacent. Suppose that $ww_{3}$ is incident with a $3$-face $h = [ww_{3}w_{4}]$. It is clear that $f'$ and $h$ are normally adjacent. By \ref{4FACECHORD}, $wx \notin E(G)$, which implies that $w_{4} \neq x$. If $w_{4} = w_{1}$, then $w$ is a $3$-vertex, a contradiction. Hence, all the mentioned vertices are distinct, and there is a subgraph isomorphic to \autoref{fig:subfig:433}, a contradiction.   
\end{proof}

\begin{enumerate}[label = \textbf{(\arabic*)}, ref = (\arabic*), resume]
\item\label{5V3V34} Let $w$ be a $4^{+}$-vertex. If $w$ is incident with a $5$-face $f_{1} = [ww_{1}xyw_{2}]$ and a $3$-face $f_{2} = [ww_{2}w_{3}]$, then $ww_{3}$ is incident with a $5^{+}$-face. 
\end{enumerate}
\begin{proof}
By \ref{5FACECHORD}, the $5$-cycle $[ww_{1}xyw_{2}]$ has no chords. Then $f_{1}$ and $f_{2}$ are normally adjacent. 

Assume $ww_{3}$ is incident with another $3$-face $f_{3} = [ww_{3}w_{4}]$. By \ref{NORMALLYADJACENT}, $f_{2}$ and $f_{3}$ are normally adjacent. If $w_{1} = w_{4}$, then $w$ is a $3$-vertex, a contradiction. Recall that $[ww_{1}xyw_{2}]$ has no chords, we have $w_{4} \notin \{x, y\}$. It follows that $w_{4} \notin \{w_{1}, x, y\}$, and there is a subgraph isomorphic to \autoref{fig:subfig:533}, a contradiction. 

Assume $ww_{3}$ is incident with a $4$-face $f_{3} = [ww_{3}zw_{4}]$. By \ref{NORMALLYADJACENT}, $f_{2}$ and $f_{3}$ are normally adjacent. Similar to the above case, we have $w_{4} \notin \{w_{1}, x, y\}$. If $z = w_{1}$, then $[ww_{1}w_{3}]$ is a separating $3$-cycle, this contradicts \ref{SEPARATING}. If $z = x$, then $[ww_{3}xw_{1}]$ is a separating good $4$-cycle, this contradicts \ref{SEPARATING}. If $z = y$, then $[ww_{2}yw_{4}]$ is a separating good $4$-cycle, this contradicts \ref{SEPARATING}. Hence, all the mentioned vertices are distinct, now there is a subgraph isomorphic to \autoref{fig:subfig:534f}, a contradiction. 
\end{proof}

\begin{enumerate}[label = \textbf{(\arabic*)}, ref = (\arabic*), resume]
\item\label{6FACE} The boundary of every $6$-face is a $6$-cycle. 
\end{enumerate}
\begin{proof}
Suppose that the boundary of a $6$-face is not a $6$-cycle, say $[w_{1}w_{2}w_{3}w_{1}w_{4}w_{5}]$. Note that the $3$-cycle $[w_{1}w_{2}w_{3}]$ bounds a $3$-face, then both $w_{2}$ and $w_{3}$ are all $2$-vertices. Hence, $[w_{1}w_{2}w_{3}]$ bounds the outer face $D$. Therefore, $w_{4}$ and $w_{5}$ are two internal vertices, but $[w_{1}w_{4}w_{5}]$ is a separating $3$-cycle, this contradicts \ref{SEPARATING}.
\end{proof}

Let $v$ be an internal vertex. Let $\diamondsuit(v)$ denote the number of incident $4^{*}$-faces, and let $\pentagon(v)$ denote the number of incident $5^{*}$-faces. 

Let $v$ be an internal $4$-vertex incident with a $5^{*}$-face. The $4$-vertex $v$ is \emph{poor} if $\triangledown(v) = 2$, \emph{bad} if $\triangledown(v) = 1$ and $\diamondsuit(v) = 2$, \emph{light} if $\triangledown(v) = \diamondsuit(v) = 1$ and $\pentagon(v) = 2$, and \emph{special} otherwise. 

Let $f = [w_{1}w_{2}w_{3}w_{4}w_{5}]$ be an internal $(5^{+}, 4, 4, 4, 4)$-face with $d(w_{1}) \geq 5$. If each of $w_{1}w_{2}, w_{2}w_{3}$, $w_{4}w_{5}$ and $w_{5}w_{1}$ is incident with a $3$-face, then we say that $f$ is a \emph{$5_{\mathrm{a}}$-face}. If each of $w_{1}w_{2}, w_{2}w_{3}$ and $w_{4}w_{5}$ is incident with a $3$-face but $w_{5}w_{1}$ is incident with a $4^{*}$-face, then we say that $f$ is a \emph{$5_{\mathrm{b}}$-face}. If $w_{2}, w_{3}$ are poor vertices, and $w_{5}$ is a bad vertex, then we say that $f$ is a \emph{$5_{\mathrm{b}'}$-face}. If each of $w_{1}w_{2}, w_{2}w_{3}$ and $w_{4}w_{5}$ is incident with a $3$-face but the other face incident with $w_{5}w_{1}$ is a $5^{*}$-face, then we say that $f$ is a \emph{$5_{\mathrm{c}}$-face}. For convenience, $5_{\mathrm{b}}$-faces, $5_{\mathrm{b}'}$-faces and $5_{\mathrm{c}}$-faces are all called \emph{bad faces}. 

The definitions of $5_{\mathrm{a}}$-face and $5_{\mathrm{b}'}$-face together with \ref{5V3V34} imply that, if a $5$-vertex is incident with a $5_{\mathrm{a}}$-face, then it is not incident with any $5_{\mathrm{b}}$- or $5_{\mathrm{b}'}$-face.

\begin{enumerate}[label = \textbf{(\arabic*)}, ref = (\arabic*), resume]
\item\label{ADJACENTBADFACE} A $(4, 5^{+})$-edge is incident with at most one bad face. As a consequence, if a $5$-vertex is incident with a $5_{\mathrm{a}}$-face, then it is incident with at most one bad face. 
\end{enumerate}
\begin{proof}
Let $f = [w_{1}w_{2}w_{3}w_{4}w_{5}]$ be a bad face with $d(w_{1}) \geq 5$, and let $w_{1}, w_{4}, x, u$ be the four neighbors of $w_{5}$ in a cyclic order. Suppose that $w_{1}w_{5}$ is incident with two bad faces. According to the definition of bad face, each of $w_{5}w_{4}$ and $w_{5}u$ is incident with a $3$-face, but this contradicts \ref{5V3V34}.
\end{proof}

\begin{enumerate}[label = \textbf{(\arabic*)}, ref = (\arabic*), resume]
\item\label{ADJACENTPOOR} Let $f = [w_{1}w_{2}w_{3}w_{4}w_{5}]$ be a $5^{*}$-face with $d(w_{2}) = d(w_{3}) = 4$. If neither $w_{2}$ nor $w_{3}$ is a light or special vertex, then both $w_{2}$ and $w_{3}$ are poor vertices, \ie each of $w_{1}w_{2}, w_{2}w_{3}$ and $w_{3}w_{4}$ is incident with a $3$-face. As a consequence, if $w_{2}$ is a bad vertex, then $w_{3}$ is a light or special vertex. 
\end{enumerate}
\begin{proof}
Suppose that $w_{2}$ is a poor vertex. Then $f$ is normally adjacent to $3$-faces $[u_{1}w_{1}w_{2}]$ and $[u_{2}w_{2}w_{3}]$. By \ref{5V3V34}, we have that $ww_{3}$ is incident with a $5^{+}$-face. If $w_{3}w_{4}$ is not incident with a $3$-face, then $w_{3}$ is a light or special vertex, a contradiction. Then $w_{3}w_{4}$ is incident with a $3$-face, and then $w_{3}$ is a poor vertex.

Suppose that both $w_{2}$ and $w_{3}$ are bad vertices. By \ref{5V3V34}, $f$ is normally adjacent to a $4^{*}$-face $[aw_{2}w_{3}b]$ and each of $\{w_{2}a, w_{3}b\}$ is incident with a $3$-face, this contradicts \ref{4V33}. 
\end{proof}

Let $G^{*}$ be an auxiliary graph with vertex set $\{f : \text{$f$ is a $3$-face other than $D$}\}$, two vertices $f_{u}$ and $f_{v}$ in $G^{*}$ being adjacent if and only if the two $3$-faces are adjacent in $G$. A \emph{cluster} in $G$ consists of some $3$-faces which corresponds to a connected component of $G^{*}$. By the definition of cluster, every $3$-face other than $D$ belongs to a unique cluster. In the followings, we only need to consider clusters consisting of three or four $3$-faces, see \autoref{Clusters}. Recall that there is no complete subgraph $K_{4}$, it follows that $u_{2}u_{4}, u_{3}u_{5} \notin E(G)$ in \autoref{Clusters}. 

\begin{figure}%
\captionsetup[subfigure]{labelformat=empty}
\centering
\subcaptionbox{$\mathcal{C}_{1}$}[0.2\linewidth]{
\begin{tikzpicture}
\def\s{1}
\coordinate (O) at (0, 0);
\coordinate (v1) at (45:\s);
\coordinate (v2) at (135:\s);
\coordinate (v3) at (225:\s);
\coordinate (v4) at (-45:\s);
\draw (v1)node[right = 1pt]{$u_{4}$}--(v2)node[left = 1pt]{$u_{3}$}--(v3)node[left = 1pt]{$u_{2}$}--(v4)node[right = 1pt]{$u_{5}$}--cycle;
\draw (v1)--(O)node[below = 1pt]{$u_{1}$}--(v3);
\draw (v2)--(v4);
\node[rectangle, inner sep = 1.5, fill, draw] () at (O) {};
\node[circle, inner sep = 1.5, fill = white, draw] () at (v3) {};
\node[circle, inner sep = 1.5, fill = white, draw] () at (v1) {};
\node[circle, inner sep = 1.5, fill = white, draw] () at (v2) {};
\node[circle, inner sep = 1.5, fill = white, draw] () at (v4) {};
\end{tikzpicture}}
\subcaptionbox{$\mathcal{C}_{2}$}[0.2\linewidth]{
\begin{tikzpicture}
\def\s{1}
\coordinate (O) at (0, 0);
\coordinate (v1) at (0:\s);
\coordinate (v2) at (60:\s);
\coordinate (v3) at (120:\s);
\coordinate (v4) at (180:\s);
\draw (v1)node[below = 1pt]{$u_{5}$}--(v2)node[above = 1pt]{$u_{4}$}--(v3)node[above = 1pt]{$u_{3}$}--(v4)node[below = 1pt]{$u_{2}$}--(O)node[below = 1pt]{$u_{1}$}--cycle;
\draw (O)--(v2);
\draw (O)--(v3);
\node[circle, inner sep = 1.5, fill = white, draw] () at (O) {};
\node[circle, inner sep = 1.5, fill = white, draw] () at (v3) {};
\node[circle, inner sep = 1.5, fill = white, draw] () at (v1) {};
\node[circle, inner sep = 1.5, fill = white, draw] () at (v2) {};
\node[circle, inner sep = 1.5, fill = white, draw] () at (v4) {};
\end{tikzpicture}}
\caption{Clusters consisting of four or three $3$-faces.}
\label{Clusters}
\end{figure}
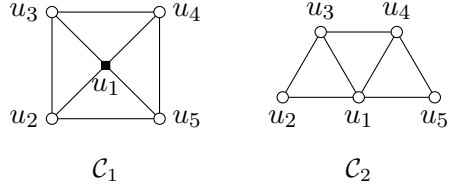

\begin{enumerate}[label = \textbf{(\arabic*)}, ref = (\arabic*), resume]
\item\label{CLUSTER2} For each cluster $\mathcal{C}_{1}$, each of $u_{2}u_{3}, u_{3}u_{4}, u_{4}u_{5}$ and $u_{5}u_{2}$ is incident with a $7^{+}$-face. 
\end{enumerate}
\begin{proof}
By symmetry, it suffices to prove that $u_{3}u_{4}$ is incident with a $7^{+}$-face. Assume $u_{3}u_{4}$ is incident with a $3$-face $[u_{3}u_{4}u]$ with $u \neq u_{1}$. By \ref{K4}, we have $u \notin \{u_{2}, u_{5}\}$. Then there is a subgraph isomorphic to \autoref{fig:subfig:a}, a contradiction. 

Assume $u_{3}u_{4}$ is incident with a $4$-face. By \ref{4V3V4}, both $u_{3}$ and $u_{4}$ are $3$-vertices, then the cycle $[u_{2}u_{3}u_{4}u_{5}]$ bounds the outer face $D$, and the outer cycle is a bad $4$-cycle, a contradiction.  

Assume $u_{3}u_{4}$ is incident with a $5^{+}$-face. If $u_{3}$ is a $3$-vertex, then it is incident with the outer face $D$, and then $u_{1}u_{3}$ is incident with the outer face $D$, but this contradicts the definition of the cluster $\mathcal{C}_{1}$. So we may assume that $u_{3}$, and symmetrically, $u_{4}$ are $4^{+}$-vertices. By \ref{5V3V34}, $u_{3}u_{4}$ cannot be incident with a $5$-face. 

Assume $u_{3}u_{4}$ is incident with a $6$-face $f = [w_{2}u_{3}u_{4}w_{5}w_{6}w_{1}]$. By \ref{6FACE}, the boundary of $f$ is a $6$-cycle. Observe that $u_{1}, u_{2}, \dots, u_{5}, w_{2}, w_{5}$ are distinct vertices. Note that $w_{2}$ and $u_{2}$ are nonadjacent, for otherwise there is a subgraph isomorphic to \autoref{fig:subfig:b}. It follows that $w_{1} \neq u_{2}$. If $w_{1} = u_{5}$, then $[w_{2}u_{3}u_{1}u_{5}]$ is a separating good $4$-cycle, a contradiction. By symmetry, we have that $\{u_{2}, u_{5}\} \cap \{w_{1}, w_{6}\} = \emptyset$. Then there is a subgraph isomorphic to the configuration in \autoref{fig:subfig:g}, a contradiction. 
\end{proof}

\begin{enumerate}[label = \textbf{(\arabic*)}, ref = (\arabic*), resume]
\item\label{CLUSTER1} In each cluster $\mathcal{C}_{2}$, the edge $u_{3}u_{4}$ is incident with a $7^{+}$-face. Moreover, if $u_{2}$ is a $4^{+}$-vertex, then $u_{2}u_{3}$ is also incident with a $7^{+}$-face. 
\end{enumerate}
\begin{proof}
Similar to \ref{CLUSTER2}, it is easy to prove that $u_{3}u_{4}$ is incident with a $7^{+}$-face (we leave it to the reader). Let $u_{2}$ be a $4^{+}$-vertex. Recall that $u_{2}u_{4}, u_{3}u_{5} \notin E(G)$. If $u_{2}u_{3}$ is incident with two $3$-faces, then there is a subgraph isomorphic to \autoref{fig:subfig:433}, a contradiction. Suppose that $u_{2}u_{3}$ is incident with a $4$-face $[w_{1}u_{2}u_{3}w_{4}]$. By \ref{4V3V4}, $u_{3}$ must be a $3$-vertex and $u_{4} = w_{4}$. It follows that $u_{3}$ is incident with the outer face $D$, then $[w_{1}u_{2}u_{3}u_{4}]$ bounds the outer face $D$. Since the outer cycle is a good cycle, we have that $w_{1} \neq u_{5}$. Note that $u_{5}$ cannot be adjacent to each of $w_{1}, u_{2}, u_{1}$ and $u_{4}$. Then $[w_{1}u_{2}u_{1}u_{4}]$ is a separating good $4$-cycle, a contradiction. Hence, $u_{2}u_{3}$ is incident with a $5^{+}$-face. If $u_{3}$ is a $3$-vertex, then it is incident with the outer face $D$, and then $u_{3}u_{1}$ is incident with $D$, but this contradicts the definition of the cluster $\mathcal{C}_{2}$. So we may assume that $u_{3}$ is a $4^{+}$-vertex. By \ref{5V3V34}, $u_{2}u_{3}$ cannot be incident with a $5$-face. 

Assume that $u_{2}u_{3}$ is incident with a $6$-face $f = [w_{1}u_{2}u_{3}w_{4}w_{5}w_{6}]$. By \ref{6FACE}, the boundary of $f$ is a $6$-cycle. Note that $u_{2}$ has degree at least four. If $w_{1} = u_{5}$, then the cycle $[u_{2}u_{1}u_{5}]$ is a separating $3$-cycle in $G$, a contradiction. Then $u_{1}, u_{2}, \dots, u_{5}, w_{1}, w_{4}$ are distinct vertices. Note that $w_{1}$ and $u_{1}$ are nonadjacent, for otherwise there is a subgraph isomorphic to \autoref{fig:subfig:a}. For the same reason, $w_{1}$ and $u_{5}$ are nonadjacent. If $w_{6} = u_{4}$, then $[w_{1}u_{2}u_{1}u_{4}]$ is a separating good $4$-cycle, a contradiction. It follows that $w_{6} \notin \{u_{1}, u_{4}, u_{5}\}$. Since the configuration in \autoref{fig:subfig:a} is forbidden, we have that $w_{4}$ and $u_{4}$ are nonadjacent. If $w_{5} = u_{1}$, then $[w_{4}u_{3}u_{1}]$ is a separating $3$-cycle, a contradiction. If $w_{5} = u_{5}$, then $[w_{4}u_{3}u_{1}u_{5}]$ is a separating good $4$-cycle, a contradiction. It follows that $w_{5} \notin \{u_{1}, u_{4}, u_{5}\}$. Therefore, all the mentioned vertices are distinct, but there is a subgraph isomorphic to \autoref{fig:subfig:h}, a contradiction. 
\end{proof}

\begin{enumerate}[label = \textbf{(\arabic*)}, ref = (\arabic*), resume]
\item\label{NO3444} Let $v$ be an internal $4$-vertex. If $\triangledown(v) = 1$ then $\diamondsuit(v) \leq 2$. 
\end{enumerate}
\begin{proof}
Let $v_{1}, v_{2}, v_{3}$ and $v_{4}$ be the four neighbors of $v$ in a cyclic order. Assume $f_{1} = [vv_{1}w_{1}v_{2}]$, $f_{2} = [vv_{2}w_{2}v_{3}]$, $f_{3} = [vv_{3}w_{3}v_{4}]$ are $4^{*}$-faces, and $f_{4} = [vv_{1}v_{4}]$ is a $3$-face. By \ref{NORMALLYADJACENT}, the two $4^{*}$-faces $f_{1}$ and $f_{2}$ are normally adjacent. By \ref{4FACECHORD}, we have $v_{4} \notin \{w_{1}, w_{2}\}$. Similarly, $f_{2}$ and $f_{3}$ are normally adjacent, $f_{3}$ and $f_{4}$ are normally adjacent. If $w_{1} = w_{3}$, then $[vv_{2}w_{1}v_{4}]$ is a separating good $4$-cycle, this contradicts \ref{SEPARATING}. It follows that $w_{1} \neq w_{3}$. But there is a subgraph isomorphic to \autoref{fig:subfig:3444}, a contradiction. 
\end{proof}

Let $w$ be a $5$-vertex incident with a $5_{\mathrm{a}}$-face $[ww_{1}xyw_{2}]$, where $[ww_{1}w_{5}]$ and $[ww_{2}w_{3}]$ are $3$-faces. By \ref{ADJACENTBADFACE}, at most one of the other two faces is a bad face. If $w$ is incident with two other $5^{*}$-faces, and one of which is a bad face (actually, it must be a $5_{\mathrm{c}}$-face), then we say that the other one is a \emph{$5_{\mathrm{d}}$-face}. 

The initial charge function $\mu$ is defined as $\mu(v) = 2d(v) - 6$ for all $v \in V(G)$, $\mu(f) = d(f) - 6$ for all $f \in F(G)\setminus D$ and $\mu(D) = d(D) + 6$. By the Handshaking Theorem and Euler's formula, we have 
\[
\sum_{v \in V(G)} \mu(v) + \sum_{f \in F(G)\setminus D} \mu(f) + \mu(D) = 0.
\] 
Following some appropriate discharging rules, a new charge function $\mu'$ is produced. In the discharging process, the sum of charges is preserved, but we will show that $\mu'(x) \geq 0$ for all $x \in V(G) \cup F(G)$, and the outer face $D$ has final charge $\mu'(D) > 0$, which leads to a contradiction. 

We use the following discharging rules. 

\begin{enumerate}[label = \textbf{R\arabic*.}, ref = R\arabic*]
\item Let $v$ be an internal $4$-vertex and $f$ be an incident $3$-face. Then 
\begin{align*}
\tau(v \rightarrow f) = 
&\begin{cases}
\frac{1}{2}, & \text{if $\triangledown(v) = 4$;}\\[0.2cm]
\frac{2}{3}, & \text{if $\triangledown(v) = 3$;}\\[0.2cm]
1, & \text{otherwise}.
\end{cases}
\end{align*}

\item\label{5+vertex3face} Let $v$ be an internal $5^{+}$-vertex and $f$ be an incident $3$-face. Then 
\begin{align*}
\tau(v \rightarrow f) = 
&\begin{cases}
\lambda = \frac{19}{14}, & \text{if $f$ is in a cluster $\mathcal{C}_{1}$ or $\mathcal{C}_{2}$, and $v = u_{3}$;}\\[0.2cm]
4 - 2\lambda = \frac{9}{7}, & \text{if $f$ is in a cluster $\mathcal{C}_{2}$ and $v \in \{u_{2}, u_{5}\}$;}\\[0.2cm]
\frac{4}{3}, & \text{if $f$ is in a cluster $\mathcal{C}_{2}$ and $v = u_{1}$;}\\[0.2cm]
1, & \text{otherwise, \ie $f$ is not in cluster $\mathcal{C}_{1}$ or $\mathcal{C}_{2}$.}
\end{cases}
\end{align*}
\item\label{vertex4face} Every internal vertex sends $\frac{1}{2}$ to each incident $4^{*}$-face. 
\item\label{4VERTEXTO5FACE} Let $v$ be an internal $4$-vertex and $f$ be an incident $5^{*}$-face. Then $\tau(v \rightarrow f) = \frac{2 - \triangledown(v) - \frac{1}{2}\diamondsuit(v)}{\pentagon(v)}$. 
\item\label{5+vertex5face} Let $v$ be an internal $5^{+}$-vertex and $f$ be an incident $5^{*}$-face. Then
\begin{align*}
\tau(v \rightarrow f) = 
&\begin{cases}
1, & \text{if $f$ is a $5_{\mathrm{a}}$-face;}\\[0.2cm]
\frac{3}{4}, & \text{if $f$ is a $5_{\mathrm{b}}$-face or a $5_{\mathrm{b}'}$-face;}\\[0.2cm]
\frac{2}{3}, & \text{if $f$ is a $5_{\mathrm{c}}$-face;}\\[0.2cm]
\frac{1}{3}, & \text{if $f$ is a $5_{\mathrm{d}}$-face;}\\[0.2cm]
\frac{1}{2}, & \text{otherwise.}
\end{cases}
\end{align*}
\item Every source (may be on $D$) sends $\rho = \frac{1}{4}$ to each associated sink. 
\item Every $7^{+}$-face sends $\frac{1}{7}$ to each adjacent $3$-face. 
\item Every vertex $v$ in $V(D)$ sends its initial charge $2d(v) - 6$ to the outer face $D$.  
\item\label{OUTERFACED} The outer face $D$ sends $2$ to each face having a common vertex with $D$, except that if $[vv_{1}v_{2}]$ has exactly one common vertex $v$ with $D$ and $v$ is a source through $[vv_{1}v_{2}]$, then $D$ sends $1$ to $[vv_{1}v_{2}]$ and sends $1$ to $v$ through $[vv_{1}v_{2}]$. 
\end{enumerate}

According to \ref{4VERTEXTO5FACE}, it is easy to obtain the following lemma. 
\begin{lemma}\label{SUMMARYRULE}
Let $v$ be an internal $4$-vertex and $f$ is an incident $5^{*}$-face. Then 
\begin{align*}
\tau(v \rightarrow f) =
&\begin{cases}
0, & \text{if $v$ is poor or bad;}\\[0.2cm]
\frac{1}{4}, & \text{if $v$ is light;}\\[0.2cm]
\geq \frac{1}{3}, & \text{if $v$ is special.}
\end{cases}
\end{align*}
\end{lemma}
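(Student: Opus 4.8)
The plan is to read the whole statement off from rule \ref{4VERTEXTO5FACE}, which says that $\tau(v \rightarrow f) = \bigl(2 - \triangledown(v) - \tfrac12 \diamondsuit(v)\bigr)/\pentagon(v)$; here $\pentagon(v) \geq 1$ because $v$ is incident with the $5^{*}$-face $f$, so the denominator is never $0$. Two of the four cases are then pure substitution. A bad vertex has $\triangledown(v) = 1$ and $\diamondsuit(v) = 2$, so the numerator is $2 - 1 - 1 = 0$ and $\tau(v \rightarrow f) = 0$. A light vertex has $\triangledown(v) = \diamondsuit(v) = 1$ and $\pentagon(v) = 2$, so $\tau(v \rightarrow f) = (2 - 1 - \tfrac12)/2 = \tfrac14$.

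For a poor vertex we have $\triangledown(v) = 2$, so the numerator is $-\tfrac12 \diamondsuit(v)$ and it is enough to prove $\diamondsuit(v) = 0$. Suppose to the contrary that $v$ is incident with a $4^{*}$-face $h$. Since $d(v) = 4$, the four faces in the rotation at $v$ are then exactly two $3$-faces, the $5^{*}$-face $f$, and the $4^{*}$-face $h$. Up to the obvious symmetry there are only two cyclic arrangements of these four faces around $v$ (according to whether $f$ and $h$ are opposite or adjacent at $v$); in both of them there is a $3$-face $t$ incident with $v$ one of whose two edges at $v$ also bounds a $4$- or $5$-face while the other bounds a $3$-face, contradicting \ref{4V3V4} or \ref{5V3V34}. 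Hence $\diamondsuit(v) = 0$ and $\tau(v \rightarrow f) = 0$.

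For a special vertex I would first pin down $\triangledown(v)$. A $4$-vertex incident with a $5^{*}$-face cannot have $\triangledown(v) \geq 3$: if $\triangledown(v) = 4$ then $\pentagon(v) = 0$, and if $\triangledown(v) = 3$ the unique non-triangular corner at $v$ is flanked by two $3$-faces, so the same rotation argument via \ref{5V3V34} and \ref{4V3V4} forces that corner onto a $6^{+}$-face and again $\pentagon(v) = 0$; either way this contradicts incidence with $f$. Moreover $\triangledown(v) = 2$ would make $v$ poor, not special. Hence $\triangledown(v) \leq 1$. Now one combines $\triangledown(v) + \diamondsuit(v) + \pentagon(v) \leq 4$ with \ref{NO3444} (giving $\diamondsuit(v) \leq 2$ when $\triangledown(v) = 1$) and with the fact that $v$ is neither poor, bad, nor light, and checks the few surviving triples $(\triangledown(v), \diamondsuit(v), \pentagon(v))$: when $\triangledown(v) = 0$ the numerator is at least $\tfrac12 \pentagon(v)$, so $\tau(v \rightarrow f) \geq \tfrac12$; when $\triangledown(v) = 1$ the only admissible triples are $(1, 0, \pentagon(v))$ with $\pentagon(v) \in \{1, 2, 3\}$ and $(1, 1, 1)$, yielding $\tau(v \rightarrow f) = 1/\pentagon(v) \geq \tfrac13$ and $\tau(v \rightarrow f) = \tfrac12$ respectively. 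So $\tau(v \rightarrow f) \geq \tfrac13$ in every special case. The one non-arithmetic ingredient is the small rotation-at-$v$ case analysis, invoked once to get $\diamondsuit(v) = 0$ for poor $v$ and once to get $\triangledown(v) \leq 1$ for special $v$; I expect that to be the main (and quite mild) obstacle, with the rest being routine bookkeeping against rule \ref{4VERTEXTO5FACE}.
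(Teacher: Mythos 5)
Your proof is correct and takes essentially the same route as the paper: everything is read off from rule \ref{4VERTEXTO5FACE}, with \ref{5V3V34} (and \ref{4V3V4}) used to constrain the rotation of faces at $v$, exactly as the paper does when it notes $\triangledown(v)\leq 2$ and that poor vertices have $\diamondsuit(v)=0$, before the same short case check for special vertices. One small slip worth fixing: in the poor case with $f$ and the $4^{*}$-face $h$ opposite at $v$, the other edge (at $v$) of a $3$-face adjacent to $f$ bounds $h$ rather than a $3$-face, but this still contradicts \ref{5V3V34} directly, so your argument goes through unchanged.
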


\begin{proof}
By \ref{5V3V34}, we get $\triangledown(v) \leq 2$. Moreover, if $\triangledown(v) = 2$, then $\diamondsuit(v) = 0$. Then $\tau(v \rightarrow f) = 0$ when $v$ is poor or bad. By the definition of light $4$-vertex, $\tau(v \rightarrow f) = \frac{1}{4}$ when $v$ is light. Assume that $v$ is special. If $\triangledown(v) = \diamondsuit(v) = \pentagon(v) = 1$, then $\tau(v \rightarrow f) = \frac{1}{2}$; if $\triangledown(v) = 1$ and $\diamondsuit(v) = 0$, then $\tau(v \rightarrow f) \geq \frac{1}{3}$; if $\triangledown(v) = 0$, then $\tau(v \rightarrow f) \geq \frac{1}{2}$. 
\end{proof}

Let $v$ be an internal $4$-vertex. If $\triangledown(v) = 4$, then $\mu'(v) = 2 - 4 \times \frac{1}{2} = 0$. If $\triangledown(v) = 3$, then $\diamondsuit(v) = \pentagon(v) = 0$ and $\mu'(v) = 2 - 3 \times \frac{2}{3} = 0$. If $\triangledown(v) = 2$, then $\diamondsuit(v) = 0$ and $\mu'(v) = 2 - 2 \times 1 = 0$. Recall that if $\triangledown(v) = 1$ then $\diamondsuit(v) \leq 2$. If $\triangledown(v) = 1$ and $\diamondsuit(v) = 2$, then $\mu'(v) = 2 - 1 - 2 \times \frac{1}{2} = 0$. If $\triangledown(v) = 1$ and $\diamondsuit(v) \leq 1$, then $\mu'(v) \geq \min\{2 - 1 - \frac{1}{2}, 0\} = 0$. If $\triangledown(v) = 0$, then $\mu'(v) \geq \min\{2 - 4 \times \frac{1}{2}, 0\} = 0$. 

Assume that $v$ is an internal $5$-vertex incident with a $5_{\mathrm{a}}$-face. Let $v$ be an internal $5$-vertex incident with a $5_{\mathrm{a}}$-face $f_{1} = [vv_{1}xyv_{2}]$, let $f_{2} = [vv_{2}v_{3}]$ and $f_{5} = [vv_{5}v_{1}]$ be two $3$-faces. Note that $v$ is incident with exactly one $5_{\mathrm{a}}$-face. By \ref{5V3V34}, the other two incident faces $f_{3}$ and $f_{4}$ are $5^{+}$-faces, so neither $f_{3}$ nor $f_{4}$ is a $5_{\mathrm{b}}$- or $5_{\mathrm{b}'}$-face. Moreover, each of $v_{2}v_{3}$ and $v_{5}v_{1}$ is incident with a $5^{+}$-face. Note that neither $f_{2}$ nor $f_{5}$ is in a cluster $\mathcal{C}_{1}$ or $\mathcal{C}_{2}$. By \ref{BADSOURCE}, $v$ is not a source through $f_{2}$ or $f_{5}$. If $v$ is incident with a $5_{\mathrm{c}}$-face $f_{4}$, then $\mu'(v) \geq 4 - 1 - 2 \times 1 - \frac{2}{3} - \frac{1}{3} = 0$. If $v$ is not incident with a $5_{\mathrm{c}}$-face, then $\mu'(v) \geq 4 - 1 - 2 \times 1 - 2 \times \frac{1}{2} = 0$. 

Assume that $v$ is an internal $5$-vertex which is not incident with a $5_{\mathrm{a}}$-face. For convenience, the incident faces are labeled as $f_{1}, f_{2}, \dots, f_{5}$ in a cyclic order. It follows that $v$ is incident with at most three $3$-faces. 
\begin{itemize}
\item Let $\triangledown(v) = 3$. By \ref{4V3V4} and \ref{5V3V34}, the other two incident faces are $6^{+}$-faces. Suppose that $v$ is incident with three consecutive $3$-faces $f_{1}, f_{2}$ and $f_{3}$. By \ref{5V3V34}, $v$ is not a source through any $3$-face. Then $\mu'(v) = 4 - 3 \times \frac{4}{3} = 0$. Suppose that $v$ is incident with three $3$-faces $f_{1}, f_{2}$ and $f_{4}$. Similarly, $v$ is not a source through $f_{1}$ or $f_{2}$. If $f_{4}$ is in a cluster $\mathcal{C}_{2}$, then $v$ sends $4 - 2\lambda$ to $f_{4}$. If $f_{4}$ is not in a cluster $\mathcal{C}_{2}$, then $f$ sends at most $1 + \rho \leq 4 -2 \lambda$ to/via $f_{4}$. Thus, $\mu'(v) \geq 4 - 2\lambda - (4 - 2\lambda) = 0$. 

\item Let $\triangledown(v) = 2$ and $d(f_{1}) = d(f_{2}) = 3$. By \ref{4V3V4} and \ref{5V3V34}, $f_{3}$ and $f_{5}$ are $6^{+}$-faces. Note that $f_{4}$ is not a bad face. It follows that $v$ sends at most $\frac{1}{2}$ to $f_{4}$. Moreover, $v$ is not a source through $f_{1}$ or $f_{2}$. This implies that $\mu'(v) \geq 4 - 2\lambda - \frac{1}{2} > 0$. 

\item Let $\triangledown(v) = 2$ and $d(f_{1}) = d(f_{3}) = 3$. It follows that $d(f_{2}) \geq 5$,  $d(f_{4}) \geq 4$ and $d(f_{5}) \geq 4$. Suppose that one of $f_{4}$ and $f_{5}$, say $f_{4}$, is a bad face. Thus $f_{5}$ is a $4^{*}$- or $5^{*}$-face. By \ref{ADJACENTBADFACE}, $f_{5}$ is not a bad face. By \ref{4V3V4} and \ref{5V3V34}, $f_{1}$ is not in a cluster $\mathcal{C}_{2}$. By \ref{BADSOURCE}, $v$ is not a source through $f_{3}$. By \ref{5V3V34}, $f_{3}$ is not in a cluster $\mathcal{C}_{2}$. Maybe $v$ is a source through $f_{1}$. Thus, $\mu'(v) \geq 4 - (1 + \rho) - \frac{1}{2} - 1 - \frac{3}{4} - \frac{1}{2} = 0$. Suppose that neither $f_{4}$ nor $f_{5}$ is a bad face. If $f_{2}$ is a $5^{*}$-face, then neither $f_{1}$ nor $f_{3}$ is in a cluster $\mathcal{C}_{2}$, and then $\mu'(v) \geq 4 - 2(1 + \rho) - 3 \times \frac{1}{2} = 0$. If $f_{2}$ is not a $5^{*}$-face, then $\tau(v \rightarrow f_{2}) = 0$ and $\mu'(v) \geq 4 - 2(4 - 2\lambda)  - 2 \times \frac{1}{2} > 0$. 

\item Let $\triangledown(v) \leq 1$. Since $v$ is incident with at most one $3$-face, we have that $v$ is incident with at most two bad faces. Moreover, $v$ is contained in at most one cluster. Then $\mu'(v) \geq 4 - (4 - 2\lambda) - 2 \times \frac{3}{4} - 2 \times \frac{1}{2} > 0$. 
\end{itemize}

Let $v$ be an internal $6^{+}$-vertex. Our goal is to prove that $v$ averagely sends to and through each incident face with a total charge of at most $1$, thus $\mu'(v) \geq \mu(v) - d(v) \times 1 = d(v) - 6 \geq 0$. Let $f_{1}, \dots, f_{k}$ be some consecutive $3$-faces incident with $v$, where $vv_{i-1}$ and $vv_{i}$ are the boundary edges of $f_{i}$, the other face incident with $vv_{0}$ is a $4^{+}$-face $f_{0}$, and the other face incident with $vv_{k}$ is a $4^{+}$-face $f_{k+1}$. We call these consecutive $3$-faces \emph{a fan of order $k$}. Note that $v$ sends at most $1$ to each incident $4^{+}$-face. It suffices to prove that 
\[
\frac{1}{2}\Big(\tau(v \rightarrow f_{0}) + \tau(v \rightarrow f_{k+1})\Big) + \tau(v \rightarrow f_{1}) + \dots + \tau(v \rightarrow f_{k}) \leq k + 1. 
\] 
According to the discharging rules, the possible values that send to/via each incident $3$-face are $\lambda, 4 - 2\lambda, \frac{4}{3}, 1 + \rho$. Note that these values are all greater than $1$. Recall that if $v$ sends $\rho$ through a $3$-face, then it directly sends $1$ to this $3$-face. 
\begin{itemize}
\item Suppose that $v$ sends $\lambda$ to an incident $3$-face in a fan. Then $k = 2$, and $v$ sends $\lambda$ to each $3$-face in the fan. By \ref{4V3V4} and \ref{5V3V34}, each of $f_{0}$ and $f_{k+1}$ is a $6^{+}$-face. It follows that $\frac{1}{2}\big(\tau(v \rightarrow f_{0}) + \tau(v \rightarrow f_{3})\big) + \tau(v \rightarrow f_{1}) + \tau(v \rightarrow f_{2}) = 0 + 0 + \lambda + \lambda < k + 1$. 
\item Suppose that $v$ sends $4 - 2\lambda$ to an incident $3$-face in a fan. Then $k = 1$, $f_{1}$ is in a cluster $\mathcal{C}_{2}$, and $v \in \{u_{2}, u_{5}\}$. By \ref{4V3V4}, \ref{5V3V34} and \ref{CLUSTER1}, $f_{0}$ and $f_{2}$ are $6^{+}$-faces. It follows that $\frac{1}{2}\big(\tau(v \rightarrow f_{0}) + \tau(v \rightarrow f_{2})\big) + \tau(v \rightarrow f_{1}) = 0 + 0 + (4 - 2\lambda) < k + 1$. 
\item Suppose that $v$ sends $\frac{4}{3}$ to an incident $3$-face in a fan. Then $k = 3$, and $v$ sends $\frac{4}{3}$ to each $3$-face in this fan. It is true that $f_{0}$ and $f_{4}$ are $6^{+}$-faces. It follows that $\frac{1}{2}\big(\tau(v \rightarrow f_{0}) + \tau(v \rightarrow f_{4})\big) + \tau(v \rightarrow f_{1}) + \tau(v \rightarrow f_{2}) + \tau(v \rightarrow f_{3}) = 0 + 0 + 3 \times \frac{4}{3} = k + 1$. 
\item Suppose that $v$ sends $1 + \rho$ to and through an incident $3$-face in a fan. Then $v$ is a source of a sink via this $3$-face. By \ref{5V3V34}, we have that $k = 1$, each of $f_{0}$ and $f_{2}$ is a $5^{+}$-face. By \ref{BADSOURCE}, neither $f_{0}$ nor $f_{2}$ is a $5_{\mathrm{a}}$-face. It follows that $\frac{1}{2}\big(\tau(v \rightarrow f_{0}) + \tau(v \rightarrow f_{2})\big) + \tau(v \rightarrow f_{1}) \leq \frac{1}{2}(\frac{3}{4} + \frac{3}{4}) + (1 + \rho) = k + 1$. 
\end{itemize}

Assume that $f$ is a $3$-face which is not in a cluster $\mathcal{C}_{1}$ or $\mathcal{C}_{2}$. If $f$ is an internal face, then it receives $1$ from each incident vertex, and then $\mu'(f) \geq 3 - 6 + 3 \times 1 = 0$. If $f$ has exactly one common vertex with $D$, then it receives $1$ from each incident internal vertex and at least $1$ from the outer face $D$, which implies that $\mu'(f) \geq 3 - 6 + 2 \times 1 + 1 = 0$. If $f$ has exactly two common vertex with $D$, then it receives $1$ from the incident internal vertex and $2$ from the outer face $D$, and then $\mu'(f) \geq 3 - 6 + 1 + 2 = 0$. It is impossible that $f$ has three common vertices with $D$. 

Consider a cluster $\mathcal{C}_{1}$. By the definition of clusters, $u_{1}$ is an internal vertex, and it sends $\frac{1}{2}$ to each incident $3$-face. According to the discharging rules, every internal vertex other than $u_{1}$ sends at least $1$ to each incident $3$-face in $\mathcal{C}_{1}$. If $\mathcal{C}_{1}$ has a common vertex with $D$, then there are at least two $3$-faces in $\mathcal{C}_{1}$ having common vertices with $D$, and then $\mu'(\mathcal{C}_{1}) \geq 4 \times (3 - 6) + 4 \times \frac{1}{2} + 6 \times 1 + 2 \times 2 = 0$. Assume that $V(\mathcal{C}_{1}) \cap V(D) = \emptyset$. By \ref{K4}, the five vertices in $\mathcal{C}_{1}$ induced a wheel $W_{4}$. By \ref{K4MINUS}, there are at least two $5^{+}$-vertices in $\mathcal{C}_{1}$. By \ref{CLUSTER2}, $\mathcal{C}_{1}$ is adjacent to four $7^{+}$-faces. It follows that $\mu'(\mathcal{C}_{1}) \geq 4 \times (3 - 6) + 4 \times \frac{1}{2} + 4 \times 1 + 4\lambda + 4 \times \frac{1}{7} = 0$. 

Consider a cluster $\mathcal{C}_{2}$. Similarly, every internal vertex other than $u_{1}$ sends at least $1$ to each incident $3$-face in $\mathcal{C}_{2}$. Assume that $\mathcal{C}_{2}$ has more than one common vertex with the outer face $D$. It is easy to check that $u_{2}u_{5} \in E(G)$ and the $3$-cycle $[u_{2}u_{1}u_{5}]$ bounds the outer face $D$ (we leave it to the reader). Then $\mu'(\mathcal{C}_{2}) \geq 3 \times (3 - 6) + 4 \times 1 + 3 \times 2 > 0$. So we may assume that $\mathcal{C}_{2}$ has at most one common vertex with $D$. If $u_{1}$ is incident with $D$, then $\mu'(\mathcal{C}_{2}) \geq 3 \times (3 - 6) + 6 \times 1 + 3 \times 2 > 0$. If $u_{3}$ or $u_{4}$ is incident with $D$, then $\mu'(\mathcal{C}_{2}) \geq 3 \times (3 - 6) + 3 \times \frac{2}{3} + 4 \times 1 + 2 \times 2 > 0$. If $u_{2}$ or $u_{5}$ is incident with $D$, then $\mu'(\mathcal{C}_{2}) \geq 3 \times (3 - 6) + 3 \times \frac{2}{3} + 5 \times 1 + 2 = 0$. So we may assume that $\mathcal{C}_{2}$ is an internal cluster. If $u_{1}$ is a $5^{+}$-vertex, then $\mu'(\mathcal{C}_{2}) \geq 3 \times (3 - 6) + 3 \times \frac{4}{3} + 6 \times 1 + 3 \times \frac{1}{7}> 0$. Thus, let $u_{1}$ be a $4$-vertex. If $u_{3}$ or $u_{4}$ is a $5^{+}$-vertex, then $\mu'(\mathcal{C}_{2}) \geq 3 \times (3 - 6) + 3 \times \frac{2}{3} + 4 \times 1 + 2\lambda + 3 \times \frac{1}{7} > 0$. If $u_{3}$ and $u_{4}$ are all $4$-vertices, then $u_{2}$ and $u_{5}$ are all $5^{+}$-vertices by \ref{K4MINUS}, and then $\mu'(\mathcal{C}_{2}) \geq 3 \times (3 - 6) + 3 \times \frac{2}{3} + 2 \times (4 - 2\lambda) + 4 \times 1 + 3 \times \frac{1}{7} = 0$.

If $f$ is a $4^{*}$-face, then it receives $\frac{1}{2}$ from each incident vertex, and $\mu'(f) = 4 - 6 + 4 \times \frac{1}{2} = 0$. If $f$ is a $4$-face having at least one common vertex with $D$, then it receives $2$ from $D$, and $\mu'(f) \geq 4 - 6 + 2 = 0$.  

If $f$ is a $6$-face, then it does not send out any charge, and $\mu'(f) \geq \mu(f) = 0$. If $f$ is a $d$-face with $d \geq 7$, then $\mu'(f) \geq \mu(f) - d(f) \times \frac{1}{7} = \frac{6d}{7} - 6 \geq 0$. 

Assume that $f$ is a $5$-face and it has at least one common vertex with $D$. By \ref{OUTERFACED}, $f$ receives at least $2$ from $D$, then $\mu'(f) \geq 5 - 6 + 2 > 0$. Next, we consider internal $5$-faces. 

Assume that $f = [v_{1}v_{2}v_{3}v_{4}v_{5}]$ is an internal $5_{\mathrm{d}}$-face with $d(v_{1}) = 5$, and $v_{1}$ is incident with a $5_{\mathrm{a}}$-face. Let $v_{1}v_{5}$ is incident with a bad $5$-face $g$, and let $v_{1}, v_{4}, u, w$ be the four neighbors of $v_{5}$ in a cyclic order. By the definition of bad faces, $g$ is a $5_{\mathrm{c}}$-face and $[uv_{5}w]$ is a $3$-face. By \ref{4V3V4} and \ref{5V3V34}, the path $v_{4}v_{5}u$ is on the boundary of a $5^{+}$-face. Note that $v_{5}$ is a special vertex, then $\tau(v_{5} \rightarrow f) \geq \frac{1}{3}$ by \autoref{SUMMARYRULE}. If $v_{4}$ is a $5^{+}$-vertex, then $\tau(v_{5} \rightarrow f) = \frac{1}{3}$ by \ref{5+vertex5face}. If $v_{4}$ is a $4$-vertex, then it must be a special vertex and $\tau(v_{4} \rightarrow f) \geq \frac{1}{3}$ by \autoref{SUMMARYRULE}. Thus, $v_{4}$ always sends at least $\frac{1}{3}$ to $f$, this implies that $\mu'(f) \geq 5 - 6 + \frac{1}{3} + \frac{1}{3} + \frac{1}{3} = 0$. So we may assume that $f$ is an internal $5$-face which is not a $5_{\mathrm{d}}$-face in the followings. 

Assume that $f = [v_{1}v_{2}v_{3}v_{4}v_{5}]$ is an internal $(4, 4, 4, 4, 4)$-face. If $f$ is adjacent to at least four $3$-faces, then $f$ has at least four sources and $\mu'(f) \geq 5 - 6 + 4\rho = 0$. If $f$ is adjacent to two or three $3$-faces, then it is incident with at least two light or special vertices by \ref{ADJACENTPOOR}, and then $\mu'(f) \geq 5 - 6 + 2\rho + 2 \times \frac{1}{4} = 0$ by \autoref{SUMMARYRULE}. Suppose $f$ is adjacent to exactly one $3$-face, say $[v_{1}v_{5}v_{6}]$. Note that $v_{1}$ and $v_{5}$ are light or special vertices. By \ref{ADJACENTPOOR}, one of $v_{2}$ and $v_{3}$ is a special vertex. This implies that $\mu'(f) \geq 5 - 6 + \rho + 2 \times \frac{1}{4} + \frac{1}{3} > 0$ by \autoref{SUMMARYRULE}. So we may assume that $f$ is not adjacent to any $3$-face. By \ref{ADJACENTPOOR}, every edge on the boundary of $f$ is incident with a special vertex. Hence, $f$ is incident with at least three special vertices, and $\mu'(f) \geq 5 - 6 + 3 \times \frac{1}{3} = 0$ by \autoref{SUMMARYRULE}.

Assume that $f = [v_{1}v_{2}v_{3}v_{4}v_{5}]$ is an internal $(5^{+}, 4, 4, 4, 4)$-face with $d(v_{1}) \geq 5$. If $f$ is a $5_{\mathrm{a}}$-face, then $\mu'(f) \geq 5 - 6 + 1 = 0$. If $f$ is a $5_{\mathrm{b}}$-face or a $5_{\mathrm{b}'}$-face, then $f$ is incident with a light or special vertex, and then $\mu'(f) \geq 5 - 6 + \frac{3}{4} + \frac{1}{4} = 0$. If $f$ is a $5_{\mathrm{c}}$-face, then $v_{5}$ is a special vertex, and $\mu'(f) \geq 5 - 6 + \frac{2}{3} + \frac{1}{3} = 0$. So we may assume that $f$ is not a $5_{\mathrm{a}}$-, bad, or $5_{\mathrm{d}}$-face. Thus, $\tau(v_{1} \rightarrow f) = \frac{1}{2}$. If $f$ is incident with at least two light or special vertices, then $\mu'(f) \geq 5 - 6 + \frac{1}{2} + 2 \times \frac{1}{4} = 0$. Note that $v_{2}, v_{3}, v_{4}, v_{5}$ cannot be all poor or bad vertices; for otherwise, by \ref{ADJACENTPOOR}, they are all poor and $f$ is a $5_{\mathrm{a}}$-face. Thus, we may further assume that $f$ is incident with exactly one light or special vertex. By symmetry, exactly one of $v_{4}$ and $v_{5}$ is a light or special vertex. Suppose that $v_{5}$ is a light or special vertex. By \ref{ADJACENTPOOR}, $v_{2}, v_{3}$ and $v_{4}$ are poor vertices, thus each of $v_{1}v_{2}, v_{2}v_{3}, v_{3}v_{4}$ and $v_{4}v_{5}$ is incident with a $3$-face. By \ref{5V3V34} and the fact that $v_{1}v_{5}$ is not incident with a $3$-, $4^{*}$- or $5^{*}$-face, we have $\triangledown(v_{5}) = 1$, $\diamondsuit(v_{5}) = 0$ and $\pentagon(v_{5}) \leq 2$. By \ref{4VERTEXTO5FACE}, $\tau(v_{5} \rightarrow f) \geq \frac{1}{2}$, thus $\mu'(f) \geq 5 - 6 + 2 \times \frac{1}{2} = 0$. The remaining case: $v_{4}$ is a light or special vertex. By \ref{ADJACENTPOOR}, we have that $v_{2}$ and $v_{3}$ are poor vertices. If $v_{5}$ is a poor vertex, then $f$ is adjacent to five $3$-faces and $v_{4}$ is also a poor vertex, a contradiction. If $v_{5}$ is a bad vertex, then $f$ is a $5_{\mathrm{b}'}$-face, a contradiction. 

Assume that $f$ is a $5^{*}$-face incident with at least two $5^{+}$-vertices. Since $f$ is not a $5_{\mathrm{d}}$-face, we have that every incident $5^{+}$-vertex sends $\frac{1}{2}$ to $f$, and $\mu'(f) \geq 5 - 6 + 2 \times \frac{1}{2} = 0$. 

By \ref{OUTERFACED}, $D$ sends $2$ to/via each face having at least one common vertex with $D$. For convenience, the face incident with $x_{i}x_{i+1}$ is counted at $x_{i}$. Thus, there are $d(x_{i}) - 2$ faces other than $D$ at $x_{i}$, and there are $\sum_{v\, \in\, V(D)} \big(d(v) - 2\big)$ faces having at least one common vertex with $D$. Therefore, 
\[
\mu'(D) = \mu(D) + \sum_{v\, \in\, V(D)} \Big(2d(v) - 6\Big) - 2\left(\sum_{v\, \in\, V(D)} \Big(d(v) - 2\Big)\right) = \mu(D) - 2d(D) = 6 - d(D) \geq 2.
\]
This completes the proof of \autoref{PAIRWISE3456}.

\section{Proof of \autoref{PAIRWISE3456-AT}}
\label{sec:3}
The following lemma is easy and it can be obtained from \cite{MR4051856}. 
\begin{lemma}[Lu \etal \cite{MR4051856}]\label{L}
Let $D$ be a digraph with $V(D) = X_{1} \cup X_{2}$ and $X_{1} \cap X_{2} = \emptyset$. If all the arcs between $X_{1}$ and $X_{2}$ are oriented from $X_{1}$ to $X_{2}$, then $\mathrm{diff}(D) = \mathrm{diff}(D[X_{1}]) \times \mathrm{diff}(D[X_{2}])$, where $D[X_{1}]$ and $D[X_{2}]$ are sub-digraph induced by $X_{1}$ and $X_{2}$ respectively. 
\end{lemma}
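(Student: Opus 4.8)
The plan is to show that a spanning sub-circulation of $D$ can never use an arc between $X_1$ and $X_2$; once that is in place, the spanning sub-circulations of $D$ are exactly the disjoint unions of a spanning sub-circulation of $D[X_1]$ with one of $D[X_2]$, and the claimed identity falls out of an elementary parity count. So the proof will have three steps: the ``no crossing arc'' lemma, the resulting bijection, and the bookkeeping.

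First I would establish the structural fact. Let $D'$ be an arbitrary spanning sub-circulation of $D$. Since every arc of $D$ between the two parts points from $X_1$ to $X_2$, neither $D$ nor $D'$ has an arc from $X_2$ to $X_1$; hence summing the circulation equalities $d^{+}_{D'}(v) = d^{-}_{D'}(v)$ over all $v \in X_1$ gives $a + e = a$, where $a$ is the number of arcs of $D'$ lying inside $X_1$ and $e$ is the number of arcs of $D'$ directed from $X_1$ to $X_2$. Therefore $e = 0$, i.e.\ $D'$ uses no crossing arc, and $A(D')$ is the disjoint union of $A(D'[X_1])$ and $A(D'[X_2])$. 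Conversely, any pair consisting of a spanning sub-circulation of $D[X_1]$ and one of $D[X_2]$ glues to a spanning sub-circulation of $D$, and the circulation condition at a vertex of $X_i$ depends only on the $X_i$-part. Thus $D' \mapsto (D'[X_1], D'[X_2])$ is a bijection between the spanning sub-circulations of $D$ and the product of those of $D[X_1]$ and $D[X_2]$, under which $|A(D')| = |A(D'[X_1])| + |A(D'[X_2])|$.

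Next comes the bookkeeping. Because the number of arcs is additive along the bijection, $D'$ has an even number of arcs exactly when $D'[X_1]$ and $D'[X_2]$ have arc sets of the same parity, so $|\ee(D)| = |\ee(D[X_1])|\,|\ee(D[X_2])| + |\oe(D[X_1])|\,|\oe(D[X_2])|$ and $|\oe(D)| = |\ee(D[X_1])|\,|\oe(D[X_2])| + |\oe(D[X_1])|\,|\ee(D[X_2])|$. Subtracting the second from the first factors as $\mathrm{diff}(D) = \bigl(|\ee(D[X_1])| - |\oe(D[X_1])|\bigr)\bigl(|\ee(D[X_2])| - |\oe(D[X_2])|\bigr) = \mathrm{diff}(D[X_1])\cdot\mathrm{diff}(D[X_2])$, which is the assertion.

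I do not expect a genuine obstacle: the lemma is purely combinatorial and essentially all of its content is the ``no crossing arc'' observation of the first step. The only point to keep in mind is the trivial edge case in which $X_1$ or $X_2$ is empty, where the corresponding induced digraph has the arcless digraph as its unique spanning sub-circulation — counted as a circulation by the paper's convention — so its $\mathrm{diff}$ equals $1$ and the product formula still holds.
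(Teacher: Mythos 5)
Your proof is correct. Note that the paper itself gives no argument for this lemma --- it simply cites Lu and Zhu \cite{MR4051856} --- so there is nothing to compare against line by line; your three steps (summing the circulation equalities over $X_{1}$ to rule out crossing arcs, the resulting product bijection preserving arc counts additively, and the parity bookkeeping that factors $\mathrm{diff}$) constitute exactly the standard argument one would expect behind the citation, including the correct handling of the degenerate case where one part is empty and the arcless circulation gives $\mathrm{diff}=1$.
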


\autoref{PAIRWISE3456-AT} can be obtained from the following stronger result. 

\begin{theorem}\label{PAIRWISE3456-AT'}
Let $G$ be a plane graph without any configuration in \autoref{FIGPAIRWISE3456}, and let $C = [x_{1}x_{2}\dots x_{l}]$ be a good $4^{-}$-cycle in $G$. Then $G - E(G[V(C)])$ has a $4$-AT-orientation such that all the edges incident with $V(C)$ are directed to $V(C)$. 
\end{theorem}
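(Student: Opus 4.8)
I would argue by induction on $|V(G)|+|E(G)|$, using \autoref{PAIRWISE3456} to locate a reducible configuration and \autoref{L} to recombine orientations; here ``$4$-AT-orientation'' abbreviates ``$\mathrm{diff}\neq 0$ and maximum out-degree at most $3$''. First observe that \autoref{PAIRWISE3456-AT} really does follow from \autoref{PAIRWISE3456-AT'}: given the orientation of $G-E(G[V(C)])$ supplied by \autoref{PAIRWISE3456-AT'}, orient $G[V(C)]$ acyclically; since $|V(C)|\le 4$ every vertex of $V(C)$ then has out-degree at most $3$, the acyclic orientation has $\mathrm{diff}=1$, and because every edge between $V(C)$ and $V(G)\setminus V(C)$ points toward $V(C)$, two applications of \autoref{L} (first to $G-E(G[V(C)])$, then to $G$) show the combined orientation has $\mathrm{diff}\neq 0$. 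For the disconnected case I also record that a plane graph with no configuration from \autoref{FIGPAIRWISE3456} and no good $4^{-}$-cycle must have a $3^{-}$-vertex (if its minimum degree were at least $4$, a standard Euler/girth estimate forces a cycle of length at most $4$, and a bad $4$-cycle always contains a triangle, which is a good $4^{-}$-cycle); hence \autoref{PAIRWISE3456-AT'}, together with the trivial step of deleting a $3^{-}$-vertex and reinserting it with all its edges oriented outward, also orients any such plane graph. So we may assume $G$ is connected, handling disconnected $G$ componentwise by \autoref{L}.

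Now apply \autoref{PAIRWISE3456} to $(G,C)$. If $V(G)=V(C)$ then $G-E(G[V(C)])$ has no edges and the empty orientation works. Otherwise let $S$ be the vertex set of the reducible configuration; since $S$ is internal, $S\cap V(C)=\emptyset$, so $G-S$ contains no configuration from \autoref{FIGPAIRWISE3456} and still has the good $4^{-}$-cycle $C$. By induction $(G-S)-E(G[V(C)])$ has a $4$-AT-orientation $D'$ with every edge meeting $V(C)$ directed toward $V(C)$. Reinsert $S$, directing every edge with exactly one endpoint in $S$ out of $S$: this keeps all edges meeting $V(C)$ directed toward $V(C)$, adds only in-edges at vertices of $V(G)\setminus S$ (so their out-degrees are unchanged), and by \autoref{L} with $X_{1}=S$, $X_{2}=V(G)\setminus S$ yields $\mathrm{diff}(D)=\mathrm{diff}(D[S])\cdot\mathrm{diff}(D')$. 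It therefore remains to orient $G[S]$ with $\mathrm{diff}(D[S])\neq 0$ and so that each $v\in S$ has out-degree in $G[S]$ at most $3-e(v)$, where $e(v)$ is the number of edges from $v$ to $V(G)\setminus S$.

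If $S=\{v\}$ with $d_{G}(v)\le 3$, then $\mathrm{diff}(D[S])=1$ and the bound $0\le 3-e(v)$ holds. If $S$ induces the configuration of \autoref{Kite}, every vertex of $S$ is a $4$-vertex of $G$; writing it as the $4$-cycle $a_{1}a_{2}a_{3}a_{4}$ with chord $a_{1}a_{3}$, orient it as $a_{1}\to a_{2}\to a_{3}\to a_{4}\to a_{1}$ together with $a_{1}\to a_{3}$. A short check shows the only circulations are $\emptyset$, the directed $4$-cycle $a_{1}a_{2}a_{3}a_{4}$, and the directed triangle $a_{1}a_{3}a_{4}$, so $\mathrm{diff}(D[S])=2-1=1$; the out-degrees in $G[S]$ are $2,1,1,1$ while $\bigl(e(a_{1}),\dots,e(a_{4})\bigr)=(1,2,1,2)$, so every vertex of $S$ ends with out-degree exactly $3$. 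If $S$ induces the configuration of \autoref{F35} (a $6$-cycle $v_{1}v_{2}Bv_{3}v_{4}A$ with chord $v_{2}v_{3}$, all $4$-vertices of $G$), orient the $5$-cycle $v_{1}\to v_{2}\to v_{3}\to v_{4}\to A\to v_{1}$ and set $v_{2}\to B\to v_{3}$; then the only circulations are $\emptyset$, the directed $5$-cycle $v_{1}v_{2}v_{3}v_{4}A$, and the directed $6$-cycle $v_{1}v_{2}Bv_{3}v_{4}A$, so $\mathrm{diff}(D[S])=2-1=1$, and once more out-degree in $G[S]$ plus $e(v)$ equals $3$ at every vertex of $S$. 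In every case $\mathrm{diff}(D)=\mathrm{diff}(D')\neq 0$ and the maximum out-degree is at most $3$, which completes the induction.

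The main obstacle is precisely the two finite configurations: one must exhibit a \emph{single} orientation of each that is simultaneously an Alon--Tarsi certificate ($\mathrm{diff}\neq 0$) and leaves every vertex enough spare in-degree that sending all of its boundary edges outward does not push its out-degree above $3$. The orientations proposed above thread this needle by being acyclic ``across the chord'' and only barely cyclic ``around the rim'', and checking $\mathrm{diff}=1$ amounts to a bounded enumeration of sub-circulations. A secondary point requiring care is the bookkeeping around the deleted edges $E(G[V(C)])$ — that these edges stay absent throughout, that the only new edges incident with $V(C)$ come from $S$ and are oriented correctly, and that \autoref{L} is applied to the bipartition that genuinely inherits $\mathrm{diff}\neq 0$ from $D'$ — together with the routine verification that removing $S$ neither destroys $C$ as a good $4^{-}$-cycle nor creates a forbidden configuration.
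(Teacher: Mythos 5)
Your proposal is correct and follows essentially the same route as the paper: a minimality/induction argument that invokes \autoref{PAIRWISE3456} to find an internal $3^{-}$-vertex or an internal induced copy of \autoref{Kite} or \autoref{F35}, deletes it, orients all its edges outward, and recombines via \autoref{L}; indeed the explicit orientations you give for the two configurations coincide with those in \autoref{OR}. (Only a cosmetic slip: one vertex in each configuration ends with out-degree $2$, not ``exactly $3$'', which of course still satisfies the bound.)
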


\begin{proof}
Suppose to the contrary that $G$ together with a good $4^{-}$-cycle $C = [x_{1}x_{2}\dots x_{l}]$ is a counterexample to the statement with $|V(G)| + |E(G)|$ as small as possible. By the minimality, the good cycle $C$ has no chords.

Suppose that there is an internal $3^{-}$-vertex $v$. By the minimality, $G - v - E(C)$ has a desired $4$-AT-orientation. We orient all the edges incident with $v$ by going out from $v$. The resulting orientation is a desired $4$-AT-orientation of $G - E(C)$, a contradiction.

Suppose that there is an internal induced subgraph $\Gamma$ isomorphic to \autoref{Kite} or \autoref{F35}. By the minimality, $G - V(\Gamma) - E(C)$ has a desired $4$-AT-orientation $D$. We orient all the edges incident with a vertex in $V(\Gamma)$ as in \autoref{OR}. Note that all the edges between $V(\Gamma)$ and $V(G) \setminus V(\Gamma)$ are directed from $V(\Gamma)$ to $V(G) \setminus V(\Gamma)$. By \autoref{L}, the resulting orientation is a desired $4$-AT-orientation of $G - E(C)$, a contradiction. 

But this contradicts \autoref{PAIRWISE3456}. 
\end{proof}

\begin{figure}
\centering
\def\s{0.8}
\subcaptionbox{}
{\begin{tikzpicture}%
\coordinate (E) at (\s, 0);
\coordinate (N) at (0, \s);
\coordinate (W) at (-\s, 0);
\coordinate (S) at (0, -\s);

\path [draw=black, postaction={on each segment={mid arrow=red}}]
(N)--(W)--(S)--(E)--cycle
(N)--(S)
(W)--($(W)!0.5!180:(S)$)
(W)--($(W)!0.5!180:(N)$)
(E)--($(E)!0.5!180:(S)$)
(E)--($(E)!0.5!180:(N)$)
(N)--($1.5*(N)$)
(S)--($1.5*(S)$)
;
\node[rectangle, inner sep = 1.5, fill, draw] () at (E) {};
\node[rectangle, inner sep = 1.5, fill, draw] () at (N) {};
\node[rectangle, inner sep = 1.5, fill, draw] () at (W) {};
\node[rectangle, inner sep = 1.5, fill, draw] () at (S) {};
\end{tikzpicture}}\hspace{1.5cm}
\subcaptionbox{}
{\begin{tikzpicture}
\coordinate (v1) at (45: 1.414*\s);
\coordinate (v2) at (135: 1.414*\s);
\coordinate (v3) at (225: 1.414*\s);
\coordinate (v4) at (-45: 1.414*\s);
\coordinate (A) at (2*\s, 0);
\coordinate (B) at (-2*\s, 0);
\path [draw=black, postaction={on each segment={mid arrow=red}}]
(v1)--(v2)--(B)--(v3)--(v4)--(A)--cycle
(v2)--(v3)
(A)--($(A)!0.5!-90:(v1)$)
(A)--($(A)!0.5!90:(v4)$)
(B)--($(B)!0.5!90:(v2)$)
(B)--($(B)!0.5!-90:(v3)$)
(v1)--($(v1)!0.25!-90:(v2)$)
(v1)--($(v1)!0.25!-135:(v2)$)
(v2)--($(v2)!0.25!90:(v1)$)
(v3)--($(v3)!0.25!-90:(v4)$)
(v4)--($(v4)!0.25!90:(v3)$)
(v4)--($(v4)!0.25!135:(v3)$)
;

\node[rectangle, inner sep = 1.5, fill, draw] () at (v1) {};
\node[rectangle, inner sep = 1.5, fill, draw] () at (v2) {};
\node[rectangle, inner sep = 1.5, fill, draw] () at (v3) {};
\node[rectangle, inner sep = 1.5, fill, draw] () at (v4) {};
\node[rectangle, inner sep = 1.5, fill, draw] () at (A) {};
\node[rectangle, inner sep = 1.5, fill, draw] () at (B) {};
\end{tikzpicture}}
\caption{Orientations of some configurations.}
\label{OR}
\end{figure}
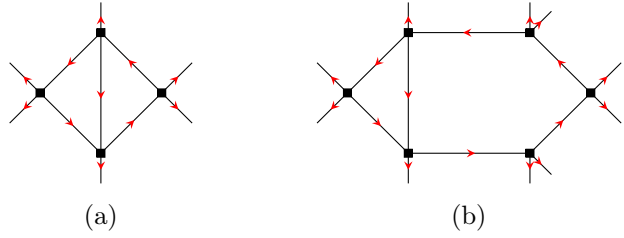

\noindent\textbf{Proof of \autoref{PAIRWISE3456-AT} from \autoref{PAIRWISE3456-AT'}.} Choose an arbitrary vertex $x \in V(G)$, and add a triangle $xyzx$, where $y$ and $z$ are new vertices. Note that the triangle $xyzx$ is a good cycle in $G + xyzx$. By \autoref{PAIRWISE3456-AT'}, $(G + xyzx) - \{xy, yz, xz\}$ has a $4$-AT-orientation $D$. Observe that $(G + xyzx) - \{xy, yz, xz\}$ is the graph obtained from $G$ by adding two isolated vertices $y$ and $z$. Then $D$ is a $4$-AT-orientation of $G$, and $AT(G) \leq 4$.

\section{Proof of \autoref{PAIRWISE3456-Weak3}}
\label{sec:4}

Bernshteyn and Lee \cite{Bernshteyn2021a} prove the following Brooks-type result. 
\begin{lemma}[Theorem 1.6 in \cite{Bernshteyn2021a}]\label{Brooks-Weak}
Let $G$ be a connected graph. The following statement are equivalent:
\begin{enumerate}
\item $G$ is weakly $(\deg - 1)$-degenerate;
\item $G$ is not a GDP-tree, where a GDP-tree is a connected graph in which every block is either a cycle or a complete graph. 
\end{enumerate}
\end{lemma}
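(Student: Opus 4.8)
I would prove the two implications separately, both by induction on $|V(G)|$. For $(1)\Rightarrow(2)$ I argue the contrapositive: a connected GDP-tree $G$ is \emph{not} weakly $(\deg_G-1)$-degenerate (the case $G=K_1$ is trivial). The base case $G=K_2$ has both charges equal to $0$, so no legal operation exists. For the step, suppose $(G,\deg_G-1)$ admits a complete legal removal sequence and inspect its first operation. If it is \textsf{Delete}$(u)$, legality forces every neighbour of $u$ to have charge $\ge1$, hence degree $\ge2$ in $G$, so the remaining instance is exactly $(G-u,\deg_{G-u}-1)$; but $G-u$ is a disjoint union of strictly smaller GDP-trees, so by induction some component is non-removable -- a contradiction. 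If it is \textsf{DeleteSave}$(u,w)$, then $\deg_G(u)>\deg_G(w)$; since $w\in N(u)$ the two vertices share a block $B$, and in a cycle- or clique-block $\deg_B(u)=\deg_B(w)$, so $\deg_G(u)>\deg_G(w)\ge\deg_B(w)=\deg_B(u)$ forces $u$ to be a cut vertex; then $G-u$ has a component $C$ avoiding $w$ on which the charges are exactly $\deg_C-1$ and which is a strictly smaller GDP-tree, so again non-removable -- a contradiction.

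For $(2)\Rightarrow(1)$ I would prove, in the manner of Erd\H{o}s--Rubin--Taylor and Borodin, the stronger statement: if $G$ is connected, $f\ge\deg_G-1$, and either $G$ is not a GDP-tree or some vertex $z$ has $f(z)\ge\deg_G(z)$ (a \emph{surplus} vertex), then $(G,f)$ is weakly $f$-degenerate, and when such a $z$ is named there is a legal sequence ending at $z$. The inductive reductions: (a) if a surplus vertex $z$ is \emph{not} a cut vertex, apply the plain statement to $G-z$ (a neighbour of $z$ is surplus there) and then \textsf{Delete} $z$ last -- this is legal because $f(z)\ge\deg_G(z)$ lets $z$ stay nonnegative throughout the removal of its neighbours; (b) if a surplus vertex $z$ \emph{is} a cut vertex, split $z$'s charge budget across the components of $G-z$ and handle each component together with $z$ by (a); (c) otherwise $f=\deg_G-1$ and $G$ is not a GDP-tree: if $G$ has a leaf, \textsf{Delete} it (a ``bad'' block -- one that is neither a cycle nor a complete graph -- survives), and if $G$ has a cut vertex $b$, write $G=G_1\cup G_2$ with $G_1\cap G_2=\{b\}$ and the bad block in $G_1$, apply (a)/(b) to $G_2$ with $f(b)$ reset to $\deg_{G_2}(b)$ to remove $V(G_2)\setminus\{b\}$, and recurse on the strictly smaller instance left on $G_1$.

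The remaining case of (c) is $G$ $2$-connected, not a cycle, not complete (so $f=\deg_G-1$). If $G$ has an edge $uw$ with $\deg_G(u)>\deg_G(w)$, then $f(u)>f(w)$, so \textsf{DeleteSave}$(u,w)$ is legal (nonnegativity holds since $\delta(G)\ge2$), keeps $G-u$ connected, and makes $w$ surplus, so case (a) applies. If instead $G$ is $k$-regular with $k\ge3$, I claim $G-u$ is not a GDP-tree for some $u$: otherwise fix $u$ (which has a non-neighbour, as $G$ is not complete); the GDP-tree $G-u$ is not a single block (a $2$-connected GDP-tree is a clique or a cycle, incompatible with the degree sequence of $G-u$), so it has at least two endblocks, and for an endblock $B$ every neighbour of a non-cut vertex of $B$ lies in $B$ with degree $k$ or $k-1$ there, which forces $B\cong K_k$ ($K_{k+1}$ contradicts $k$-regularity, $K_2$ is impossible, and $C_j$ forces $k=3$ with every non-cut vertex of $B$ adjacent to $u$); in each case $B$ supplies at least two distinct neighbours of $u$, so two endblocks already exceed the $k$ neighbours $u$ has -- a contradiction. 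Then \textsf{Delete}$(u)$ yields $(G-u,\deg_{G-u}-1)$, still not a GDP-tree and strictly smaller, and induction closes.

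The two places I expect to require genuine care are reduction (a) -- verifying that a surplus vertex really can absorb all the deletions of its neighbours, and carrying this correctly through the cut-vertex splittings so that every recursive call is to a strictly smaller instance still meeting the hypotheses (the plain form, the ``ends at $z$'' form, and the budget-reallocation at cut vertices must be run as one coupled induction) -- and the regular-graph lemma in the previous paragraph. Everything else is routine tracking of charges under \textsf{Delete} and \textsf{DeleteSave}.
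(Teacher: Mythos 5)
The paper does not actually prove this lemma: it is quoted verbatim from Bernshteyn and Lee (Theorem 1.6 of \cite{Bernshteyn2021a}) and used as a black box, so there is no internal proof to compare yours against. Judged on its own, your argument is essentially sound and follows the classical degree-choosability/Gallai-tree template: the $(1)\Rightarrow(2)$ direction is correct (the block-degree computation showing that a legal \textsf{DeleteSave} forces a cut vertex, and the observation that both \textsf{Delete} and \textsf{DeleteSave} leave a component carrying exactly $\deg-1$, are exactly what is needed), and the coupled ``surplus vertex'' induction for $(2)\Rightarrow(1)$ is the right mechanism.

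Two spots need tightening. First, in the $k$-regular $2$-connected case your final count is too weak as written: ``at least two neighbours of $u$ per endblock, so two endblocks exceed $k$'' only gives a contradiction for $k=3$. You should use what your own parenthetical already establishes: every endblock of $G-u$ is either $K_k$ (contributing $k-1$ non-cut vertices, all adjacent to $u$) or, when $k=3$, a cycle (contributing at least $2=k-1$), and since distinct endblocks share no non-cut vertices, two endblocks force at least $2(k-1)>k$ neighbours of $u$. Second, in the cut-vertex step of (c) (and in (b)) you replay a removal sequence for $(G_2,f_2)$, with $f_2(b)$ reset to $\deg_{G_2}(b)$, inside the ambient graph where $b$'s true charge is larger by $\deg_{G_1}(b)-1$. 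A \textsf{Delete} transfers verbatim, but a \textsf{DeleteSave} of the form \textsf{DeleteSave}$(u,b)$ need not: its legality requires the current charge of $u$ to exceed that of $b$, and $b$'s ambient charge may be strictly larger than in the sub-instance. Your construction survives because the surplus branch (a)/(b) never leaves the surplus branch and hence can be arranged to produce \textsf{Delete}-only sequences (equivalently, prove the surplus lemma directly by deleting vertices in reverse BFS order toward $z$, which uses no \textsf{DeleteSave} at all); but this must be said explicitly, otherwise the ``budget reset at $b$'' step is not justified. With these two repairs, and the harmless convention disposing of $K_1$ (where $\deg-1$ is negative), the proof is complete.
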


We need the following Gallai-type result for critical graphs w.r.t. weakly $g$-degenerate. 

\begin{lemma}[Lemma 5.5 in \cite{Bernshteyn2021a}]\label{Gallai-Weak}
Assume $G$ is a graph which is not weakly $(h-1)$-degenerate. Let $U \subseteq \{u \in V(G) : d(v) = h(v)\}$. If $G - U$ is weakly $(h-1)$-degenerate, then every component of $G[U]$ is a GDP-tree.
\end{lemma}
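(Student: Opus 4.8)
The plan is to argue by contraposition: assume that $G-U$ is weakly $(h-1)$-degenerate and that some component $B$ of $G[U]$ is \emph{not} a GDP-tree, and produce a sequence of legal \textsf{Delete} and \textsf{DeleteSave} operations that empties $G$, contradicting that $G$ is not weakly $(h-1)$-degenerate. The starting function is $h-1$.

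The two ingredients are, first, that by \autoref{Brooks-Weak} the connected non-GDP-tree $B$ is weakly $(\deg_{B}-1)$-degenerate, so there is a legal sequence $\mathcal{S}$ emptying $B$ with respect to $\deg_{B}-1$; and second, that the hypothesis supplies a legal sequence $\mathcal{R}$ emptying $V(G)\setminus U$ with respect to $(h-1)|_{V(G)\setminus U}$. I would run $\mathcal{R}$ first (on all of $G$) and then $\mathcal{S}$. Running $\mathcal{R}$ first is safe because it touches only vertices outside $U$ and never names a vertex of $U$ as the ``saved'' vertex of a \textsf{DeleteSave}; hence a vertex $v\in U$ loses exactly one unit of charge per deleted neighbour outside $U$, and since $d_{G}(v)=h(v)$ and $B$ (indeed every component of $G[U]$) keeps its remaining edges inside $G[U]$, after $\mathcal{R}$ the charge on $v$ is exactly $d_{G[U]}(v)-1$, reached monotonically, so $\mathcal{R}$ stays legal on $G$. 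What remains is $G[U]$ with the function $\deg_{G[U]}-1$; on $B$ this is $\deg_{B}-1$ exactly, and as $B$ sends no edge to $U\setminus V(B)$, the sequence $\mathcal{S}$ legally finishes it off.

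The main obstacle is the components of $G[U]$ \emph{other} than $B$, which the scheme above leaves untouched and which need not even be GDP-trees; here the hypotheses must be used in a more entangled way. A component $C\neq G$ of $G[U]$ is joined to $V(G)\setminus U$, so as long as one such edge survives, $C$ carries a function that is $\geq\deg_{C}-1$ everywhere and strictly larger at an endpoint of that edge — a surplus from which $C$ is readily emptied directly (delete a neighbour of a surplus vertex and clear the resulting deficiency along the block structure, using \textsf{DeleteSave} where needed, or invoke \autoref{Brooks-Weak} if $C$ happens not to be a GDP-tree). So the sequence I would actually assemble interleaves the elimination of each such $C$ into $\mathcal{R}$, performing it just before $\mathcal{R}$ would delete the last neighbour of $C$ lying outside $U$, and checking that the extra decrements $C$ inflicts on that neighbour do not spoil the legality of the remainder of $\mathcal{R}$; the delicate bookkeeping is exactly this scheduling, together with the edge case of vertices isolated in $G[U]$ — these are $K_{1}$-components, hence GDP-trees, and must be deleted at precisely the right moment relative to their neighbours outside $U$. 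Once all of $G[U]$ outside $V(B)$ has been absorbed in this manner, $\mathcal{S}$ empties $B$ last, showing $G$ is weakly $(h-1)$-degenerate and giving the contradiction.
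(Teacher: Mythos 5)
Your first two steps are sound: running the removal sequence of $G-U$ inside $G$ is legal because every $u\in U$ has $d_{G}(u)=h(u)$, so its value decreases monotonically and ends at exactly $d_{G[U]}(u)-1\geq 0$; and the component $B$, being connected and not a GDP-tree, can then be emptied by \autoref{Brooks-Weak}. The problem is the step you defer as ``delicate bookkeeping'': absorbing the components of $G[U]$ other than $B$ (the troublesome ones are precisely those that \emph{are} GDP-trees, not the ones that are not). First, such a component need not meet $V(G)\setminus U$ at all when $G$ is disconnected. Second, even when it does, the surplus you want to exploit can be unusable. Concretely, let $G$ consist of $K_{4}$ minus an edge on $\{b_{1},b_{2},b_{3},b_{4}\}$, a triangle on $\{t_{1},t_{2},t_{3}\}$, and one further vertex $w$ adjacent to $b_{1},t_{1},t_{2},t_{3}$; put $h(u)=d_{G}(u)$ on the seven vertices other than $w$, $h(w)=1$, and $U=V(G)\setminus\{w\}$. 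Then $G-U$ is the isolated vertex $w$, hence weakly $(h-1)$-degenerate, and $U\subseteq\{u: d(u)=h(u)\}$. But $G$ is not weakly $(h-1)$-degenerate: $w$ carries value $0$ and values never increase, so $w$ can only leave by \textsf{Delete}, a $t_{i}$ can leave before $w$ only by \textsf{DeleteSave}$(\cdot,\cdot,t_{i},w)$, at most two such moves are legal, and whether $w$ is deleted with two or with three $t_{i}$ still present, the surviving $t_{i}$ end adjacent with value $0$, so the triangle is never emptied. Thus all hypotheses of \autoref{Gallai-Weak} hold while the component $K_{4}^{-}$ of $G[U]$ is not a GDP-tree; in particular your interleaving scheme must fail (here the GDP-tree component $K_{3}$ can be cleared neither before nor after $w$), and no bookkeeping can rescue the ``every component'' conclusion from these hypotheses alone.

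What your correct first half actually proves is the weaker statement that \emph{some} component of $G[U]$ is a GDP-tree: if none were, then after the sequence for $G-U$ every component carries the function $\deg-1$ and \autoref{Brooks-Weak} finishes each component independently, so $G$ would be weakly $(h-1)$-degenerate. To get the ``every component'' conclusion for a given non-GDP-tree component $B$ you need the deletion hypothesis for $V(B)$ itself, namely that $G-V(B)$ is weakly $(h-1)$-degenerate; with that, your Part~1 applied with $U$ replaced by $V(B)$ plus \autoref{Brooks-Weak} ends the proof immediately, with no other components to schedule. That stronger hypothesis is exactly what is available where this paper invokes the lemma: $U$ is always the vertex set of a single connected configuration, and minimality of the counterexample yields weak degeneracy of $G$ minus that set, so the connected case (which your argument does establish) suffices there. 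As written, however, your proposal has a genuine gap, and the missing step is not merely unproved but false in the stated generality.
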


The following lemma will be frequently used to find a reducible configuration. 

\begin{lemma}\label{Weak-Sequence}
Let $G$ be a graph, and let $A$ be a subset of $V(G)$. Assume $G - A$ is weakly $3$-degenerate. If the vertices in $A$ can be ordered as a sequence $a_{1}, a_{2}, \dots, a_{t}$ satisfying 
\begin{enumerate}
\item $a_{1}a_{t} \in E(G)$; and 
\item $a_{t}$ has degree four in $G$; and 
\item $a_{t}$ has more than $d_{G - A}(a_{1})$ neighbors in $G - A$; and
\item $a_{i}$ has at most three neighbors in $G - \{a_{i+1}, \dots, a_{t}\}$ for each $i \in \{2, 3, \dots, t-1\}$;  
\end{enumerate}
then $G$ is weakly $3$-degenerate. 
\end{lemma}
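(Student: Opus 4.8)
\emph{Overview of the plan.} The goal is to exhibit an explicit legal sequence of $\textsf{Delete}$ and $\textsf{DeleteSave}$ operations that reduces $G$ to the empty graph, working throughout with the constant value function $3$. Write $B=V(G)\setminus A$, so that $G[B]=G-A$ is weakly $3$-degenerate, and for $a\in A$ put $d_B(a)=|N_G(a)\cap B|$. The hypotheses immediately give $d_B(a)\le 3$ for every $a\in A$: if $a=a_i$ with $2\le i\le t-1$ this is condition (4), since $B\subseteq V(G)\setminus\{a_{i+1},\dots,a_t\}$; if $a=a_t$ it follows from (1) and (2), because $a_1$ is an $A$-neighbour of $a_t$ and $d_G(a_t)=4$; and if $a=a_1$ then (3) gives $d_B(a_1)<d_B(a_t)\le 3$, hence in fact $d_B(a_1)\le 2$. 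In particular $a_1$ is not one of the vertices with three $B$-neighbours.

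\emph{Phase 1: delete $B$.} Fix a witnessing sequence for the weak $3$-degeneracy of $G[B]$ and carry out the analogous operations in $G$, deleting precisely the vertices of $B$, in the same order and with the same choices. Since no vertex of $A$ is deleted during this phase, the value of each $b\in B$ evolves in $G$ exactly as it does in $G[B]$, so each operation is legal as far as $B$ is concerned; and it is legal as far as $A$ is concerned because, when a vertex $b\in B$ is deleted, every $A$-neighbour $a$ of $b$ still has value $3-(\text{number of already-deleted }B\text{-neighbours of }a)\ge 3-(d_B(a)-1)\ge 1$ (we used that $b$ is still present and $d_B(a)\le 3$). After Phase 1 only $G[A]$ remains, carrying the value function $f^{\ast}(a)=3-d_B(a)\ge 0$, with $f^{\ast}(a_1)\ge 1$ and, by (3), $f^{\ast}(a_1)\ge f^{\ast}(a_t)+1$.

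\emph{Phase 2: delete $A$.} Delete the vertices $a_t,a_{t-1},\dots,a_1$ in this order, using $\textsf{Delete}$ by default but applying $\textsf{DeleteSave}$ to shield a still-present neighbour whenever that neighbour's value has fallen to $0$. This is where the four hypotheses pay off. As long as only $\textsf{Delete}$ has been used, the current value of a still-present $a_j$ equals $\deg(a_j)+3-d_G(a_j)$, where $\deg$ denotes current degree; hence it never drops below $0$, and a shielding $\textsf{DeleteSave}$ can only raise it. When we reach $a_i$ with $i\le t-1$, the vertices $a_{i+1},\dots,a_t$ are already gone, so by (4) its remaining neighbours are exactly $N_G(a_i)\cap\{a_1,\dots,a_{i-1}\}$, of which there are at most $3-d_B(a_i)=f^{\ast}(a_i)$; one checks that at most one of them currently has value $0$ and that $a_i$ then has value $\ge 1$, so a single $\textsf{DeleteSave}$ suffices. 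The vertex $a_t$ is handled first: by (2) it has degree $4$, and by (1) and (3) its neighbour $a_1$ still has value $f^{\ast}(a_1)\ge f^{\ast}(a_t)+1\ge 1$, which is exactly the slack needed to delete $a_t$ legally ($\textsf{DeleteSave}$ protecting $a_1$, or another neighbour, if needed). When $a_1$ is finally reached it has no neighbours left and is deleted trivially, so $G$ has been emptied and is therefore weakly $3$-degenerate.

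\emph{Expected main obstacle.} The substance is in the legality verifications in Phase 2: maintaining the invariant on the current value function, and — the genuinely delicate point — ruling out the situation in which a single deletion would need to shield two distinct neighbours at once. This rests on the structural consequences of (3) and (4), namely that $a_1$ is not a vertex with three $B$-neighbours and that, in the current graph, a vertex of current value $0$ has no still-present neighbour of smaller index (so that after $a_{i+1},\dots,a_t$ are removed such a vertex is pendant); propagating these facts along the process is the crux.
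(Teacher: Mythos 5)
Your Phase 1 is fine; it is exactly the paper's first step, with the legality of the decrements on $A$ spelled out. The gap is in Phase 2: the deletion order $a_{t}, a_{t-1}, \dots, a_{1}$ does not work, and the claim you defer as "the crux" — that at each step at most one still-present neighbour of the vertex being deleted has value $0$, so a single \textsf{DeleteSave} suffices — is not a consequence of the hypotheses; it can fail at the very first deletion. Take $t = 4$, let $B = V(G)\setminus A$ consist of seven independent vertices (three private neighbours for each of $a_{2}$ and $a_{3}$, one for $a_{4}$), let $a_{4}$ be adjacent to $a_{1}, a_{2}, a_{3}$ and its one $B$-neighbour, let each of $a_{2}, a_{3}$ be adjacent inside $A$ only to $a_{4}$, and let $a_{1}$ be adjacent only to $a_{4}$. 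All four hypotheses hold ($d_{G-A}(a_{4}) = 1 > 0 = d_{G-A}(a_{1})$, $d_{G}(a_{4}) = 4$, and condition (4) is tight for $a_{2}, a_{3}$), and after Phase 1 the values are $f^{*}(a_{1}) = 3$, $f^{*}(a_{2}) = f^{*}(a_{3}) = 0$, $f^{*}(a_{4}) = 2$. Now $a_{4}$ cannot be removed first: a plain \textsf{Delete} drives both $a_{2}$ and $a_{3}$ to $-1$, a \textsf{DeleteSave} can shield only one of them, and shielding $a_{1}$ is not even legal, since that would require $f^{*}(a_{4}) > f^{*}(a_{1})$, the opposite of what condition (3) gives. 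So your prescribed Phase 2 cannot start, although the lemma of course holds for this graph.

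The structural reason is that condition (4) bounds only the smaller-index neighbours of $a_{i}$, so a vertex of value $0$ has all of its $A$-neighbours later in the sequence; processing the sequence backwards forces such a vertex to be shielded at the deletion of each of its (possibly several) later neighbours, and a later vertex such as $a_{t}$ may be adjacent to two value-$0$ vertices at once, which one \textsf{DeleteSave} per deletion cannot handle. The hypotheses are tailored to the opposite order, which is the paper's proof: first remove $a_{1}$ by \textsf{DeleteSave}$(G[A], g, a_{1}, a_{t})$ — legal because condition (3) gives $g(a_{1}) > g(a_{t})$, and every other neighbour $a_{j}$ of $a_{1}$ has $g(a_{j}) \geq 1$ by condition (4), as $a_{1}$ is itself one of $a_{j}$'s smaller-index neighbours — then remove $a_{2}, \dots, a_{t-1}$ in increasing order by \textsf{Delete}, condition (4) keeping every still-present vertex nonnegative, and finally remove $a_{t}$, whose value stayed nonnegative precisely because the initial save spared it the decrement from $a_{1}$ and conditions (1)–(2) leave it at most three other neighbours. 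With Phase 2 reversed to this order, the rest of your write-up goes through.
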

\begin{proof}
Since $G - A$ is weakly $3$-degenerate, starting from $G - A$ and the constant function of value $3$, we remove all vertices from $G - A$ by a sequence of legal applications of the operations \textsf{Delete} and \textsf{DeleteSave}. This naturally defines a function $g: A \longrightarrow \mathbb{N}$ by $g(a) = 3 - |N_{G}(a) \cap (V(G) - A)|$ for each $a \in A$. Since $a_{1}a_{t} \in E(G)$ and $a_{t}$ has more than $d_{G - A}(a_{1})$ neighbors in $G - A$, we have $g(a_{1}) > g(a_{t})$, and we can remove $a_{1}$ by a legal application of the operation \textsf{DeleteSave}$(G[A], g, a_{1}, a_{t})$. Note that $a_{i}$ has at most three neighbors in $G - \{a_{i+1}, \dots, a_{t}\}$ for each $i \in \{2, 3, \dots, t-1\}$, we can further remove $a_{2}, a_{3}, \dots, a_{t-1}$ in this order by a sequence of legal applications of the operation \textsf{Delete}. Finally, as $a_{t}$ has degree $4$ in $G$, and \textsf{DeleteSave}$(G[A], g, a_{1}, a_{t})$ is applied, so we can remove $a_{t}$ by a legal application of the operation \textsf{Delete}. Hence, $G$ is weakly $3$-degenerate. 
\end{proof}

\begin{figure}%
\centering
\begin{tikzpicture}
\def\s{1}
\coordinate (v1) at (-2*\s, 0);
\coordinate (v2) at (-1*\s, 0);
\coordinate (v3) at (\s, 0);
\coordinate (v4) at (2*\s, 0);
\draw (v1)node[below]{$a_{1}$} to [out=70, in=110] (v4)node[below]{$a_{t}$};
\draw (v2)node[below]{$a_{r}$} to [out=70, in=110] (v3)node[below]{$a_{m}$};
\draw (v1)[dotted]--(v4);
\node[circle, inner sep = 1.5, fill = white, draw] () at (v1) {};
\node[circle, inner sep = 1.5, fill = white, draw] () at (v2) {};
\node[circle, inner sep = 1.5, fill = white, draw] () at (v3) {};
\node[circle, inner sep = 1.5, fill = white, draw] () at (v4) {};
\end{tikzpicture}
\caption{Nested pairs in \autoref{WWeak-Sequence}.}
\label{NestPairs}
\end{figure}

The first three conditions in \autoref{Weak-Sequence} make sure that we can ``save a color'' for $a_{t}$ so that it can be legally removed in the last step. We say that $(a_{1}, a_{t})$ is an ordered pair. The fourth condition in \autoref{Weak-Sequence} guarantees that we can greedily remove $a_{2}, a_{3}, \dots, a_{t-1}$ in the order by a sequence of legal applications of the operation \textsf{Delete}. Actually, to legally remove $a_{2}, a_{3}, \dots, a_{t-1}$, we can also use similar conditions like the first three ones. In other words, we can nest a pair $(a_{r}, a_{m})$ on the sequence $a_{2}, a_{3}, \dots, a_{t-1}$. Then we have the following generalization of \autoref{Weak-Sequence}. Note that \autoref{WWeak-Sequence} only demonstrates two nested pairs. 

\begin{lemma}\label{WWeak-Sequence}
Let $G$ be a graph, and let $A$ be a subset of $V(G)$. Assume $G - A$ is weakly $3$-degenerate. If the vertices in $A$ can be ordered as a sequence $a_{1}, a_{2}, \dots, a_{t}$ satisfying 
\begin{enumerate}
\item $a_{1}a_{t}, a_{r}a_{m} \in E(G)$, where $1 < r < m < t$; and 
\item $a_{t}$ has degree four in $G$, and $a_{m}$ has degree four in $G - \{a_{m+1}, \dots, a_{t}\}$; and 
\item $a_{t}$ has more than $d_{G - A}(a_{1})$ neighbors in $G - A$, and $a_{m}$ has more than $d_{G - \{a_{r}, \dots, a_{t}\}}(a_{r})$ neighbors in $G - \{a_{r}, \dots, a_{t}\}$; and
\item $a_{i}$ has at most three neighbors in $G - \{a_{i+1}, \dots, a_{t}\}$ for each $i \in \{2, 3, \dots, t-1\} \setminus \{r, m\}$; 
\end{enumerate}
then $G$ is weakly $3$-degenerate. 
\end{lemma}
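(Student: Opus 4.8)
The plan is to imitate the proof of \autoref{Weak-Sequence} almost verbatim, inserting one extra \textsf{DeleteSave} to service the inner pair $(a_r,a_m)$ in exactly the way the outer pair $(a_1,a_t)$ is serviced. First I would use that $G-A$ is weakly $3$-degenerate: fix a legal sequence of \textsf{Delete}/\textsf{DeleteSave} operations that removes all of $V(G)\setminus A$ from $G-A$ (with the constant function $3$), and perform the same deletions, with the same saves, inside $G$. This leaves the graph $G[A]$ with an induced function $g\colon A\to\mathbb{N}$: each deleted outside-neighbour of $a$ lowers $g(a)$ by one, except when that deletion was a save aimed at $a$, so $g(a)\ge 3-|N_G(a)\cap(V(G)\setminus A)|$, and since a larger value is only more favourable we may take $g(a)=3-|N_G(a)\cap(V(G)\setminus A)|$. (These values stay nonnegative throughout this first phase, because $g$ on each $A$-vertex is monotone non-increasing and $|N_G(a)\cap(V(G)\setminus A)|\le 3$ follows from conditions $(2)$--$(4)$ together with the two prescribed edges, so the first phase really is legal.)

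Next I would remove the vertices of $A$ in the order $a_1,a_2,\dots,a_t$. Remove $a_1$ by \textsf{DeleteSave}$(G[A],g,a_1,a_t)$: condition $(3)$ gives $g(a_1)>g(a_t)$, and using $a_1a_t\in E(G)$ together with $(2)$--$(4)$ every other $G[A]$-neighbour of $a_1$ has $g$-value at least $1$, so the resulting function is nonnegative. Remove $a_2,\dots,a_{r-1}$ in order by \textsf{Delete}; each is legal because condition $(4)$ bounds, for every later index $j$, the number of already-deleted neighbours of $a_j$ by $3$. At $a_r$, switch back to \textsf{DeleteSave}$\big(G-\{a_1,\dots,a_{r-1}\}-(V(G)\setminus A),\,g',\,a_r,a_m\big)$: at that moment the current value of $a_r$ equals $3-d_{G-\{a_r,\dots,a_t\}}(a_r)$ and the current value of $a_m$ equals $3-|N_G(a_m)\cap(V(G)\setminus\{a_r,\dots,a_t\})|$, so the second half of condition $(3)$ makes the former strictly larger, while $(2)$ and $(4)$ keep everything else nonnegative. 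Then \textsf{Delete} $a_{r+1},\dots,a_{m-1}$; then \textsf{Delete} $a_m$, legal because $a_m$ has degree $4$ in $G-\{a_{m+1},\dots,a_t\}$, one of those four neighbours is $a_r$ whose removal was the save aimed at $a_m$, so $a_m$ has at most three deleted neighbours when it is reached; then \textsf{Delete} $a_{m+1},\dots,a_{t-1}$; and finally \textsf{Delete} $a_t$, legal because $a_t$ has degree $4$ in $G$ and one of its neighbours is $a_1$, whose removal was the save aimed at $a_t$. Every vertex of $G$ has been removed by a legal sequence, so $G$ is weakly $3$-degenerate.

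The substance here is legality bookkeeping rather than a structural idea: at each operation I must check that the \emph{resulting} function is nonnegative, i.e.\ that every still-present neighbour of the removed vertex retains positive budget, and track which vertex each \textsf{DeleteSave} saves. The delicate point is that the save at step $1$ must land on $a_t$ and the save at step $r$ must land on $a_m$, and one must verify that these two saves, together with condition $(4)$ for the indices in $\{2,\dots,t-1\}\setminus\{r,m\}$ and the edges $a_1a_t$, $a_ra_m$, are exactly enough to keep the two ``degree-$4$'' vertices $a_t$ and $a_m$ from running out of budget; in particular, even if $a_1$ is adjacent to $a_r$ or to $a_m$, the bound $d_{G-\{a_r,\dots,a_t\}}(a_r)\le 2$ forced by the second half of condition $(3)$ (combined with $a_m$ having degree $4$ in $G-\{a_{m+1},\dots,a_t\}$) supplies the needed slack. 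The generalisation to more than two nested pairs promised just before the statement is obtained by iterating this insertion, one \textsf{DeleteSave} per nested pair, the saves nested in the same order as the pairs.
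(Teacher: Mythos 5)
Your proposal is correct and is essentially the paper's own argument: the paper proves \autoref{Weak-Sequence} in detail and obtains \autoref{WWeak-Sequence} by exactly the nesting you describe, namely replaying a legal removal of $G-A$ inside $G$, then running through $a_{1},\dots,a_{t}$ with \textsf{DeleteSave}$(\cdot,\cdot,a_{1},a_{t})$ and a second nested \textsf{DeleteSave}$(\cdot,\cdot,a_{r},a_{m})$, all remaining removals being plain \textsf{Delete}s justified by condition (4). Your explicit bookkeeping (in particular that condition (3) together with $d_{G-\{a_{m+1},\dots,a_{t}\}}(a_{m})=4$ forces $d_{G-\{a_{r},\dots,a_{t}\}}(a_{r})\leq 2$, so all budgets stay nonnegative) is sound and if anything more detailed than the paper's sketch.
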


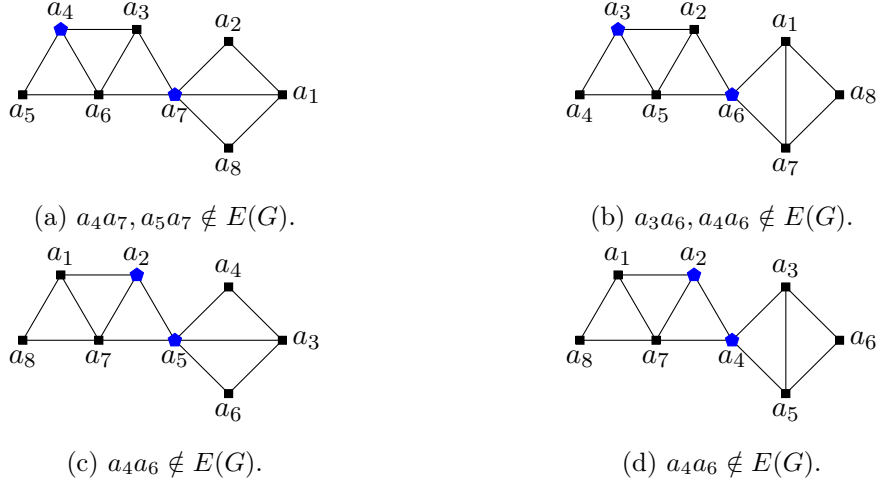
\begin{figure}%
\centering
\subcaptionbox{\label{fig:subfig:RC-3-a}$a_{4}a_{7}, a_{5}a_{7} \notin E(G)$.}[0.45\linewidth]{
\begin{tikzpicture}
\def\s{1}
\coordinate (O) at (0, 0);
\coordinate (v1) at (0:\s);
\coordinate (v2) at (60:\s);
\coordinate (v3) at (120:\s);
\coordinate (v4) at (180:\s);
\coordinate (E1) at ($(45:\s) + (v1)$);
\coordinate (E2) at ($(0:1.414*\s) + (v1)$);
\coordinate (E3) at ($(-45:\s) + (v1)$);
\draw (v1)node[below]{$a_{7}$}--(v2)node[above]{$a_{3}$}--(v3)node[above]{$a_{4}$}--(v4)node[below]{$a_{5}$}--(O)node[below]{$a_{6}$}--cycle;
\draw (O)--(v2);
\draw (O)--(v3);
\draw (E1)node[above]{$a_{2}$}--(E2)node[right]{$a_{1}$}--(E3)node[below]{$a_{8}$}--(v1)--cycle;
\draw (v1)--(E2);
\node[rectangle, inner sep = 1.5, fill, draw] () at (O) {};
\node[regular polygon, inner sep = 1.5, fill=blue, draw=blue] () at (v3) {};
\node[regular polygon, inner sep = 1.5, fill=blue, draw=blue] () at (v1) {};
\node[rectangle, inner sep = 1.5, fill, draw] () at (v2) {};
\node[rectangle, inner sep = 1.5, fill, draw] () at (v4) {};
\node[rectangle, inner sep = 1.5, fill, draw] () at (E1) {};
\node[rectangle, inner sep = 1.5, fill, draw] () at (E2) {};
\node[rectangle, inner sep = 1.5, fill, draw] () at (E3) {};
\end{tikzpicture}}
\subcaptionbox{\label{fig:subfig:RC-3-b}$a_{3}a_{6}, a_{4}a_{6} \notin E(G)$.}[0.45\linewidth]{
\begin{tikzpicture}
\def\s{1}
\coordinate (O) at (0, 0);
\coordinate (v1) at (0:\s);
\coordinate (v2) at (60:\s);
\coordinate (v3) at (120:\s);
\coordinate (v4) at (180:\s);
\coordinate (E1) at ($(45:\s) + (v1)$);
\coordinate (E2) at ($(0:1.414*\s) + (v1)$);
\coordinate (E3) at ($(-45:\s) + (v1)$);
\draw (v1)node[below]{$a_{6}$}--(v2)node[above]{$a_{2}$}--(v3)node[above]{$a_{3}$}--(v4)node[below]{$a_{4}$}--(O)node[below]{$a_{5}$}--cycle;
\draw (O)--(v2);
\draw (O)--(v3);
\draw (E1)node[above]{$a_{1}$}--(E2)node[right]{$a_{8}$}--(E3)node[below]{$a_{7}$}--(v1)--cycle;
\draw (E1)--(E3);
\node[rectangle, inner sep = 1.5, fill, draw] () at (O) {};
\node[regular polygon, inner sep = 1.5, fill=blue, draw=blue] () at (v3) {};
\node[regular polygon, inner sep = 1.5, fill=blue, draw=blue] () at (v1) {};
\node[rectangle, inner sep = 1.5, fill, draw] () at (v2) {};
\node[rectangle, inner sep = 1.5, fill, draw] () at (v4) {};
\node[rectangle, inner sep = 1.5, fill, draw] () at (E1) {};
\node[rectangle, inner sep = 1.5, fill, draw] () at (E2) {};
\node[rectangle, inner sep = 1.5, fill, draw] () at (E3) {};
\end{tikzpicture}}
\subcaptionbox{\label{fig:subfig:RC-3-c}$a_{4}a_{6} \notin E(G)$.}[0.45\linewidth]{
\begin{tikzpicture}
\def\s{1}
\coordinate (O) at (0, 0);
\coordinate (v1) at (0:\s);
\coordinate (v2) at (60:\s);
\coordinate (v3) at (120:\s);
\coordinate (v4) at (180:\s);
\coordinate (E1) at ($(45:\s) + (v1)$);
\coordinate (E2) at ($(0:1.414*\s) + (v1)$);
\coordinate (E3) at ($(-45:\s) + (v1)$);
\draw (v1)node[below]{$a_{5}$}--(v2)node[above]{$a_{2}$}--(v3)node[above]{$a_{1}$}--(v4)node[below]{$a_{8}$}--(O)node[below]{$a_{7}$}--cycle;
\draw (O)--(v2);
\draw (O)--(v3);
\draw (E1)node[above]{$a_{4}$}--(E2)node[right]{$a_{3}$}--(E3)node[below]{$a_{6}$}--(v1)--cycle;
\draw (v1)--(E2);
\node[rectangle, inner sep = 1.5, fill, draw] () at (O) {};
\node[regular polygon, inner sep = 1.5, fill=blue, draw=blue] () at (v2) {};
\node[regular polygon, inner sep = 1.5, fill=blue, draw=blue] () at (v1) {};
\node[rectangle, inner sep = 1.5, fill, draw] () at (v3) {};
\node[rectangle, inner sep = 1.5, fill, draw] () at (v4) {};
\node[rectangle, inner sep = 1.5, fill, draw] () at (E1) {};
\node[rectangle, inner sep = 1.5, fill, draw] () at (E2) {};
\node[rectangle, inner sep = 1.5, fill, draw] () at (E3) {};
\end{tikzpicture}}
\subcaptionbox{\label{fig:subfig:RC-3-d}$a_{4}a_{6} \notin E(G)$.}[0.45\linewidth]{
\begin{tikzpicture}
\def\s{1}
\coordinate (O) at (0, 0);
\coordinate (v1) at (0:\s);
\coordinate (v2) at (60:\s);
\coordinate (v3) at (120:\s);
\coordinate (v4) at (180:\s);
\coordinate (E1) at ($(45:\s) + (v1)$);
\coordinate (E2) at ($(0:1.414*\s) + (v1)$);
\coordinate (E3) at ($(-45:\s) + (v1)$);
\draw (v1)node[below]{$a_{4}$}--(v2)node[above]{$a_{2}$}--(v3)node[above]{$a_{1}$}--(v4)node[below]{$a_{8}$}--(O)node[below]{$a_{7}$}--cycle;
\draw (O)--(v2);
\draw (O)--(v3);
\draw (E1)node[above]{$a_{3}$}--(E2)node[right]{$a_{6}$}--(E3)node[below]{$a_{5}$}--(v1)--cycle;
\draw (E1)--(E3);
\node[rectangle, inner sep = 1.5, fill, draw] () at (O) {};
\node[regular polygon, inner sep = 1.5, fill=blue, draw=blue] () at (v2) {};
\node[regular polygon, inner sep = 1.5, fill=blue, draw=blue] () at (v1) {};
\node[rectangle, inner sep = 1.5, fill, draw] () at (v3) {};
\node[rectangle, inner sep = 1.5, fill, draw] () at (v4) {};
\node[rectangle, inner sep = 1.5, fill, draw] () at (E1) {};
\node[rectangle, inner sep = 1.5, fill, draw] () at (E2) {};
\node[rectangle, inner sep = 1.5, fill, draw] () at (E3) {};
\end{tikzpicture}}
\caption{Note that $|N_{G}(a_{8}) \cap \{a_{1}, \dots, a_{7}\}| = 2$.}
\label{RC-3}
\end{figure}

\begin{theorem}\label{Weak-Reduce}
Let $G$ be a graph without $K_{5}$, and let $\Gamma$ be a subgraph of $G$. Assume $G$ is not weakly $3$-degenerate but $G - V(\Gamma)$ is weakly $3$-degenerate. Then it has the following properties:
\begin{enumerate}
\item $\Gamma$ cannot be a single vertex with degree at most $3$ in $G$; and 
\item $\Gamma$ cannot be isomorphic to the configuration in \autoref{Kite}; and 
\item $\Gamma$ cannot be isomorphic to the configuration in \autoref{F35}; and 
\item $\Gamma$ cannot be isomorphic to a configuration in \autoref{RC} (only consider the solid vertices); and 
\item $\Gamma$ cannot be isomorphic to the configuration in \autoref{fig:subfig:RC1-a}; and 
\item $\Gamma$ cannot be isomorphic to the configuration in \autoref{RC-1}; and 
\item $\Gamma$ cannot be isomorphic to a configuration in \autoref{RC-2}; and 
\item $\Gamma$ cannot be isomorphic to a configuration in \autoref{RC-3}; and 
\end{enumerate}
\end{theorem}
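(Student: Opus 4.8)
\section*{Proof proposal for \autoref{Weak-Reduce}}

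The plan is to use the ``delete the rest first'' principle. Since $G-V(\Gamma)$ is weakly $3$-degenerate, fix a sequence of legal \textsf{Delete} and \textsf{DeleteSave} operations, starting from the constant function $3$, that removes every vertex of $G-V(\Gamma)$. Because no vertex of $V(\Gamma)$ is ever the vertex being ``saved'' in a \textsf{DeleteSave} applied inside $G-V(\Gamma)$, the charge left on $V(\Gamma)$ is exactly the function $g(a)=3-|N_{G}(a)\cap(V(G)\setminus V(\Gamma))|$; reading the degrees off each figure, one checks $g\ge 0$, the only delicate vertices being the $5$- and $6$-vertices, where $g$ can be as small as $0$. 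It then suffices to show that $G[V(\Gamma)]$ is weakly $g$-degenerate, since appending that removal to the one above would exhibit $G$ as weakly $3$-degenerate, a contradiction. All eight items reduce to this.

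Item (1) is immediate: $g(v)=3-d_{G}(v)\ge 0$, so one legal \textsf{Delete} empties $G[\{v\}]$. For items (2) and (3) every vertex of $\Gamma$ is a $4$-vertex of $G$, so $V(\Gamma)\subseteq\{u:d_{G}(u)=4\}$, and since $G-V(\Gamma)$ is weakly $3$-degenerate while $G$ is not, \autoref{Gallai-Weak} (with $h\equiv 4$) forces every component of $G[V(\Gamma)]=\Gamma$ to be a GDP-tree. But the configurations in \autoref{Kite} and \autoref{F35} are each $2$-connected and are neither a cycle nor a complete graph, hence are not GDP-trees by the definition used in \autoref{Brooks-Weak}, a contradiction. (Alternatively one can just exhibit the ordering for \autoref{Weak-Sequence}, e.g.\ $a_{1},a_{4},a_{3},a_{2}$ for \autoref{Kite}.)

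For items (4)--(8) we apply \autoref{Weak-Sequence} and \autoref{WWeak-Sequence} directly with $A=V(\Gamma)$, using the vertex labels indicated in the relevant figures. For \autoref{RC}, \autoref{fig:subfig:RC1-a} and \autoref{RC-1} the natural ordering $a_{1},a_{2},\dots,a_{t}$ satisfies the hypotheses of \autoref{Weak-Sequence}: the vertex $a_{1}$ (a vertex of largest degree inside $\Gamma$, hence with at most one neighbour outside $\Gamma$) is adjacent to $a_{t}$, which is a $4$-vertex with exactly two neighbours among $a_{1},\dots,a_{t-1}$ and hence two outside, so $g(a_{1})>g(a_{t})$; the remaining condition, that each $a_{i}$ ($2\le i\le t-1$) have at most three neighbours in $G-\{a_{i+1},\dots,a_{t}\}$, is checked one vertex at a time, using the non-adjacency hypotheses in the captions (such as $a_{2}a_{5},a_{3}a_{5}\notin E(G)$ for \autoref{RC-1}) together with the absence of $K_{5}$ to rule out further chords among the $a_{i}$. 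For \autoref{RC-2} and \autoref{RC-3}, which contain $5$-vertices with only two neighbours inside $\Gamma$ and thus residual charge as low as $0$, the plain lemma is not enough: such a vertex must be removed while both of its inside neighbours still carry positive charge, and afterwards $\Gamma$ may split into blocks that are complete graphs (GDP-trees), each of which needs one \textsf{DeleteSave} to be made reducible. This is the situation \autoref{WWeak-Sequence} is designed for: one chooses the main pair $(a_{1},a_{t})$ and the nested pair $(a_{r},a_{m})$ so that the two protected $4$-vertices sit in the two dangerous blocks, and then verifies conditions (1)--(4), again invoking the captioned non-edges.

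The main obstacle is entirely the bookkeeping in items (4)--(8): producing, for each configuration, an ordering (and for \autoref{RC-2}, \autoref{RC-3} a compatible nested pair) that makes all the inequalities of \autoref{Weak-Sequence} or \autoref{WWeak-Sequence} hold simultaneously. The recurring subtlety is that a vertex can be protected by a \textsf{DeleteSave} only when its current charge is strictly smaller than that of a still-present neighbour; since every residual charge is at most $2$ and a charge-$2$ vertex can therefore never be saved, the ordering must be arranged so that the vertices genuinely needing protection are exactly those of residual charge at most $1$, while each charge-$2$ vertex is removed before too many of its neighbours. Checking that the labellings in the figures (with their non-edge conditions) admit such an ordering is the technical heart of the argument; it is routine but lengthy, and is precisely where the hypotheses forbidding the configurations of the earlier figures enter.
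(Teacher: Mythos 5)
Your proposal is correct and follows essentially the same route as the paper: remove $G-V(\Gamma)$ first, pass to the residual function $g$, and then dispose of the configurations via \autoref{Gallai-Weak}, \autoref{Weak-Sequence} and the nested-pair version \autoref{WWeak-Sequence} (with the same pairs, e.g.\ $(a_{1},a_{7})$ and $(a_{2},a_{5})$ for \autoref{fig:subfig:RC-2-a}). The only cosmetic difference is that for \autoref{Kite} the paper writes down an explicit \textsf{DeleteSave}-then-\textsf{Delete} sequence, whereas you invoke \autoref{Gallai-Weak} (noting the explicit ordering as an alternative), which is equally valid since all four vertices have degree four in $G$.
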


\begin{proof}
Since $G - V(\Gamma)$ is weakly $3$-degenerate, starting from $G - V(\Gamma)$ and the constant function of value $3$, we remove all vertices from $G - V(\Gamma)$ via a sequence of legal applications of the operations \textsf{Delete} and \textsf{DeleteSave}. This naturally defines a function $g: V(\Gamma) \longrightarrow \mathbb{N}$ by $g(u) = 3 - |N_{G}(u) \cap (V(G)\setminus V(\Gamma))|$ for all $u \in V(\Gamma)$. Next, we prove $G[V(\Gamma)]$ is weakly $g$-degenerate, which leads to a contradiction that $G$ is weakly $3$-degenerate. 

Assume $\Gamma$ is a single vertex $w$ with $d(w) \leq 3$. Note that $g(w) \geq 0$, so we can remove $w$ with a legal application of the operation \textsf{Delete}. Then $G$ is weakly $3$-degenerate, a contradiction. 

Assume $\Gamma$ is isomorphic to the configuration in \autoref{Kite}. Note that $g(a_{1}) = g(a_{3}) = 2$ and $g(a_{2}) = g(a_{4}) = 1$. Starting from $\Gamma$ and $g$, we can remove $a_{1}$ by a legal application of the operation \textsf{DeleteSave}$(\Gamma, g, a_{1}, a_{4})$, and then remove the remaining vertices in the order $a_{2}, a_{3}, a_{4}$ by a sequence of legal applications of the operation \textsf{Delete}. Then $G$ is weakly $3$-degenerate, a contradiction. 

Assume $\Gamma$ is isomorphic to the configuration in \autoref{F35}. Since $G$ does not contain $K_{5}$, the subgraph induced by $V(\Gamma)$ is neither a complete graph nor a cycle. By \autoref{Gallai-Weak}, $G[V(\Gamma)]$ is weakly $g$-degenerate. Then $G$ is weakly $3$-degenerate, a contradiction. 

Assume $\Gamma$ is isomorphic to a configuration in \autoref{RC}, \autoref{fig:subfig:RC1-a} or \autoref{RC-1}. Note that the sequence $a_{1}, a_{2}, \dots, a_{t}$ satisfies all the conditions of \autoref{Weak-Sequence}. Then $G$ is weakly $3$-degenerate, a contradiction. 

Assume $\Gamma$ is isomorphic to a configuration in \autoref{RC-2} or \autoref{RC-3}. Note that \autoref{Weak-Sequence} can be used many times with nested pairs for \textsf{DeleteSave}. We take the configuration in \autoref{fig:subfig:RC-2-a} for an example. We remove the vertices in the order $a_{1}, a_{2}, \dots, a_{7}$ by a sequence of legal applications of the operation \textsf{Delete} except \textsf{DeleteSave}$(*, *, a_{1}, a_{7})$ and \textsf{DeleteSave}$(*, *, a_{2}, a_{5})$. Then $G$ is weakly $3$-degenerate, a contradiction. 
\end{proof}

Next, we prove the following stronger result \autoref{PAIRWISE3456-Weak3'}, which is used to prove \autoref{PAIRWISE3456-Weak3}.

\begin{theorem}\label{PAIRWISE3456-Weak3'}
Let $G$ be a connected planar graph without any configuration in \autoref{FIGPAIRWISE3456}, and let $[x_{1}x_{2}\dots x_{l}]$ be a good $4^{-}$-cycle in $G$. Define a function $g: V(G)\setminus \{x_{1}, \dots, x_{l}\} \longrightarrow \mathbb{N}$ by 
\[
g(u) = 3 - |N_{G}(u) \cap \{x_{1}, \dots, x_{l}\}|
\]
for each $u \in V(G)\setminus \{x_{1}, \dots, x_{l}\}$. Then $G - \{x_{1}, \dots, x_{l}\}$ is weakly $g$-degenerate. 
\end{theorem}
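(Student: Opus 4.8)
The plan is to argue by minimality. Suppose \autoref{PAIRWISE3456-Weak3'} fails, and let $(G,C_{0})$ with $C_{0}=[x_{1}x_{2}\dots x_{l}]$ be a counterexample for which $|V(G)|+|E(G)|$ is minimum; write $X=\{x_{1},\dots,x_{l}\}$ and let $g$ be the function in the statement. Apply \autoref{PAIRWISE3456} to $(G,C_{0})$: since $G$ is connected, planar, free of the configurations in \autoref{FIGPAIRWISE3456}, and $C_{0}$ is a good $4^{-}$-cycle, one of the conclusions (i)--(iv) holds. If $V(G)=X$, then $G-X$ has no vertices and is vacuously weakly $g$-degenerate, a contradiction. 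Otherwise there is an internal subgraph $\Gamma\subseteq G$ which is a single $3^{-}$-vertex, an induced copy of \autoref{Kite}, or an induced copy of \autoref{F35}; being internal, $V(\Gamma)\cap X=\varnothing$.

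The next step is to show that $G-V(\Gamma)-X$ is weakly $g$-degenerate. If $G-V(\Gamma)$ is connected, this is immediate from minimality applied to the smaller pair $(G-V(\Gamma),C_{0})$: the budget function of that instance is $u\mapsto 3-|N_{G-V(\Gamma)}(u)\cap X|$, which agrees with $g$ on $V(G)\setminus(V(\Gamma)\cup X)$ since $V(\Gamma)$ and $X$ are disjoint. If $G-V(\Gamma)$ is disconnected, let $G_{1}$ be the component containing $C_{0}$; minimality disposes of $G_{1}-X$ as above, while every other component $G_{i}$ has no vertex adjacent to $X$ (so $g\equiv 3$ on $G_{i}$) and is weakly $3$-degenerate: either $G_{i}$ has girth at least $5$, hence is $3$-degenerate by the usual Euler-formula count, or $G_{i}$ contains a good $4^{-}$-cycle $C_{i}$, and then minimality applied to $(G_{i},C_{i})$, together with the observation that a cycle on at most $4$ vertices can always be deleted first by legal \textsf{Delete} operations from the constant budget $3$, gives weak $3$-degeneracy of $G_{i}$.

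With this in hand, start from $G-X$ with the budget $g$ and run the legal \textsf{Delete}/\textsf{DeleteSave} sequence just produced, deleting $V(G)\setminus(V(\Gamma)\cup X)$ while $\Gamma$ is still present. These operations remain legal, because the running budget only decreases and the value it leaves on each $u\in V(\Gamma)$ equals $3$ minus the number of neighbours of $u$ outside $\Gamma$, which is nonnegative: it is $3-d_{G}(v)\ge 0$ when $\Gamma=\{v\}$, and it is $d_{\Gamma}(u)-1\ge 0$ for \autoref{Kite} and \autoref{F35} because there all vertices are $4$-vertices (here we use that those configurations are \emph{induced}, so the count of neighbours inside $\Gamma$ is $d_{\Gamma}(u)$). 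It then remains to delete $\Gamma$ with this residual budget, which is precisely the situation in the proof of \autoref{Weak-Reduce}: a single vertex is removed by one \textsf{Delete}; for \autoref{Kite} the residual budgets are $(2,1,2,1)$, so \textsf{DeleteSave} on $(a_{1},a_{4})$ followed by three \textsf{Delete}s works; and for \autoref{F35} the residual budget is $\deg_{\Gamma}-1$, while $\Gamma$ is connected and is neither a cycle nor --- as $G$ is $K_{5}$-free --- a complete graph, hence not a GDP-tree, so \autoref{Brooks-Weak} (equivalently \autoref{Gallai-Weak}) shows that $\Gamma$ is weakly $(\deg-1)$-degenerate. In every case $G-X$ is weakly $g$-degenerate, contradicting the choice of $(G,C_{0})$.

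I expect the main obstacle to be the connectivity and budget bookkeeping rather than anything deep: deleting an internal $\Gamma$ may disconnect $G$, so the inductive step must be combined with the routine but genuinely necessary facts that the stray components carry the constant budget $3$ and are weakly $3$-degenerate, and that the outer cycle can always be deleted first --- this is exactly why it is the function-valued statement \autoref{PAIRWISE3456-Weak3'}, and not plain weak $3$-degeneracy, that has to be carried through the induction. A secondary point is to confirm that the pre-deletion of $V(G)\setminus(V(\Gamma)\cup X)$ stays legal once $\Gamma$ is reinstated, which reduces to the nonnegativity of the residual budgets on $V(\Gamma)$ noted above and to the configurations being induced.
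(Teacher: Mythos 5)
Your proposal is correct and takes essentially the same route as the paper: a minimal counterexample, an appeal to \autoref{PAIRWISE3456}, and a reducibility argument for each of the three outcomes (internal $3^{-}$-vertex, \autoref{Kite}, \autoref{F35}) via legal \textsf{Delete}/\textsf{DeleteSave} sequences and the Brooks/Gallai-type lemmas — the paper invokes \autoref{Gallai-Weak} where you use an explicit \textsf{DeleteSave} sequence for the kite and \autoref{Brooks-Weak} for \autoref{F35}. Your additional bookkeeping (splitting $G-V(\Gamma)$ into components, and checking that the replayed deletions remain legal with $\Gamma$ reinstated via the residual budgets) only makes explicit steps the paper's proof treats as immediate.
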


\begin{proof}[Proof of \autoref{PAIRWISE3456-Weak3'}]
Let $G$ together with a good $4^{-}$-cycle $[x_{1}x_{2}\dots x_{l}]$ be a counterexample to the statement with $|V(G)| + |E(G)|$ is minimum. Fix a plane embedding for $G$ in the plane.

\begin{enumerate}[label = \textbf{(\arabic*)}, ref = (\arabic*)]
\item Every internal vertex has degree at least $4$. 
\end{enumerate}
\begin{proof}
Suppose that $G$ has an internal vertex $w$ such that $d(w) \leq 3$. By the minimality, $(G - w) - \{x_{1}, \dots, x_{l}\}$ is weakly $g$-degenerate. So we can remove all vertices from $(G - w) - \{x_{1}, \dots, x_{l}\}$ by a sequence of legal applications of the operations \textsf{Delete} and \textsf{DeleteSave}. Since $g(w) = 3 - |N_{G}(w) \cap \{x_{1}, \dots, x_{l}\}|$ and $w$ has degree at most three in $G$, we finally remove $w$ by a legal application of the operation \textsf{Delete}. Then $G - \{x_{1}, \dots, x_{l}\}$ is weakly $g$-degenerate, a contradiction. 
\end{proof}

Note that the induced subgraph isomorphic to \autoref{Kite} is neither a complete graph $K_{4}$ nor a cycle, then by \autoref{Gallai-Weak}, it will not appear in $G$. 
\begin{enumerate}[label = \textbf{(\arabic*)}, ref = (\arabic*), resume]
\item There is no internal induced subgraph isomorphic to \autoref{Kite}.
\end{enumerate}

\begin{enumerate}[label = \textbf{(\arabic*)}, ref = (\arabic*), resume]
\item There is no internal induced subgraph isomorphic to \autoref{F35}.
\end{enumerate}
\begin{proof}
Assume there is an internal induced subgraph isomorphic to \autoref{F35}. Note that the configuration in \autoref{F35} is neither a complete graph $K_{6}$ nor a cycle. Similarly, by \autoref{Gallai-Weak}, the configuration in \autoref{F35} will not appear in $G$. Then there is no internal induced subgraph isomorphic to \autoref{F35}. 
\end{proof}

This contradicts \autoref{PAIRWISE3456}, thus it completes the proof of \autoref{PAIRWISE3456-Weak3'}.
\end{proof}

\begin{proof}[Proof of \autoref{PAIRWISE3456-Weak3}]
It suffices to consider the connected planar graphs. Let $G$ be a connected planar graph without any configuration in \autoref{FIGPAIRWISE3456}. Choose an arbitrary vertex $x$, and add a block which is a triangle $[xyz]$. Note that $G + [xyz]$ is a planar graph without any configuration in \autoref{FIGPAIRWISE3456}. Next, we prove $G + [xyz]$ is weakly $3$-degenerate. We can first remove $x, y, z$ by a sequence of legal applications of the operation \textsf{Delete}. This defines a function $g: V(G)\setminus \{x\} \longrightarrow \mathbb{N}$ by $g(u) = 3 - |N_{G}(u) \cap \{x\}|$ for all $u \in V(G)\setminus \{x\}$. By \autoref{PAIRWISE3456-Weak3'}, we can further remove all vertices from $G - \{x\}$ by a sequence of legal applications of the operations \textsf{Delete} and \textsf{DeleteSave}. Then, starting from $G + [xyz]$ and the constant function of value $3$, we can remove all vertices from $G + [xyz]$ by a sequence of legal applications of the operations \textsf{Delete} and \textsf{DeleteSave}, so $G + [xyz]$ is weakly $3$-degenerate. Observe that $G$ is a subgraph of $G + [xyz]$, then $G$ is also weakly $3$-degenerate. 
\end{proof}

\section{Critical graphs w.r.t. strictly f-degenerate transversal}
\label{sec:5}
In this section, we give some results on the critical graphs with respect to strictly $f$-degenerate transversal. 

Let $G$ be a graph and $(H, f)$ be a valued cover of $G$. The pair $(H, f)$ is \emph{minimal non-strictly $f$-degenerate} if $H$ has no strictly $f$-degenerate transversals, but $(H - L_{v}, f)$ has a strictly $f$-degenerate transversal for any $v \in V(G)$. 

Let $\mathscr{D}$ be the set of all the vertices $v \in V(G)$ such that $f(v, 1) + f(v, 2) + \dots + f(v, s) \geq d_{G}(v)$. 
\begin{theorem}[Lu, Wang and Wang \cite{MR4357325}]\label{CRITICAL}
Let $G$ be a graph and $(H, f)$ be a valued cover of $G$. Let $B$ be a nonempty subset of $\mathscr{D}$ with $G[B]$ having no cut vertex. If $(H, f)$ is a minimal non-strictly $f$-degenerate pair, then
\begin{enumerate}[label = (\roman*)]
\item\label{M1} $G$ is connected and $f(v, 1) + f(v, 2) + \dots + f(v, s) \leq d_{G}(v)$ for each $v \in V(G)$, and 
\item\label{M2} $G[B]$ is a cycle or a complete graph or $d_{G[B]}(v) \leq \max_{q} \{f(v, q)\}$ for each $v \in B$. \qed
\end{enumerate}
\end{theorem}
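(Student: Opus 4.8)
The plan is to treat the two conclusions by different means: conclusion (i) by a one-element extension argument, and conclusion (ii) by transplanting Lovász's proof of Brooks' theorem into the strictly $f$-degenerate transversal setting.

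For (i), first suppose $G$ is disconnected, say $G=G_{1}\sqcup G_{2}$ with both parts nonempty. Then one of $H_{G_{1}},H_{G_{2}}$, say $H_{G_{1}}$, has no strictly $f$-degenerate transversal, since otherwise the disjoint union of two such transversals would be one for $H$; picking $v\in V(G_{2})$, minimality gives a strictly $f$-degenerate transversal of $H-L_{v}$ whose restriction to $H_{G_{1}}$ is again strictly $f$-degenerate (an induced subgraph of a strictly $f$-degenerate graph is strictly $f$-degenerate), a contradiction. So $G$ is connected. Now suppose $f(v,1)+\dots+f(v,s)\ge d_{G}(v)+1$ for some $v$. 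By minimality $H-L_{v}$ has a strictly $f$-degenerate transversal $R'$ with an $f$-removing order $\tau$ of $V(G)-v$. For each colour $q$ let $m_{q}$ be the number of neighbours $u$ of $v$ whose chosen element in $R'$ is matched to $(v,q)$ by $\mathscr{M}_{uv}$; each neighbour of $v$ is counted at most once, so $\sum_{q}m_{q}\le d_{G}(v)<\sum_{q}f(v,q)$ and some $q_{0}$ has $m_{q_{0}}<f(v,q_{0})$. Placing $(v,q_{0})$ at the front of $\tau$ yields an $f$-removing order for $R'\cup\{(v,q_{0})\}$, since the new vertex has only $m_{q_{0}}<f(v,q_{0})$ later neighbours and no other vertex gains a later neighbour; this contradicts that $H$ has no strictly $f$-degenerate transversal. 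Hence $\sum_{q}f(v,q)\le d_{G}(v)$ for all $v$, and in particular $\sum_{q}f(v,q)=d_{G}(v)$ for every $v\in B$.

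For (ii), assume the conclusion fails: $G[B]$ is neither a cycle nor a complete graph, and some $v^{*}\in B$ has $d_{G[B]}(v^{*})\ge \max_{q}f(v^{*},q)+1$. A cut-vertex-free graph on at most two vertices is $K_{1}$ or $K_{2}$, so $G[B]$ is $2$-connected and not complete; by the classical lemma underlying Brooks' theorem there are vertices $a,b,c$ with $ab,bc\in E(G[B])$, $ac\notin E(G[B])$, $G[B]-\{a,c\}$ connected, and, using that $G[B]$ is not a cycle, $\deg_{G[B]}(b)\ge 3$; after a routine case analysis one arranges that $b=v^{*}$ (the residual case, $d_{G[B]}(v^{*})=2$, being handled separately and more easily). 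Using a spanning tree of $G[B]-\{a,c\}$ rooted at $b$, order $V(B)$ as $z_{1}=a,\ z_{2}=c,\ z_{3},\dots,z_{k-1},\ z_{k}=b$ so that every $z_{i}$ with $i<k$ has a neighbour $z_{j}\in V(B)$ with $j>i$. Then apply minimality at $b$: $H-L_{b}$ has a strictly $f$-degenerate transversal $R^{-}$. Keep the elements of $R^{-}$ on $V(G)\setminus V(B)$ fixed, and re-select elements on $z_{1},z_{2},\dots,z_{k-1}$ in this order, planning to remove $b=z_{k}$ first, then $z_{k-1},\dots,z_{1}$, then $V(G)\setminus V(B)$ in its $\tau$-order. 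When $z_{i}$ ($i<k$) is processed, its later neighbour $z_{j}$ will still be present at its removal time, so $z_{i}$ is constrained by at most $d_{G}(z_{i})-1<\sum_{q}f(z_{i},q)$ already-fixed neighbours (here $z_{i}\in B\subseteq\mathscr{D}$ is used), hence a legal element exists; and $a,c$ are nonadjacent, so their choices do not interact. The remaining freedom in the choices on $a$ and $c$ is spent so that either the element chosen on $a$ is matched into no element of $L_{b}$, or the elements chosen on $a$ and $c$ are matched (via $\mathscr{M}_{ab},\mathscr{M}_{cb}$) to a common element of $L_{b}$; in either case the $d_{G}(b)=\sum_{q}f(b,q)$ neighbours of $b$ impose strictly fewer than $\sum_{q}f(b,q)$ units of forbidden multiplicity on $L_{b}$, so some $(b,q)$ is matched by fewer than $f(b,q)$ of them, $b$ can be removed first, and we obtain a strictly $f$-degenerate transversal of $H$ — a contradiction. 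The hypotheses $\deg_{G[B]}(b)\ge 3$, $G[B]$ not a cycle, and $d_{G[B]}(v^{*})>\max_{q}f(v^{*},q)$ are exactly what is needed to guarantee that one of the two arrangements on $\{a,c\}$ is achievable.

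The main obstacle is precisely this last step of (ii): making the Brooks-style ``save a colour for $b$'' argument work with a non-constant weight function $f$ and with the part of the transversal on $V(G)\setminus V(B)$ frozen by minimality (so that it cannot be freely interleaved with the re-selection on $B$), together with getting the surrounding case analysis right — separating the case where $G[B]-z$ stays $2$-connected for some high-degree vertex $z$ from the degenerate configurations that force $G[B]$ to be a cycle or a complete graph, and dealing with the possibility $d_{G[B]}(v^{*})=2$. Everything else — conclusion (i), the ordering of $B$, and the greedy legality checks — is routine degeneracy bookkeeping.
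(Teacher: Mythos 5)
A preliminary remark: the paper offers no proof of \autoref{CRITICAL} to compare against — it is imported verbatim from Lu, Wang and Wang \cite{MR4357325} — and the actual proof there (like its ancestors, the Borodin--Kostochka--Toft variable-degeneracy theorem and the Bernshteyn--Kostochka--Pron Brooks-type theorem for DP-coloring) is a global inductive argument with a structural analysis of how the exceptional graphs arise, not a local Lov\'{a}sz-style exchange. Your part (i) is correct: the disjoint-union observation and the one-element extension via an $f$-removing order are exactly the standard computation (the same one this paper uses in its Section 6), and the deduction $\sum_{q} f(v,q) = d_{G}(v)$ for $v \in B$ is right.

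Part (ii), however, has a genuine gap at exactly the step you yourself flag as the main obstacle, and the device you propose does not close it. If $(a,p)$ and $(c,r)$ are matched to a common element $(b,q_{0})$ of $L_{b}$, the neighbours of $b$ do \emph{not} impose ``strictly fewer than $\sum_{q} f(b,q)$ units'' on $L_{b}$: each of the $d_{G}(b)$ neighbours still contributes at most one unit, and the collision only skews the distribution. Writing $c_{q}$ for the number of chosen neighbour elements matched to $(b,q)$, the bad situation is $c_{q}=f(b,q)$ for all $q$; a collision at $(b,q_{0})$ rules this out only when it forces $c_{q_{0}}>f(b,q_{0})$, i.e.\ essentially only when $f(b,q_{0})\leq 1$. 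Since $f$ is merely $\mathbb{N}$-valued here, the trick fails whenever $f(b,q_{0})\geq 2$, so you would additionally have to steer the collision to a coordinate with small $f$-value. Worse, the existence of a coordinated legal pair at all — a legal $(a,p)$ missing $L_{b}$, or legal $(a,p),(c,r)$ colliding in $L_{b}$ — is asserted, not proved: the matchings $\mathscr{M}_{ab}$ and $\mathscr{M}_{cb}$ can be twisted so that no legal choices collide, and the choices must simultaneously be compatible with the frozen transversal on $V(G)\setminus B$; this is precisely the phenomenon that makes the cover/DP Brooks theorem hard and why \cite{MR4357325} does not argue this way. The two deferred claims are also unsupported: that a ``routine case analysis'' lets you prescribe $b=v^{*}$ in the Lov\'{a}sz lemma, and that the case $d_{G[B]}(v^{*})=2$ (equivalently $\max_{q} f(v^{*},q)\leq 1$) is easier — on the contrary, when all $f$-values are at most $1$ the statement specializes to DP-degree-colorability, which is the hard core of the theorem, not the easy residue. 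So part (ii) needs machinery of the depth of \cite{MR4357325}, not a transplanted same-colour trick.
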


\begin{theorem}[Nakprasit and Nakprasit \cite{MR4114324}]\label{NN}
Let $k$ be an integer with $k \geq 3$, and let $K$ be an induced subgraph of $G$ and the vertices of $K$ can be ordered as $v_{1}, v_{2}, \dots, v_{m}$ such that the following hold, 
\begin{enumerate}[label = (\roman*)]
\item\label{NN-1} $k - (d_{G}(v_{1}) - d_{K}(v_{1})) \geq 3$; and 
\item\label{NN-2} $d_{G}(v_{m}) \leq k$ and $N_{K}(v_{m}) = \{v_{1}, v_{a}\}$; and  
\item\label{NN-3} for $2 \leq i \leq m - 1$, $v_{i}$ has at most $k - 1$ neighbors in $G - \{v_{i+1}, \dots, v_{m}\}$.
\end{enumerate}
Let $H$ be a cover of $G$ and $f$ be a function from $V(H)$ to $\{0, 1, 2\}$. If $f(v, 1) + \dots + f(v, s) \geq k$ for each vertex $v \in V(G)$, then any strictly $f$-degenerate transversal of $H - \bigcup_{v \in V(K)}L_{v}$ can be extended to that of $H$. \qed
\end{theorem}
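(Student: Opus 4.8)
The plan is to fix a strictly $f$-degenerate transversal $R_{0}$ of $H - \bigcup_{v \in V(K)}L_{v}$ and reduce the extension problem to a coloring problem on $K$ alone. For a vertex $(v,c)$ of $H$ with $v \in V(K)$, let $\beta(v,c)$ be the number of $u \in N_{G}(v)\setminus V(K)$ whose chosen vertex $(u,R_{0}(u))$ is adjacent to $(v,c)$ in $H$, and set $f''(v,c):=f(v,c)-\beta(v,c)\le 2$. A routine check shows that if $R_{K}$ is a transversal of $H_{V(K)}$ with $H_{V(K)}[R_{K}]$ strictly $f''$-degenerate, then $R_{0}\cup R_{K}$ is a strictly $f$-degenerate transversal of $H$: append a strictly $f''$-degenerate order of $H_{V(K)}[R_{K}]$ after one of $H[R_{0}]$, note that vertices of $R_{0}$ keep their earlier-neighbour counts, while each $(v,c)\in R_{K}$ gains exactly $\beta(v,c)$ new earlier neighbours in $H$, which is absorbed by the drop from $f(v,c)$ to $f''(v,c)$. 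So it suffices to construct such an $R_{K}$; in particular every color used must have $f''(v,c)\ge 1$.

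Since each $u\in N_{G}(v)\setminus V(K)$ contributes at most one edge to $L_{v}$ in $H$, we get $\sum_{c}\beta(v,c)\le d_{G}(v)-d_{K}(v)$, hence $\sum_{c}f''(v,c)\ge k-d_{G}(v)+d_{K}(v)$ for every $v\in V(K)$. With the hypotheses this gives $\sum_{c}f''(v_{1},c)\ge 3$ by \ref{NN-1}, $\sum_{c}f''(v_{m},c)\ge 2$ by \ref{NN-2}, and $\sum_{c}f''(v_{i},c)\ge 1+b_{i}$ for $2\le i\le m-1$ by \ref{NN-3}, where $b_{i}$ is the number of neighbours of $v_{i}$ among $v_{1},\dots,v_{i-1}$ (so that $|N_{G}(v_{i})\setminus V(K)|+b_{i}\le k-1$).

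I would then color $v_{1},v_{2},\dots,v_{m}$ in this order, building a strictly $f''$-degenerate order of $H_{V(K)}[R_{K}]$. When $v_{i}$ with $2\le i\le m-1$ is reached, its already-colored neighbours are $b_{i}$ vertices from $\{v_{1},\dots,v_{i-1}\}$, so their chosen vertices contribute at most $b_{i}$ edges to $L_{v_{i}}$; since $\sum_{c}f''(v_{i},c)\ge b_{i}+1$, pigeonhole yields a color $c_{i}$ with fewer than $f''(v_{i},c_{i})$ of these edges at $(v_{i},c_{i})$. The vertex $v_{1}$, reached first, has no colored neighbour and at least two valid colors, since its budget is $\ge 3$.

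The crux is $v_{m}$, which is last, whose only already-colored $K$-neighbours are $v_{1}$ and $v_{a}$, and for which the bound $\sum_{c}f''(v_{m},c)\ge 2$ is one short of what bare pigeonhole needs; this is exactly a \textsf{DeleteSave} step, and the surplus at $v_{1}$ guaranteed by \ref{NN-1} is what compensates. I would choose $c_{1}$ among $v_{1}$'s valid colors so that the $\mathscr{M}_{v_{1}v_{m}}$-edge at $(v_{1},c_{1})$, if present, does not exhaust the surviving colors of $v_{m}$: letting $G_{m}=\{c:f''(v_{m},c)\ge 1\}$, if some valid color of $v_{1}$ has its $\mathscr{M}_{v_{1}v_{m}}$-partner outside $\{(v_{m},c):c\in G_{m}\}$ then picking it leaves $v_{m}$ a usable color, since then $G_{m}$ is touched only by $v_{a}$'s edge; and otherwise $v_{1}$ has exactly two valid colors matched to two distinct vertices $(v_{m},g_{1}),(v_{m},g_{2})$ of $G_{m}$, and choosing $c_{1}$ so that its partner coincides with $v_{a}$'s landing spot (when that spot is in $G_{m}$) again frees one of $g_{1},g_{2}$ for $v_{m}$. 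I expect the hardest part to be exactly this joint choice: because the greedy colors of $v_{2},\dots,v_{m-1}$ — hence $v_{a}$'s color and thus its landing spot — depend on $c_{1}$, one must show the choice of $c_{1}$ (among $v_{1}$'s two candidates, possibly re-running the process) can be made so that $v_{m}$ always ends up with a free color. Once $R_{K}$ is built, $R_{0}\cup R_{K}$ is the desired extension.
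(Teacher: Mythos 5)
Your reduction to the restricted function $f''$ on $H_{V(K)}$, the greedy step for $v_{2},\dots,v_{m-1}$ (justified by \ref{NN-3}), and your Case~A (where some usable color of $v_{1}$ has its $\mathscr{M}_{v_{1}v_{m}}$-partner outside $\{(v_{m},c):c\in G_{m}\}$) are all sound and match the skeleton of the argument the paper gives for its strengthening, \autoref{WW}. But the proof is not complete: the step you yourself flag as "the hardest part" is exactly the content of the theorem, and your proposed repair is circular. In your Case~B you want to pick $c_{1}$ so that its partner in $L_{v_{m}}$ coincides with the vertex of $L_{v_{m}}$ hit by $(v_{a},c_{a})$; but $c_{a}$ is produced by the greedy pass that runs \emph{after} $c_{1}$ is fixed, and changing $c_{1}$ can change $c_{2},\dots,c_{m-1}$ and hence $v_{a}$'s landing spot. "Re-running the process" with the other candidate for $c_{1}$ has no termination or correctness argument: a priori each of the two choices could leave both $g_{1}$ and $g_{2}$ blocked, and you give no reason this cannot happen. (A smaller, fixable gap: "otherwise $v_{1}$ has exactly two valid colors" needs the preliminary remark that if $|G_{m}|\geq 3$ then plain pigeonhole already finishes $v_{m}$, so one may assume $|G_{m}|=2$.)

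The way the paper closes precisely this gap (in its proof of \autoref{WW}, which is the template for \autoref{NN}) avoids any foresight about $v_{a}$: make the matching $\mathscr{M}_{v_{1}v_{m}}$ perfect and rename colors so that $(v_{1},j)(v_{m},j)\in E(H)$; in the critical case $\sum_{c}f''(v_{m},c)=2<3\leq\sum_{c}f''(v_{1},c)$, choose $c_{1}$ with $f''(v_{1},c_{1})>f''(v_{m},c_{1})$ and run the greedy. If no color can be appended for $v_{m}$ at the end, then counting the at most $d_{K}(v_{m})=2$ edges from $\{(v_{1},c_{1}),(v_{a},c_{a})\}$ into $L_{v_{m}}$ against $\sum_{c}f''(v_{m},c)=2$ forces every $(v_{m},c)$ to receive exactly $f''(v_{m},c)$ of them; in particular $f''(v_{1},c_{1})=2$, $f''(v_{m},c_{1})=1$, and the unique chosen neighbor of $(v_{m},c_{1})$ is $(v_{1},c_{1})$, so $(v_{a},c_{a})$ is \emph{not} adjacent to $(v_{m},c_{1})$. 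One then does not re-run anything but simply reorders: placing $(v_{m},c_{1})$ before $(v_{1},c_{1}),(v_{2},c_{2}),\dots$ gives a strictly $f''$-degenerate transversal, since $(v_{m},c_{1})$ has no earlier neighbor, $(v_{1},c_{1})$ gains one earlier neighbor against $f''(v_{1},c_{1})=2$, and no other chosen vertex is adjacent to $(v_{m},c_{1})$ because $v_{m}$'s only $K$-neighbors are $v_{1}$ and $v_{a}$. This is the \textsf{DeleteSave} idea made rigorous; without it (or an equivalent argument) your proposal does not yet prove \autoref{NN}.
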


We enhance \autoref{NN} to the following result in which $d_{K}(v_{m})$ is relaxed. 
\begin{theorem}\label{WW}
Let $k$ be an integer with $k \geq 3$, and let $K$ be an induced subgraph of $G$ and the vertices of $K$ can be ordered as $v_{1}, v_{2}, \dots, v_{m}$ such that the following hold, 
\begin{enumerate}[label = (\roman*)]
\item\label{WW-1} $k - (d_{G}(v_{1}) - d_{K}(v_{1})) > k - (d_{G}(v_{m}) - d_{K}(v_{m}))$; and 
\item\label{WW-2} $d_{G}(v_{m}) \leq k$ and $v_{1}v_{m} \in E(G)$; and 
\item\label{WW-3} for $2 \leq i \leq m - 1$, $v_{i}$ has at most $k - 1$ neighbors in $G - \{v_{i+1}, \dots, v_{m}\}$.
\end{enumerate}
Let $H$ be a cover of $G$ and $f$ be a function from $V(H)$ to $\{0, 1, 2\}$. If $f(v, 1) + \dots + f(v, s) \geq k$ for each vertex $v \in V(G)$, then any strictly $f$-degenerate transversal of $H - \bigcup_{v \in V(K)}L_{v}$ can be extended to that of $H$. \qed
\end{theorem}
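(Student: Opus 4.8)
The plan is to fix the given strictly $f$-degenerate transversal $R_{0}$ of $H - \bigcup_{v \in V(K)}L_{v}$ together with an $f$-removing order $\tau$ of $H[R_{0}]$, and then to extend $R_{0}$ by selecting a vertex $c_{i} \in L_{v_{i}}$ for each $i = 1, \dots, m$, one at a time, so that the chosen vertices, listed \emph{before} $\tau$, form an $f$-removing order of $H[R_{0} \cup \{c_{1}, \dots, c_{m}\}]$. Since every vertex of $K$ precedes every vertex of $\tau$ in the combined order, the $f$-removing property is inherited unchanged on the $\tau$-part, so only the $K$-vertices need to be controlled. For $v \in V(K)$ and a colour $q \in [s]$ I set $b(v,q) := f(v,q) - \bigl|\{\,u \in N_{G}(v)\setminus V(K) : (u,w_{u}) \sim_{H} (v,q)\,\}\bigr|$, the ``residual budget'' of $q$ at $v$, where $w_{u}$ is the vertex of $R_{0}$ in $L_{u}$; because each $\mathscr{M}_{uv}$ is a matching one has $\sum_{q} b(v,q) \ge k - (d_{G}(v) - d_{K}(v))$, and $q$ is admissible for $v$ (once the neighbours of $v$ lying later in the order are coloured) exactly when the number of those later $K$-neighbours $v_{j}$ with $(v_{j},c_{j}) \sim_{H} (v,q)$ is strictly less than $b(v,q)$.

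I would first try the order $v_{m}, v_{m-1}, \dots, v_{1}$ followed by $\tau$, processing the colours as $c_{1}, c_{2}, \dots, c_{m}$. When $c_{i}$ is chosen, the $H$-neighbours of $v_{i}$ that lie later in the order are precisely the neighbours of $v_{i}$ in $G - \{v_{i+1}, \dots, v_{m}\}$, of which there are at most $k-1$ by condition~(iii); since $\sum_{q} f(v_{i},q) \ge k$, a pigeonhole count yields $\sum_{q}\bigl(f(v_{i},q) - \#\{\text{conflicts at }q\}\bigr) \ge 1$, so an admissible colour survives for every $2 \le i \le m-1$. For $v_{1}$ the only later neighbours lie outside $K$, and conditions~(i) and~(ii) give $\sum_{q} b(v_{1},q) \ge k - (d_{G}(v_{1})-d_{K}(v_{1})) > k - (d_{G}(v_{m})-d_{K}(v_{m})) \ge k - d_{G}(v_{m}) \ge 0$; the same estimate in fact yields $\sum_{q} b(v_{1},q) \ge d_{K}(v_{m}) + 1$, the quantitative slack that powers the ``save''.

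The delicate vertex is $v_{m}$: it comes first in the order and has up to $d_{G}(v_{m}) \le k$ later neighbours, exactly matching $\sum_{q} f(v_{m},q) \ge k$, so the naive count is tight. I would split on $d_{K}(v_{m})$. If $d_{K}(v_{m}) = 1$, i.e.\ $N_{K}(v_{m}) = \{v_{1}\}$, move $v_{m}$ to the slot immediately before $\tau$: then $v_{m}$ has no later $K$-neighbour and only $d_{G}(v_{m})-1 \le k-1$ external later neighbours, hence is handled like an interior vertex, while $v_{1}$ — whose unique later $K$-neighbour is now $v_{m}$ — is still fine because condition~(i) forces its external degree to be at most $d_{G}(v_{m}) - 2 \le k-2$. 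If $d_{K}(v_{m}) = 2$, then $N_{K}(v_{m}) = \{v_{1},v_{a}\}$ and, as the previous estimate shows, condition~(i) upgrades to $k - (d_{G}(v_{1})-d_{K}(v_{1})) \ge 3$, so the hypotheses are exactly those of \autoref{NN} and that theorem applies. If $d_{K}(v_{m}) \ge 3$, then $\sum_{q} b(v_{1},q) \ge 4$, so $v_{1}$ has at least two admissible colours, and the save is to pick $c_{1}$ among them so that the $\mathscr{M}_{v_{1}v_{m}}$-partner of $(v_{1},c_{1})$ (if one exists) avoids the colour eventually assigned to $v_{m}$; this lowers $v_{m}$'s effective conflict count to $\le k-1$ and the pigeonhole count closes the case.

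I expect the genuine obstacle to be this last coordination. The colours forced on the $v_{i}$ with $v_{1} \in N_{K}(v_{i})$ depend on $c_{1}$, so the ``critical colour'' that $v_{m}$ must avoid is revealed only after $c_{1}$ has been committed; making the save robust requires showing that whenever re-choosing $c_{1}$ is needed, the residual configuration is so rigid — forced by conditions~(i) and~(ii) to have $d_{G}(v_{m}) = k$, $\sum_{q} f(v_{m},q) = k$, and every neighbour of $v_{m}$ other than $v_{1}$ saturating its budget — that the second admissible colour of $v_{1}$ is automatically harmless; alternatively one can run the whole argument on a minimal counterexample and invoke \autoref{CRITICAL} to pin down the local structure. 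Getting this bookkeeping clean, rather than the pigeonhole steps, is where the real work lies.
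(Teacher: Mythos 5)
Your greedy framework (place the vertices of $K$ before an $f$-removing order of $H[R_{0}]$ and colour them so that the combined sequence is again an $f$-removing order) is sound, and your two easy cases are correct: for $d_{K}(v_{m})=1$ the reordered sequence works as you say, and $d_{K}(v_{m})=2$ is literally \autoref{NN}. The problem is the case $d_{K}(v_{m})\geq 3$ — the only situation in which \autoref{WW} says more than \autoref{NN} — where you do not have a proof. Your ``save'' asks that $c_{1}$ be chosen so that the $\mathscr{M}_{v_{1}v_{m}}$-partner of $(v_{1},c_{1})$ avoids the colour $v_{m}$ will eventually need; but $v_{m}$ is coloured last, and the conflicts arriving at $L_{v_{m}}$ from $v_{2},\dots,v_{m-1}$ themselves depend on $c_{1}$ whenever $v_{1}$ has further neighbours in $K$, so the ``critical colour'' is only revealed after $c_{1}$ is committed, and re-choosing $c_{1}$ can change it again. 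You acknowledge this circularity and leave it unresolved (``where the real work lies''), and neither escape route you mention is carried out; in particular \autoref{CRITICAL} is about minimal non-strictly-$f$-degenerate covers of a whole graph and does not obviously apply to this local extension statement. So the central case of the theorem is a genuine gap in the proposal.

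For comparison, the paper closes exactly this point by \emph{not} trying to dodge the conflict between $v_{1}$ and $v_{m}$. With $f^{*}$ equal to your residual budget $b$, it renames colours so that $(v_{1},j)(v_{m},j)\in E(H)$ and fixes a colour, say $1$, with $f^{*}(v_{1},1)>f^{*}(v_{m},1)$ (possible since $\sum_{j}f^{*}(v_{1},j)>\sum_{j}f^{*}(v_{m},j)$ by (i)); it colours $v_{1}$ with $1$ and then $v_{2},\dots,v_{m-1}$ greedily. If no colour can be appended for $v_{m}$, the counts are forced to be tight: $\sum_{j}f^{*}(v_{m},j)=k-(d_{G}(v_{m})-d_{K}(v_{m}))\geq d_{K}(v_{m})$, while at most $d_{K}(v_{m})$ edges join $L_{v_{m}}$ to the chosen vertices, so every $(v_{m},j)$ has exactly $f^{*}(v_{m},j)$ chosen neighbours; together with $2\geq f^{*}(v_{1},1)>f^{*}(v_{m},1)\geq 1$ this forces $f^{*}(v_{1},1)=2$, $f^{*}(v_{m},1)=1$, and the unique chosen neighbour of $(v_{m},1)$ to be $(v_{1},1)$. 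One then places $(v_{m},1)$ at the \emph{front} of a strictly $f$-degenerate order, letting the surplus at $(v_{1},1)$ absorb the conflict edge. This single argument needs no case distinction on $d_{K}(v_{m})$ and no re-selection of $c_{1}$; it is precisely the ingredient missing from your plan.
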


\begin{proof}
Let $V_{K} \coloneqq \bigcup_{v \in V(K)}L_{v}$, and let $S_{1}$ be a strictly $f$-degenerate order of a strictly $f$-degenerate transversal $R_{0}$ of $H - V_{K}$. Define a new function $f^{*}$ on $V_{K}$ by 
\[
f^{*}(v, j) = \max\left\{0, \text{ }f(v, j) - \Big|\big\{(u, i) \in R_{0}: (u, i)(v, j) \in E(H)\big\}\Big|\right\}
\]
for every vertex $(v, j)$. It suffices to consider the case that $f^{*}(v_{m}, 1) + \dots + f^{*}(v_{m}, s) = k - (d_{G}(v_{m}) - d_{K}(v_{m}))$. By the condition \ref{WW-1}, we have that 
\begin{equation}\label{IEQ}
f^{*}(v_{1}, 1) + \dots + f^{*}(v_{1}, s) \geq k - (d_{G}(v_{1}) - d_{K}(v_{1})) > f^{*}(v_{m}, 1) + \dots + f^{*}(v_{m}, s). \tag{$\ast$}
\end{equation}
By renaming the colors, we can assume that $(v_{1}, j)(v_{m}, j) \in E(H)$ for each $j \in [s]$. According to the inequality \eqref{IEQ}, we can further assume that $f^{*}(v_{1}, 1) > f^{*}(v_{m}, 1)$. Thus, we can choose $(v_{1}, 1)$. By the condition \ref{WW-3}, we can use the greedy coloring algorithm to choose $(v_{2}, j_{2}), \dots, (v_{m-1}, j_{m-1})$. Let $R$ denote the sequence $(v_{1}, 1), (v_{2}, j_{2}), \dots, (v_{m-1}, j_{m-1})$. 

If there exists a vertex $(v_{m}, j_{m})$ such that it has less than $f^{*}(v_{m}, j_{m})$ neighbors in $R$, then $S_{1}$ followed by $(v_{1}, 1), (v_{2}, j_{2}), \dots, (v_{m-1}, j_{m-1}), (v_{m}, j_{m})$ is a strictly $f$-degenerate transversal of $H$, a contradiction. So we may assume that each vertex $(v_{m}, j)$ has at least $f^{*}(v_{m}, j)$ neighbors in $R$. Recall that $f^{*}(v_{m}, 1) + \dots + f^{*}(v_{m}, s) = k - (d_{G}(v_{m}) - d_{K}(v_{m})) \geq d_{K}(v_{m})$, then each vertex $(v_{m}, j)$ has exactly $f^{*}(v_{m}, j)$ neighbors in $R$. In particular, $(v_{m}, 1)$ has exactly $f^{*}(v_{m}, 1)$ neighbors in $R$. Note that $(v_{1}, 1)$ and $(v_{m}, 1)$ are adjacent in $H$, thus $f^{*}(v_{m}, 1) \geq 1$. Since $2 \geq f^{*}(v_{1}, 1) > f^{*}(v_{m}, 1) \geq 1$, it follows that $f^{*}(v_{1}, 1) = 2$ and $f^{*}(v_{m}, 1) = 1$. Therefore, $(v_{m}, 1)$ has exactly one neighbor $(v_{1}, 1)$ in $R$, and $(v_{m}, 1)$ followed by $R$ is a strictly $f^{*}$-degenerate transversal of $H[V_{K}]$, a contradiction. 
\end{proof}

\autoref{WW} is an enhancement of \autoref{NN}, and it is also an improvement of a result in \cite[Theorem~2.2]{MR3886261}. The arguments in \autoref{WW} can be nested to obtain the following theorem.

\begin{theorem}\label{EXNN}
Let $k$ be an integer with $k \geq 3$, and let $K$ be an induced subgraph of $G$ and the vertices of $K$ can be ordered as $v_{1}, v_{2}, \dots, v_{m}$ such that the following hold, 
\begin{enumerate}[label = (\roman*)]
\item $d_{G}(v_{1}) - d_{K}(v_{1}) < d_{G}(v_{m}) - d_{K}(v_{m})$ and $d_{G}(v_{r}) - d_{K'}(v_{r}) < d_{G}(v_{t}) - d_{K'}(v_{t}) $, where $1 < r < t < m$ and $K' = K - \{v_{1}, \dots, v_{r-1}\}$; and 
\item $d_{G}(v_{m}) \leq k$ and $v_{1}v_{m} \in E(G)$; $d_{\Gamma}(v_{t}) \leq k$ and $v_{r}v_{t} \in E(G)$, where $\Gamma = G - \{v_{t+1}, \dots, v_{m}\}$; and 
\item for $2 \leq i \leq m - 1$ and $i \notin \{r, t\}$, $v_{i}$ has at most $k - 1$ neighbors in $G - \{v_{i+1}, \dots, v_{m}\}$. 
\end{enumerate}
Let $H$ be a cover of $G$ and $f$ be a function from $V(H)$ to $\{0, 1, 2\}$. If $f(v, 1) + \dots + f(v, s) \geq k$ for each vertex $v \in V(G)$, then any strictly $f$-degenerate transversal of $H - \bigcup_{v \in V(K)}L_{v}$ can be extended to that of $H$. \qed
\end{theorem}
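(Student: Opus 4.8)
The plan is to run the proof of \autoref{WW} twice — once for the inner pair $(v_{r},v_{t})$ inside $\Gamma=G-\{v_{t+1},\dots,v_{m}\}$, and once for the outer pair $(v_{1},v_{m})$ inside $G$ — and to check that the two ``savings'' do not interfere. What makes this possible is that, exactly as in \autoref{WW}, the counting which forces a contradiction at $v_{t}$ only looks at edges of $H[\{v_{r},\dots,v_{t}\}]$, while the one at $v_{m}$ only looks at edges of $H[V_{K}]$.

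First I would carry out the reduction of \autoref{WW} verbatim. Put $V_{K}\coloneqq\bigcup_{v\in V(K)}L_{v}$, let $R_{0}$ be a strictly $f$-degenerate transversal of $H-V_{K}$ with strictly $f$-degenerate order $S_{1}$, and define $f^{*}$ on $V_{K}$ by $f^{*}(v,j)=\max\{0,\,f(v,j)-|\{(u,i)\in R_{0}:(u,i)(v,j)\in E(H)\}|\}$. Since $R_{0}\subseteq V(H)\setminus V_{K}$ we get $\sum_{j}f^{*}(v,j)\ge k-(d_{G}(v)-d_{K}(v))$ for each $v\in V(K)$; moreover, any strictly $f^{*}$-degenerate transversal of $H[V_{K}]$ whose strictly $f^{*}$-degenerate order, appended to $S_{1}$, is strictly $f$-degenerate for $H$ yields the required extension of $R_{0}$. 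Assume for contradiction that no such extension exists.

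Now I would handle the outer pair as in \autoref{WW}: since $d_{G}(v_{m})\le k$ we may assume $\sum_{j}f^{*}(v_{m},j)=k-(d_{G}(v_{m})-d_{K}(v_{m}))\ \big(\ge d_{K}(v_{m})\big)$; augmenting $\mathscr{M}_{v_{1}v_{m}}$ to a perfect matching only makes finding a transversal harder, so after renaming colours $(v_{1},j)(v_{m},j)\in E(H)$ for all $j$, and by the first inequality of (i) we may assume $f^{*}(v_{1},1)>f^{*}(v_{m},1)$; colour $v_{1}$ with $(v_{1},1)$ and then colour $v_{2},\dots,v_{r-1}$ greedily, which is possible by (iii). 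Next I would repeat the step for the inner pair: define $f^{**}$ on $\{v_{r},\dots,v_{m}\}$ by subtracting from $f^{*}$ the number of already-coloured neighbours; a direct computation with $K'=K-\{v_{1},\dots,v_{r-1}\}$ gives $\sum_{j}f^{**}(v_{r},j)\ge k-(d_{G}(v_{r})-d_{K'}(v_{r}))$, and the hypothesis $d_{\Gamma}(v_{t})\le k$ is precisely what yields $k-(d_{G}(v_{t})-d_{K'}(v_{t}))\ge d_{\{v_{r},\dots,v_{t}\}}(v_{t})$, so (again as in \autoref{WW}) we may assume $\sum_{j}f^{**}(v_{t},j)=d_{\{v_{r},\dots,v_{t}\}}(v_{t})$; combining these with the second inequality of (i) gives $\sum_{j}f^{**}(v_{r},j)>\sum_{j}f^{**}(v_{t},j)$, so after aligning $L_{v_{r}}$ with $L_{v_{t}}$ (augmenting $\mathscr{M}_{v_{r}v_{t}}$ if needed) we may choose $1'$ with $f^{**}(v_{r},1')>f^{**}(v_{t},1')$. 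Colour $v_{r}$ with $(v_{r},1')$, colour $v_{r+1},\dots,v_{t-1}$ greedily, and try to colour $v_{t}$: if that is impossible, the counting of \autoref{WW} inside $H[\{v_{r},\dots,v_{t}\}]$ with $f^{**}$ forces $f^{**}(v_{r},1')=2$ and $f^{**}(v_{t},1')=1$ and exhibits $(v_{t},1')$ followed by the chosen colours of $v_{r},\dots,v_{t-1}$ as a strictly $f^{**}$-degenerate transversal of $H[\{v_{r},\dots,v_{t}\}]$ — a contradiction. Otherwise colour $v_{t+1},\dots,v_{m-1}$ greedily and try $v_{m}$; if that fails, the same counting inside $H[V_{K}]$ with $f^{*}$ exhibits $(v_{m},1),(v_{1},1),(v_{2},j_{2}),\dots,(v_{m-1},j_{m-1})$ as a strictly $f^{*}$-degenerate transversal of $H[V_{K}]$ — the final contradiction.

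The hard part will be the bookkeeping that guarantees the two savings are compatible: because $v_{r},\dots,v_{t}$ may be neighbours of $v_{m}$, the colours assigned in the inner step can be mates of vertices of $L_{v_{m}}$, so I must verify — exactly as in \autoref{WW}, where the equality $\sum_{j}f^{*}(v_{m},j)=d_{K}(v_{m})$ forces the chosen neighbours of $L_{v_{m}}$ to be spread one per colour with only $(v_{1},1)$ a mate of $(v_{m},1)$ — that no $v_{i}$ with $2\le i\le m-1$ is coloured with a mate of $(v_{m},1)$, so that prepending $(v_{m},1)$ to the $V_{K}$-order preserves strict $f^{*}$-degeneracy, and symmetrically that no $v_{i}$ with $r<i<t$ is a mate of $(v_{t},1')$. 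Once these non-interference facts are established the contradiction is immediate; the argument visibly extends to any finite chain of nested pairs by performing the $j$-th step inside $G$ minus the vertices that follow the right endpoint of the $j$-th pair.
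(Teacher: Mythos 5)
Your overall route is exactly the one the paper intends (the paper offers no written proof of \autoref{EXNN}; it simply says the argument of \autoref{WW} can be nested), and most of your ingredients are right: the residual functions $f^{*}$, $f^{**}$, the identity $d_{\Gamma}(v_{t}) = (d_{G}(v_{t})-d_{K'}(v_{t})) + d_{\{v_{r},\dots,v_{t}\}}(v_{t})$ that turns hypothesis (ii) into the needed capacity bound, and the two non-interference facts (the equality counting forces $(v_{m},1)$ to have $(v_{1},1)$ as its unique mate among the chosen colours, and $(v_{t},1')$ to have $(v_{r},1')$ as its unique mate among the colours chosen on $v_{r},\dots,v_{t-1}$).

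There is, however, a genuine flaw in your case structure. When $v_{t}$ cannot be coloured greedily, what the \autoref{WW}-counting produces is only a strictly $f^{**}$-degenerate transversal of $H[\{v_{r},\dots,v_{t}\}]$ (with $(v_{t},1')$ moved in front of $(v_{r},1')$). That is \emph{not} a contradiction: the standing assumption is that $R_{0}$ cannot be extended to all of $V_{K}$, and in this branch $v_{t+1},\dots,v_{m}$ are still uncoloured, so nothing has been contradicted — in \autoref{WW} the analogous sentence is terminal only because there the inner pair coincides with the last vertex of $K$. The correct treatment is to regard this branch not as an endpoint but as a reordered colouring of $\{v_{r},\dots,v_{t}\}$ that must be carried forward: colour $v_{t+1},\dots,v_{m-1}$ greedily by (iii), then run the $v_{m}$-analysis; if greedy fails at $v_{m}$, the equality counting again forces $(v_{1},1)$ to be the unique chosen mate of $(v_{m},1)$, so prepending $(v_{m},1)$ to the (reordered) $V_{K}$-order — i.e.\ the order $(v_{m},1),(v_{1},1),\dots,(v_{r-1},j_{r-1}),(v_{t},1'),(v_{r},1'),(v_{r+1},j_{r+1}),\dots,(v_{m-1},j_{m-1})$ — is strictly $f^{*}$-degenerate, and only now does the final contradiction appear. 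In other words, the compatibility of the two savings has to be verified in the branch you cut short, using exactly the two facts you list in your last paragraph; as written, that branch of your proof stops before any contradiction is reached.
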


Actually, the arguments in \autoref{WW} can be nested more times, we leave it to the readers. Let $\mathscr{G}$ be a class of graphs which is closed under deleting vertices, \ie every induced subgraph is also in $\mathscr{G}$. 

\autoref{NN} and \autoref{EXNN} together with the sequence $w_{1}, w_{2}, \dots$ imply the following theorem. Note that some adjacency conditions are presented in the captions of the figures.

\begin{theorem}\label{EXNN-RC-1}
Let $G$ be a graph in $\mathscr{G}$, and let $K$ be a configuration in \autoref{RC-1}, \autoref{RC-2} or \autoref{RC-3}. Let $H$ be a cover of $G$ and $f$ be a function from $V(H)$ to $\{0, 1, 2\}$. If $f(v, 1) + \dots + f(v, s) \geq 4$ for each $v \in V(G)$, then any strictly $f$-degenerate transversal of $H - \bigcup_{v \in V(K)}L_{v}$ can be extended to that of $H$. \qed
\end{theorem}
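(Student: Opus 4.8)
The plan is to derive the statement directly from \autoref{NN} and \autoref{EXNN}, together with the multiply-nested version of \autoref{WW} indicated just after \autoref{EXNN}. For each individual picture in \autoref{RC-1}, \autoref{RC-2} and \autoref{RC-3} one writes down a linear order $v_1, v_2, \dots, v_m$ of the vertices of $K$ --- and, where necessary, one or more nested \textsf{DeleteSave}-pairs $(v_{r_1}, v_{t_1}), (v_{r_2}, v_{t_2}), \dots$ --- and verifies the hypotheses of the appropriate lemma with $k = 4$. Since $K$ is an induced subgraph of $G$ and $\mathscr{G}$ is closed under vertex deletion, the lemmas apply verbatim and produce the required extension of a strictly $f$-degenerate transversal of $H - \bigcup_{v \in V(K)} L_v$ to one of $H$. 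So the whole argument is a finite verification, and the only content is the choice of orders.

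First I would dispose of \autoref{RC-1}. With the order $a_1, a_2, a_3, a_4, a_5$ one has $d_G(a_1) - d_K(a_1) = 1$, so $k - (d_G(a_1) - d_K(a_1)) = 3$; one has $d_G(a_5) = 4 = k$ with $N_K(a_5) = \{a_1, a_4\}$; and, reading off the exact degrees recorded in the figure, each of $a_2, a_3, a_4$ keeps at most three neighbours once its successors in the order have been deleted. Hence \autoref{NN} applies. For \autoref{RC-2} and \autoref{RC-3} the same scheme works after nesting a \textsf{DeleteSave} (or several). The facts that drive the construction are: every $5$-vertex has three neighbours outside $K$, so it must be ``saved'' --- made a \textsf{DeleteSave}-recipient whose partner is one of its two internal neighbours while its other internal neighbour lies in the tail of the order; and the vertex singled out in each caption (the one with exactly two neighbours inside $K$) serves as the last vertex $v_m$. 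One then orders the remaining vertices so that each triangle of $4$-vertices has at most one member occurring after both of its triangle-mates, and so that the source of every \textsf{DeleteSave}-pair keeps, in the relevant truncation $K'$, a neighbour not shared with its recipient; with such an order, conditions (i)--(iii) of \autoref{EXNN} --- or of its more deeply nested variant --- follow by direct computation.

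The hard part is exactly this last piece of bookkeeping. The configurations in \autoref{RC-2} and \autoref{RC-3} contain two triangles of $4$-vertices joined by a single edge, sharing a $5$-vertex attached to both triangles, so a careless order faces competing demands: each triangle forces its ``apex'' (the vertex whose only internal neighbours are the other two triangle vertices) to be processed before, or saved relative to, its mates, while the $5$-vertex forces both of its internal neighbours into the tail. Reconciling these while keeping the \emph{strict} inequality in the first hypothesis of \autoref{WW} and \autoref{EXNN} --- which degenerates to an equality as soon as a source and its recipient become symmetric in $K'$, for example once the source's distinguishing neighbour has been removed --- is what dictates how many nestings are needed and in what order the pairs must be opened. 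I expect nothing conceptually new beyond carrying out this case analysis picture by picture, which is why the statement can reasonably be recorded without a detailed proof.
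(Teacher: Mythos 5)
Your overall strategy is exactly the paper's: \autoref{EXNN-RC-1} is obtained by feeding the labelled orders $a_{1},a_{2},\dots$ of the figures into \autoref{NN} and \autoref{EXNN} with $k=4$, and your verification for \autoref{RC-1} is correct and agrees with what the paper intends. The problem is with \autoref{RC-2} and \autoref{RC-3}, which is where all the content of this theorem lies (the lemmas are already available): you never exhibit the orders or the nested pairs, and the guidance you give for producing them misreads the configurations. The $5$-vertices there do not have three neighbours outside $K$: in \autoref{fig:subfig:RC-2-a} the $5$-vertex $a_{3}$ is the centre of the picture, adjacent to the four configuration vertices $a_{1},a_{2},a_{4},a_{6}$ and to a single outside vertex, and in \autoref{fig:subfig:RC-3-a} the $5$-vertex $a_{7}$ has all five of its neighbours inside $K$; nor are the two triangles joined by an extra edge --- they meet only through the $5$-vertex. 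Consequently the vertices that must be made pair-targets are not, in general, the $5$-vertices (in \autoref{RC-2} the $5$-vertex satisfies the greedy condition (iii) outright), but the vertices that accumulate four already-decided neighbours: $a_{5}$ and $a_{7}$ in \autoref{fig:subfig:RC-2-a}, and $a_{7}$ together with $a_{8}$ in \autoref{fig:subfig:RC-3-a}. A plan built on ``save every $5$-vertex'' would not verify the hypotheses as claimed.

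For the record, the deferred bookkeeping is short once the correct pairs are named, and two nestings --- that is, \autoref{EXNN} itself, not a deeper variant --- suffice in every case: for \autoref{fig:subfig:RC-2-a} (and likewise \autoref{fig:subfig:RC-2-b}) take the labelled order with pairs $(a_{1},a_{7})$ and $(a_{2},a_{5})$, the same pairs the paper uses for the weak-degeneracy analogue in the proof of \autoref{Weak-Reduce}; for \autoref{fig:subfig:RC-3-a} take the labelled order with pairs $(a_{1},a_{8})$ and $(a_{3},a_{7})$, and analogously for the remaining pictures (e.g. $(a_{1},a_{8})$ and $(a_{2},a_{6})$ for \autoref{fig:subfig:RC-3-b}). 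With these choices conditions (i)--(iii) of \autoref{EXNN} are immediate to check, precisely because the $5$-vertices have at most two (often zero or one) neighbours outside $K$. So your proposal coincides with the paper's route in outline, but it omits exactly the case-by-case verification that the theorem records, and the heuristic it offers in place of that verification is factually wrong about the figures.
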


\begin{figure}%
\centering
\subcaptionbox{\label{fig:subfig:RC-4-a}$a_{4}a_{6} \notin E(G)$.}[0.45\linewidth]{\begin{tikzpicture}
\def\s{1}
\coordinate (O) at (0, 0);
\coordinate (I) at (0.5*\s, 0);
\coordinate (E1) at ($(45: \s) + (I)$);
\coordinate (W1) at ($(135: \s) - (I)$);
\coordinate (W3) at ($(225: \s) - (I)$);
\coordinate (E3) at ($(-45: \s) + (I)$);
\coordinate (W2) at ($(-1.414*\s, 0) - (I)$);
\coordinate (E2) at ($(1.414*\s, 0) + (I)$);
\coordinate (N1) at ($(O)-(I)$);
\coordinate (N2) at (I);
\coordinate (N3) at ($(N2) + (0, \s)$);
\coordinate (N4) at ($(N1) + (0, \s)$);
\draw (W1)node[above]{$a_{1}$}--(W2)node[left]{$a_{10}$}--(W3)node[below]{$a_{9}$}--(N1)node[below]{$a_{5}$}--cycle;
\draw (E1)node[above]{$a_{2}$}--(E2)node[right]{$a_{8}$}--(E3)node[below]{$a_{7}$}--(N2)node[below]{$a_{4}$}--cycle;
\draw (E1)--(E3);
\draw (W1)--(W3);
\draw (N1)--(N2)--(N3)node[above]{$a_{3}$}--(N4)node[above]{$a_{6}$}--cycle;
\draw (N1)--(N3);
\node[rectangle, inner sep = 1.5, fill, draw] () at (W1) {};
\node[rectangle, inner sep = 1.5, fill, draw] () at (W2) {};
\node[rectangle, inner sep = 1.5, fill, draw] () at (W3) {};
\node[rectangle, inner sep = 1.5, fill, draw] () at (E1) {};
\node[rectangle, inner sep = 1.5, fill, draw] () at (E2) {};
\node[rectangle, inner sep = 1.5, fill, draw] () at (E3) {};
\node[regular polygon, inner sep = 1.5, fill=blue, draw=blue] () at (N1) {};
\node[regular polygon, inner sep = 1.5, fill=blue, draw=blue] () at (N2) {};
\node[rectangle, inner sep = 1.5, fill, draw] () at (N3) {};
\node[rectangle, inner sep = 1.5, fill, draw] () at (N4) {};
\end{tikzpicture}}
\subcaptionbox{\label{fig:subfig:RC-4-b}$a_{5}a_{7} \notin E(G)$.}[0.45\linewidth]{\begin{tikzpicture}
\def\s{1}
\coordinate (O) at (0, 0);
\coordinate (I) at (0.5*\s, 0);
\coordinate (E1) at ($(45: \s) + (I)$);
\coordinate (W1) at ($(135: \s) - (I)$);
\coordinate (W3) at ($(225: \s) - (I)$);
\coordinate (E3) at ($(-45: \s) + (I)$);
\coordinate (W2) at ($(-1.414*\s, 0) - (I)$);
\coordinate (E2) at ($(1.414*\s, 0) + (I)$);
\coordinate (N1) at ($(O)-(I)$);
\coordinate (N2) at (I);
\coordinate (N3) at ($(N2) + (0, \s)$);
\coordinate (N4) at ($(N1) + (0, \s)$);
\draw (W1)node[above]{$a_{1}$}--(W2)node[left]{$a_{10}$}--(W3)node[below]{$a_{9}$}--(N1)node[below]{$a_{6}$}--cycle;
\draw (E1)node[above]{$a_{3}$}--(E2)node[right]{$a_{2}$}--(E3)node[below]{$a_{8}$}--(N2)node[below]{$a_{5}$}--cycle;
\draw (N2)--(E2);
\draw (W1)--(W3);
\draw (N1)--(N2)--(N3)node[above]{$a_{4}$}--(N4)node[above]{$a_{7}$}--cycle;
\draw (N1)--(N3);
\node[rectangle, inner sep = 1.5, fill, draw] () at (W1) {};
\node[rectangle, inner sep = 1.5, fill, draw] () at (W2) {};
\node[rectangle, inner sep = 1.5, fill, draw] () at (W3) {};
\node[rectangle, inner sep = 1.5, fill, draw] () at (E1) {};
\node[rectangle, inner sep = 1.5, fill, draw] () at (E2) {};
\node[rectangle, inner sep = 1.5, fill, draw] () at (E3) {};
\node[regular polygon, inner sep = 1.5, fill=blue, draw=blue] () at (N1) {};
\node[regular polygon, inner sep = 1.5, fill=blue, draw=blue] () at (N2) {};
\node[rectangle, inner sep = 1.5, fill, draw] () at (N3) {};
\node[rectangle, inner sep = 1.5, fill, draw] () at (N4) {};
\end{tikzpicture}}
\subcaptionbox{\label{fig:subfig:RC-4-c}$a_{4}a_{6} \notin E(G)$.}[0.45\linewidth]{\begin{tikzpicture}
\def\s{0.707}
\coordinate (O) at (0, 0);
\coordinate (N1) at (-2*\s, \s);
\coordinate (N2) at (0, \s);
\coordinate (N3) at (2*\s, \s);
\coordinate (S1) at (-2*\s, -\s);
\coordinate (S2) at (0, -\s);
\coordinate (S3) at (2*\s, -\s);
\coordinate (M1) at (-3*\s, 0);
\coordinate (M2) at (-\s, 0);
\coordinate (M3) at (\s, 0);
\coordinate (M4) at (3*\s, 0);
\draw (M1)node[left]{$a_{10}$}--(N1)node[above]{$a_{1}$}--(M2)node[above]{$a_{6}$}--(N2)node[above]{$a_{3}$}--(M3)node[above]{$a_{4}$}--(N3)node[above]{$a_{2}$}--(M4)node[right]{$a_{8}$}--(S3)node[below]{$a_{7}$}--(M3)--(S2)node[below]{$a_{5}$}--(M2)--(S1)node[below]{$a_{9}$}--cycle;
\draw (N1)--(S1);
\draw (N2)--(S2);
\draw (N3)--(S3);
\node[rectangle, inner sep = 1.5, fill, draw] () at (N1) {};
\node[rectangle, inner sep = 1.5, fill, draw] () at (N2) {};
\node[rectangle, inner sep = 1.5, fill, draw] () at (N3) {};
\node[rectangle, inner sep = 1.5, fill, draw] () at (S1) {};
\node[rectangle, inner sep = 1.5, fill, draw] () at (S2) {};
\node[rectangle, inner sep = 1.5, fill, draw] () at (S3) {};
\node[rectangle, inner sep = 1.5, fill, draw] () at (M1) {};
\node[regular polygon, inner sep = 1.5, fill=blue, draw=blue] () at (M2) {};
\node[regular polygon, inner sep = 1.5, fill=blue, draw=blue] () at (M3) {};
\node[rectangle, inner sep = 1.5, fill, draw] () at (M4) {};
\end{tikzpicture}}
\subcaptionbox{\label{fig:subfig:RC-4-d}$a_{4}a_{6} \notin E(G)$.}[0.45\linewidth]{\begin{tikzpicture}
\def\s{0.707}
\coordinate (O) at (0, 0);
\coordinate (N1) at (-2*\s, \s);
\coordinate (N2) at (0, \s);
\coordinate (N3) at (2*\s, \s);
\coordinate (S1) at (-2*\s, -\s);
\coordinate (S2) at (0, -\s);
\coordinate (S3) at (2*\s, -\s);
\coordinate (M1) at (-3*\s, 0);
\coordinate (M2) at (-\s, 0);
\coordinate (M3) at (\s, 0);
\coordinate (M4) at (3*\s, 0);
\draw (M1)node[left]{$a_{10}$}--(N1)node[above]{$a_{1}$}--(M2)node[above]{$a_{5}$}--(N2)node[above]{$a_{4}$}--(M3)node[above]{$a_{3}$}--(N3)node[above]{$a_{2}$}--(M4)node[right]{$a_{8}$}--(S3)node[below]{$a_{7}$}--(M3)--(S2)node[below]{$a_{6}$}--(M2)--(S1)node[below]{$a_{9}$}--cycle;
\draw (N1)--(S1);
\draw (M2)--(M3);
\draw (N3)--(S3);
\node[rectangle, inner sep = 1.5, fill, draw] () at (N1) {};
\node[rectangle, inner sep = 1.5, fill, draw] () at (N2) {};
\node[rectangle, inner sep = 1.5, fill, draw] () at (N3) {};
\node[rectangle, inner sep = 1.5, fill, draw] () at (S1) {};
\node[rectangle, inner sep = 1.5, fill, draw] () at (S2) {};
\node[rectangle, inner sep = 1.5, fill, draw] () at (S3) {};
\node[rectangle, inner sep = 1.5, fill, draw] () at (M1) {};
\node[regular polygon, inner sep = 1.5, fill=blue, draw=blue] () at (M2) {};
\node[regular polygon, inner sep = 1.5, fill=blue, draw=blue] () at (M3) {};
\node[rectangle, inner sep = 1.5, fill, draw] () at (M4) {};
\end{tikzpicture}}
\caption{Note that $|N_{G}(a_{10}) \cap \{a_{1}, \dots, a_{9}\}| = 2$ and $|N_{G}(a_{8}) \cap \{a_{2}, \dots, a_{7}\}| = 2$.}
\label{RC-4}
\end{figure}
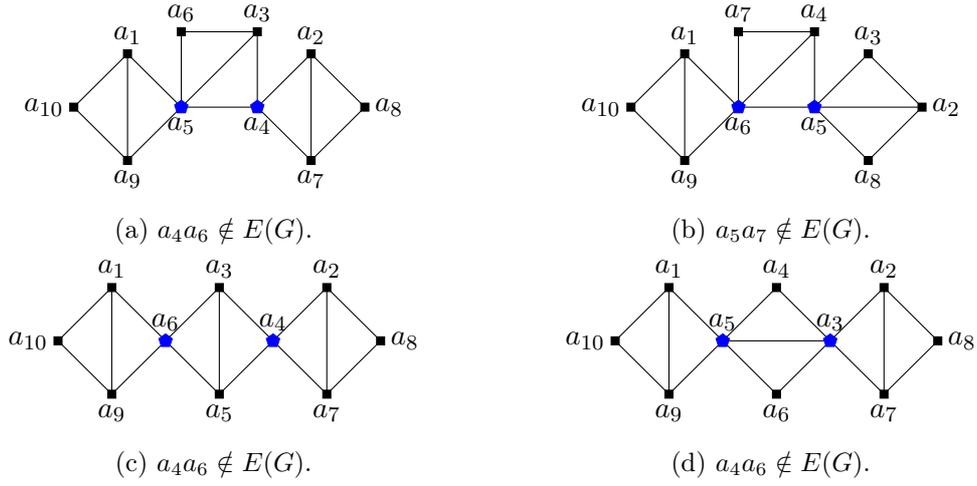

\begin{figure}%
\centering
\subcaptionbox{\label{fig:subfig:RC-5-a}}[0.45\linewidth]
{\begin{tikzpicture}
\def\s{0.707}
\coordinate (O) at (0, 0);
\coordinate (N1) at (-2*\s, \s);
\coordinate (N2) at (0, \s);
\coordinate (N3) at (2*\s, \s);
\coordinate (S1) at (-2*\s, -\s);
\coordinate (S2) at (0, -\s);
\coordinate (S3) at (2*\s, -\s);
\coordinate (M1) at (-3*\s, 0);
\coordinate (M2) at (-\s, 0);
\coordinate (M3) at (\s, 0);
\coordinate (M4) at (3*\s, 0);
\draw (M1)node[left]{$a_{1}$}--(N1)node[above]{$a_{2}$}--(M2)node[above]{$a_{7}$}--(N2)node[above]{$a_{4}$}--(M3)node[above]{$a_{5}$}--(N3)node[above]{$a_{3}$}--(M4)node[right]{$a_{9}$}--(S3)node[below]{$a_{8}$}--(M3)--(S2)node[below]{$a_{6}$}--(M2)--(S1)node[below]{$a_{10}$}--cycle;
\draw (M1)--(M2);
\draw (N2)--(S2);
\draw (N3)--(S3);
\node[rectangle, inner sep = 1.5, fill, draw] () at (N1) {};
\node[rectangle, inner sep = 1.5, fill, draw] () at (N2) {};
\node[rectangle, inner sep = 1.5, fill, draw] () at (N3) {};
\node[rectangle, inner sep = 1.5, fill, draw] () at (S1) {};
\node[rectangle, inner sep = 1.5, fill, draw] () at (S2) {};
\node[rectangle, inner sep = 1.5, fill, draw] () at (S3) {};
\node[rectangle, inner sep = 1.5, fill, draw] () at (M1) {};
\node[regular polygon, inner sep = 1.5, fill=blue, draw=blue] () at (M2) {};
\node[regular polygon, inner sep = 1.5, fill=blue, draw=blue] () at (M3) {};
\node[rectangle, inner sep = 1.5, fill, draw] () at (M4) {};
\end{tikzpicture}}
\subcaptionbox{\label{fig:subfig:RC-5-b}}[0.45\linewidth]
{\begin{tikzpicture}
\def\s{0.707}
\coordinate (O) at (0, 0);
\coordinate (N1) at (-2*\s, \s);
\coordinate (N2) at (0, \s);
\coordinate (N3) at (2*\s, \s);
\coordinate (S1) at (-2*\s, -\s);
\coordinate (S2) at (0, -\s);
\coordinate (S3) at (2*\s, -\s);
\coordinate (M1) at (-3*\s, 0);
\coordinate (M2) at (-\s, 0);
\coordinate (M3) at (\s, 0);
\coordinate (M4) at (3*\s, 0);
\draw (M1)node[left]{$a_{1}$}--(N1)node[above]{$a_{2}$}--(M2)node[above]{$a_{8}$}--(N2)node[above]{$a_{5}$}--(M3)node[above]{$a_{6}$}--(N3)node[above]{$a_{4}$}--(M4)node[right]{$a_{3}$}--(S3)node[below]{$a_{9}$}--(M3)--(S2)node[below]{$a_{7}$}--(M2)--(S1)node[below]{$a_{10}$}--cycle;
\draw (M1)--(M2);
\draw (N2)--(S2);
\draw (M3)--(M4);
\node[rectangle, inner sep = 1.5, fill, draw] () at (N1) {};
\node[rectangle, inner sep = 1.5, fill, draw] () at (N2) {};
\node[rectangle, inner sep = 1.5, fill, draw] () at (N3) {};
\node[rectangle, inner sep = 1.5, fill, draw] () at (S1) {};
\node[rectangle, inner sep = 1.5, fill, draw] () at (S2) {};
\node[rectangle, inner sep = 1.5, fill, draw] () at (S3) {};
\node[rectangle, inner sep = 1.5, fill, draw] () at (M1) {};
\node[regular polygon, inner sep = 1.5, fill=blue, draw=blue] () at (M2) {};
\node[regular polygon, inner sep = 1.5, fill=blue, draw=blue] () at (M3) {};
\node[rectangle, inner sep = 1.5, fill, draw] () at (M4) {};
\end{tikzpicture}}
\caption{Note that $|N_{G}(a_{10}) \cap \{a_{1}, \dots, a_{9}\}| = 2$ and $|N_{G}(a_{9}) \cap \{a_{3}, \dots, a_{8}\}| = 2$.}
\label{RC-5}
\end{figure}

\begin{figure}%
\centering
\subcaptionbox{\label{fig:subfig:RC-6-a}$a_{6}a_{9}, a_{7}a_{9} \notin E(G)$.}[0.45\linewidth]
{\begin{tikzpicture}
\def\s{1}
\coordinate (O) at (0, 0);
\coordinate (v1) at (0:\s);
\coordinate (v2) at (60:\s);
\coordinate (v3) at (120:\s);
\coordinate (v4) at (180:\s);
\coordinate (E1) at ($(45:\s) + (v1)$);
\coordinate (E2) at ($(0:1.414*\s) + (v1)$);
\coordinate (E3) at ($(-45:\s) + (v1)$);
\coordinate (W1) at ($(135:\s) + (v4)$);
\coordinate (W2) at ($(180:1.414*\s) + (v4)$);
\coordinate (W3) at ($(-135:\s) + (v4)$);
\draw (v1)node[below]{$a_{9}$}--(v2)node[above]{$a_{5}$}--(v3)node[above]{$a_{6}$}--(v4)node[below]{$a_{7}$}--(O)node[below]{$a_{8}$}--cycle;
\draw (O)--(v2);
\draw (O)--(v3);
\draw (E1)node[above]{$a_{4}$}--(E2)node[right]{$a_{3}$}--(E3)node[below]{$a_{10}$}--(v1)--cycle;
\draw (v1)--(E2);
\draw (W1)node[above]{$a_{2}$}--(W2)node[left]{$a_{1}$}--(W3)node[below]{$a_{11}$}--(v4)--cycle;
\draw (W2)--(v4);
\node[rectangle, inner sep = 1.5, fill, draw] () at (O) {};
\node[regular polygon, inner sep = 1.5, fill=blue, draw=blue] () at (v3) {};
\node[regular polygon, inner sep = 1.5, fill=blue, draw=blue] () at (v1) {};
\node[rectangle, inner sep = 1.5, fill, draw] () at (v2) {};
\node[regular polygon, inner sep = 1.5, fill=blue, draw=blue] () at (v4) {};
\node[rectangle, inner sep = 1.5, fill, draw] () at (E1) {};
\node[rectangle, inner sep = 1.5, fill, draw] () at (E2) {};
\node[rectangle, inner sep = 1.5, fill, draw] () at (E3) {};
\node[rectangle, inner sep = 1.5, fill, draw] () at (W1) {};
\node[rectangle, inner sep = 1.5, fill, draw] () at (W2) {};
\node[rectangle, inner sep = 1.5, fill, draw] () at (W3) {};
\end{tikzpicture}}
\subcaptionbox{\label{fig:subfig:RC-6-b}$a_{5}a_{8}, a_{6}a_{8} \notin E(G)$.}[0.45\linewidth]
{\begin{tikzpicture}
\def\s{1}
\coordinate (O) at (0, 0);
\coordinate (v1) at (0:\s);
\coordinate (v2) at (60:\s);
\coordinate (v3) at (120:\s);
\coordinate (v4) at (180:\s);
\coordinate (E1) at ($(45:\s) + (v1)$);
\coordinate (E2) at ($(0:1.414*\s) + (v1)$);
\coordinate (E3) at ($(-45:\s) + (v1)$);
\coordinate (W1) at ($(135:\s) + (v4)$);
\coordinate (W2) at ($(180:1.414*\s) + (v4)$);
\coordinate (W3) at ($(-135:\s) + (v4)$);
\draw (v1)node[below]{$a_{8}$}--(v2)node[above]{$a_{4}$}--(v3)node[above]{$a_{5}$}--(v4)node[below]{$a_{6}$}--(O)node[below]{$a_{7}$}--cycle;
\draw (O)--(v2);
\draw (O)--(v3);
\draw (E1)node[above]{$a_{3}$}--(E2)node[right]{$a_{10}$}--(E3)node[below]{$a_{9}$}--(v1)--cycle;
\draw (E1)--(E3);
\draw (W1)node[above]{$a_{2}$}--(W2)node[left]{$a_{1}$}--(W3)node[below]{$a_{11}$}--(v4)--cycle;
\draw (W2)--(v4);
\node[rectangle, inner sep = 1.5, fill, draw] () at (O) {};
\node[regular polygon, inner sep = 1.5, fill=blue, draw=blue] () at (v3) {};
\node[regular polygon, inner sep = 1.5, fill=blue, draw=blue] () at (v1) {};
\node[rectangle, inner sep = 1.5, fill, draw] () at (v2) {};
\node[regular polygon, inner sep = 1.5, fill=blue, draw=blue] () at (v4) {};
\node[rectangle, inner sep = 1.5, fill, draw] () at (E1) {};
\node[rectangle, inner sep = 1.5, fill, draw] () at (E2) {};
\node[rectangle, inner sep = 1.5, fill, draw] () at (E3) {};
\node[rectangle, inner sep = 1.5, fill, draw] () at (W1) {};
\node[rectangle, inner sep = 1.5, fill, draw] () at (W2) {};
\node[rectangle, inner sep = 1.5, fill, draw] () at (W3) {};
\end{tikzpicture}}
\caption{Note that $|N_{G}(a_{11}) \cap \{a_{1}, \dots, a_{10}\}|$ = 2 and $|N_{G}(a_{10}) \cap \{a_{3}, \dots, a_{9}\}| = 2$.}
\label{RC-6}
\end{figure}
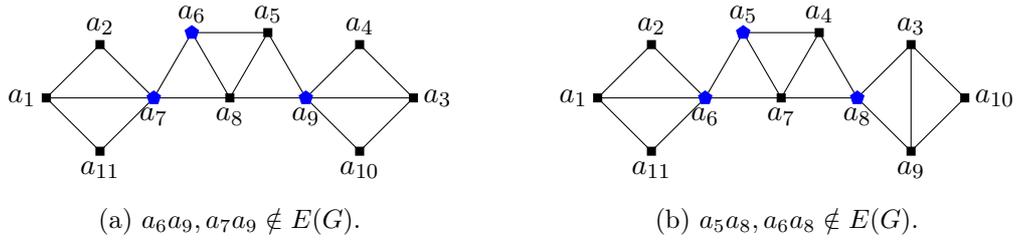

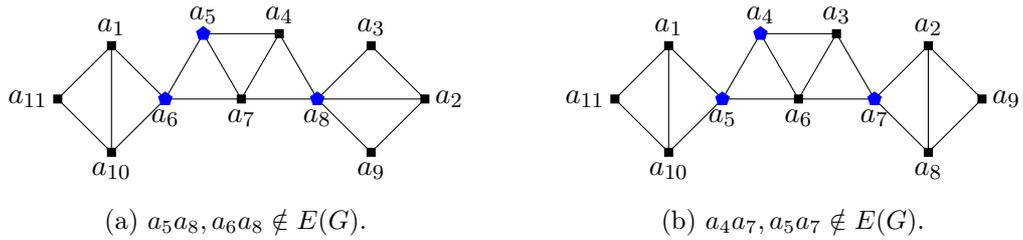
\begin{figure}%
\centering
\subcaptionbox{\label{fig:subfig:RC-7-a}$a_{5}a_{8}, a_{6}a_{8} \notin E(G)$.}[0.45\linewidth]
{\begin{tikzpicture}
\def\s{1}
\coordinate (O) at (0, 0);
\coordinate (v1) at (0:\s);
\coordinate (v2) at (60:\s);
\coordinate (v3) at (120:\s);
\coordinate (v4) at (180:\s);
\coordinate (E1) at ($(45:\s) + (v1)$);
\coordinate (E2) at ($(0:1.414*\s) + (v1)$);
\coordinate (E3) at ($(-45:\s) + (v1)$);
\coordinate (W1) at ($(135:\s) + (v4)$);
\coordinate (W2) at ($(180:1.414*\s) + (v4)$);
\coordinate (W3) at ($(-135:\s) + (v4)$);
\draw (v1)node[below]{$a_{8}$}--(v2)node[above]{$a_{4}$}--(v3)node[above]{$a_{5}$}--(v4)node[below]{$a_{6}$}--(O)node[below]{$a_{7}$}--cycle;
\draw (O)--(v2);
\draw (O)--(v3);
\draw (E1)node[above]{$a_{3}$}--(E2)node[right]{$a_{2}$}--(E3)node[below]{$a_{9}$}--(v1)--cycle;
\draw (v1)--(E2);
\draw (W1)node[above]{$a_{1}$}--(W2)node[left]{$a_{11}$}--(W3)node[below]{$a_{10}$}--(v4)--cycle;
\draw (W1)--(W3);
\node[rectangle, inner sep = 1.5, fill, draw] () at (O) {};
\node[regular polygon, inner sep = 1.5, fill=blue, draw=blue] () at (v3) {};
\node[regular polygon, inner sep = 1.5, fill=blue, draw=blue] () at (v1) {};
\node[rectangle, inner sep = 1.5, fill, draw] () at (v2) {};
\node[regular polygon, inner sep = 1.5, fill=blue, draw=blue] () at (v4) {};
\node[rectangle, inner sep = 1.5, fill, draw] () at (E1) {};
\node[rectangle, inner sep = 1.5, fill, draw] () at (E2) {};
\node[rectangle, inner sep = 1.5, fill, draw] () at (E3) {};
\node[rectangle, inner sep = 1.5, fill, draw] () at (W1) {};
\node[rectangle, inner sep = 1.5, fill, draw] () at (W2) {};
\node[rectangle, inner sep = 1.5, fill, draw] () at (W3) {};
\end{tikzpicture}}
\subcaptionbox{\label{fig:subfig:RC-7-b}$a_{4}a_{7}, a_{5}a_{7} \notin E(G)$.}[0.45\linewidth]
{\begin{tikzpicture}
\def\s{1}
\coordinate (O) at (0, 0);
\coordinate (v1) at (0:\s);
\coordinate (v2) at (60:\s);
\coordinate (v3) at (120:\s);
\coordinate (v4) at (180:\s);
\coordinate (E1) at ($(45:\s) + (v1)$);
\coordinate (E2) at ($(0:1.414*\s) + (v1)$);
\coordinate (E3) at ($(-45:\s) + (v1)$);
\coordinate (W1) at ($(135:\s) + (v4)$);
\coordinate (W2) at ($(180:1.414*\s) + (v4)$);
\coordinate (W3) at ($(-135:\s) + (v4)$);
\draw (v1)node[below]{$a_{7}$}--(v2)node[above]{$a_{3}$}--(v3)node[above]{$a_{4}$}--(v4)node[below]{$a_{5}$}--(O)node[below]{$a_{6}$}--cycle;
\draw (O)--(v2);
\draw (O)--(v3);
\draw (E1)node[above]{$a_{2}$}--(E2)node[right]{$a_{9}$}--(E3)node[below]{$a_{8}$}--(v1)--cycle;
\draw (E1)--(E3);
\draw (W1)node[above]{$a_{1}$}--(W2)node[left]{$a_{11}$}--(W3)node[below]{$a_{10}$}--(v4)--cycle;
\draw (W1)--(W3);
\node[rectangle, inner sep = 1.5, fill, draw] () at (O) {};
\node[regular polygon, inner sep = 1.5, fill=blue, draw=blue] () at (v3) {};
\node[regular polygon, inner sep = 1.5, fill=blue, draw=blue] () at (v1) {};
\node[rectangle, inner sep = 1.5, fill, draw] () at (v2) {};
\node[regular polygon, inner sep = 1.5, fill=blue, draw=blue] () at (v4) {};
\node[rectangle, inner sep = 1.5, fill, draw] () at (E1) {};
\node[rectangle, inner sep = 1.5, fill, draw] () at (E2) {};
\node[rectangle, inner sep = 1.5, fill, draw] () at (E3) {};
\node[rectangle, inner sep = 1.5, fill, draw] () at (W1) {};
\node[rectangle, inner sep = 1.5, fill, draw] () at (W2) {};
\node[rectangle, inner sep = 1.5, fill, draw] () at (W3) {};
\end{tikzpicture}}
\caption{Note that $|N_{G}(a_{11}) \cap \{a_{1}, \dots, a_{10}\}|$ = 2 and $|N_{G}(a_{9}) \cap \{a_{2}, \dots, a_{8}\}| = 2$.}
\label{RC-7}
\end{figure}

By using the arguments in \autoref{WW} three times together with the sequence $a_{1}, a_{2}, \dots$, we can obtain the following structural result. 

\begin{theorem}\label{EXNN-RC-2}
Let $G$ be a graph in $\mathscr{G}$, and let $K$ be a configuration in \autoref{RC-4}, \autoref{RC-5}, \autoref{RC-6} or \autoref{RC-7}. Let $H$ be a cover of $G$ and $f$ be a function from $V(H)$ to $\{0, 1, 2\}$. If $f(v, 1) + \dots + f(v, s) \geq 4$ for each vertex $v \in V(G)$, then any strictly $f$-degenerate transversal of $H - \bigcup_{v \in V(K)}L_{v}$ can be extended to that of $H$. \qed
\end{theorem}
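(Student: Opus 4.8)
The plan is to deduce \autoref{EXNN-RC-2} from the colour‑saving mechanism of \autoref{WW}, iterated three times. First I would extend \autoref{EXNN} from two nested ordered pairs to three: if the vertices of an induced subgraph $K$ of $G$ can be listed $v_{1}, v_{2}, \dots, v_{m}$ so that there are indices $1 < p < q < r < t < m$ with $v_{p}v_{q}, v_{r}v_{t}, v_{1}v_{m} \in E(G)$; with $v_{q}, v_{t}, v_{m}$ of degree at most $k$ in the truncated graphs $G - \{v_{q+1},\dots,v_{m}\}$, $G - \{v_{t+1},\dots,v_{m}\}$, $G$ respectively; with the three budget inequalities $d_{G}(v_{1}) - d_{K}(v_{1}) < d_{G}(v_{m}) - d_{K}(v_{m})$ and its analogues for $(v_{p},v_{q})$, $(v_{r},v_{t})$ relative to the truncated copies of $K$; and with every remaining intermediate vertex having at most $k-1$ neighbours among the vertices not yet removed, then any strictly $f$-degenerate transversal of $H - \bigcup_{v\in V(K)}L_{v}$ extends to $H$. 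The proof copies that of \autoref{WW} verbatim, as the paragraph after \autoref{EXNN} anticipates: after passing to the residual budget function $f^{*}$ on $V(K)$ one colours $v_{1}$ using the outermost strict inequality, runs the greedy algorithm through $v_{2}, v_{3}, \dots$, invoking the pair $(v_{r},v_{t})$ when $v_{r}$ is reached and $(v_{p},v_{q})$ when $v_{p}$ is reached, and in each case a failure to extend forces the ``$f^{*}(v_{1},1)=2$, $f^{*}(v_{m},1)=1$, unique conflicting neighbour'' situation, a contradiction.

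With this three‑pair lemma in hand, for each of the eight sub‑configurations in \autoref{RC-4}--\autoref{RC-7} I would take $k=4$, let $v_{i}=a_{i}$ in the order given by the vertex labels, and read the three nested pairs off the figures. For the ten‑vertex configurations (\autoref{RC-4} and \autoref{RC-5}) these are the pair $(a_{1},a_{10})$ together with two inner pairs anchored at the remaining $4$-vertices whose number of neighbours inside $K$ is two (the data $|N_{G}(a_{10})\cap\{a_{1},\dots,a_{9}\}|=2$ etc.\ in the captions records exactly this, giving $d_{G}-d_{K}=2$ at those anchors), the companion of each pair being a $4$-vertex with three neighbours inside $K$ and hence $d_{G}-d_{K}=1$; so each budget inequality reduces to $3>2$. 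For the eleven‑vertex configurations (\autoref{RC-6} and \autoref{RC-7}) the outermost pair is $(a_{1},a_{11})$ and the two inner pairs are obtained analogously. Conditions (i)--(ii) of the lemma are then immediate from the drawings and from the edges $v_{1}v_{m}\in E(G)$ (and their inner analogues), which are present in the figures.

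It remains to check condition (iii): each intermediate vertex $a_{i}$ — the ones serving neither as an anchor nor as a companion — has at most three neighbours in $G-\{a_{i+1},\dots,a_{m}\}$. Since every vertex of $K$ is a solid $4$-vertex or a blue $5$-vertex, and the $4$ (resp.\ $5$) edges a vertex is drawn with account for all its edges in $G$, this amounts to verifying that each intermediate $4$-vertex has at least one later neighbour, and each intermediate $5$-vertex at least two later neighbours, in the label order; the non‑adjacency hypotheses recorded in the captions (e.g.\ $a_{4}a_{6}\notin E(G)$) are precisely what forbids the extra chords that would otherwise break this count or demand a fourth nesting. I would simply go through the eight sub‑configurations one at a time, reading the neighbourhoods off the planar pictures together with those non‑edges, and confirm the bound; this is mechanical, though tight in several spots.

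The main obstacle is stating and proving the three‑fold nested version of \autoref{WW} cleanly — in particular getting the bookkeeping right for how the residual budget $f^{*}$ is evaluated relative to each truncated graph $G-\{v_{t+1},\dots,v_{m}\}$ at the moment an inner pair is invoked, and confirming that the greedy step for an intermediate $a_{i}$ is not spoiled by the anchors $v_{m}, v_{t}, v_{q}$ still being uncoloured at that point. Once that lemma is in place, the verification for the eight figures, although it has several cases, is routine.
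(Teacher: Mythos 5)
Your proposal is correct and follows essentially the same route as the paper: the paper likewise obtains \autoref{EXNN-RC-2} by nesting the colour-saving argument of \autoref{WW} three times (extending \autoref{EXNN} by one more pair) and running it along the labelled sequence $a_{1}, a_{2}, \dots$, with the captions' degree and non-adjacency data supplying exactly the budget inequalities and the greedy bound you describe. In fact you supply more detail than the paper, which leaves both the three-pair lemma and the figure-by-figure verification to the reader.
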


Note that the configurations in \autoref{RC-2} and \autoref{RC-3} may have two blocks, and they are reducible configurations by using twice arguments in \autoref{WW}. The configurations in \autoref{RC-4}--\autoref{RC-7} may have three blocks, and they are reducible configurations by using three times arguments in \autoref{WW}. Although some configurations in \autoref{RC-2}--\autoref{RC-7} will not be used in the following sections, but we believe that they and the ideas will be used in similar results for planar graphs without $7$-cycles or chordal $6$-cycles. 

\section{Proof of \autoref{PAIRWISE3456-SFDT}}
\label{sec:6}
Similar to the above section, we prove the following stronger result \autoref{PAIRWISE3456-SFDT'}, which is used to prove \autoref{PAIRWISE3456-SFDT}
\begin{theorem}\label{PAIRWISE3456-SFDT'}
Let $G$ be a planar graph without any configuration in \autoref{FIGPAIRWISE3456}, and let $[x_{1}x_{2}\dots x_{l}]$ be a good $4^{-}$-cycle in $G$. Let $H$ be a cover of $G$ and $f$ be a function from $V(H)$ to $\{0, 1, 2\}$. If $f(v, 1) + f(v, 2) + \dots + f(v, s) \geq 4$ for each $v \in V(G)$, then each strictly $f$-degenerate transversal $R_{0}$ of $H_{0} = H[\bigcup_{i \in [l]} L_{x_{i}}]$ can be extended to a strictly $f$-degenerate transversal of $H$.  
\end{theorem}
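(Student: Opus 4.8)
The plan is to argue by minimal counterexample, using \autoref{PAIRWISE3456} as the structural engine and the extension lemmas \autoref{NN} and \autoref{WW} (with $k=4$) in place of the Gallai-type lemma used for weak degeneracy, exactly paralleling the proof of \autoref{PAIRWISE3456-Weak3'}. Suppose the statement fails and let $(G,[x_{1}\dots x_{l}],H,f,R_{0})$ be a counterexample with $|V(G)|+|E(G)|$ minimum, so $R_{0}$ is a strictly $f$-degenerate transversal of $H_{0}$ that does not extend to $H$. First I would reduce to the case that $G$ is connected: if not, let $G_{1}$ be the component containing $C_{0}=[x_{1}\dots x_{l}]$; any other component $G_{j}$ is a planar graph without the forbidden configurations, hence either contains a good $4^{-}$-cycle (a triangle, say) --- and then, by minimality applied to $G_{j}$ with a greedily built strictly $f$-degenerate transversal of the small cover on that cycle (possible since $\sum_{q}f(v,q)\ge 4>2$), $H_{G_{j}}$ has a strictly $f$-degenerate transversal --- or is triangle-free, hence $3$-degenerate, hence has one directly because $\sum_{q}f(v,q)\ge 4>3$. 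Thus $(G_{1},C_{0},H_{G_{1}},f,R_{0})$ is already a counterexample and minimality forces $G=G_{1}$. Now apply \autoref{PAIRWISE3456} to $G$ and $C_{0}$; one of the four alternatives holds.

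If $V(G)=\{x_{1},\dots,x_{l}\}$ then $H=H_{0}$ and $R_{0}$ is itself the required transversal, a contradiction. If there is an internal vertex $w$ with $d_{G}(w)\le 3$, then by minimality (applied to $G-w$, or componentwise as above) $R_{0}$ extends to a strictly $f$-degenerate transversal $R$ of $H-L_{w}$; since for each neighbour $u$ of $w$ the chosen vertex in $L_{u}$ is matched to at most one vertex of $L_{w}$, the vertices of $R$ send at most $d_{G}(w)\le 3<4\le\sum_{q}f(w,q)$ edges into $L_{w}$, so some $(w,q)$ has fewer than $f(w,q)$ neighbours in $R$, and appending $(w,q)$ to the end of a strictly $f$-degenerate order of $H[R]$ gives a strictly $f$-degenerate transversal of $H$ extending $R_{0}$ --- again a contradiction.

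In the two remaining cases there is an internal induced subgraph $\Gamma$ isomorphic to \autoref{Kite}, respectively \autoref{F35}; by minimality (componentwise if needed), $R_{0}$ extends to a strictly $f$-degenerate transversal $R$ of $H-\bigcup_{v\in V(\Gamma)}L_{v}$, and it then suffices to check that $K=G[V(\Gamma)]$ meets the hypotheses of \autoref{NN} (for the kite), respectively \autoref{WW} (for \autoref{F35}), with $k=4$, so that $R$ extends to $H$. For the kite $[a_{1}a_{2}a_{3}a_{4}]$ with diagonal $a_{1}a_{3}$, all vertices of degree $4$ and $a_{2}a_{4}\notin E(G)$, the ordering $v_{1}=a_{1}$, $v_{2}=a_{4}$, $v_{3}=a_{3}$, $v_{4}=a_{2}$ works: $k-(d_{G}(a_{1})-d_{K}(a_{1}))=4-1=3$, $d_{G}(a_{2})=4\le k$ with $N_{K}(a_{2})=\{a_{1},a_{3}\}=\{v_{1},v_{3}\}$, and $a_{4}$ (resp.\ $a_{3}$) keeps only $3$ neighbours once $\{a_{3},a_{2}\}$ (resp.\ $\{a_{2}\}$) is deleted. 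For \autoref{F35}, a $6$-cycle $v_{1}v_{2}Bv_{3}v_{4}A$ with chord $v_{2}v_{3}$ and all vertices of degree $4$, the ordering $w_{1}=v_{2}$, $w_{2}=v_{1}$, $w_{3}=A$, $w_{4}=v_{4}$, $w_{5}=v_{3}$, $w_{6}=B$ meets \autoref{WW}: $d_{G}(v_{2})-d_{K}(v_{2})=1<2=d_{G}(B)-d_{K}(B)$, $v_{2}B\in E(G)$, $d_{G}(B)=4\le k$, and each of $w_{2},\dots,w_{5}$ retains at most $3$ neighbours after the later vertices in the order are removed. Either way $R$ extends to $H$, contradicting the choice of counterexample.

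The genuinely hard work lives in \autoref{PAIRWISE3456} and in \autoref{NN}/\autoref{WW}; for this theorem the only real obstacle is bookkeeping --- exhibiting the orderings above and verifying the (routine) degree conditions of the extension lemmas for the kite and \autoref{F35}, and dispatching the disconnected/cut-vertex subcases cleanly. Finally, \autoref{PAIRWISE3456-SFDT} follows from \autoref{PAIRWISE3456-SFDT'} by attaching a triangle $[xyz]$ at an arbitrary vertex $x$ (with $y,z$ new and otherwise isolated), greedily choosing a strictly $f$-degenerate transversal of the cover induced by $\{x,y,z\}$, and applying \autoref{PAIRWISE3456-SFDT'} with the good cycle $[xyz]$.
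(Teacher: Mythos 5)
Your proposal is correct, and its skeleton---minimal counterexample, reduction to the connected case, invoking \autoref{PAIRWISE3456}, and killing an internal $3^{-}$-vertex $w$ by extending to $H - L_{w}$ and appending a part $(w,q)$ with fewer than $f(w,q)$ chosen neighbours---is the same as the paper's. The genuine divergence is in how the two configurations are discharged: the paper handles an internal induced copy of \autoref{Kite} or \autoref{F35} by appealing to the criticality theorem \autoref{CRITICAL} (the induced subgraph is neither a cycle nor a complete graph and has a vertex of degree $3 > 2 \geq \max_{q} f(\cdot, q)$), whereas you extend $R_{0}$ to $H$ minus the configuration by minimality and then apply the extension lemmas \autoref{NN} and \autoref{WW} with explicit orderings ($a_{1}, a_{4}, a_{3}, a_{2}$ for the kite; $v_{2}, v_{1}, A, v_{4}, v_{3}, B$ for \autoref{F35}); your verification of conditions (i)--(iii) is accurate, and both configurations are induced by \autoref{PAIRWISE3456}, as those lemmas require. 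The trade-off: \autoref{CRITICAL} is phrased for minimal non-strictly $f$-degenerate pairs, so invoking it in this precoloured-extension setting implicitly requires passing to a residual function on the configuration's lists, while \autoref{NN} and \autoref{WW} are stated directly as extension results and fit the setting with no adaptation---indeed this is exactly how the paper treats its other reducible configurations in the toroidal and intersecting-$5$-cycle sections. Your closing derivation of \autoref{PAIRWISE3456-SFDT} by attaching a pendant triangle at a vertex also works (the new triangle lies on no other cycle, so no forbidden configuration is created) and bypasses the paper's appeal to \autoref{MRTHREE} in the triangle-free case.
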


\begin{proof}
Assume $(G, H, f, R_{0})$ is a counterexample to the statement with $|V(G)| + |E(G)|$ is minimum. Note that $G$ is connected. Fix a plane embedding for $G$ in the plane.

\begin{enumerate}[label = \textbf{(\arabic*)}, ref = (\arabic*)]
\item Every internal vertex $v$ satisfies $f(v, 1) + f(v, 2) + \dots + f(v, s) \leq d(v)$. As a consequence, every internal vertex has degree at least $4$. 
\end{enumerate}
\begin{proof}
Suppose that $G$ has an internal vertex $w$ such that $f(w, 1) + f(w, 2) + \dots + f(w, s) > d_{G}(w)$. By the minimality, $R_{0}$ can be extended to a strictly $f$-degenerate transversal $T$ of $H - L_{w}$. Note that 
\[
d_{T}(w, 1) + d_{T}(w, 2) + \dots + d_{T}(w, s) \leq d_{G}(w) < f(w, 1) + f(w, 2) + \dots + f(w, s),
\]
then there exists a vertex $(w, q)$ such that $d_{T}(w, q) < f(w, q)$. Hence, $(w, q)$ followed by an $f$-removing order of $T$ is an $f$-removing order of $T \cup \{(w, q)\}$, a contradiction. 
\end{proof}

\begin{enumerate}[label = \textbf{(\arabic*)}, ref = (\arabic*), resume]
\item There is no internal induced subgraph isomorphic to \autoref{Kite}.
\end{enumerate}
\begin{proof}
Assume there is an internal induced subgraph $\Gamma$ isomorphic to \autoref{Kite}. Note that $\Gamma$ is neither a cycle nor a complete graph $K_{4}$. Moreover, $d_{\Gamma}(a_{1}) = 3 > 2 \geq \max\limits_{i}\{f(x, i)\}$. This contradicts \autoref{CRITICAL}. 
\end{proof}

\begin{enumerate}[label = \textbf{(\arabic*)}, ref = (\arabic*), resume]
\item There is no internal induced subgraph isomorphic to \autoref{F35}.
\end{enumerate}
\begin{proof}
Assume there is an internal induced subgraph $\Gamma$ isomorphic to \autoref{F35}, where $xy$ is the chord of the $6$-cycle. Note that $\Gamma$ is neither a complete graph $K_{6}$ nor a cycle. Moreover, $d_{\Gamma}(x) = 3 > 2 \geq \max\limits_{i}\{f(x, i)\}$. This contradicts \autoref{CRITICAL}. 
\end{proof}

This contradicts \autoref{PAIRWISE3456}, thus it completes the proof of \autoref{PAIRWISE3456-SFDT'}. 
\end{proof}

\begin{proof}[Proof of \autoref{PAIRWISE3456-SFDT}]
Assume that $G$ is triangle-free. By \autoref{MRTHREE}, $H$ has a strictly $f$-degenerate transversal. So we may assume that $G$ contains a $3$-cycle $C$. Note that every triangle is good in $G$. Note that $H_{C}$ has a strictly $f$-degenerate transversal $T$. By \autoref{PAIRWISE3456-SFDT'}, $T$ can be extended to a strictly $f$-degenerate transversal of $H$. 
\end{proof}

\section{Proof of Theorems \ref{T345}, \ref{T345-Weak3} and \ref{T345-SFDT}}
\label{sec:7}

\begin{proof}[Proof of \autoref{T345}]
Suppose that $G$ is a counterexample to \autoref{T345}. Then $G$ is connected and $\delta(G) \geq 4$. Since there is no subgraph isomorphic to the configuration in \autoref{fig:subfig:TA}, every $5$-cycle has no chords.

\begin{lemma}\label{V-label}
Let $[v_{1}v_{2}v_{3}v_{4}v_{5}]$ be a $5$-face adjacent to five $3$-faces $[u_{1}v_{1}v_{2}], [u_{2}v_{2}v_{3}], [u_{3}v_{3}v_{4}], [u_{4}v_{4}v_{5}]$ and $[u_{5}v_{5}v_{1}]$.  \begin{enumerate}[label = (\roman*)]
\item\label{i} The vertices $v_{1}, \dots, v_{5}, u_{1}, \dots, u_{5}$ are distinct, and they induce a subgraph with fifteen edges. 
\item\label{ii} Let $v_{1}$ be a $5^{+}$-vertex and $[v_{1}u_{5}u_{6}u_{7}u_{8}]$ be a $5$-face. Then $v_{1}, u_{8}, u_{7}, u_{6}, u_{5}, v_{5}, v_{4}, v_{3}, v_{2}$ are distinct, and $v_{2}$ has exactly two neighbors $v_{1}$ and $v_{3}$ in this sequence. 
\item\label{iv} Let $v_{1}$ be a $6^{+}$-vertex and $[v_{1}u_{5}u_{6}u_{7}u_{8}]$ be a $5$-face adjacent to a $3$-face $[u_{8}u_{9}v_{1}]$. Then $v_{1}, u_{9}, u_{8}, u_{7}, u_{6}$, $u_{5}, v_{5}, v_{4}, v_{3}, v_{2}$ are distinct, and $v_{2}$ has exactly two neighbors $v_{1}$ and $v_{3}$ in this sequence. 
\end{enumerate}
\end{lemma}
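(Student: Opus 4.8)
The plan is to argue inside the minimal counterexample $G$ to \autoref{T345} already fixed at the start of the proof, so that $G$ is connected, $\delta(G)\ge 4$, contains no subgraph isomorphic to a configuration in \autoref{A345}, and in particular every $5$-cycle of $G$ is chordless. I will repeatedly use two elementary facts about the embedding: every edge lies on exactly two face-sides, and no vertex has degree at most $3$ (so, in particular, no edge traversed twice on one face can have a non-leaf endpoint on the ``short'' side).

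For \ref{i} I would first check that the $5$-face $[v_1v_2v_3v_4v_5]$ is bounded by a genuine $5$-cycle: each edge $v_iv_{i+1}$ also lies on the $3$-face $[u_iv_iv_{i+1}]$, hence occurs exactly once on the boundary walk of the $5$-face, so that walk uses five distinct edges and must be a $5$-cycle (a simple graph with five edges all of even degree is $C_5$); thus $v_1,\dots,v_5$ are distinct. Next $u_i\notin\{v_1,\dots,v_5\}$, since $u_i$ is adjacent to $v_i$ and $v_{i+1}$ and any further coincidence would give a chord of the $5$-cycle. Then the $u_i$ are pairwise distinct: if $u_i=u_{i+1}$ then $v_{i+1}$ is incident only with the $5$-face and the two $3$-faces $[u_iv_iv_{i+1}]$ and $[u_iv_{i+1}v_{i+2}]$, forcing $d(v_{i+1})=3$; if $u_i=u_{i+2}$ then $u_i$ is adjacent to four consecutive vertices of the $5$-cycle and one extracts a chorded $5$-cycle (for instance $[u_i v_i v_{i-1} v_{i+3} v_{i+2}]$ with chord $u_iv_{i+3}$). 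Finally I would verify the edge count: the only non-forced adjacencies among the ten vertices are $v_iv_{i+2}$ (a chord, excluded), $u_iu_{i+1}$, $u_iu_{i+2}$, and $u_iv_j$ with $j\notin\{i,i+1\}$, and in every case the extra edge creates either a chorded $5$-cycle, i.e.\ \autoref{fig:subfig:TA} — for example $u_1v_3$ gives the chorded $5$-cycle $[u_1v_3u_2v_2v_1]$ with chord $u_1v_2$, and $u_1v_4$ gives $[u_1v_4v_5v_1v_2]$ with chord $u_1v_1$ — or a copy of \autoref{fig:subfig:TB}, which is exactly what $u_iu_{i+1}$ produces, with $v_{i+1}$ as the apex. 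What remains is precisely the five cycle-edges together with $u_iv_i,u_iv_{i+1}$, i.e.\ fifteen edges.

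For \ref{ii} and \ref{iv} the listed sequence is a path in $G$: the $5$-cycle portion $v_5v_4v_3v_2$, the $3$-face edge $u_5v_5$, the boundary edges of the new $5$-face $[v_1u_5u_6u_7u_8]$, and, in \ref{iv}, the edges through $u_9$ coming from the $3$-face $[u_8u_9v_1]$; since $v_1v_2\in E(G)$ these close up once distinctness is known. I would prove distinctness by the same mechanism: a coincidence among the new $u$-vertices forces an edge to lie on three face-sides, or makes $u_5u_6$ a bridge whose endpoint $u_6$ is a leaf, or produces a chord of the $5$-cycle bounding $[v_1u_5u_6u_7u_8]$, or (when $v_1$ would be forced to have degree $<6$, namely $u_9=u_1$ in \ref{iv}) yields \autoref{fig:subfig:TB}; and part \ref{i} already prevents the new $u$'s from equalling $u_1,\dots,u_4$, since inside the configuration each $u_j$ with $j\le 4$ has its only neighbours among the indicated $v_i$'s. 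Once distinctness holds, the assertion that $v_2$ has exactly the two neighbours $v_1,v_3$ in the sequence is a short case check: $v_2v_4$ and $v_2v_5$ are chords of the original $5$-cycle; $v_2u_5$ is ruled out by the fifteen-edge count of \ref{i}; and each of $v_2u_6$, $v_2u_7$, $v_2u_8$ (and $v_2u_9$ in \ref{iv}) forces a chorded $5$-cycle through $u_1$, e.g.\ $v_2u_7$ gives $[u_1v_2u_7u_8v_1]$ with chord $v_1v_2$, while $v_2u_8$, together with the triangle $v_1v_2u_8$ and the new $5$-face, is a copy of \autoref{fig:subfig:TB}.

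I expect the main obstacle to be the distinctness bookkeeping in \ref{ii} and \ref{iv}: a priori each of $u_6,u_7,u_8$ (and $u_9$) could coincide with another new vertex, with some $u_j$ ($j\le 5$), or with some $v_i$, and each kind of coincidence needs its own little argument. The uniform tools are ``an edge lies on at most two face-sides,'' $\delta(G)\ge 4$, and the chordless-$5$-cycle property applied not only to the original $5$-cycle but to the $5$-face $[v_1u_5u_6u_7u_8]$ itself; guaranteeing that this latter face is bounded by a cycle (so that ``chord of it'' is even meaningful) is the one point where a small amount of care with the embedding is needed.
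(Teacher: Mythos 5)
Your part (i) is essentially the paper's argument and is fine. The gap is in the distinctness bookkeeping for (ii) and (iv). You assert that part (i) ``already prevents the new $u$'s from equalling $u_{1},\dots,u_{4}$, since inside the configuration each $u_{j}$ with $j\le 4$ has its only neighbours among the indicated $v_{i}$'s.'' Part (i) only controls edges \emph{inside} the ten-vertex set $A=\{v_{1},\dots,v_{5},u_{1},\dots,u_{5}\}$; it says nothing about neighbours outside $A$, so it cannot exclude a new vertex whose exhibited neighbours lie outside $A$ from coinciding with some $u_{j}$. Indeed $u_{7}=u_{3}$ cannot be excluded at all (the paper's proof only shows $u_{7}\notin A\setminus\{u_{3}\}$), and $u_{7}=u_{1}$, $u_{8}=u_{1}$ each need separate arguments. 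This matters precisely for the second half of (ii), which you yourself route ``through $u_{1}$'': the chordal $5$-cycle $[u_{1}v_{2}u_{7}u_{8}v_{1}]$ killing $v_{2}u_{7}$ needs $u_{1}\notin\{u_{7},u_{8}\}$, and the copy of \autoref{fig:subfig:TB} killing $v_{2}u_{8}\in E(G)$ is the $5$-face $[v_{1}u_{5}u_{6}u_{7}u_{8}]$ together with the \emph{two} triangles $v_{1}u_{8}v_{2}$ and $u_{1}v_{1}v_{2}$ (your description omits the second triangle), which requires all seven vertices, in particular $u_{1}$, to be distinct from $u_{5},\dots,u_{8}$.

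Moreover, $u_{8}=u_{1}$ cannot be excluded by any mechanism in your list, because it is locally consistent with all the forbidden-configuration constraints when $d(v_{1})=4$: in that picture the rotation at $v_{1}$ consists exactly of the original $5$-face, the two $3$-faces $[u_{1}v_{1}v_{2}]$, $[u_{5}v_{5}v_{1}]$ and the new $5$-face, no chordal $5$-cycle or copy of \autoref{fig:subfig:TB} is forced, and then $v_{2}u_{8}=v_{2}u_{1}\in E(G)$ gives $v_{2}$ a third neighbour in the sequence. So the hypothesis $d(v_{1})\geq 5$ in (ii) is indispensable and must be invoked, e.g.\ if $u_{8}=u_{1}$ those four faces close the rotation at $v_{1}$ and force $d(v_{1})=4$; your proposal never uses this hypothesis in (ii) (you invoke a degree count only for $u_{9}=u_{1}$ in (iv), and even there conflate it with the \autoref{fig:subfig:TB} exclusion). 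Similarly $u_{7}=u_{1}$ should be killed by a chordal $5$-cycle such as $[u_{1}u_{6}u_{5}v_{1}v_{2}]$ with chord $u_{1}v_{1}$, not by (i); and coincidences like $u_{7}=v_{3}$ need chordal $5$-cycles mixing old and new vertices (e.g.\ $[v_{3}u_{2}v_{2}v_{1}u_{8}]$ with chord $v_{2}v_{3}$), which is within your declared toolkit but not among the mechanisms you enumerate. With these repairs the argument goes through along the paper's lines.
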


\begin{proof}
\ref{i} Since every $5$-cycle has no chords, we have that $u_{i} \notin \{v_{1}, v_{2}, \dots, v_{5}\}$ for each $i \in \{1, 2, \dots, 5\}$. If $u_{i} = u_{i+1}$, then $v_{i+1}$ is a $3$-vertex, a contradiction. If $u_{i} = u_{i+2}$, then the $5$-cycle $[v_{i}v_{i+1}v_{i+2}v_{i+3}u_{i}]$ has two chords $u_{i}v_{i+1}$ and $u_{i}v_{i+2}$, a contradiction. Therefore, $v_{1}, \dots, v_{5}, u_{1}, \dots, u_{5}$ are ten distinct vertices. Let $A \coloneqq \{v_{1}, \dots, v_{5}, u_{1}, \dots, u_{5}\}$. So there are only these fifteen edges in the induced subgraph $G[A]$, for otherwise there are chordal $5$-cycles. 

\ref{ii} Since $u_{8}$ is a neighbor of $v_{1}$, we have that $u_{8} \notin \{u_{1}, v_{2}, u_{5}, v_{5}\}$. Recall that $G[A]$ has the known fifteen edges. It follows that $u_{8} \notin A$. Similarly, we can obtain that $u_{6} \notin A$. Since $u_{7}$ has two neighbors $u_{6}$ and $u_{8}$, we can check that $u_{7} \notin A \setminus \{u_{3}\}$. Therefore, $v_{1}, \dots, v_{5}, u_{1}, u_{2}, u_{4}, u_{5}, \dots, u_{8}$ are distinct. 

If $u_{8}v_{2} \in E(G)$, then $[v_{1}u_{5}u_{6}u_{7}u_{8}]$ together with $v_{2}$ and $u_{1}$ form a configuration isomorphic to \autoref{fig:subfig:TB}, a contradiction. If $v_{2}$ has neighbors in $\{v_{4}, v_{5}, u_{5}, u_{6}, u_{7}\}$, then there exists a chordal $5$-cycle, a contradiction. Hence, $v_{2}$ has exactly two neighbors $v_{1}$ and $v_{3}$ in the sequence $v_{1}, u_{8}, u_{7}, u_{6}, u_{5}, v_{5}, v_{4}, v_{3}, v_{2}$. 

\ref{iv} It is easy to check that $v_{1}, u_{9}, u_{8}, u_{7}, u_{6}, u_{5}, v_{5}, v_{4}, v_{3}, v_{2}$ are distinct by \ref{i}, \ref{ii} and the fact that every $5$-cycle has no chords. If $u_{9}v_{2} \in E(G)$, then the $5$-cycle $[v_{1}u_{5}u_{6}u_{7}u_{8}]$ together with $u_{9}$ and $v_{2}$ form a configuration isomorphic to \autoref{fig:subfig:TB}, a contradiction. By this and \ref{ii}, $v_{2}$ has exactly two neighbors $v_{1}$ and $v_{3}$ in the sequence $v_{1}, u_{9}, u_{8}, u_{7}, u_{6}, u_{5}, v_{5}, v_{4}, v_{3}, v_{2}$. 
\end{proof}

An \emph{$F_{5}$-face} is a $(5^{+}, 4, 4, 4, 4)$-face adjacent to five $3$-faces. Since the configurations in \autoref{RC} are forbidden in $G$, \autoref{V-label} implies the following lemma. 
\begin{lemma}\label{Lem-RC}
Let $[x_{1}x_{2}x_{3}x_{4}x_{5}]$ be an $F_{5}$-face adjacent to five $3$-faces $[x_{1}x_{2}y_{1}], [x_{2}x_{3}y_{2}], [x_{3}x_{4}y_{3}]$, $[x_{4}x_{5}y_{4}]$ and $[x_{5}x_{1}y_{5}]$.  \begin{enumerate}[label = (\roman*)]
\item\label{Lem-RC1} If $x_{1}$ is a $5$-vertex, then either $y_{1}$ or $y_{5}$ is a $5^{+}$-vertex. 
\item\label{Lem-RC2} If $x_{1}$ is a $5$-vertex, then $x_{1}y_{5}$ is not incident with a $(5, 4, 4, 4, 4)$-face. 
\item\label{Lem-RC3} If $x_{1}$ is a $6$-vertex, then $x_{1}y_{5}$ cannot be incident with an $F_{5}$-face $[x_{1}y_{5}y_{6}y_{7}y_{8}]$ adjacent to a $(6, 4, 4)$-face $[x_{1}y_{8}y_{9}]$. 
\end{enumerate}
\end{lemma}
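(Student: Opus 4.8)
The plan is to prove the three parts by contradiction, in each case showing that if the stated conclusion fails then $G$ contains a subgraph isomorphic to one of the reducible configurations drawn in \autoref{RC}; since $G$ is a counterexample to \autoref{T345} it contains none of these, a contradiction. The only tool needed beyond the hypotheses of the lemma is \autoref{V-label}, applied to the $F_5$-face $[x_1x_2x_3x_4x_5]$ and the five incident $3$-faces: its distinctness assertions guarantee that the vertices of the would-be configuration are pairwise distinct, and its ``exactly two neighbours'' clauses — together with the facts already established in the proof of \autoref{T345} that $\delta(G)\ge 4$ and that every $5$-cycle of $G$ is chordless — guarantee that no edges beyond the ones in the picture appear among the relevant (solid) vertices, so that the subgraph we exhibit really is isomorphic to the relevant configuration (the configurations of \autoref{RC} consist of their solid vertices only, cf.\ \autoref{Weak-Reduce}).

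For the first part, suppose $x_1$ is a $5$-vertex and neither $y_1$ nor $y_5$ is a $5^+$-vertex. Since $\delta(G)\ge 4$, this forces $y_1$ and $y_5$ to be $4$-vertices, and $x_2,x_3,x_4,x_5$ are $4$-vertices because $[x_1x_2x_3x_4x_5]$ is an $F_5$-face. By the first part of \autoref{V-label} the vertices $x_1,\dots,x_5,y_1,\dots,y_5$ are distinct and induce exactly the fifteen obvious edges; restricting to $\{x_1,x_2,x_3,x_4,x_5,y_1,y_5\}$ and recording the degrees yields a subgraph isomorphic to the configuration in \autoref{fig:subfig:-b}, a contradiction. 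For the second part, suppose $x_1$ is a $5$-vertex and the edge $x_1y_5$ lies on a $(5,4,4,4,4)$-face, which we write as $[x_1y_5y_6y_7y_8]$, so $y_5,y_6,y_7,y_8$ are $4$-vertices and $y_5y_6,\,y_6y_7,\,y_7y_8,\,y_8x_1$ are its remaining edges. This $5$-face is the face of $G$ on the other side of $x_1y_5$ from the $3$-face $[x_5x_1y_5]$, so the second part of \autoref{V-label} applies with it playing the role of the face ``$[v_1u_5u_6u_7u_8]$''; it gives that $x_1,y_8,y_7,y_6,y_5,x_5,x_4,x_3,x_2$ are distinct and that $x_2$ has no neighbour among them other than $x_1$ and $x_3$. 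Together with the degrees, this is precisely the configuration in \autoref{fig:subfig:-a}, a contradiction.

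For the third part, suppose $x_1$ is a $6$-vertex and the edge $x_1y_5$ lies on an $F_5$-face $[x_1y_5y_6y_7y_8]$ that is adjacent, along $x_1y_8$, to a $(6,4,4)$-face $[x_1y_8y_9]$; then $y_5,\dots,y_9$ are $4$-vertices. Applying the third part of \autoref{V-label} with $[x_1y_5y_6y_7y_8]$ in the role of ``$[v_1u_5u_6u_7u_8]$'' and $[x_1y_8y_9]$ in the role of the $3$-face ``$[u_8u_9v_1]$'' produces the ten distinct vertices $x_1,y_9,y_8,y_7,y_6,y_5,x_5,x_4,x_3,x_2$ together with the fact that $x_2$'s only neighbours among them are $x_1$ and $x_3$; combining this with the degrees and the incidences already assumed, the structure on these ten vertices is isomorphic to the configuration in \autoref{fig:subfig:-c}, a contradiction.

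The mathematics here is light; the main care goes into the matching step. One must correctly identify which of the five triangles around the original $F_5$-face plays the role of $[x_5x_1y_5]$ in \autoref{V-label} (it is the one sharing the edge $x_1x_5$ with the $F_5$-face and the edge $x_1y_5$ with the new face), and then verify that the edge set handed back by \autoref{V-label}, together with the edges of the original $F_5$-face, coincides exactly with the edge set of the drawn configuration: the ``at least'' direction uses the $F_5$-face hypothesis and the chordlessness of $5$-cycles, while the ``at most'' direction is precisely the ``exactly two neighbours'' clause of \autoref{V-label}. A minor additional point is that in the second and third parts the apexes $y_1,y_2,y_3,y_4$ of the remaining three triangles of the original $F_5$-face occur in the figures only as non-solid vertices and therefore need not be tracked at all.
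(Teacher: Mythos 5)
Your proof is correct and takes essentially the same route as the paper, which disposes of this lemma in one line by noting that a failure of each part would, via \autoref{V-label}, produce a forbidden configuration of \autoref{RC}; your case-by-case matchings to the $t=7$, $t=9$ and $t=10$ configurations (including the degree checks and the use of the ``exactly two neighbours'' clause to verify the caption condition on $a_t=x_2$, resp.\ $y_5$ in part (i)) are exactly the intended details. One minor remark: your assertion that no edges beyond those drawn occur among the solid vertices is not fully warranted in parts (ii) and (iii), since \autoref{V-label} only controls the neighbourhood of $x_2$ there, but this is harmless because only a subgraph with the stated degrees and the condition $|N_{G}(a_{t})\cap\{a_{1},\dots,a_{t-1}\}|=2$ is needed.
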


A 3-face is \emph{good} if it is not adjacent to any other 3-face. If $f = [w_{1}w_{2}w_{3}]$ is a 3-face and $w_{1}w_{2}$ is adjacent to another 3-face, then we say that $f$ is a \emph{bad face}, and $w_{1}w_{3}$ is a \emph{bad edge}, and $w_{1}$ is a \emph{bad vertex} associated with the bad edge $w_{1}w_{3}$. Since every $5$-cycle has no chords, $w_{2}w_{3}$ cannot be incident with another $3$-face. Then $w_{1}$ and $w_{3}$ are not symmetrical, so we cannot associate $w_{3}$ with the bad edge $w_{1}w_{3}$. Therefore, every edge is associated with at most one bad vertex. An \emph{$\epsilon$-face} is a $5$-face adjacent to at most three $3$-faces or a $6^{+}$-face. 

It is easy to obtain the following lemma. 
\begin{lemma}\label{BAD6+}
Every bad edge is incident with a $6^{+}$-face. 
\end{lemma}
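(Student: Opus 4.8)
The plan is to argue by contradiction. Suppose $w_1w_3$ is a bad edge that is not incident with a $6^{+}$-face. By the definition of a bad edge, $w_1w_3$ lies on a bad face $[w_1w_2w_3]$, and the edge $w_1w_2$ is incident with a second $3$-face, say $[w_1w_2w_5]$; since $[w_1w_2w_3]$ and $[w_1w_2w_5]$ are distinct faces sharing $w_1w_2$, the vertex $w_5$ is distinct from $w_1,w_2,w_3$. Let $f'$ be the face incident with $w_1w_3$ other than $[w_1w_2w_3]$; by assumption $d(f')\le5$. Write $f'=[w_1w_3z_1z_2\dots]$, with $z_1,z_2,\dots$ the successive vertices on the boundary of $f'$ after $w_3$, and I would treat the cases $d(f')\in\{3,4,5\}$ separately.

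If $d(f')=4$, then $[w_2w_3z_1z_2w_1]$ is a $5$-cycle with chord $w_1w_3$; if $d(f')=3$, then $[w_5w_2w_3z_1w_1]$ is a $5$-cycle with chords $w_1w_2$ and $w_1w_3$. In either case this is a subgraph isomorphic to \autoref{fig:subfig:TA}, contradicting the hypothesis on $G$. If $d(f')=5$, say $f'=[w_1w_3z_1z_2z_3]$, then the $5$-cycle $f'$ together with the edges $w_1w_2$ and $w_2w_3$ of the triangle $[w_1w_2w_3]$ and the edges $w_1w_5$ and $w_2w_5$ of the triangle $[w_1w_2w_5]$ is a subgraph isomorphic to \autoref{fig:subfig:TB}, with $w_1,w_3,w_2,w_5$ playing the roles of $A,H,X,Y$; again a contradiction.

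The step that really needs care --- the main obstacle --- is to check that the vertices named in each case are pairwise distinct, since a priori some $z_i$ could equal $w_2$ or $w_5$, or two of the $z_i$ could coincide; this is where the assumptions $\delta(G)\ge4$ and the chordlessness of all $5$-cycles in $G$ are used. If a coincidence puts the edge $w_2w_3$ on the boundary of $f'$, then $w_1$ and $w_2$ become the only neighbours of $w_3$, contradicting $\delta(G)\ge4$. If it puts $w_1w_2$ on the boundary of $f'$, then $w_1w_2$ is incident with the three distinct faces $[w_1w_2w_3]$, $[w_1w_2w_5]$ and $f'$, which is absurd. If it makes $w_5$ the vertex of $f'$ immediately preceding $w_1$ on its boundary, then the faces $[w_1w_2w_3]$, $[w_1w_2w_5]$ and $f'$ exhaust the rotation at $w_1$ and force $d(w_1)=3$, again contradicting $\delta(G)\ge4$. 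In every remaining coincidence one of the edges $w_1w_2$, $w_2w_3$, $w_1w_5$, $w_2w_5$ becomes a chord of one of the $5$-cycles displayed above (the edges $w_1w_5$ and $w_2w_5$ being supplied by the triangle $[w_1w_2w_5]$), so we again obtain a subgraph isomorphic to \autoref{fig:subfig:TA}. With all coincidences ruled out, the three displayed cases go through verbatim, so $d(f')\ge6$ and the bad edge $w_1w_3$ is incident with a $6^{+}$-face, completing the proof.
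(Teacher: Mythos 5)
The paper offers no proof of this lemma (it is dismissed as "easy"), so there is nothing to compare against except the intended argument, and your case analysis is exactly that argument and is correct: if the face $f'$ on the other side of the bad edge $w_1w_3$ had length $3$ or $4$ one gets a chordal $5$-cycle, i.e.\ a copy of \autoref{fig:subfig:TA}, and if it had length $5$ the face together with the two triangles $[w_1w_2w_3]$ and $[w_1w_2w_5]$ gives a copy of \autoref{fig:subfig:TB} with $w_1,w_3,w_2,w_5$ as $A,H,X,Y$; your three explicit coincidence cases ($w_2w_3$ on $f'$, $w_1w_2$ on $f'$, and $w_5$ preceding $w_1$ on $f'$) are also handled correctly. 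The only imprecision is the final catch-all sentence: coincidences such as $z_i\in\{w_1,w_3\}$ (which you never list) or $z_1=z_3$ in the $5$-face case do not produce a chord among $w_1w_2,w_2w_3,w_1w_5,w_2w_5$; they are instead impossible for the more basic reason that a repeated vertex on the boundary walk of a face of length at most $5$ forces some boundary edge to be traversed twice, which either makes $f'$ lie on both sides of $w_1w_3$ (impossible, since the bad face $[w_1w_2w_3]$ is on one side) or creates a vertex of degree $1$, contradicting $\delta(G)\geq 4$. With that one-line repair your proof is complete.
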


A \emph{$4$-regular face} is a face on which all the vertices have degree four in $G$. If a $4$-regular $5$-face $f$ is adjacent to a $3$-face $[uvw]$ with common edge $uv$, then $w$ is said to be a \emph{pendent vertex} of the $4$-regular face $f$. Since there are no subgraphs isomorphic to the configuration in \autoref{F35}, we have the following result. 

\begin{lemma}\label{CAP}
Every pendent vertex of a $4$-regular $5$-face is a $5^{+}$-vertex. 
\end{lemma}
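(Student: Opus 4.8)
The plan is to argue by contradiction inside the proof of \autoref{T345}, exploiting that the counterexample $G$ contains no subgraph isomorphic to the configuration in \autoref{F35}, that $\delta(G)\ge 4$, and that (as already established from the absence of \autoref{fig:subfig:TA}) every $5$-cycle of $G$ is chordless. Let $f=[v_{1}v_{2}v_{3}v_{4}v_{5}]$ be a $4$-regular $5$-face and let $w$ be a pendent vertex of $f$; after relabeling we may assume $w$ arises from a $3$-face $[v_{1}v_{2}w]$ sharing the edge $v_{1}v_{2}$ with $f$. Since $\delta(G)\ge 4$ we have $d(w)\ge 4$, so it suffices to derive a contradiction from the assumption $d(w)=4$.

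First I would check that $v_{1},\dots,v_{5},w$ are six pairwise distinct vertices. Trivially $w\notin\{v_{1},v_{2}\}$; and if $w$ were one of $v_{3},v_{4},v_{5}$, then the $3$-face $[v_{1}v_{2}w]$ would supply a chord of the $5$-cycle $[v_{1}v_{2}v_{3}v_{4}v_{5}]$ (namely $v_{1}v_{3}$ if $w=v_{3}$, or $v_{1}v_{4}$ if $w=v_{4}$, or $v_{2}v_{5}$ if $w=v_{5}$), contradicting chordlessness. Next I would collect the seven edges $v_{1}v_{2},v_{2}v_{3},v_{3}v_{4},v_{4}v_{5},v_{5}v_{1}$ from the boundary of $f$ together with $v_{1}w,v_{2}w$ from the $3$-face: on the six vertices $w,v_{1},v_{5},v_{4},v_{3},v_{2}$ these form the $6$-cycle $w\,v_{1}\,v_{5}\,v_{4}\,v_{3}\,v_{2}\,w$ with the single chord $v_{1}v_{2}$, which cuts it into the triangle $[v_{1}v_{2}w]$ and the pentagon $[v_{1}v_{5}v_{4}v_{3}v_{2}]$. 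Since each $v_{i}$ has degree $4$ in $G$ (the face $f$ being $4$-regular) and $d(w)=4$ by assumption, this is precisely a subgraph isomorphic to the configuration in \autoref{F35}, the desired contradiction; hence $d(w)\ge 5$.

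The proof involves no discharging and no case analysis beyond the above; the only place demanding care is the distinctness check, where the chordlessness of $5$-cycles is essential — without it the seven listed edges might degenerate and fail to realize a genuine copy of \autoref{F35}. So I expect the main (minor) obstacle to be bookkeeping the possible coincidences between $w$ and the vertices $v_{i}$, rather than any substantive difficulty.
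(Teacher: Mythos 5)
Your proof is correct and is essentially the paper's argument: the paper derives \autoref{CAP} in one line from the absence of subgraphs isomorphic to the configuration in \autoref{F35}, which is exactly the contradiction you construct (and since only a subgraph, not an induced subgraph, is needed, your distinctness check via chordless $5$-cycles is all the extra bookkeeping required). No gap.
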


Next, we will use discharging method to complete the proof. For each vertex $v \in V(G)$, we assign an initial charge $\mu(v) = d(v) - 4$; for every face $f \in F(G)$, we assign an initial charge $\mu(f) = d(f) - 4$. Euler's formula yields 
\[
\sum_{v \in V(G)}(d(v) - 4) + \sum_{f \in F(G)}(d(f) - 4) = 0.
\]
On the other hand, we design the following discharging rules to redistribute the charges among the vertices and faces, preserving the sum, such that every element in $V(G) \cup F(G)$ has a nonnegative final charge $\mu'$ but the sum of the final charges is positive, which leads to a contradiction. 

\noindent\textbf{Discharging Rules:}
\begin{enumerate}[label = \textbf{R\arabic*.}, ref = R\arabic*]
\item\label{R1} Every good 3-face receives $\frac{1}{3}$ from each adjacent $5^{+}$-face.
\item\label{R2} Every bad 3-face receives $\frac{1}{2}$ from each adjacent $6^{+}$-face. 
\item\label{R3} Each $4$-regular $5$-face receives $\frac{2}{15}$ from each pendent vertex. 
\item\label{R4} If $f$ is a $5$-face incident with a $5^{+}$-vertex and $\triangledown(f) \geq 4$, then $f$ receives $\frac{\frac{1}{3}\triangledown(f)-1}{n(f)}$ from each incident $5^{+}$-vertex, where $n(f)$ is the number of $5^{+}$-vertices on $f$. 
\item\label{R5} Let $f$ be a $6^{+}$-face and $abc$ be a path on the boundary. Assume each of $ab$ and $bc$ is incident with a $3$-face. 
\begin{enumerate}
\item If $b$ is a bad vertex associated with two bad edges in $\{ab, bc\}$, then $f$ receives $\frac{1}{3}$ from $b$.
\item If $b$ is a bad vertex associated with exactly one bad edge in $\{ab, bc\}$, then $f$ receives $\frac{1}{6}$ from $b$. 
\end{enumerate}
\end{enumerate}
\begin{remark}
Since there are no chordal $5$-cycles, every $3$-face is adjacent to at most one another $3$-face. Then the vertex $b$ in \ref{R5} must be a $5^{+}$-vertex. 
\end{remark}

\begin{claim}
Every face $f$ has nonnegative final charge. 
\end{claim}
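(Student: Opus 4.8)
The plan is to show $\mu'(f)\ge 0$ face by face, distinguishing the cases $d(f)=4$, $d(f)=3$, $d(f)=5$, and $d(f)\ge 6$; the last of these will carry essentially all the work. A $4$-face is touched by no discharging rule, so $\mu'(f)=\mu(f)=0$. For a $3$-face $f$ I would split into good and bad. If $f$ is good it is adjacent to no $3$-face by definition, and I claim it is adjacent to no $4$-face either: otherwise the triangle bounding $f$ together with the adjacent $4$-face, glued along their common edge, form a $5$-cycle on five distinct vertices (distinctness being forced by $\delta(G)\ge 4$) in which that common edge is a chord, contradicting the exclusion of \autoref{fig:subfig:TA}. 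Hence all three faces adjacent to a good $3$-face are $5^{+}$-faces, and by \ref{R1} it gains $3\cdot\tfrac13=1$, so $\mu'(f)=0$. If $f=[w_1w_2w_3]$ is bad, say with $w_1w_2$ on another $3$-face, then by the remark following the discharging rules this is its only adjacent $3$-face, the edges $w_1w_3,w_2w_3$ are bad edges, and by \autoref{BAD6+} each of them lies on a $6^{+}$-face; these two $6^{+}$-faces are distinct (else $d(w_3)=2$), so by \ref{R2} $f$ gains $2\cdot\tfrac12=1$ and $\mu'(f)=0$.

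For $d(f)=5$, write $t=\triangledown(f)$. The face $f$ sends $\tfrac13$ to each adjacent good $3$-face and nothing else out (bad $3$-faces receive nothing from it), so it loses at most $\tfrac t3$ and $\mu'(f)\ge 1-1=0$ whenever $t\le 3$. If $t\in\{4,5\}$ I would use the dichotomy: either $f$ carries a $5^{+}$-vertex, and then \ref{R4} returns $\tfrac13 t-1$ to $f$; or $f$ is $4$-regular, in which case its $t$ pendent vertices are all $5^{+}$-vertices by \autoref{CAP}, and \ref{R3} returns $\tfrac{2t}{15}$ to $f$. A direct computation then gives $\mu'(f)\ge 0$ in every subcase (with $\tfrac15$ to spare when $f$ is $4$-regular with $t=4$).

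The case $d(f)\ge 6$ is the crux, and I would handle it by an edge-count on the boundary cycle of $f$ (which is a cycle once $G$ is taken $2$-connected). Partition the $d(f)$ boundary edges into sets $A$, $B$, $C$ according as the face on the far side is a good $3$-face, a bad $3$-face, or a $4^{+}$-face; then $|A|+|B|+|C|=d(f)$ and $f$ sends out $\tfrac13|A|+\tfrac12|B|$. Each $e\in B$ is a bad edge and so has an associated bad vertex $b_e$, which lies on $f$; call $e$ \emph{covered} if the other boundary edge of $f$ at $b_e$ is also incident with a $3$-face. If $e$ is covered then \ref{R5} applies at $b_e$ and returns $\tfrac16$ to $f$ on account of $e$ (the total returned from $b_e$ being $\tfrac13$, via \ref{R5}(a), exactly when $b_e$ is the associated vertex of two covered boundary edges), so $f$ receives at least $\tfrac16(|B|-u)$, where $u$ is the number of uncovered edges of $B$. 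Finally, if $e\in B$ is uncovered then the other boundary edge at $b_e$ lies in $C$, and a given edge of $C$ plays this role for at most its two endpoints, whence $u\le 2|C|$. Putting these together,
\[
\mu'(f)\ \ge\ \bigl(d(f)-4\bigr)-\tfrac13|A|-\tfrac12|B|+\tfrac16\bigl(|B|-2|C|\bigr)\ =\ \tfrac23 d(f)-4\ \ge\ 0,
\]
the last inequality because $d(f)\ge 6$. The substance of the argument lies in verifying the three local claims underlying this count — that every $B$-edge is a bad edge with its bad vertex on $f$, that \ref{R5} contributes exactly $\tfrac16$ per covered $B$-edge with no double counting, and that an uncovered $B$-edge pins down a distinct $C$-edge — each of which I expect to follow, with some care, from the absence of chordal $5$-cycles, from \autoref{V-label} and \autoref{Lem-RC}, and from $\delta(G)\ge 4$; this is the step I anticipate being the main obstacle, the remaining cases being routine.
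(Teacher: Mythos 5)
Your proof is correct and takes essentially the same route as the paper's: the same case split by face degree, the same use of rules R1--R5 together with \autoref{CAP} and \autoref{BAD6+} for the $3$- and $5$-face cases, and for $6^{+}$-faces the same idea of letting the R5 receipts offset the extra $\frac16$ charged on bad edges — your partition into $A$, $B$, $C$ with the bound $u\le 2|C|$ is just a cleaner, global bookkeeping of the paper's ``$f$ sends out an average of $\frac13$ via each incident edge'' argument. The extra justifications you supply (a good $3$-face cannot meet a $4$-face without creating a chordal $5$-cycle; the explicit subcase computations for $5$-faces) are points the paper asserts implicitly, so there is no gap.
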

Let $f$ be a face in $F(G)$.
\begin{case}
If $f$ is a good $3$-face, then it is adjacent to three $5^{+}$-faces, and then $\mu'(f) = \mu(f) + 3 \times \frac{1}{3} = 0$ by \ref{R1}. If $f$ is a bad $3$-face, then it is adjacent to two $6^{+}$-faces, and then $\mu'(f) = \mu(f) + 2 \times \frac{1}{2} = 0$ by \ref{R2}. 
\end{case}

\begin{case}
If $f$ is a $4$-face, then it is not involved in the discharging procedure, and then $\mu'(f) = \mu(f) = 0$.
\end{case}

\begin{case}
Let $f$ be a $5$-face. Since the configuration in \autoref{fig:subfig:TB} is forbidden, $f$ cannot be adjacent to any bad $3$-face. Assume $f$ is a $4$-regular $5$-face. By \autoref{CAP}, $f$ has $\triangledown(f)$ pendent $5^{+}$-vertices, and then 
\[
\mu'(f) = \mu(f) - \triangledown(f) \times \frac{1}{3} + \triangledown(f) \times \frac{2}{15} = d(f) - 4 - d(f) \times \frac{1}{5} = 5 - 4 - 5 \times \frac{1}{5} = 0.
\]
So we may assume that $f$ is incident with at least one $5^{+}$-vertex. If $f$ is adjacent to at most three $3$-faces, then $\mu'(f) \geq \mu(f) - 3 \times \frac{1}{3} = 0$. If $f$ is adjacent to at least four $3$-faces, then 
\begin{equation*}
\mu'(f) = 5 - 4 - \triangledown(f) \times \frac{1}{3} + n(f) \times \frac{\frac{1}{3} \triangledown(f)- 1}{n(f)} = 0.
\end{equation*}
\end{case}

\begin{case}
Let $f$ be a $6^{+}$-face. Assume that $uvw$ is a path on the boundary of $f$. If $v$ is associated with two bad edges $uv$ and $vw$, then $f$ sends out $\frac{1}{2}$ via each of $uv$ and $vw$, the value of $\frac{1}{2}$ is $\frac{1}{6}$ larger than that of $\frac{1}{3}$, but $f$ gets back $\frac{1}{3} = 2 \times \frac{1}{6}$ from the vertex $v$, thus it is viewed that $f$ sends out $\frac{1}{3}$ in total via each of $uv$ and $vw$. Similarly, if $vw$ is incident with a $3$-face and $v$ is associated with exactly one bad edge $uv$, then $f$ sends out $\frac{1}{2}$ via $uv$ and gets back $\frac{1}{6}$ from $v$, so it is viewed that $f$ sends out $\frac{1}{3}$ via each of $uv$ and $vw$; if $v$ is not associated with any edge in $\{uv, vw\}$, then $f$ sends out at most $\frac{1}{3}$ via each of $uv$ and $vw$. In conclusion, $f$ sends out an average of $\frac{1}{3}$ via each incident edge. This implies that $\mu'(f) \geq \mu(f) - d(f) \times \frac{1}{3} \geq 0$. 
\end{case}

\begin{claim}
Every vertex $v$ has nonnegative final charge. Furthermore, there is a vertex $x$ such that $\mu'(x) > 0$. 
\end{claim}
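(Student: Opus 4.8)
The plan is a case analysis on $d(v)$, using that no rule moves charge onto a vertex, so $\mu'(v) = d(v) - 4$ minus the total charge $v$ sends out. If $d(v) = 4$ then $v$ is not involved in any rule (every rule removing charge from a vertex acts on a $5^{+}$-vertex, and pendent vertices are $5^{+}$-vertices by \autoref{CAP}), hence $\mu'(v) = \mu(v) = 0$. For $d(v) \geq 5$ I would walk around $v$ and bound the charge sent to each incident face: a $3$-face receives at most $\frac{2}{15}$ (via \ref{R3}, only when $v$ is a pendent vertex of a $4$-regular $5$-face across the edge opposite $v$), a $4$-face receives $0$, a $5$-face $f$ receives $\frac{\frac{1}{3}\triangledown(f) - 1}{n(f)} \leq \frac{2}{3}$ (via \ref{R4}), and a $6^{+}$-face receives at most $\frac{1}{3}$ (via \ref{R5}).

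For $d(v) \geq 6$ the target $\mu'(v) \geq 0$ is comfortable: a face can extract $\frac{2}{3}$ from $v$ only if it is an $F_{5}$-face of which $v$ is the unique $5^{+}$-vertex, and in that case \autoref{Lem-RC} and \autoref{CAP} force the two $3$-faces of that $F_{5}$-face at $v$ to receive $0$ from $v$, so every heavy demand is flanked by two null ones and the average charge sent per incident face stays below $1 - \frac{4}{d(v)}$. The delicate case is $d(v) = 5$, where the budget is exactly $1$. When $v$ is incident with an $F_{5}$-face $f$ it already sends $\frac{2}{3}$ to $f$, so I must cap the charge to the other four incident faces by $\frac{1}{3}$: two of them are the $3$-faces of $f$ at $v$ and receive $0$ (by \autoref{CAP}, since their apexes have degree $4$); for the remaining two, \autoref{BAD6+} rules out an adjacent bad edge on a non-$6^{+}$-face and \autoref{Lem-RC}\ref{Lem-RC2} rules out an incident $(5,4,4,4,4)$-face, which together cap each contribution at $\frac{1}{6}$. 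When $v$ is incident with no $F_{5}$-face, \ref{R4}, \ref{R5} and \autoref{V-label} show $v$ sends at most $\frac{1}{3}$ to each incident face and not to three consecutive faces, again giving a total of at most $1$. I expect this $5$-vertex analysis—particularly excluding the a priori dangerous pattern in which $v$ would send $\frac{2}{3}$ to an $F_{5}$-face and then $\frac{1}{3}$ to each of two other incident faces—to be the main obstacle.

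For the second assertion, recall that discharging preserves the total charge, which is $0$ on the torus; since all vertices and all faces have nonnegative final charge, exhibiting one element with positive final charge yields the contradiction. As $G$ is not $4$-regular and $\delta(G) \geq 4$, it has a $5^{+}$-vertex, so it suffices to find a $5^{+}$-vertex whose local configuration is not of the tight type isolated above (incident with an $F_{5}$-face and with two further incident $5$-faces each contributing exactly $\frac{1}{6}$). Such a vertex must exist, since otherwise every $5^{+}$-vertex would sit in such a tight configuration, and tracking this structure through \autoref{V-label} and \autoref{Lem-RC} forces either a configuration of \autoref{A345} or a separating short cycle, both already excluded in the proof of \autoref{T345}. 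Hence some vertex $x$ has $\mu'(x) > 0$, contradicting $\sum_{y} \mu'(y) = 0$ and completing the proof of \autoref{T345}.
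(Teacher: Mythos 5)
Your overall skeleton (degree-by-degree case analysis, with the incident $F_{5}$-face as the critical situation for $5$-vertices) matches the paper, but two steps do not close. The more serious one is the "furthermore" assertion. You try to produce a strictly positive vertex by finding a $5^{+}$-vertex avoiding your "tight type", claiming that otherwise \autoref{V-label} and \autoref{Lem-RC} would force a configuration of \autoref{A345} or a separating short cycle "already excluded in the proof of \autoref{T345}"; but no separating-cycle reduction is ever made in the toroidal argument (those reductions belong to the planar theorems), and no derivation of a forbidden configuration is actually given, so this is a placeholder rather than a proof. You also mischaracterize the tight case: in the only subcase where the bound for a $5$-vertex degenerates to $0$, the fifth neighbour $v_{5}$ (shared by $f_{4}$ and $f_{5}$) must be a $6^{+}$-vertex — if $d(v_{5})=4$ one of $f_{4},f_{5}$ is an $\epsilon$-face and receives nothing, and if $d(v_{5})=5$ then \autoref{Lem-RC}\ref{Lem-RC1} improves one cap to $\frac{1}{9}$, so both cases are strictly positive. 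The paper's resolution is then immediate: point at that incident $6^{+}$-vertex, which has strictly positive final charge. Even that repair needs strict positivity for $6$-vertices, which your averaging claim ("below $1-\frac{4}{d(v)}$") does not deliver: a $6$-vertex whose incident faces alternate $3$-face/$F_{5}$-face would send exactly $3\cdot\frac{2}{3}=2=\mu(v)$ under your bookkeeping; the paper excludes this pattern with \autoref{Lem-RC}\ref{Lem-RC3} (at most two incident $F_{5}$-faces), which you never invoke.

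There is also a gap in part 1 for $5$-vertices not incident with an $F_{5}$-face. Your bound "at most $\frac{1}{3}$ to each incident face and not to three consecutive faces" ignores the $\frac{2}{15}$ pendent charges sent through the incident $3$-faces by \ref{R3}: with nonadjacent $3$-faces $f_{1},f_{3}$ and the three remaining faces each drawing up to $\frac{1}{3}$ by \ref{R4}, your ledger allows $3\cdot\frac{1}{3}+2\cdot\frac{2}{15}=\frac{19}{15}>1$. The paper closes this by showing that when $f_{2}$ is not an $F_{5}$-face either $v_{5}$ is a $5^{+}$-vertex (so $f_{4},f_{5}$ have $n(f)\geq 2$ and receive at most $\frac{1}{6}$ each) or $v_{5}$ is a $4$-vertex (so one of $f_{4},f_{5}$ is an $\epsilon$-face and receives $0$), leaving slack $\frac{1}{15}$. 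Similarly, in the $F_{5}$-case your cap of $\frac{1}{6}$ on $f_{4},f_{5}$ needs $\triangledown(f)\leq 4$ together with $n(f)\geq 2$ (or the $\epsilon$-face argument), not \autoref{BAD6+}; and the flanking $3$-faces contribute nothing because of \autoref{NPendent} (whose proof uses \autoref{CAP} applied to a degree-$4$ vertex of the $F_{5}$-face), not because their apexes have degree $4$. So nonnegativity for $5$-vertices outside the $F_{5}$-case, strict positivity for $6$-vertices, and consequently the existence of a vertex with $\mu'(x)>0$ all remain unproved in your proposal.
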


Let $v$ be a $k$-vertex with neighbors $v_{1}, v_{2}, \dots, v_{k}$ in a cyclic order, and let $f_{i}$ be the face incident with $vv_{i}$ and $vv_{i+1}$, where the subscripts are taken module $k$. 

\begin{lemma}\label{NPendent}
Let $v$ be a $5^{+}$-vertex incident with an $F_{5}$-face $f_{2}$. Then $v$ is not a pendent vertex of a $4$-regular $5$-face via $f_{1}$ or $f_{3}$. 
\end{lemma}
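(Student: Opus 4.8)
The plan is to argue by contradiction: assume that $v$ is a pendent vertex of some $4$-regular $5$-face $g$ via $f_{1}$ (the case ``via $f_{3}$'' being entirely symmetric), and produce from this a second pendent vertex of $g$ having degree four, which is impossible by \autoref{CAP}.

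First I would unwind the hypotheses. Being a pendent vertex of $g$ via $f_{1}$ forces $f_{1}$ to be the $3$-face $[v v_{1} v_{2}]$, and forces the $4$-regular $5$-face $g$ (which is distinct from $f_{1}$) to share the edge $v_{1}v_{2}$ with $f_{1}$; hence $v_{2}\in V(g)$. Since $f_{2}$ is an $F_{5}$-face and $d(v)\geq 5$, writing $f_{2}=[v v_{2} x y v_{3}]$ we get that $v$ is not one of the four $4$-vertices of $f_{2}$, so $d(v_{2})=d(x)=d(y)=d(v_{3})=4$, and every edge of $f_{2}$ lies on a $3$-face. Now I examine the faces around $v_{2}$, which has degree $4$: the corner at $v_{2}$ between $v_{2}v_{1}$ and $v_{2}v$ is $f_{1}$ and the corner between $v_{2}v$ and $v_{2}x$ is $f_{2}$, so the neighbours of $v_{2}$ in cyclic order are $v_{1},v,x,w$ for some fourth neighbour $w$ (in particular $x\neq v_{1}$, else $f_{1}=f_{2}$). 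Let $h$ be the corner between $v_{2}x$ and $v_{2}w$; it is the face on the other side of the edge $v_{2}x$ from $f_{2}$, so by the $F_{5}$-property $h$ is a $3$-face, namely $[v_{2}xw]$. The remaining corner, between $v_{2}w$ and $v_{2}v_{1}$, is one of the two faces bounded by the edge $v_{1}v_{2}$ (which is not a bridge, as it lies on the triangle $f_{1}$); the other such face is $f_{1}$, so since $g$ is bounded by $v_{1}v_{2}$ and $g\neq f_{1}$, this remaining corner is $g$. In particular $v_{2}w\in E(g)$.

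Putting this together, $g$ is a $4$-regular $5$-face adjacent to the $3$-face $h=[v_{2}wx]$ along their common edge $v_{2}w$, so $x$ is a pendent vertex of $g$; but $d(x)=4$, contradicting \autoref{CAP}. The only real work is the local embedding bookkeeping at $v_{2}$ done above — identifying the four faces incident with $v_{2}$ and checking that the relevant vertices are distinct; there are no estimates or discharging involved, and the case ``$v$ is a pendent vertex via $f_{3}$'' is handled identically with $(v_{2},x,v_{1})$ replaced by $(v_{3},y,v_{4})$.
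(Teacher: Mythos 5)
Your proof is correct and follows essentially the same route as the paper: the paper's own (very terse) argument also observes that if $v_{1}v_{2}$ lies on a $4$-regular $5$-face $g$, then the $4$-vertex $x$ of the $F_{5}$-face $f_{2}$ becomes a pendent vertex of $g$, contradicting \autoref{CAP}. Your version merely spells out the rotation bookkeeping at $v_{2}$ that the paper leaves implicit, so it is a faithful (and more detailed) rendering of the same proof.
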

\begin{proof}
Let $f_{2} = [vv_{2}xyv_{3}]$ be an $F_{5}$-face. In other words, $d(v_{2}) = d(x) = d(y) = d(v_{3}) = 4$, and each edge on $f_{2}$ is incident with a $3$-face. Assume $v_{1}v_{2}$ is incident with a $4$-regular $5$-face. Then $x$ is also a pendent vertex of this $4$-regular $5$-face, but this contradicts \autoref{CAP}. 
\end{proof}

We claim that if a $5^{+}$-vertex $v$ is incident with a $5^{+}$-face $f$ which is not an $F_{5}$-face, then it sends at most $\frac{1}{3}$ to $f$. Assume $f$ is a $5$-face. By definition, $f$ is incident with at least two $5^{+}$-vertices, or $f$ is adjacent at most four $3$-faces. By \ref{R4}, $v$ sends at most $\frac{1}{3}$ to $f$. By \ref{R5}, $v$ also sends at most $\frac{1}{3}$ when $f$ is a $6^{+}$-face. This proves the claim.  

\begin{case}
Note that each $4$-vertex is not involved in the discharging procedure, thus its final charge is zero.
\end{case}

\begin{case}
Let $v$ be a $6$-vertex. Thus, it is incident with at most four $3$-faces. 
\begin{itemize}
\item Let $\triangledown(v) = 4$. Then $v$ is incident with two $6^{+}$-faces and four bad $3$-faces. Note that $v$ is not a pendent vertex of any $5$-face. Then only \ref{R5} can be applied to $v$, and then $\mu'(v) \geq 6 - 4 - 2 \times \frac{1}{3} > 0$. 

\item Let $\triangledown(v) = 1$. Then $v$ is a pendent vertex of at most one $5$-face, and \ref{R4} can be applied on at most two incident faces. It follows that $\mu'(v) \geq 6 - 4 - \frac{2}{15} - 2 \times \frac{1}{3} > 0$. 

\item Let $\triangledown(v) = 0$. Then $\mu'(v) = \mu(v) = 6 - 4 > 0$. 

\item Let $\triangledown(v) = 2$. If $f_{1}$ and $f_{2}$ are $3$-faces, then no rules can be applied on incident faces, and $\mu'(v) = \mu(v) = 6 - 4 > 0$. If $f_{1}$ and $f_{3}$ are $3$-faces, then $v$ sends at most $\frac{2}{3}$ to $f_{2}$ by \ref{R4}, at most $\frac{2}{15}$ via each of $f_{1}$ and $f_{3}$ by \ref{R3}, at most $\frac{1}{3}$ to each of $f_{4}$ and $f_{6}$ by \ref{R4}, thus $\mu'(v) \geq 6- 4 - \frac{2}{3}  - 2 \times \frac{2}{15} - 2 \times \frac{1}{3} > 0$. If $f_{1}$ and $f_{4}$ are $3$-faces, then $v$ sends at most $\frac{2}{15}$ via each of $f_{1}$ and $f_{4}$ by \ref{R3}, at most $\frac{1}{3}$ to each of the other incident faces by \ref{R4}, and then $\mu'(v) \geq 6 - 4 - 4 \times \frac{1}{3} - 2 \times \frac{2}{15} > 0$. 

\item Let $\triangledown(v) = 3$. 
\begin{itemize}
\item Let $f_{1}, f_{2}$ and $f_{4}$ be $3$-faces. By \autoref{BAD6+}, $f_{3}$ and $f_{6}$ are $6^{+}$-faces. Then $v$ sends $\frac{1}{6}$ to $f_{3}$ by \ref{R5}, at most $\frac{2}{15}$ via $f_{4}$ by \ref{R3}, at most $\frac{1}{3}$ to $f_{5}$ by \ref{R4}, and then $\mu'(v) \geq 6 - 4 - \frac{1}{6} - \frac{2}{15} - \frac{1}{3} > 0$. 
\item Let $f_{1}, f_{3}$ and $f_{5}$ be $3$-faces. Assume that $v$ is a pendent vertex of a $4$-regular $5$-face incident with $v_{3}v_{4}$. By \autoref{NPendent}, neither $f_{2}$ nor $f_{4}$ is an $F_{5}$-face. Hence, $v$ sends at most $\frac{1}{3}$ to each of $f_{2}$ and $f_{4}$, which implies that $\mu'(v) \geq 6 - 4 - 3 \times \frac{2}{15} - 2 \times \frac{1}{3} - \frac{2}{3} > 0$. Assume that $v$ is not a  pendent vertex of any $4$-regular $5$-face. By \autoref{Lem-RC}\ref{Lem-RC3}, $v$ is incident with at most two $F_{5}$-face. It follows that $\mu'(v) > 6 - 4 - 3 \times \frac{2}{3} = 0$. 
\end{itemize}
\end{itemize}
\end{case}

\begin{case}
Let $v$ be a $7^{+}$-vertex. By the discharging rules, $v$ sends at most $\frac{1}{3}$ to/via each incident face $f_{i}$, except that $f_{i}$ is an $F_{5}$-face. Assume $f_{i}$ is an $F_{5}$-face. By \autoref{BAD6+}, $f_{i-1}$ and $f_{i+1}$ are good $3$-faces. By \autoref{NPendent}, $v$ is not a pendent vertex of a $4$-regular $5$-face via $f_{i-1}$ or $f_{i+1}$. It follows that $v$ sends $\frac{2}{3}$ to $f_{i}$, and sends zero to $f_{i-1}$ and $f_{i+1}$. Then $v$ averagely sends at most $\frac{1}{3}$ to/via each incident face, and $\mu'(v) \geq d(v) - 4 - d(v) \times \frac{1}{3} > 0$. 
\end{case}

\begin{case}\label{5S}
Let $v$ be a $5$-vertex. It follows that $\triangledown(v) \leq 3$. If $\triangledown(v) = 3$ and $f_{1}, f_{2}, f_{4}$ are $3$-faces, then $f_{3}$ and $f_{5}$ are $6^{+}$-faces, and then $v$ sends $\frac{1}{6}$ to each of $f_{3}$ and $f_{5}$, sends at most $\frac{2}{15}$ via $f_{4}$, this implies that $\mu'(v) \geq 5 - 4 - \frac{2}{15} - 2 \times \frac{1}{6} > 0$. If $\triangledown(v) = 1$, then $\mu'(v) \geq 5 - 4 - \frac{2}{15} - 2 \times \frac{1}{3} > 0$. If $\triangledown(v) = 0$, then $\mu'(v) = 5 - 4 > 0$. If $\triangledown(v) = 2$ and $f_{1}, f_{2}$ are $3$-faces, then $f_{3}$ and $f_{5}$ are $6^{+}$-faces, and then $\mu'(v) = 5 - 4 > 0$. 

Next, assume that $v$ is incident with two $3$-faces which are not adjacent, say $f_{1}$ and $f_{3}$. Then each of $f_{2}, f_{4}$ and $f_{5}$ is $5^{+}$-face. 

\begin{figure}%
\centering
\begin{tikzpicture}[scale=0.9]
\foreach \x in {0,...,4}
{
\def\pointnameA{A\x}
\coordinate (\pointnameA) at ($(\x*360/5 + 90:1)$);
}
\draw (A0)node[above]{$v_{2}$}--(A1)node[left]{$v_{3}$};
\draw (A0)--(A4)node[right]{$v_{1}$};
\draw (A2)node[left]{$v_{4}$}--(A3)node[right]{$v_{5}$};
\foreach \x in {0,...,4}
{
\draw (A\x)--(0, 0);
\node[circle, inner sep = 1.5, fill = white, draw] () at (A\x) {};
}
\node[regular polygon, inner sep = 1.5, fill=blue, draw=blue] () at (0, 0) {};
\end{tikzpicture}
\begin{tikzpicture}[scale=0.9]
\foreach \x in {0,...,4}
{
\def\pointnameA{A\x}
\coordinate (\pointnameA) at ($(\x*360/5 + 90:1)$);
}
\draw (A0)node[above]{$v_{2}$}--(A1)node[left]{$v_{3}$};
\draw (A0)--(A4)node[right]{$v_{1}$};
\foreach \x in {0,...,4}
{
\draw (A\x)--(0, 0);
\node[circle, inner sep = 1.5, fill = white, draw] () at (A\x) {};
}
\node[regular polygon, inner sep = 1.5, fill=blue, draw=blue] () at (0, 0) {};
\draw (A2)node[left]{$v_{4}$};
\draw (A3)node[right]{$v_{5}$};
\end{tikzpicture}
\begin{tikzpicture}[scale=0.9]
\foreach \x in {0,...,4}
{
\def\pointnameA{A\x}
\coordinate (\pointnameA) at ($(\x*360/5 + 90:1)$);
}
\draw (A2)node[left]{$v_{1}$}--(A3)node[right]{$v_{2}$};
\foreach \x in {0,...,4}
{
\draw (A\x)--(0, 0);
\node[circle, inner sep = 1.5, fill = white, draw] () at (A\x) {};
}
\node[regular polygon, inner sep = 1.5, fill=blue, draw=blue] () at (0, 0) {};
\draw (A4)node[right]{$v_{3}$};
\draw (A0)node[above]{$v_{4}$};
\draw (A1)node[left]{$v_{5}$};
\end{tikzpicture}
\begin{tikzpicture}[scale=0.9]
\foreach \x in {0,...,4}
{
\def\pointnameA{A\x}
\coordinate (\pointnameA) at ($(\x*360/5 + 90:1)$);
}
\draw (A1)node[left]{$v_{1}$}--(A2)node[left]{$v_{2}$};
\draw (A3)node[right]{$v_{3}$}--(A4)node[right]{$v_{4}$};
\coordinate (B0) at ($(A0)+(90:0.6)$);
\coordinate (B1) at ($(A0)+(45:1)$);
\coordinate (B2) at ($(A0)+(135:1)$);
\draw (A0)node[right]{$v_{5}$}--(B1);
\draw (A0)--(B2);
\draw (A0)--(B0)--(B1);
\draw (B0)[dotted]--(B2);
\foreach \x in {0,...,4}
{
\draw (A\x)--(0, 0);
\node[circle, inner sep = 1.5, fill = white, draw] () at (A\x) {};
}
\node[regular polygon, inner sep = 1.5, fill=blue, draw=blue] () at (0, 0) {};
\node[circle, inner sep = 1.5, fill = white, draw] () at (B0) {};
\node[circle, inner sep = 1.5, fill = white, draw] () at (B1) {};
\node[circle, inner sep = 1.5, fill = white, draw] () at (B2) {};
\end{tikzpicture}
\begin{tikzpicture}[scale=0.9]
\foreach \x in {0,...,4}
{
\def\pointnameA{A\x}
\coordinate (\pointnameA) at ($(\x*360/5 + 90:1)$);
}
\draw (A1)node[left]{$v_{1}$}--(A2)node[left]{$v_{2}$};
\draw (A3)node[right]{$v_{3}$}--(A4)node[right]{$v_{4}$};
\foreach \x in {0,...,4}
{
\draw (A\x)--(0, 0);
\node[circle, inner sep = 1.5, fill = white, draw] () at (A\x) {};
}
\node[regular polygon, inner sep = 1.5, fill=blue, draw=blue] () at (0, 0) {};
\draw (A0)node[right]{$v_{5}$};
\end{tikzpicture}
\caption{Some subcases in \autoref{5S}.}
\end{figure}
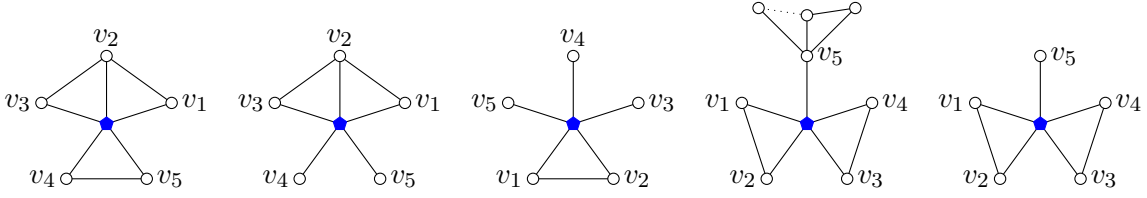

\begin{subcase}
Suppose that $f_{2}$ is not an $F_{5}$-face. Then $v$ sends at most $\frac{1}{3}$ to $f_{2}$. If $v_{5}$ is a $5^{+}$-vertex, then $v$ sends at most $\frac{1}{6}$ to each of $f_{4}$ and $f_{5}$ by \ref{R4}, and then $\mu'(v) \geq 5 - 4 - \frac{1}{3} - 2 \times \frac{1}{6} - 2 \times \frac{2}{15} > 0$. If $v_{5}$ is a $4$-vertex, then $f_{4}$ or $f_{5}$ is an $\epsilon$-face, and $v$ sends zero to each incident $\epsilon$-face, so $\mu'(v) \geq 5 - 4 - 2 \times \frac{1}{3} - 2 \times \frac{2}{15} > 0$. 
\end{subcase}

\begin{subcase}
Suppose that $f_{2}$ is an $F_{5}$-face. By \ref{R4}, we have $\tau(v \rightarrow f_{2}) = \frac{2}{3}$. By \autoref{NPendent}, $v$ cannot be a pendent vertex of any $4$-regular face. By \autoref{Lem-RC}\ref{Lem-RC2}, neither $f_{4}$ nor $f_{5}$ is a $(5, 4, 4, 4, 4)$-face. 

\begin{subsubcase}
Let $v_{5}$ be a $4$-vertex. It follows that $f_{4}$ or $f_{5}$ is an $\epsilon$-face, otherwise there exists a configuration isomorphic to \autoref{fig:subfig:TB}, a contradiction. But if $f_{4}$ or  $f_{5}$ is an $\epsilon$-face, then $v$ sends nothing to it. If $v$ sends a positive charge to $f_{4}$ or $f_{5}$, then the face must be a $5$-face incident with at least two $5^{+}$-vertices, and then $v$ sends at most $\frac{\frac{4}{3} - 1}{2} = \frac{1}{6}$ to $f_{4}$ and $f_{5}$. Thus, $\mu'(v) \geq 5 - 4 - \frac{2}{3} - \frac{1}{6} > 0$. 
\end{subsubcase}

\begin{subsubcase}
Let $v_{5}$ be a $5$-vertex. By \autoref{Lem-RC}\ref{Lem-RC1}, either $v_{1}$ or $v_{4}$ is a $5^{+}$-vertex. Then one of $f_{4}$ and $f_{5}$ is incident with at least three $5^{+}$-vertices, and the other one is incident with at least two $5^{+}$-vertices. By \ref{R4} and \ref{R5}, $v$ sends at most $\frac{\frac{4}{3} - 1}{3} = \frac{1}{9}$ to one of $f_{4}$ and $f_{5}$, sends at most $\frac{\frac{4}{3} - 1}{2} = \frac{1}{6}$ to the other one, thus $\mu'(v) \geq 5 - 4 - \frac{2}{3} - \frac{1}{6} - \frac{1}{9} > 0$. 
\end{subsubcase}

\begin{subsubcase}\label{S}
Let $v_{5}$ be a $6^{+}$-vertex. Since each of $f_{4}$ and $f_{5}$ is incident with at least two $5^{+}$-vertices, $v$ sends at most $\frac{1}{6}$ to each of $f_{4}$ and $f_{5}$ by \ref{R4}, and then $\mu'(v) \geq 5 - 4 - \frac{2}{3} - 2 \times \frac{1}{6} = 0$. By the previous arguments, every $6^{+}$-vertex has positive final charge, so we have that $\mu'(v_{5}) > 0$. 
\end{subsubcase}
\end{subcase}
\end{case}

Since $G$ is not a $4$-regular graph, $G$ must contain some $5^{+}$-vertices. By the previous arguments, all the $5^{+}$-vertices have positive final charge, except the $5$-vertices in \autoref{S}, but in that case there is a vertex such that its final charge is positive. This leads to a contradiction, thus it completes the proof of \autoref{T345}. 
\setcounter{case}{0}
\end{proof}

\begin{proof}[Proof of \autoref{T345-Weak3}]
Let $G$ be a counterexample with $|V(G)|$ as small as possible. Then $G$ is connected.  Suppose that $G$ is $4$-regular. Then $G$ cannot be a GDP-tree, this contradicts \autoref{Brooks-Weak}. Therefore, $G$ is not $4$-regular. By \autoref{Weak-Reduce}, the minimum degree of $G$ is at least $4$, and there are no subgraphs isomorphic to the configurations in \autoref{F35} and \autoref{RC}. But this contradicts \autoref{T345}, so we complete the proof of \autoref{T345-Weak3}. 
\end{proof}

\begin{proof}[Proof of \autoref{T345-SFDT}]
Let $(G, H, f)$ be a counterexample to \autoref{T345-SFDT} with $|V(G)|$ as small as possible. Note that $(H, f)$ is minimal non-strictly $f$-degenerate. By \autoref{CRITICAL}\ref{M1}, $G$ is connected and $\delta(G) \geq 4$. Suppose that $G$ is $4$-regular. Then $G$ cannot be a GDP-tree, but this contradicts \autoref{CRITICAL}\ref{M2}. Therefore, $G$ is not $4$-regular. 

Suppose that $G$ contains a subgraph isomorphic to the configuration in \autoref{F35}. Let $B$ be the set of solid vertices in the configuration. Note the $G[B]$ is neither a complete graph $K_{6}$ nor a cycle, and has a vertex of degree at least three. This contradicts \autoref{CRITICAL}\ref{M2}. 

Suppose that $G$ contains a subgraph isomorphic to a configuration in \autoref{RC}. Let $B$ be the set of solid vertices in the configuration. Note that the sequence $a_{1}, a_{2}, \dots, a_{t}$ satisfies all the conditions of \autoref{NN}. It follows that $H$ has a strictly $f$-degenerate transversal, a contradiction. 

Now, $G$ has minimum degree at least four, none of the configurations in \autoref{F35} and \autoref{RC} will appear in $G$, and it is not $4$-regular, but this contradicts \autoref{T345}. This completes the proof of \autoref{T345-SFDT}. 
\end{proof}

\section{Planar graphs without intersecting 5-cycles}
In this section, we first prove \autoref{Intersecting}.

\begin{proof}[Proof of \autoref{Intersecting}]
Suppose that $G$ is a counterexample to \autoref{Intersecting} with $|V(G)|$ is minimum. If $C_{0}$ is a separating $3$-cycle, then $G - \ext(C_{0})$ is a smaller counterexample, a contradiction. So $C_{0}$ is a facial cycle, we may assume that $C_{0}$ bounds the outer face $D$. Similarly, if $C$ is a separating $3$-cycle in $G$, then $G - \ext(C)$ is a smaller counterexample, a contradiction. Then $G$ has no separating $3$-cycle. In other words, 
\begin{itemize}
\item there are no separating $3$-cycles; and
\item every vertex not in $V(C_{0})$ has degree at least four; and
\item there is no induced subgraph isomorphic to a configuration in \autoref{Kite}, \autoref{F35}, \autoref{fig:subfig:RC1-a}, \autoref{RC-1}, or \autoref{RC-2}.
\end{itemize}

For a vertex $v$, let $\diamond(v)$ denote the number of incident internal $4$-faces. A \emph{special} vertex $v$ is an internal 4-vertex with $\triangledown(v) = 3$, or $\triangledown(v) = 2$ and $\diamond(v) = 1$. Note that every $4$-vertex is incident with at most three $3$-faces. 

\begin{lemma}
\label{Chord1}
The boundary of every $4$-face has no chord. Consequently, if a $4$-face is adjacent to a $3$-face, then they are normally adjacent. 
\end{lemma}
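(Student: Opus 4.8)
The plan is to follow the template of property \ref{4FACECHORD} from \autoref{sec:2}, adjusting only the final contradiction to the present setting, where the outer face $C_0 = D$ is a triangle rather than a good $4^{-}$-cycle. Let $f = [w_1w_2w_3w_4]$ be a $4$-face, and suppose for contradiction that its boundary has a chord; by symmetry assume $w_1w_3 \in E(G)$. Since $f$ is a face, this chord must lie in the region on the other side of the bounding $4$-cycle, and there it subdivides that region into a part bounded by the $3$-cycle $[w_1w_2w_3]$ and a part bounded by $[w_1w_3w_4]$. Each of these $3$-cycles has the vertex $w_4$ (resp. $w_2$), hence the face $f$, strictly on one side, so, because $G$ has no separating $3$-cycle, each of them must bound a face. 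Tracing the faces incident with $w_2$ — namely $f$ and the face bounded by $[w_1w_2w_3]$ — one sees that $w_2$ has exactly the two neighbours $w_1,w_3$, so $d(w_2) = 2$; symmetrically $d(w_4) = 2$.

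Now I invoke the structural facts already recorded for the minimal counterexample. Since every vertex not on $C_0$ has degree at least four, both $w_2$ and $w_4$ lie on $C_0$. As $C_0$ is a triangle, a degree-$2$ vertex of $C_0$ is adjacent precisely to the other two vertices of $C_0$; applying this to $w_2$ forces $V(C_0) = \{w_1,w_2,w_3\}$, whence $w_4 \in \{w_1,w_2,w_3\}$, contradicting the distinctness of the four vertices of the $4$-face $f$. This proves the first assertion. For the consequence, suppose a $4$-face $f = [w_1w_2w_3w_4]$ is adjacent to a $3$-face $f' = [w_1w_2u]$ along the common edge $w_1w_2$. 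By the first part the boundary of $f$ is a chordless $4$-cycle on four distinct vertices; if $u = w_3$ then $w_1w_3 \in E(G)$ and if $u = w_4$ then $w_2w_4 \in E(G)$, either way yielding a chord of $f$, a contradiction. Hence $u \notin \{w_1,w_2,w_3,w_4\}$, so the boundaries of $f$ and $f'$ intersect in exactly the edge $w_1w_2$ together with its endpoints, i.e. in a copy of $K_2$, which is the definition of normal adjacency.

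The only step that requires genuine care is the planar-embedding bookkeeping in the first paragraph — verifying that the chord must lie outside $f$, and that each of the two $3$-cycles it creates is non-separating and hence bounds a face — but this is routine given the absence of separating $3$-cycles and is performed exactly as in the proof of \ref{4FACECHORD}. (A pedantic reader might object that a $4$-face could \emph{a priori} have a repeated vertex, or that $w_1 = w_3$; these degenerate situations force a cut vertex bordering a small separating structure and are excluded in the standard way by the no-separating-$3$-cycle property together with the minimum-degree condition, and in any case they fall among the configurations forbidden in the hypothesis of \autoref{Intersecting}.) So the crux is entirely in the first paragraph, and it is a short planarity argument rather than a combinatorial obstacle.
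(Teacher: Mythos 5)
Your proof is correct and follows essentially the same route as the paper's: the chord forces the triangle(s) through it to bound faces (no separating $3$-cycles), producing degree-$2$ vertices that must lie on the triangular outer cycle $C_0$, which is absurd. The only cosmetic difference is that the paper closes by exhibiting $[w_1w_3w_4]$ as a separating $3$-cycle (using that the internal vertex $w_4$ has degree at least four), while you apply the non-separating property to both triangles and contradict the distinctness of the face's boundary vertices — a harmless variant, since distinctness indeed follows from the presence of the chord.
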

\begin{proof}
Assume a $4$-cycle $u_{1}u_{2}u_{3}u_{4}$ bounds a $4$-face with $u_{1}u_{3} \in E(G)$. Since $u_{1}u_{2}u_{3}$ is a triangle, it bounds a $3$-face. Then $u_{2}$ is a $2$-vertex on $D$, and $u_{1}u_{3}u_{4}$ is a separating $3$-cycle in $G$, a contradiction. 
\end{proof}

\begin{lemma}
\label{Chord2}
The boundary of every internal $5$-face has no chord. Consequently, if an internal $5$-face is adjacent to a $3$-face, then they are normally adjacent. 
\end{lemma}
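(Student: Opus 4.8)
The plan is to follow the same template as the proof of \autoref{Chord1}: argue by contradiction and invoke the ``no separating $3$-cycle'' and ``internal vertices have degree $\ge 4$'' hypotheses. First I would observe that, since every internal vertex has degree at least four, the boundary closed walk of an internal $5$-face cannot repeat a vertex (a repeat would force a bridge on the boundary and hence an internal vertex of degree at most one), so the boundary is a genuine $5$-cycle, say $[u_1u_2u_3u_4u_5]$. Up to relabelling, the only possible chord is $u_1u_3$.

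Next, assuming $u_1u_3\in E(G)$, I would note that the chord $u_1u_3$ together with the two boundary edges $u_1u_2,u_2u_3$ of $f$ encloses a closed disc $R$ disjoint from $f$, whose boundary is the $3$-cycle $[u_1u_2u_3]$; the other side of this $3$-cycle contains $u_4$ and $u_5$. If the interior of $R$ contains a vertex of $G$, then $[u_1u_2u_3]$ is a separating $3$-cycle, contradicting the hypothesis. Otherwise $R$ is a single $3$-face bounded by $[u_1u_2u_3]$, so $u_2$ is incident only with this $3$-face and with $f$, giving $d_G(u_2)=2$; but $u_2\in V(f)$ and $f$ is internal, so $u_2\notin V(C_0)$ and hence $d_G(u_2)\ge 4$, a contradiction. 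This proves the first assertion. For the ``consequently'' part, let an internal $5$-face $f=[u_1u_2u_3u_4u_5]$ be adjacent to a $3$-face $f'$ along an edge, which we may take to be $u_1u_2$, so $f'=[u_1u_2w]$ and $u_1w,u_2w\in E(G)$. If $w\in\{u_3,u_4,u_5\}$, then one of $u_1w,u_2w$ is a chord of the $5$-cycle bounding $f$ (e.g.\ $w=u_3$ yields the chord $u_1u_3$, $w=u_4$ yields $u_1u_4$, and $w=u_5$ yields $u_2u_5$), contradicting the first part. Hence $w\notin\{u_3,u_4,u_5\}$, so the boundaries of $f$ and $f'$ meet exactly in the edge $u_1u_2\cong K_2$, i.e.\ $f$ and $f'$ are normally adjacent.

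I do not expect a real obstacle here; this lemma is routine and structurally identical to \autoref{Chord1}. The only point that needs a sentence of care is justifying that the boundary of an internal $5$-face is a $5$-cycle rather than a closed walk with a repeated vertex, and that follows at once from the minimum-degree condition on internal vertices. Everything else is the standard separating-triangle versus low-degree-vertex dichotomy already used in the $4$-face case.
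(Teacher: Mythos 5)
Your proof is correct and follows essentially the same route as the paper: the chord $u_1u_3$ creates a triangle $[u_1u_2u_3]$ which, by the absence of separating $3$-cycles, must bound a $3$-face, forcing the internal vertex $u_2$ to have degree $2$, contradicting the minimum-degree condition on internal vertices; the "normally adjacent" consequence then follows exactly as you say. Your extra remarks (that the boundary walk of an internal $5$-face is a genuine $5$-cycle, and the explicit case check for the third vertex of the adjacent $3$-face) are just more detailed versions of steps the paper leaves implicit.
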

\begin{proof}
Assume an internal $5$-cycle $u_{1}u_{2}u_{3}u_{4}u_{5}$ bounds a $5$-face with $u_{1}u_{3} \in E(G)$. Since $u_{1}u_{2}u_{3}$ is a triangle, it bounds a $3$-face. Then $u_{2}$ is an internal $2$-vertex, a contradiction. 
\end{proof}

\begin{lemma}\label{SSS}
Let $[x_{1}x_{2}x_{3}x_{4}x_{5}]$ be an internal $5$-face with $d(x_{i}) = 4$ for all $1 \leq i \leq 5$. If $x_{4}x_{5}$ is incident with a $3$-face $[x_{4}x_{5}w]$, then $w$ is on $C_{0}$ or $w$ is an internal $5^{+}$-vertex. 
\end{lemma}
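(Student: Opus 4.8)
The plan is to proceed by contradiction. Suppose $w\notin V(C_0)$ and $d_G(w)=4$; since every vertex off $C_0$ has degree at least $4$ and $[x_1x_2x_3x_4x_5]$ is an internal face, $x_1,\dots,x_5$ also lie off $C_0$ and all six of $x_1,\dots,x_5,w$ are internal $4$-vertices. I will then derive a contradiction by showing that $G[\{x_1,\dots,x_5,w\}]$ is an internal induced subgraph isomorphic to the configuration in \autoref{F35}, which cannot occur in the minimal counterexample $G$.

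First I would record the edges. Since $[x_1x_2x_3x_4x_5]$ is an internal $5$-face, \autoref{Chord2} shows this $5$-cycle is chordless, so the only edges among $x_1,\dots,x_5$ are the five cycle edges; in particular $x_1x_3,x_1x_4,x_2x_4,x_2x_5,x_3x_5\notin E(G)$. Because $[x_4x_5w]$ is a $3$-face we have $wx_4,wx_5\in E(G)$ and $w\neq x_4,x_5$; the chordlessness just noted also forces $w\notin\{x_1,x_2,x_3\}$ (for instance $w=x_1$ would make $x_1x_4$ an edge). So $x_1,\dots,x_5,w$ are six distinct internal vertices. The key point is to rule out the edges $wx_1,wx_2,wx_3$, and here I would use the hypothesis that $G$ has no intersecting $5$-cycles: if $wx_1\in E(G)$ then $[wx_1x_2x_3x_4]$ is a $5$-cycle through $x_1$ distinct from $[x_1x_2x_3x_4x_5]$; if $wx_2\in E(G)$ then $[wx_2x_1x_5x_4]$ is such a $5$-cycle; and if $wx_3\in E(G)$ then $[wx_3x_2x_1x_5]$ is such a $5$-cycle. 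In each case two distinct $5$-cycles share a vertex, a contradiction, so $w$ is adjacent to none of $x_1,x_2,x_3$.

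Consequently the edge set of $G[\{x_1,\dots,x_5,w\}]$ is exactly $\{x_1x_2,x_2x_3,x_3x_4,x_4x_5,x_5x_1,x_4w,x_5w\}$, that is, the $6$-cycle $x_1x_2x_3x_4wx_5$ together with the chord $x_4x_5$, with all six vertices of degree $4$ in $G$ and off $C_0$; this is an internal induced subgraph isomorphic to the configuration in \autoref{F35}, the desired contradiction. Hence $w\in V(C_0)$ or $w$ is an internal $5^{+}$-vertex. The only delicate step is the chord-elimination: one must check that each putative edge $wx_i$ with $i\le 3$ genuinely closes a $5$-cycle sharing a vertex with $[x_1x_2x_3x_4x_5]$ — this is precisely where the no-intersecting-$5$-cycles condition (rather than a separating-triangle or chord lemma) does the work, and it is the part worth writing out carefully; everything else is bookkeeping with \autoref{Chord2}.
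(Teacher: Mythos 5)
Your proof is correct and follows essentially the same route as the paper: show $w$ is distinct from $x_{1},\dots,x_{5}$ via \autoref{Chord2}, rule out the edges $wx_{1},wx_{2},wx_{3}$, and conclude that $\{x_{1},\dots,x_{5},w\}$ would induce an internal copy of the configuration in \autoref{F35}, contradicting the choice of the counterexample. The only (harmless) difference is that you exclude $wx_{1}$ and $wx_{3}$ by exhibiting intersecting $5$-cycles, whereas the paper excludes these two cases by noting that the resulting triangle would force $x_{5}$ (resp.\ $x_{4}$) to be an internal $3$-vertex; both arguments are valid.
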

\begin{proof}
Assume $w$ is an internal $4$-vertex. By \autoref{Chord2}, $x_{1}x_{2}x_{3}x_{4}x_{5}x_{1}$ is an induced $5$-cycle and $w \notin \{x_{1}, x_{2}, \dots, x_{5}\}$. If $w$ is adjacent to $x_{2}$, then $wx_{2}x_{1}x_{5}x_{4}w$ and $wx_{2}x_{3}x_{4}x_{5}w$ are two intersecting $5$-cycles, a contradiction. If $w$ is adjacent to $x_{1}$ or $x_{3}$, then $x_{5}$ or $x_{4}$ is an internal $3$-vertex, a contradiction. Then there is an induced subgraph isomorphic to the configuration in \autoref{F35}, contracting the assumption of the counterexample. 
\end{proof}

\begin{lemma}\label{L4}
\mbox{}
\begin{enumerate}[label = (\roman*)]
\item
\label{x.1}
Every special vertex should be as depicted in \autoref{special}. 

\item
\label{x.2}
$w_{1}$ and $w_{3}$ are nonadjacent in \autoref{special}. Symmetrically, $w_{2}$ and $w_{4}$ are nonadjacent in \autoref{fig:subfig:special-1}. 

\item
\label{x.3}
Each of $w_{1}w_{2}, w_{2}w_{3}, w_{3}w_{4}, ww_{4}$ and $ww_{1}$ in \autoref{fig:subfig:special-1} is incident with a $6^{+}$-face; each of $w_{1}w_{2}, w_{2}w_{3}$ and $ww_{1}$ in \autoref{fig:subfig:special-2} is also incident with a $6^{+}$-face.

\item
\label{x.4}
A $5$-face is not incident with any special vertex. 

\item
There are no adjacent special vertices. 

\item
\label{x.5}
Every special vertex is incident with at most one internal $(4, 4, 4)$-face. 
\end{enumerate}
\end{lemma}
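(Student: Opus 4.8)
The plan is to analyse a single special vertex $w$. I write $w_1,w_2,w_3,w_4$ for its neighbours in the rotational order of the fixed embedding (matching \autoref{special}) and $f_1,\dots,f_4$ for the incident faces, $f_i$ being bounded by $ww_i$ and $ww_{i+1}$ (indices modulo $4$). The inputs are the three properties recorded for the minimal counterexample to \autoref{Intersecting} --- no separating $3$-cycle, every vertex off $C_0$ of degree at least $4$, and no induced copy of the configurations in \autoref{Kite}, \autoref{F35}, \autoref{fig:subfig:RC1-a}, \autoref{RC-1}, \autoref{RC-2} --- together with the chordlessness Lemmas \autoref{Chord1}, \autoref{Chord2} and Lemma \autoref{SSS}. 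The observation that organises everything is that a special vertex $w$ carries a $5$-cycle $C$ through $w$: if $\triangledown(w)=3$, say with $3$-faces $f_1=[ww_1w_2]$, $f_2=[ww_2w_3]$, $f_3=[ww_3w_4]$, then $C=ww_1w_2w_3w_4w$; if $\triangledown(w)=2$, $\diamond(w)=1$ with internal $4$-face $[ww_3uw_4]$, then $C=ww_2w_3uw_4w$. By Lemmas \autoref{Chord1} and \autoref{Chord2} this is in each case a genuine simple $5$-cycle, and since $G$ has no two $5$-cycles through a common vertex, no $5$-cycle other than $C$ can meet $w$.

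For \ref{x.1} I would split on $\triangledown(w)$ and $\diamond(w)$. Three $3$-faces at $w$ are automatically consecutive, being the complement of one face in a cyclic quadruple, and Lemmas \autoref{Chord1}, \autoref{Chord2} with the no-separating-$3$-cycle property forbid any further edge among $w_1,\dots,w_4$; this gives \autoref{fig:subfig:special-1}. When $\triangledown(w)=2$ and $\diamond(w)=1$ the only thing to verify is that the two $3$-faces and the internal $4$-face are consecutive: in the opposite arrangement, with two opposite $3$-faces and the internal $4$-face $[ww_2uw_3]$ between them, the $5$-cycles $ww_1w_2uw_3w$ and $ww_4uw_2w_1w$ are distinct and both pass through $w$, which is impossible; this leaves \autoref{fig:subfig:special-2}. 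Part \ref{x.2} is a separating-cycle argument: if $w_1w_3\in E(G)$ then $ww_1w_3$ is a $3$-cycle with $w_2$ and the triangles $f_1,f_2$ strictly on one side and $w_4$ and the faces $f_3,f_4$ strictly on the other, hence separating --- a contradiction; the statement for $w_2w_4$ in \autoref{fig:subfig:special-1} is symmetric.

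Part \ref{x.3} carries the weight. For each edge $e$ in the two lists I would bound below the size of the face $X$ lying on $e$ other than the adjacent triangle. If $X$ is a $3$-face, a $4$-face, or an internal $5$-face, then, using Lemmas \autoref{Chord1}, \autoref{Chord2} and part \ref{x.2} to see that $X$ is chordless and that its vertices other than the two on $e$ are distinct from $w_1,\dots,w_4$, a short path through the one, two, or three new vertices of $X$ completes to a $5$-cycle that shares a vertex with $C$ and differs from $C$ --- a contradiction; the few residual degeneracies (the new $5$-cycle coinciding with $C$, or $\triangledown(w)$ being forced up to $4$, or $X$ being a $5$-face carrying a $5^{+}$-vertex) are disposed of by the definition of a special vertex, or by Lemma \autoref{SSS} together with one of \autoref{F35}, \autoref{fig:subfig:RC1-a}, \autoref{RC-1}, \autoref{RC-2}; finally $X$ cannot meet $C_0$ along such an edge, since $w$ and the relevant $w_i$ are then internal. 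Granting \ref{x.3}, the remaining parts are short. For \ref{x.4}: \ref{x.3} applied to $ww_1$ forces $f_4$ to be a $6^{+}$-face, and $w$'s other incident faces are triangles or internal $4$-faces, so no $5$-face is incident with $w$. For the claim that there are no adjacent special vertices: by \ref{x.3} each $w_i$ sees a $6^{+}$-face on the outer side of every triangle at $w$ incident with $w_i$, so counting faces around $w_i$ against the two templates of \ref{x.1} gives $\triangledown(w_i)\le 2$, with $\diamond(w_i)=0$ in case of equality, which rules out both types; the one neighbour possibly lying on an internal $4$-face at $w$ is excluded by comparing $C$ with the rim cycle of that neighbour. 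For \ref{x.5}: two incident internal $(4,4,4)$-faces must, since triangles at $w$ occur consecutively, include two consecutive triangles $[ww_1w_2]$, $[ww_2w_3]$ on which $w,w_1,w_2,w_3$ are all internal $4$-vertices with $w_1w_3\notin E(G)$ by \ref{x.2} --- that is, an induced internal copy of \autoref{Kite}.

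I expect \ref{x.3} to be the main obstacle: it is the only step that genuinely needs the whole list of hypotheses, and its proof is a case analysis in which, for each way a small face could sit on one of the boundary edges, one must write down the competing $5$-cycle and check that it is simple, distinct from $C$, and internal --- or, failing that, that the configuration on hand is one of the forbidden ones. Parts \ref{x.1}, \ref{x.2}, \ref{x.4} and the non-adjacency claim become routine once the $5$-cycle $C$ is in play, and \ref{x.5} is immediate from \ref{x.2}.
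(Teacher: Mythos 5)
Your proposal is correct and takes essentially the same approach as the paper: every item is extracted from the absence of intersecting $5$-cycles through the special vertex (your only deviation is proving the nonadjacency in (ii) via a separating $3$-cycle rather than a second $5$-cycle, which is equally valid here), and items (iv)--(vi) are argued just as in the paper's proof. One small slip worth fixing: in your argument for (i), the second competing $5$-cycle should be $ww_2uw_3w_4w$ (equivalently $ww_4w_3uw_2w$), not $ww_4uw_2w_1w$, since $w_4u$ need not be an edge.
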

\begin{proof}
If a special vertex is incident with three $3$-faces, then it must be as depicted in \autoref{fig:subfig:special-1}. If a special vertex is incident with exactly two $3$-faces and one $4$-face, then the two $3$-faces must be adjacent (see \autoref{fig:subfig:special-2}), for otherwise there are intersecting $5$-cycles. Since there are no intersecting $5$-cycles, $w_{1}$ and $w_{3}$ are nonadjacent in \autoref{special}. For the same reason, each of $w_{1}w_{2}, w_{2}w_{3}, w_{3}w_{4}, ww_{4}$ and $ww_{1}$ in \autoref{fig:subfig:special-1} is incident with a $6^{+}$-face; each of $w_{1}w_{2}, w_{2}w_{3}$ and $ww_{1}$ in \autoref{fig:subfig:special-2} is also incident with a $6^{+}$-face. Consequently, a $5$-face is not incident with a special vertex. In \autoref{fig:subfig:special-2}, the edge $xw_{4}$ cannot be incident with a $3$-face, thus $w_{4}$ cannot be a special vertex. According to the required faces in a cyclic order around a special vertex, none of $w_{1}, w_{2}, w_{3}, w_{4}$ can be a special vertex, so there are no adjacent special vertices. It follows from \ref{x.2} and the absence of configuration in \autoref{Kite} that every special vertex is incident with at most one internal $(4, 4, 4)$-face.      
\end{proof}

Since there are no intersecting $5$-cycles, it is easy to obtain the following result. 
\begin{lemma}\label{TRIANGULAR}
Every internal $d$-vertex is incident with at most $\frac{2d+1}{3}$ triangular faces and at most one $5$-face. 
\end{lemma}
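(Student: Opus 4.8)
The plan is to push the hypothesis to its strongest form: since no two $5$-cycles of $G$ share a vertex, \emph{every vertex of $G$ lies on at most one $5$-cycle}. Fix an internal vertex $v$, write $d = d(v) \ge 4$, let $v_1, \dots, v_d$ be its neighbours in cyclic order, and let $f_1, \dots, f_d$ be the incident faces, with $f_i$ incident to the edges $vv_i$ and $vv_{i+1}$ (subscripts mod $d$). Everything reduces to producing $5$-cycles through $v$ out of the local face pattern around $v$.

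For the assertion that $v$ is incident with at most one $5$-face, I would first note that a $5$-face incident with an internal vertex is bounded by a $5$-cycle: if its boundary walk revisited a vertex, then tracing it yields either a separating $3$-cycle or a vertex of degree at most two off $C_0$, exactly as in the proofs of \autoref{Chord1} and \autoref{Chord2}, both of which are already excluded. Hence, if $v$ were incident with two $5$-faces, their boundaries would be two distinct $5$-cycles through $v$, contradicting that $v$ lies on at most one $5$-cycle.

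For the triangular bound, the local engine is: if $f_{i-1}, f_i, f_{i+1}$ are three consecutive triangular faces at $v$, then each is, respectively, the triangle $[v v_{i-1} v_i]$, $[v v_i v_{i+1}]$, $[v v_{i+1} v_{i+2}]$ (a $3$-face incident with two given spokes at $v$ can be nothing else), and since $d \ge 4$ the vertices $v_{i-1}, v_i, v_{i+1}, v_{i+2}$ are pairwise distinct and distinct from $v$, so $v_{i-1} v_i v_{i+1} v_{i+2} v$ is a $5$-cycle through $v$. Therefore $v$ is incident with at most one maximal run of three consecutive triangular faces, and with none of length $\ge 4$ (a run of length $4$ already produces two distinct $5$-cycles through $v$, distinguished by their edge sets). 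Now put $t = \triangledown(v)$. Because $d \ge 4$, the $d$ faces at $v$ are not all triangular, so the triangular ones split into $m$ maximal runs, each followed cyclically by a distinct non-triangular face, whence $m \le d - t$. Since every run has length at most $3$ and at most one has length exactly $3$, we obtain $t \le 3 + 2(m-1) = 2m + 1 \le 2(d - t) + 1$, i.e.\ $t \le \frac{2d+1}{3}$. (If no run has length $3$, then already $t \le 2m \le 2(d-t)$.)

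I do not expect a real obstacle: the content of the lemma is just the "at most one $5$-cycle per vertex'' consequence of the hypothesis, combined with the observation that a short run of triangular faces at $v$ manufactures a $5$-cycle through $v$. The only points needing a line of care are the degenerate boundary walks of $5$-faces (handled via separating triangles or low-degree vertices, as in \autoref{Chord1}--\autoref{Chord2}) and the verification that the $5$-cycles coming from overlapping or long runs of triangular faces are genuinely distinct, which one sees by exhibiting an edge of one that is not an edge of the other.
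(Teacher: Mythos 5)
Your proof is correct and follows the route the paper intends: \autoref{TRIANGULAR} is stated there without proof (``Since there are no intersecting $5$-cycles, it is easy to obtain the following result''), and the expected argument is exactly yours — each vertex lies on at most one $5$-cycle, a $5$-face at an internal vertex is bounded by a $5$-cycle, and three consecutive $3$-faces at $v$ create a $5$-cycle through $v$, so runs of triangular faces have length at most $3$ with at most one run of length exactly $3$, yielding $\triangledown(v)\le 2\bigl(d-\triangledown(v)\bigr)+1$, i.e.\ $\triangledown(v)\le\frac{2d+1}{3}$. The two points you flag (distinctness of the produced $5$-cycles, and ruling out degenerate $5$-face boundaries via the minimum-degree and no-separating-triangle properties of the minimal counterexample) are indeed the only details to make explicit, and both go through as you describe.
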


\begin{lemma}\label{BIGFACE}
If an internal vertex $v$ is incident with a $3$-face, then $v$ is incident with at least one $6^{+}$-face. 
\end{lemma}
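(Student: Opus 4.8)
The plan is to argue by contradiction. Suppose $v$ is an internal vertex that is incident with a $3$-face but with no $6^{+}$-face, so that every face incident with $v$ has length $3$, $4$ or $5$. Put $d=d(v)$; since $v$ is internal, $d\geq 4$. Let $u_{1},\dots,u_{d}$ be the neighbours of $v$ in cyclic order and let $f_{i}$ be the face bounded by $vu_{i}$ and $vu_{i+1}$ (subscripts read mod $d$). I will exhibit two distinct $5$-cycles of $G$, each passing through $v$; since two such cycles share the vertex $v$ they are intersecting $5$-cycles, contradicting the hypothesis on $G$ and finishing the proof.

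The three building blocks are the following. First, if $f_{i}=[vu_{i}u_{i+1}]$ is a $3$-face and the next face $f_{i+1}=[vu_{i+1}xu_{i+2}]$ is a $4$-face, then $vu_{i}u_{i+1}xu_{i+2}v$ is a $5$-cycle through $v$ (and the mirror statement holds for a $4$-face immediately preceding a $3$-face at $v$). Second, a $5$-face incident with $v$ has a cycle for its boundary, which is then a $5$-cycle through $v$ (immediate from \autoref{Chord2} when the face is internal, and, since $G$ may be taken $2$-connected, in general). Third, three consecutive $3$-faces $f_{i-1},f_{i},f_{i+1}$ at $v$ give the $5$-cycle $[vu_{i-1}u_{i}u_{i+1}u_{i+2}]$. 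In each case one must know that the vertices are distinct; this uses \autoref{Chord1} and \autoref{Chord2} (no chords on short faces), together with the fact that two faces sharing an edge meet in exactly the two endpoints of that edge, so that the ``interior'' vertex of an incident $4$-face and the two interior vertices of an incident $5$-face are never neighbours of $v$.

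By \autoref{TRIANGULAR}, $v$ is incident with at most $\tfrac{2d+1}{3}<d$ triangular faces and with at most one $5$-face; hence the cyclic word $f_{1},\dots,f_{d}$ contains at least one $3$-face and at least one non-$3$-face, and therefore it has at least two ``switches'' between a maximal block of $3$-faces and a neighbouring non-$3$-face. Each switch falls under the first building block (when the non-$3$-face there is a $4$-face) or exposes an incident $5$-face as in the second, and thus produces a $5$-cycle through $v$; in the single borderline arrangement where a block of three $3$-faces is flanked by the lone $5$-face, I invoke the third building block to still obtain a second $5$-cycle. What remains is to check that two of the $5$-cycles so produced are \emph{distinct}. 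When the two switches are not cyclically adjacent this is clear, since the vertex sets of the two cycles then occupy far-apart arcs of $N(v)$. Otherwise one is reduced to a single block of $3$-faces $f_{1},\dots,f_{k}$ with $1\le k\le\min\{3,d-1\}$ followed by a single block of non-$3$-faces, and then comparing the cycle obtained at the switch $f_{k}\mid f_{k+1}$ with the one obtained at $f_{d}\mid f_{1}$ separates their vertex sets because $u_{i+2}\ne u_{i-1}$ for $d\geq 4$ and because $4$-face and $5$-face interior vertices are not neighbours of $v$. This finite distinctness bookkeeping for the small-degree arrangements is the only genuine obstacle; I note that the degree-$4$ sub-cases with $\triangledown(v)=3$ (the special vertices) are in any event already covered by \autoref{L4}. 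Once two intersecting $5$-cycles are in hand, the lemma follows.
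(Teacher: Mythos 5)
Your proof is correct and takes essentially the same route as the paper's: both argue that if every non-triangular face at $v$ had length $4$ or $5$, then a $3$-face paired with an adjacent $4$-face, the boundary of an incident $5$-face, or three consecutive $3$-faces at $v$ would produce two distinct $5$-cycles through $v$, contradicting the absence of intersecting $5$-cycles. The paper organizes this by taking a maximal block of $3$-faces flanked by two $4^{+}$-faces (plus the case of a single $4^{+}$-face) and dismisses the distinctness verification as ``easy to check,'' which your switch-based bookkeeping simply makes explicit.
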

\begin{proof}
Since $v$ has degree at least four, it is incident with at least one $4^{+}$-face. Assume $v$ is incident with exactly one $4^{+}$-face. Then the incident $3$-faces form a $5$-cycle, which implies that the only $4^{+}$-face must be a $6^{+}$-face, for otherwise there are intersecting $5$-cycles. So we may assume that $v$ is incident with at least two $4^{+}$-faces. Let $f_{1}, f_{2}, \dots, f_{k}$ be consecutive faces in a cyclic order. Suppose that $f_{1}, f_{t}$ are $4^{+}$-faces, and $f_{2}, \dots, f_{t-1}$ are $3$-faces. It is easy to check that one of $f_{1}$ and $f_{t}$ is a $6^{+}$-face.  
\end{proof}

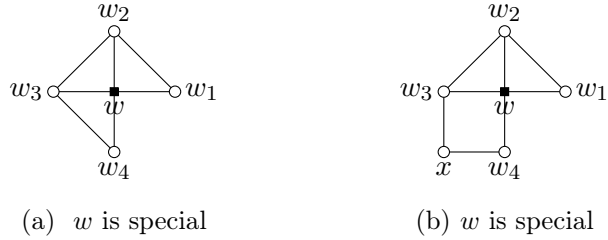
\begin{figure}%
\centering
\subcaptionbox{\label{fig:subfig:special-1} $w$ is special}{\begin{tikzpicture}
\def\s{0.8}
\coordinate (O) at (0, 0);
\coordinate (v1) at (\s, 0);
\coordinate (v2) at (0, \s);
\coordinate (v3) at (-\s, 0);
\coordinate (v4) at (0, -\s);
\draw (v1)node[right]{$w_{1}$}--(v2)node[above]{$w_{2}$}--(v3)node[left]{$w_{3}$}--(v4)node[below]{$w_{4}$}--(O)node[below]{$w$}--cycle;
\draw (O)--(v2);
\draw (O)--(v3);
\node[rectangle, inner sep = 1.5, fill, draw] () at (O) {};
\node[circle, inner sep = 1.5, fill = white, draw] () at (v1) {};
\node[circle, inner sep = 1.5, fill = white, draw] () at (v2) {};
\node[circle, inner sep = 1.5, fill = white, draw] () at (v3) {};
\node[circle, inner sep = 1.5, fill = white, draw] () at (v4) {};
\end{tikzpicture}}\hspace{2cm}
\subcaptionbox{\label{fig:subfig:special-2}$w$ is special}{\begin{tikzpicture}
\def\s{0.8}
\coordinate (O) at (0, 0);
\coordinate (v1) at (\s, 0);
\coordinate (v2) at (0, \s);
\coordinate (v3) at (-\s, 0);
\coordinate (v4) at (0, -\s);
\coordinate (A) at (-\s, -\s);
\draw (v1)node[right]{$w_{1}$}--(v2)node[above]{$w_{2}$}--(v3)node[left]{$w_{3}$}--(A)node[below]{$x$}--(v4)node[below]{$w_{4}$}--(O)node[below]{$w$}--cycle;
\draw (O)--(v2);
\draw (O)--(v3);
\node[rectangle, inner sep = 1.5, fill, draw] () at (O) {};
\node[circle, inner sep = 1.5, fill = white, draw] () at (v1) {};
\node[circle, inner sep = 1.5, fill = white, draw] () at (v2) {};
\node[circle, inner sep = 1.5, fill = white, draw] () at (v3) {};
\node[circle, inner sep = 1.5, fill = white, draw] () at (v4) {};
\node[circle, inner sep = 1.5, fill = white, draw] () at (A) {};
\end{tikzpicture}}
\caption{Two types of special vertex.}
\label{special}
\end{figure}

Next, we assign each vertex $v$ an initial charge $\mu(v) = 2d(v) - 6$, assign each bounded face $f$ an initial charge $\mu(f) = d(f) - 6$, and assign the outer face $D$ an initial charge $\mu(D) = d(D) + 6$. By Euler's formula, we can rewrite it in the following form, 
\[
\sum_{v \in V(G)} (2d(v) - 6) + \sum_{f \in F(G)\setminus \{D\}}(d(f) - 6) + (\mu(D) + 6) = 0. 
\]
We design some discharging rules to redistribute charges, such that after the discharging process, a new charge function $\mu'$ is produced, and $\mu'(x) \geq 0$ for all $x \in V(G) \cup F(G)$, in particular $\mu'(D) > 0$, which leads to a contradiction. 

The followings are the discharging rules.
\begin{enumerate}[label = \textbf{R\arabic*.}, ref = R\arabic*]
\item
\label{S1}
If $f$ is an internal $(4, 4, 4)$-face or an internal $3$-face with no incident special vertex, then $f$ receives $1$ from each incident vertex. 
\item
\label{S2}
Let $f = [uvw]$ be an internal 3-face, and let $v$ be a special vertex and $d(w) \geq 5$ ($u$ cannot be special). Then 
\begin{align*}
\tau(v \rightarrow f) = \alpha = 
&\begin{cases}
\frac{2}{3}, & \text{if $\triangledown(v) = 3$ and no internal $(4, 4, 4)$-face is incident with $v$;}\\[0.3cm]
\frac{3}{4}, & \text{if $\triangledown(v) = 2$ and no internal $(4, 4, 4)$-face is incident with $v$;}\\[0.3cm]
\frac{1}{2}, & \text{otherwise}.
\end{cases}\\[0.3cm]
\tau(u \rightarrow f) = 
&\begin{cases}
1, & \text{if $d(u) = 4$;}\\[0.3cm]
\frac{3 - \alpha}{2}, & \text{otherwise}.
\end{cases}\\[0.3cm]
\tau(w \rightarrow f) = 
&\begin{cases}
2 - \alpha, & \text{if $d(u) = 4$;}\\[0.3cm]
\frac{3 - \alpha}{2}, & \text{otherwise}.
\end{cases}
\end{align*}

\item
\label{S3}
Every internal 4-face receives $\frac{1}{2}$ from each incident vertex. 
\item
\label{S4}
Let $f$ be an internal 5-face. 
\begin{enumerate}
\item
\label{S4-a}
If $f$ is incident with a $5^{+}$-vertex, then $f$ receives $\frac{1}{n(f)}$ from each incident $5^{+}$-vertex, where $n(f)$ is the number of $5^{+}$-vertices incident with $f$. 
\item
\label{S4-b}
If $f$ is a $(4, 4, 4, 4, 4)$-face incident with a vertex $v$ and $\triangledown(v) \leq 1$, then $f$ receives $\frac{1}{2}$ from $v$. 
\item
\label{S4-c}
If $f$ is a $(4, 4, 4, 4, 4)$-face incident with an edge $uv$ and $[uvw]$ is an internal 3-face, then $f$ receives $\frac{1}{4}$ from $w$ (By \autoref{SSS}, $w$ is an internal $5^{+}$-vertex). 
\item
\label{S4-d}
If $f$ is a $(4, 4, 4, 4, 4)$-face incident with an edge $uv$ and $[uvw]$ is a 3-face with $w \in V(C_{0})$, then $f$ receives $\frac{1}{4}$ from the outer face $D$. 
\end{enumerate}
\item
\label{S5}
If $[uvw]$ is a $3$-face having exactly one internal vertex $w$, then $w$ sends $1$ to $f$. 
\item
\label{S6}
If $[uvw]$ is a $3$-face having exactly two internal vertices $u$ and $v$, then each of $u$ and $v$ sends $\frac{1}{2}$ to $f$. 
\item
\label{S7}
Every vertex $v$ on $C_{0}$ sends its initial charge $\mu(v)$ to $D$. 
\item
\label{S8}
If $f$ is a bounded $5^{-}$-face having a common vertex with $D$, then $D$ sends $2$ to $f$. 
\end{enumerate}

By \ref{S1} and \ref{S2}, every internal $5^{+}$-vertex sends at most $\frac{3}{2}$ to each incident 3-face. If an internal $5^{+}$-vertex $w$ sends $\frac{1}{4}$ via a 3-face $g_{1}$ to an internal $(4, 4, 4, 4, 4)$-face $g_{2}$, then it follows from \autoref{L4}\ref{x.3} that no vertex on $g_{1}$ is special, and $w$ sends $1$ to $g_{1}$, and then $\tau(w \rightarrow g_{1}) + \tau(w \rightarrow g_{2}) = \frac{5}{4}$. By \ref{S4}, every internal $5^{+}$-vertex sends at most $1$ to incident internal $5$-face.

We claim that $\mu'(x) \geq 0$ for all $x \in V(G) \cup F(G)$. If $f$ is a $3$-face with $|b(f) \cap V(C_{0})| = 2$, then $\mu'(f) = \mu(f) + 2 + 1 = 0$ by \ref{S7} and \ref{S5}. If $f$ is a $3$-face with $|b(f) \cap V(C_{0})| = 1$, then $\mu'(f) = \mu(f) + 2 + 2 \times \frac{1}{2} = 0$ by \ref{S7} and \ref{S6}. It is impossible that $f \neq D$ and $|b(f) \cap V(C_{0})| = 3$. If $f$ is a $4^{+}$-face with $b(f) \cap V(C_{0}) \neq \emptyset$, then $\mu'(f) \geq \mu(f) + 2 \geq 0$ by \ref{S7}. If $f$ is an internal $3$-face, then $\mu'(f) = \mu(f) + 3 = 0$ by \ref{S1} and \ref{S2}. If $f$ is an internal $4$-face, then $\mu'(f) = \mu(f) + 4 \times \frac{1}{2} = 0$ by \ref{S3}. If $f$ is an internal $6^{+}$-face, then $\mu'(f) = \mu(f) = d(f) - 6 \geq 0$. 

Let $f$ be an internal 5-face. If $f$ is incident with a $5^{+}$-vertex, then $\mu'(f) = \mu(f) + n(f) \times \frac{1}{n(f)} = 0$ by \ref{S4-a}. So we may assume that $f$ is an internal $(4, 4, 4, 4, 4)$-face $[v_{1}v_{2}v_{3}v_{4}v_{5}]$. By \autoref{L4}\ref{x.4}, $f$ is not incident with any special vertex. Then every vertex $v_{i}$ on $f$ has $\triangledown(v_{i}) \leq 2$. If there are two incident vertices $v_{i}$ and $v_{j}$ such that $\triangledown(v_{i}) \leq 1$ and $\triangledown(v_{j}) \leq 1$, then $\mu'(f) \geq \mu(f) + 2 \times \frac{1}{2} = 0$ by \ref{S4-b}. If $\triangledown(v_{i}) = 2$ for all $1 \leq i \leq 5$, then $f$ is adjacent to at least four 3-faces, implying that $\mu'(f) \geq \mu(f) + 4 \times \frac{1}{4} = 0$ by \ref{S4-c} and \ref{S4-d}. Without loss of generality, we can assume that $\triangledown(v_{1}) \leq 1$ and $\triangledown(v_{i}) = 2$ for all $2 \leq i \leq 5$. In this case, $f$ is adjacent to at least two 3-faces, thus $\mu'(f) \geq \mu(f) + \frac{1}{2} + 2 \times \frac{1}{4} = 0$ by \ref{S4}. 

Let $v$ be an internal 4-vertex. Recall that every $4$-vertex is incident with at most three $3$-faces. 
\begin{itemize}
\item If $\triangledown(v) = 0$, then $v$ sends at most $\frac{1}{2}$ to each incident face, and then $\mu'(v) \geq 2 - 4 \times \frac{1}{2} = 0$. 

\item If $\triangledown(v) = 1$, then it follows from \autoref{BIGFACE} that $v$ is incident with a $6^{+}$-face, and then $\mu'(v) \geq 2 - 1 - 2 \times \frac{1}{2} = 0$. 

\item Assume $\triangledown(v) = 2$ and $\diamond(v) = 1$. Then $v$ is the special vertex as depicted in \autoref{fig:subfig:special-2}. It follows from \autoref{L4}\ref{x.3} that each of the three incident $4^{-}$-faces has at most one common vertex with $D$. If $v$ is not incident with an internal $(4, 4, 4)$-face, then it sends at most $\frac{3}{4}$ to each incident $3$-face and $\frac{1}{2}$ to the incident internal $4$-face, implying that $\mu'(v) \geq 2 - 2 \times \frac{3}{4} - \frac{1}{2} = 0$. If $v$ is incident with an internal $(4, 4, 4)$-face, then by \autoref{L4}\ref{x.5}, the other $3$-face is incident with an internal $5^{+}$-vertex or a vertex on $C_{0}$, implying $\mu'(v) = 2 - 1 - \frac{1}{2} - \frac{1}{2} = 0$. 

\item If $\triangledown(v) = 2$ and $\diamond(v) = 0$, then $v$ only sends charge to incident 3-faces, and then $\mu'(v) \geq 2 - 2 \times 1 = 0$. 

\item It is impossible that $\triangledown(v) = \diamond(v) = 2$. 

\item Assume $\triangledown(v) = 3$. Then $v$ is the special vertex as depicted in \autoref{fig:subfig:special-1}. It follows from \autoref{L4}\ref{x.3} that each of the three incident $3$-faces has at most one common vertex with $D$. If $v$ is not incident with an internal $(4, 4, 4)$-face, then $\mu'(v) \geq 2 - 3 \times \frac{2}{3} = 0$. If $v$ is incident with an internal $(4, 4, 4)$-face, then by \autoref{L4}\ref{x.5}, each of the other two $3$-face is incident with a boundary vertex or an internal $5^{+}$-vertex, implying $\mu'(v) \geq 2 - 1 - 2 \times \frac{1}{2} = 0$. 
\end{itemize}

Let $v$ be an internal 6-vertex. By \autoref{TRIANGULAR}, $v$ is incident with at most four $3$-faces. Note that $v$ sends at most $\frac{3}{2}$ to each incident $3$-face. If $\triangledown(v) \leq 2$, then $\mu'(v) \geq 6 - 2 \times \frac{3}{2} - 1 - 3 \times \frac{1}{2} > 0$. If $\triangledown(v) = 3$, then $v$ is incident with a $6^{+}$-face, and then $\mu'(v) \geq 6 - 3 \times \frac{3}{2} - 1 - \frac{1}{2} = 0$. If $\triangledown(v) = 4$ and $v$ is not incident with a 5-face, then $v$ is not incident with a $4$-face, implying $\mu'(v) \geq 6 - 4 \times \frac{3}{2} = 0$. If $\triangledown(v) = 4$ and $v$ is incident with a 5-face, then by \autoref{L4}\ref{x.3}, $v$ is not adjacent to a special vertex, implying $\mu'(v) \geq 6 - 4 \times (1 + \frac{1}{4}) - 1 = 0$. 

Let $v$ be an internal 7-vertex. By \autoref{TRIANGULAR}, $v$ is incident with at most five $3$-faces. If $\triangledown(v) = 5$, then $v$ is incident with two $6^{+}$-faces, and $\mu'(v) \geq 8 - 5 \times \frac{3}{2} > 0$. If $\triangledown(v) \leq 4$, then $\mu'(v) \geq 8 - 4 \times \frac{3}{2} - 1 - 2 \times \frac{1}{2} = 0$. 

Let $v$ be an internal $d$-vertex with $d \geq 8$. If $d= 8$, then $\triangledown(v) \leq 5$ and $\mu'(v) \geq 10 - 5 \times \frac{3}{2} - 1 - 2 \times \frac{1}{2} > 0$. If $d \geq 9$, then 
\[
\mu'(v) \geq 2d - 6 - \frac{2d+1}{3} \times \frac{3}{2} - 1 - \left(d - \frac{2d+1}{3} - 1\right) \times \frac{1}{2} = \frac{5d - 41}{6} > 0. 
\]

\textsl{In the following, assume that $v$ is an internal 5-vertex with neighbors $v_{1}, v_{2}, v_{3}, v_{4}, v_{5}$ in a cyclic order, and $f_{i}$ is the face incident with $vv_{i}$ and $vv_{i+1}$, where the subscripts are taken module $5$.} 

By \autoref{TRIANGULAR}, $\triangledown(v) \leq 3$. If $\triangledown(v) = 0$, then $v$ sends $\frac{1}{2}$ to each incident internal 4-face and sends at most $1$ to incident internal 5-face, and then $\mu'(v) \geq 4 - 1 - 4 \times \frac{1}{2} = 1$. If $\triangledown(v) = 1$, then it follows from \autoref{BIGFACE} that $v$ is incident with a $6^{+}$-face, and $\mu'(v) \geq 4 - \frac{3}{2} - 1 - 2 \times \frac{1}{2} > 0$. 

Assume $\triangledown(v) = 2$, $f_{1}$ and $f_{2}$ are two adjacent 3-faces. Since there are no intersecting 5-cycles, at least one of $f_{3}$ and $f_{5}$ is a $6^{+}$-face, at most one of $v_{1}v_{2}$ and $v_{2}v_{3}$ is incident with an internal $(4, 4, 4, 4, 4)$-face. By \autoref{L4}\ref{x.3}, neither $v_{1}$ nor $v_{3}$ is a special vertex. If $v_{2}$ is not a special vertex, then $\mu'(v) \geq 4 - 2 \times 1 - \frac{1}{2} - 1 - \frac{1}{4} > 0$. If $v_{2}$ is a special vertex, then it follows from \autoref{L4} that $f_{3}$ and $f_{5}$ are $6^{+}$-faces, and $v_{2}$ is not incident with an internal $(4, 4, 4, 4, 4)$-face, thus $\mu'(v) \geq 4 - 2 \times \frac{3}{2} - 1 = 0$. 

Assume $\triangledown(v) = 2$, $f_{1}$ and $f_{3}$ are two nonadjacent 3-faces. Since there are no intersecting 5-cycles, at least two of $f_{2}, f_{4}$ and $f_{5}$ are $6^{+}$-faces. This implies that $\mu'(v) \geq 4 - 2 \times \frac{3}{2} - 1 = 0$. 

Assume $\triangledown(v) = 3$,  and $f_{1}, f_{2}, f_{3}$ are 3-faces. Since there are no intersecting 5-cycles, none of $v_{1}, v_{2}, v_{3}$ and $v_{4}$ is a special vertex, both $f_{4}$ and $f_{5}$ are $6^{+}$-faces. For the same reason, none of $v_{1}v_{2}, v_{2}v_{3}$ and $v_{3}v_{4}$ is incident with an internal $(4, 4, 4, 4, 4)$-face. Then $\mu'(v) \geq 4 - 3 \times 1 = 1$.

\textsl{In the following, we can assume that $\triangledown(v) = 3$,  and $f_{1}, f_{2}, f_{4}$ are 3-faces.} Since there are no intersecting 5-cycles, we may further assume that $d(f_{3}) \geq 5$ and $d(f_{5}) \geq 6$. Moreover, neither $v_{1}$ nor $v_{3}$ is a special vertex; at most one of $v_{1}v_{2}$ and $v_{2}v_{3}$ is incident with an internal $(4, 4, 4, 4, 4)$-face.

Assume $f_{3}$ is a 5-face $[vv_{3}uwv_{4}]$. It follows from \autoref{L4}\ref{x.3} and \ref{x.4} that $v$ is not adjacent to a special vertex. Since there are no intersecting 5-cycles, neither $v_{2}v_{3}$ nor $v_{4}v_{5}$ is incident with an internal $(4, 4, 4, 4, 4)$-face. If $v_{1}v_{2}$ is not incident with an internal $(4, 4, 4, 4, 4)$-face, then $\mu'(v) \geq 4 - 3 \times 1 - 1 = 0$. Assume $v_{1}v_{2}$ is incident with a $(4, 4, 4, 4, 4)$-face $f'$. Then $v_{3}$ is on $C_{0}$ or an internal $5^{+}$-vertex, otherwise $f_{1}, f_{2}$ and $f'$ form an induced subgraph isomorphic to the configuration in \autoref{fig:subfig:RC1-a}. In this situation, $f_{3}$ is incident with two internal $5^{+}$-vertices or it has a common vertex with $D$, thus $v$ sends at most $\frac{1}{2}$ to $f_{3}$, and $\mu'(v) \geq 4 - 2 \times 1 - \frac{1}{2} - (1 + \frac{1}{4}) > 0$.

Assume that $f_{3}$ is a $6^{+}$-face. If $v$ is incident with a 3-face having a common vertex with $D$, then $\mu'(v) \geq 4 - 1 - 2 \times \frac{3}{2} = 0$. Then we may assume that the three incident 3-faces are all internal faces. If $v_{2}$ is not a special vertex, then $\mu'(v) \geq 4 - 1 - (1 + \frac{1}{4}) - \frac{3}{2} > 0$. So we may assume that $v_{2}$ is a special vertex. It is obvious that $v_{2}$ is not incident with an internal $(4, 4, 4, 4, 4)$-face. If $\triangledown(v_{2}) = 2$, then $v_{2}$ is not incident with an internal $(4, 4, 4)$-face, implying that $\tau(v_{2} \rightarrow f_{1}) = \tau(v_{2} \rightarrow f_{2}) = \frac{3}{4}$ and $\mu'(v) \geq 4 - 2 \times (2 - \frac{3}{4}) - \frac{3}{2} = 0$. So we may assume that $v_{2}$ is a special vertex with $\triangledown(v_{2}) = 3$ and $v_{1}v_{2}$ is incident with another $3$-face $[v_{1}v_{2}x]$. By \autoref{L4}\ref{x.3}, $v_{2}$ is incident with a $6^{+}$-face. Now, $v$ is contained in a $5$-cycle $[vv_{1}xv_{2}v_{3}]$. Since there are no intersecting $5$-cycles, neither $vv_{4}$ nor $vv_{5}$ is contained in a $5$-cycle.

Suppose $\tau(v_{2} \rightarrow f_{1}) = \tau(v_{2} \rightarrow f_{2}) = \frac{2}{3}$. Recall that $v$ is contained in a 5-cycle $[v_{1}vv_{3}v_{2}x]$. If neither $v_{4}$ nor $v_{5}$ is a special vertex, then $\mu'(v) \geq 4 - 2 \times (2 - \frac{2}{3}) - (1 + \frac{1}{4}) > 0$. Then $v_{4}$ is a special vertex with $\triangledown(v_{4}) = 2$, for otherwise $v$ is contained in two intersecting 5-cycles. In this situation, if $\tau(v_{4} \rightarrow f_{4}) = \frac{3}{4}$, then $\tau(v \rightarrow f_{4}) \leq \frac{5}{4}$ and $\mu'(v) \geq 4 - 2 \times (2 - \frac{2}{3}) - \frac{5}{4} > 0$. Thus, $\tau(v_{4} \rightarrow f_{4}) = \frac{1}{2}$ and $v_{4}$ is incident with an internal $(4, 4, 4)$-face $[v_{4}v_{5}y]$. Note that $v_{1}$ and $v_{3}$ are nonadjacent, for otherwise $v_{1}vv_{3}$ is a separating $3$-cycle. Since there are no intersecting $5$-cycles, $y$ has exactly two neighbors in $\{v, v_{1}, v_{2}, \dots, v_{5}\}$. If $d(v_{1}) = d(v_{3}) = 4$, then $\{v, v_{1}, v_{2}, v_{3}, v_{4}, v_{5}, y\}$ induces a subgraph isomorphic to the configuration in \autoref{fig:subfig:RC-2-b}, a contradiction. Then $v_{1}$ or $v_{3}$ is an internal $5^{+}$-vertex, $\tau(v \rightarrow f_{1}) + \tau(v \rightarrow f_{2}) \leq (2 - \frac{2}{3}) + \frac{3 - \frac{2}{3}}{2} = \frac{5}{2}$, implying $\mu'(v) \geq 4 - \frac{5}{2} - \frac{3}{2} = 0$.

Suppose $\tau(v_{2} \rightarrow f_{1}) = \tau(v_{2} \rightarrow f_{2}) = \frac{1}{2}$. By \ref{S2}, $[v_{1}v_{2}x]$ is an internal $(4, 4, 4)$-face and $\tau(v \rightarrow f_{1}) = \frac{3}{2}$. Since there is no induced subgraph isomorphic to the configuration in \autoref{RC-1}, $v_{3}$ must be an internal $5^{+}$-vertex, thus $\tau(v \rightarrow f_{2}) = \frac{5}{4}$. If neither $v_{4}$ nor $v_{5}$ is a special vertex, then $\tau(v \rightarrow f_{4}) = 1$ and $\mu'(v) \geq 4 - \frac{3}{2} - \frac{5}{4} - (1 + \frac{1}{4}) = 0$. Without loss of generality, we may assume that $v_{4}$ is a special vertex. It follows that $\triangledown(v_{4}) = 2$, for otherwise $v$ is contained in two different 5-cycles. If $v_{4}$ is incident with an internal $(4, 4, 4)$-face, then there is an induced subgraph isomorphic to the configuration in \autoref{fig:subfig:RC-2-a}, a contradiction. Thus, $v_{4}$ is not incident with an internal $(4, 4, 4)$-face and $\tau(v_{4} \rightarrow f_{4}) = \frac{3}{4}$, this implies that $\tau(v \rightarrow f_{4}) \leq \frac{5}{4}$ and $\mu'(v) \geq 4 - \frac{3}{2} - \frac{5}{4} - \frac{5}{4} = 0$. 

By \ref{S7}, every vertex in $V(C_{0})$ has final charge zero. Finally, consider the final charge of the outer face $D$. Let $g = [uvxyz]$ be an internal $5$-face, and $[uvw]$ is a $3$-face with $w \in V(C_{0})$. Since there are no intersecting $5$-cycles, either $wu$ or $wv$ is incident with a $6^{+}$-face. Without loss of generality, let $wu$ be incident with a $6^{+}$-face $h$. By the discharging rules, $D$ does not sends charges to $h$, but it sends $\frac{1}{4}$ to $g$ via the 3-face $[uvw]$. We can regard the charge $\frac{1}{4}$ sent via $[uvw]$ is through the $6^{+}$-face $h$. Then $D$ sends at most $2 \times \frac{1}{4} = \frac{1}{2}$ through the $6^{+}$-face $h$. Then $D$ sends at most $2$ to/via each face having a common vertex with $D$. Note that there are $\sum_{v\, \in\, V(D)} \big(d(v) - 2\big)$ faces having a common vertex with $D$. It follows that
\[
\mu'(D) \geq \mu(D) + \sum_{v\, \in\, V(D)} \Big(2d(v) - 6\Big) - 2\left(\sum_{v\, \in\, V(D)} \Big(d(v) - 2\Big)\right) = \mu(D) - 2d(D) = 6 - d(D) = 3.
\]
This completes the proof.
\end{proof}

We can prove the following result which imply \autoref{Intersecting-AT}. The proof of \autoref{Intersecting-AT'} is analogous to that of \autoref{PAIRWISE3456-AT'}, we leave it to the reader. The orientations for the related configurations are depicted in \autoref{OR} and \autoref{OR-C5}.
\begin{theorem}\label{Intersecting-AT'}
Let $G$ be a plane graph without intersecting 5-cycles, and let $C_{0} = [x_{1}x_{2}x_{3}]$ be a 3-cycle in $G$. Then $G - E(C_{0})$ has a $4$-AT-orientation such that all the edges incident with $V(C_{0})$ are directed to $V(C_{0})$. 
\end{theorem}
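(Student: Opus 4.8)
The plan is to prove \autoref{Intersecting-AT'} by exactly the same induction-on-$|V(G)|+|E(G)|$ scheme used for \autoref{PAIRWISE3456-AT'}, now driven by the structural result \autoref{Intersecting}. Suppose $(G, C_{0})$ with $C_{0}=[x_{1}x_{2}x_{3}]$ is a counterexample minimizing $|V(G)|+|E(G)|$. We may assume $G$ is connected (otherwise orient the components separately and apply \autoref{L}), and by minimality $C_{0}$ bounds a face, with no separating $3$-cycle in $G$. Since $G-E(C_{0})$ is the target, \autoref{Intersecting} applies with $C_{0}$ as the prescribed triangle: one of its four conclusions holds. The first, $V(G)=V(C_{0})$, is trivial — $G-E(C_{0})$ has no edges incident with $V(C_{0})$ to orient at all, and the empty orientation is a $4$-AT-orientation since $\mathrm{diff}$ of the edgeless digraph is $1\neq 0$. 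So we are left with a reducible internal configuration $\Gamma$, and the job is to delete $V(\Gamma)$ (or the single low-degree vertex), invoke minimality on $G-V(\Gamma)-E(C_{0})$ to get a $4$-AT-orientation $D$ of it with all edges incident with $V(C_{0})$ pointing into $V(C_{0})$, then extend $D$ across $V(\Gamma)$ so that (i) every vertex of $V(\Gamma)$ keeps out-degree $<4$, (ii) all arcs between $V(\Gamma)$ and $V(G)\setminus V(\Gamma)$ leave $V(\Gamma)$, and (iii) $\mathrm{diff}$ stays nonzero — which \autoref{L} guarantees once we know $\mathrm{diff}(D[V(\Gamma)])\neq 0$ and the cut is oriented away from $V(\Gamma)$.

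First I would handle the easy case: an internal vertex $v$ with $d_{G}(v)\le 3$. Delete $v$, extend by orienting all edges at $v$ out of $v$; $v$ gets out-degree $\le 3<4$, the new arcs point out of $\{v\}$, and $\mathrm{diff}$ is multiplied by $\mathrm{diff}(D[\{v\}])=1$, so the orientation is a desired $4$-AT-orientation, contradicting minimality. Next, for each of the configurations $\Gamma$ isomorphic to \autoref{Kite}, \autoref{F35}, \autoref{fig:subfig:RC1-a}, \autoref{RC-1}, or \autoref{RC-2}, I would exhibit an explicit internal orientation of $E(G[V(\Gamma)])$ together with the edges joining $V(\Gamma)$ to the rest of $G$, all pointing away from $V(\Gamma)$, in which every vertex of $V(\Gamma)$ has out-degree at most $3$ in $G$ (using the degree labels recorded in the figure captions: a solid quadrilateral is a $4$-vertex, pentagon a $5$-vertex, etc., and the marked vertices $a_{t}$, $a_{8}$, $a_{10}$, … have exactly two neighbours among the preceding $a_{i}$). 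The orientations for \autoref{Kite} and \autoref{F35} are already drawn in \autoref{OR}; the analogous pictures for the $C_{5}$-configurations go in \autoref{OR-C5}. In each case one must check that $\mathrm{diff}(D[V(\Gamma)])\neq 0$ — equivalently that the chosen internal orientation is an Alon–Tarsi orientation of the small graph $G[V(\Gamma)]$ with the requisite out-degree bound — after which \autoref{L} finishes: $\mathrm{diff}(G-E(C_{0}))=\mathrm{diff}(D[V(\Gamma)])\cdot\mathrm{diff}(D)\neq 0$. Since every possibility from \autoref{Intersecting} leads to a contradiction, no counterexample exists.

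The main obstacle is verifying $\mathrm{diff}(D[V(\Gamma)])\neq 0$ for the larger configurations — especially \autoref{RC-1} and \autoref{RC-2}, which may consist of two blocks glued at a cut vertex. The clean way around this is to orient each block so that its induced sub-digraph is a $4$-AT-orientation of that block (a path/cycle/near-complete small graph, where $\mathrm{diff}$ is easy to compute directly or via the fact that these are small enough to check by hand), route the cut vertex's external edges outward, and apply \autoref{L} blockwise: if $V(\Gamma)=B_{1}\cup B_{2}$ with $B_{1}\cap B_{2}=\{c\}$, orient so all arcs between $B_{1}\setminus\{c\}$ and $B_{2}$ leave $B_{1}\setminus\{c\}$, getting $\mathrm{diff}(D[V(\Gamma)])=\mathrm{diff}(D[B_{1}])\cdot\mathrm{diff}(D[B_{2}\setminus(B_{1}\setminus\{c\})])$, each factor nonzero by the single-block analysis. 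One technical point to watch: a vertex of $V(\Gamma)$ shared by two blocks must not accumulate out-degree $4$, so the internal orientation at a cut vertex should send its block-$B_{1}$ chord(s) into $B_{1}$ and its block-$B_{2}$ chord(s) into $B_{2}$ in a balanced way, which the degree labels make possible. Once that bookkeeping is done the rest is the routine $\mathrm{diff}$-multiplicativity already used for \autoref{PAIRWISE3456-AT'}, and indeed the proof of \autoref{Intersecting-AT} then follows by the same trick as in Section~\ref{sec:3}: attach a triangle $xyzx$ at an arbitrary vertex $x$, apply \autoref{Intersecting-AT'} to $(G+xyzx, [xyz])$, and observe that $(G+xyzx)-\{xy,yz,xz\}$ is $G$ plus two isolated vertices, whence $AT(G)\le 4$.

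\begin{figure}
\centering
\caption{Orientations of the configurations in \autoref{fig:subfig:RC1-a}, \autoref{RC-1} and \autoref{RC-2}.}
\label{OR-C5}
\end{figure}
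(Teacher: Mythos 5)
Your proposal follows essentially the same route as the paper: the paper proves \autoref{Intersecting-AT'} by the same minimal-counterexample argument as \autoref{PAIRWISE3456-AT'}, invoking \autoref{Intersecting}, deleting the reducible configuration, orienting its edges (and the edges leaving it) as in \autoref{OR} and \autoref{OR-C5} so that out-degrees stay below $4$, and applying \autoref{L} to keep $\mathrm{diff}\neq 0$, then deduces \autoref{Intersecting-AT} by attaching a triangle exactly as you describe. The only difference is that the paper supplies the explicit orientations of the $C_{5}$-configurations in \autoref{OR-C5} (where the verification of $\mathrm{diff}\neq 0$ is left to the reader, as in your sketch), so your blockwise caveat, while harmless, is not needed since those configurations are handled directly by the drawn orientations.
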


\begin{figure}%
\centering
\def\s{1}
\begin{tikzpicture}
\coordinate (A) at (45:\s);
\coordinate (B) at (135:\s);
\coordinate (C) at (225:\s);
\coordinate (D) at (-45:\s);
\coordinate (H) at (90:1.414*\s);
\coordinate (X) at ($(A)+(0, \s)$);
\coordinate (Y) at (45:2*\s);
\path [draw=black, postaction={on each segment={mid arrow=red}}]
(A)--(Y)--(X)--(H)--(B)--(C)--(D)--cycle
(H)--(A)--(X)
(Y)--($(Y)!0.5cm!90:(A)$)
(Y)--($(Y)!0.5cm!180:(A)$)
(X)--($(X)!0.5cm!80:(Y)$)
(X)--($(X)!0.5cm!-80:(H)$)
(H)--($(H)!0.5cm!180:(A)$)
(B)--($(B)!0.5cm!60:(H)$)
(B)--($(B)!0.5cm!120:(H)$)
(C)--($(C)!0.5cm!180:(D)$)
(C)--($(C)!0.5cm!-90:(D)$)
(D)--($(D)!0.5cm!90:(C)$)
(D)--($(D)!0.5cm!180:(C)$);
\node[rectangle, inner sep = 1.5, fill, draw] () at (A) {};
\node[rectangle, inner sep = 1.5, fill, draw] () at (B) {};
\node[rectangle, inner sep = 1.5, fill, draw] () at (C) {};
\node[rectangle, inner sep = 1.5, fill, draw] () at (D) {};
\node[rectangle, inner sep = 1.5, fill, draw] () at (H) {};
\node[regular polygon, inner sep = 1.5, fill=blue, draw=blue] () at (X) {};
\node[rectangle, inner sep = 1.5, fill, draw] () at (Y) {};
\end{tikzpicture}\hspace{0.5cm}
\begin{tikzpicture}
\coordinate (O) at (0, 0);
\coordinate (v1) at (0:\s);
\coordinate (v2) at (60:\s);
\coordinate (v3) at (120:\s);
\coordinate (v4) at (180:\s);
\path [draw=black, postaction={on each segment={mid arrow=red}}]
(v1)--(O)--(v4)--(v3)--(v2)--cycle
(v2)--(O)
(O)--(v3)
(v1)--($(v1)!0.5!90:(O)$)
(v1)--($(v1)!0.5!-90:(v2)$)
(v2)--($(v2)!0.5!180:(O)$)
(v3)--($(v3)!0.5!90:(v2)$)
(v3)--($(v3)!0.5!-90:(v4)$)
(v4)--($(v4)!0.5!90:(v3)$)
(v4)--($(v4)!0.5!-90:(O)$);
\node[rectangle, inner sep = 1.5, fill, draw] () at (O) {};
\node[regular polygon, inner sep = 1.5, fill=blue, draw=blue] () at (v3) {};
\node[rectangle, inner sep = 1.5, fill, draw] () at (v1) {};
\node[rectangle, inner sep = 1.5, fill, draw] () at (v2) {};
\node[rectangle, inner sep = 1.5, fill, draw] () at (v4) {};
\end{tikzpicture}\hspace{0.5cm}
\begin{tikzpicture}
\coordinate (O) at (0, 0);
\coordinate (v1) at (45: \s);
\coordinate (v2) at (135: \s);
\coordinate (v3) at (225: \s);
\coordinate (v4) at (-45: \s);
\coordinate (A) at (-1.414*\s, 0);
\coordinate (B) at (1.414*\s, 0);
\path [draw=black, postaction={on each segment={mid arrow=red}}]
(v1)--(O)--(v4)--(B)--cycle
(O)--(v2)--(A)--(v3)--cycle
(v2)--(v3)
(v1)--(v4)
(v1)--($(v1)!0.5!135:(B)$)
(v4)--($(v4)!0.5!-135:(B)$)
(v2)--($(v2)!0.5!-135:(A)$)
(v3)--($(v3)!0.5!135:(A)$)
(A)--($(A)!0.5!90:(v2)$)
(A)--($(A)!0.5!180:(v2)$)
(B)--($(B)!0.5!-90:(v1)$)
(B)--($(B)!0.5!-180:(v1)$)
(O)--(0, -0.5*\s);
\node[rectangle, inner sep = 1.5, fill, draw] () at (v1) {};
\node[rectangle, inner sep = 1.5, fill, draw] () at (v2) {};
\node[rectangle, inner sep = 1.5, fill, draw] () at (v3) {};
\node[rectangle, inner sep = 1.5, fill, draw] () at (v4) {};
\node[rectangle, inner sep = 1.5, fill, draw] () at (A) {};
\node[rectangle, inner sep = 1.5, fill, draw] () at (B) {};
\node[regular polygon, inner sep = 1.5, fill=blue, draw=blue] () at (O) {};
\end{tikzpicture}\hspace{0.5cm}
\begin{tikzpicture}
\coordinate (O) at (0, 0);
\coordinate (v1) at (45: \s);
\coordinate (v2) at (135: \s);
\coordinate (v3) at (225: \s);
\coordinate (v4) at (-45: \s);
\coordinate (A) at (-1.414*\s, 0);
\coordinate (B) at (1.414*\s, 0);
\path [draw=black, postaction={on each segment={mid arrow=red}}]
(v1)--(O)--(v4)--(B)--cycle
(O)--(v2)--(A)--(v3)--cycle
(v2)--(v3)
(O)--(B)
(v1)--($(v1)!0.5!90:(B)$)
(v1)--($(v1)!0.5!180:(B)$)
(v4)--($(v4)!0.5!-90:(B)$)
(v4)--($(v4)!0.5!-180:(B)$)
(v2)--($(v2)!0.5!-135:(A)$)
(v3)--($(v3)!0.5!135:(A)$)
(A)--($(A)!0.5!90:(v2)$)
(A)--($(A)!0.5!180:(v2)$)
(B)--($(B)!0.5!-135:(v1)$);
\node[rectangle, inner sep = 1.5, fill, draw] () at (v1) {};
\node[rectangle, inner sep = 1.5, fill, draw] () at (v2) {};
\node[rectangle, inner sep = 1.5, fill, draw] () at (v3) {};
\node[rectangle, inner sep = 1.5, fill, draw] () at (v4) {};
\node[rectangle, inner sep = 1.5, fill, draw] () at (A) {};
\node[rectangle, inner sep = 1.5, fill, draw] () at (B) {};
\node[regular polygon, inner sep = 1.5, fill=blue, draw=blue] () at (O) {};
\end{tikzpicture}
\caption{Orientations of some configurations.}
\label{OR-C5}
\end{figure}
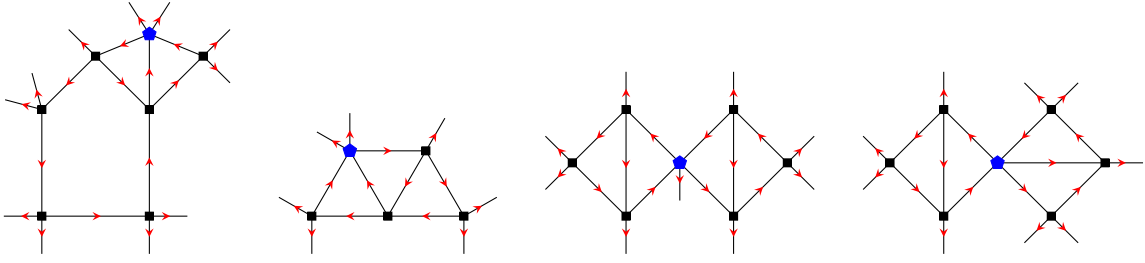

To prove \autoref{Intersecting-Weak3}, we can show the following stronger result as \autoref{PAIRWISE3456-Weak3'}. Using this, we can prove \autoref{Intersecting-Weak3} as the proof of \autoref{PAIRWISE3456-Weak3}.

\begin{theorem}\label{Intersecting-Weak3'}
Let $G$ be a connected planar graph without intersecting $5$-cycles, and let $C_{0}=[x_{1}x_{2}x_{3}]$ be a $3$-cycle in $G$. Define a function $g: V(G)\setminus \{x_{1}, x_{2}, x_{3}\} \longrightarrow \mathbb{N}$ by 
\[
g(u) = 3 - |N_{G}(u) \cap \{x_{1}, x_{2}, x_{3}\}|
\]
for each $u \in V(G)\setminus \{x_{1}, x_{2}, x_{3}\}$. Then $G - \{x_{1}, x_{2}, x_{3}\}$ is weakly $g$-degenerate. 
\end{theorem}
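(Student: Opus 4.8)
The plan is to run a minimal-counterexample argument and then invoke the structural theorem \autoref{Intersecting}. Suppose $G$, together with a triangle $C_{0}=[x_{1}x_{2}x_{3}]$, is a counterexample with $|V(G)|+|E(G)|$ minimum, and fix a plane embedding. The engine of the proof is the following reduction: if $\Gamma$ is an induced subgraph of $G$ with $V(\Gamma)\cap\{x_{1},x_{2},x_{3}\}=\emptyset$, then by minimality $(G-V(\Gamma))-\{x_{1},x_{2},x_{3}\}$ is weakly $g$-degenerate (the restriction of $g$ here coincides with the function associated to $G-V(\Gamma)$, since deleting internal vertices does not change adjacencies to $\{x_{1},x_{2},x_{3}\}$). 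Performing that removal sequence starting from $G-\{x_{1},x_{2},x_{3}\}$ is still legal and deletes everything outside $V(\Gamma)$, leaving on $V(\Gamma)$ a residual function $g^{*}$ with
\[
g^{*}(u)\ \ge\ g(u)-\bigl(d_{G}(u)-|N_{G}(u)\cap\{x_{1},x_{2},x_{3}\}|-d_{\Gamma}(u)\bigr)\ =\ 3-d_{G}(u)+d_{\Gamma}(u)
\]
for all $u\in V(\Gamma)$. So it suffices to show $G[V(\Gamma)]$ is weakly $g^{*}$-degenerate; by monotonicity of weak degeneracy it is enough to work with the lower bound $\widetilde g(u):=3-d_{G}(u)+d_{\Gamma}(u)$, which is exactly the function denoted $g$ in the proof of \autoref{Weak-Reduce}.

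With this in hand I would exclude, one by one, each outcome of \autoref{Intersecting} other than (1). An internal $3^{-}$-vertex $w$ (outcome (2)) is immediate: $\widetilde g(w)\ge 3-d_{G}(w)\ge 0$, so $w$ is removable by a single legal \textsf{Delete} after everything else, contradicting minimality. For outcome (3), if $\Gamma$ is isomorphic to the configuration in \autoref{Kite} or \autoref{F35}, all its vertices are $4$-vertices, hence $\widetilde g(u)=d_{\Gamma}(u)-1=\deg_{G[V(\Gamma)]}(u)-1$; since $G$ is planar it contains no $K_{5}$, so $G[V(\Gamma)]$ is a connected $2$-connected graph whose unique block is neither a cycle nor a complete graph, i.e.\ not a GDP-tree, and \autoref{Brooks-Weak} makes it weakly $(\deg-1)$-degenerate, hence weakly $\widetilde g$-degenerate. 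If $\Gamma$ is isomorphic to a configuration in \autoref{fig:subfig:RC1-a} or \autoref{RC-1}, the labelling $a_{1},\dots,a_{t}$ shown in the figure (read together with the vertex-degree marks and the non-adjacency/degree conditions in the captions, in particular $|N_{G}(a_{t})\cap\{a_{1},\dots,a_{t-1}\}|=2$) satisfies all hypotheses of \autoref{Weak-Sequence}; if $\Gamma$ is one of the two-block configurations in \autoref{RC-2}, the same labelling satisfies the hypotheses of \autoref{WWeak-Sequence} with one nested \textsf{DeleteSave} pair. In each case $G[V(\Gamma)]$ is weakly $g^{*}$-degenerate, a contradiction; these verifications are literally those carried out in the proof of \autoref{Weak-Reduce}.

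Having excluded outcomes (2) and (3), I apply \autoref{Intersecting} to the connected plane graph $G$ with the triangle $C_{0}$: the only remaining possibility is (1), $V(G)=V(C_{0})$, in which case $G-\{x_{1},x_{2},x_{3}\}$ is empty and vacuously weakly $g$-degenerate, contradicting that $G$ is a counterexample. Hence \autoref{Intersecting-Weak3'} holds. I expect the only real friction to be bookkeeping: checking the ``save'' inequality $g^{*}(a_{1})>g^{*}(a_{t})$ and the greedy-removal bounds in \autoref{Weak-Sequence}/\autoref{WWeak-Sequence} from the pictures, which amounts to reading off the internal degrees $d_{\Gamma}(a_{i})$ and is identical to what is already done in \autoref{Weak-Reduce}. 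Routine connectivity technicalities — that deleting a vertex or $V(\Gamma)$ might disconnect $G$ — are handled exactly as in the proof of \autoref{PAIRWISE3456-Weak3'}.
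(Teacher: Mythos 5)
Your proposal is correct and follows essentially the same route as the paper: a minimal counterexample, reduction of each outcome of \autoref{Intersecting} via the weak-degeneracy machinery (\autoref{Weak-Sequence}, \autoref{WWeak-Sequence}, and the GDP-tree lemmas), exactly as packaged in \autoref{Weak-Reduce}, ending in a contradiction with \autoref{Intersecting}. The only deviations are cosmetic: you re-derive the residual function $3-d_{G}(u)+d_{\Gamma}(u)$ explicitly and dispose of the configurations in \autoref{Kite} and \autoref{F35} via \autoref{Brooks-Weak} instead of the paper's explicit \textsf{DeleteSave} sequence and \autoref{Gallai-Weak}, which is an equivalent justification.
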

\begin{proof}
Let $G$ together with a $3$-cycle $[x_{1}x_{2}x_{3}]$ be a counterexample to the statement with $|V(G)|$ as small as possible. Fix a plane embedding for $G$ in the plane. It follows that $G - \{x_{1}, x_{2}, x_{3}\}$ is not weakly $g$-degenerate, but $G - \{x_{1}, x_{2}, x_{3}\} - X$ is weakly $g$-degenerate, where $X$ is a nonempty subset of internal vertices. By \autoref{Weak-Reduce}, every internal vertex has degree at least four, and none of the configurations in \autoref{Kite}, \autoref{F35}, \autoref{fig:subfig:RC1-a}, \autoref{RC-1} and \autoref{RC-2} will appear in $G$, but this contradicts \autoref{Intersecting}.
\end{proof}

\begin{proof}[Proof of \autoref{IntersectingSFDT}]
Let $(G, C_{0}, H, f)$ be a counterexample to \autoref{IntersectingSFDT} with $|V(G)|$ as small as possible. Note that $H$ has no strictly $f$-degenerate transversal, but $H - H_{X}$ has one, where $X$ is any nonempty subset of internal vertices in $G$. Then $G$ is connected and every internal vertex has degree at least four.

Suppose that $G$ contains an internal induced subgraph isomorphic to the configuration in \autoref{Kite} or \autoref{F35}. Let $B$ be the set of solid vertices in the configuration. Note the $G[B]$ is neither a complete graph nor a cycle, and has a vertex of degree at least three. This contradicts \autoref{CRITICAL}\ref{M2}. 

Suppose that $G$ contains an internal induced subgraph isomorphic to the configuration in \autoref{fig:subfig:RC1-a}. Let $B$ be the set of solid vertices in the configuration. Note the the sequence $a_{1}, a_{2}, \dots, a_{7}$ satisfies all the conditions of \autoref{NN}. It follows that $H$ has a strictly $f$-degenerate transversal, a contradiction. 

By \autoref{EXNN-RC-1}, the configurations in \autoref{RC-1} and \autoref{RC-2} cannot be an internal induced subgraph of $G$. Therefore, every internal vertex of $G$ has degree at least four, and none of the configurations in \autoref{Kite}, \autoref{F35}, \autoref{fig:subfig:RC1-a}, \autoref{RC-1} and \autoref{RC-2} can be an internal induced subgraph of $G$, but this contradicts \autoref{Intersecting}. This completes the proof of \autoref{IntersectingSFDT}. 
\end{proof}

\end{document}